\DeclareMathOperator*{\bigboxtimes}{\boxtimes}
\newcommand{\Z}{\ensuremath{\mathbb{Z}}}
\newcommand{\R}{\ensuremath{\mathbb{R}}}
\newcommand{\C}{\ensuremath{\mathbb{C}}}
\newcommand{\Aut}{\ensuremath{\mathrm{Aut}}}
\newcommand{\Tr}{\ensuremath{\mathrm{tr}\,}}
\newcommand{\Nrd}{\ensuremath{\mathrm{Nrd}\,}}
\newcommand{\dd}{\ensuremath{\,\mathrm{d}}}
\newcommand{\angles}[1]{\ensuremath{\langle #1 \rangle}}
\newcommand{\identity}{\ensuremath{\mathrm{id}}}
\newcommand{\Hom}{\ensuremath{\mathrm{Hom}}}
\newcommand{\Isom}{\ensuremath{\mathrm{Isom}}}
\newcommand{\End}{\ensuremath{\mathrm{End}}}
\newcommand{\rightiso}{\ensuremath{\stackrel{\sim}{\rightarrow}}}
\newcommand{\Ker}{\ensuremath{\mathrm{Ker}\,}}
\newcommand{\Ad}{\ensuremath{\mathrm{Ad}\,}}
\newcommand{\Gm}{\ensuremath{\mathbb{G}_\mathrm{m}}}
\newcommand{\GL}{\ensuremath{\mathrm{GL}}}
\newcommand{\SU}{\ensuremath{\mathrm{SU}}}
\newcommand{\PGL}{\ensuremath{\mathrm{PGL}}}
\newcommand{\SL}{\ensuremath{\mathrm{SL}}}
\newcommand{\Ind}{\ensuremath{\mathrm{Ind}}}
\newcommand{\Lgrp}[1]{\ensuremath{{}^{\mathrm{L}} #1}}
\newcommand{\WD}{\ensuremath{\mathrm{WD}}}
\theoremstyle{plain}
\newtheorem{proposition}{Proposition}[subsection]
\newtheorem{lemma}[proposition]{Lemma}
\newtheorem{theorem}[proposition]{Theorem}
\newtheorem{corollary}[proposition]{Corollary}
\theoremstyle{definition}
\newtheorem{definition}[proposition]{Definition}
\newtheorem{definition-theorem}[proposition]{Definition-Theorem}
\newtheorem{definition-proposition}[proposition]{Definition-Proposition}
\newtheorem{hypothesis}[proposition]{Hypothesis}
\newtheorem{example}[proposition]{Example}
\newtheorem{remark}[proposition]{Remark}
\renewcommand{\Re}{\ensuremath{\mathrm{Re}\xspace}}
\renewcommand{\Im}{\ensuremath{\mathrm{Im}\xspace}}
\title{Dual $R$-groups of the inner forms of $\SL(N)$}
\author{Kuok Fai Chao \and Wen-Wei Li}
\date{}
\begin{document}

\maketitle

\begin{abstract}
  We study the Knapp-Stein $R$-groups of the inner forms of $\SL(N)$ over a non-archimedean local field of characteristic zero, by using restriction from the inner forms of $\GL(N)$. As conjectured by Arthur, these $R$-groups are then shown to be naturally isomorphic to their dual avatars defined in terms of L-parameters. The $2$-cocycles attached to $R$-groups can be described as well. The proofs are based on the results of K.\! Hiraga and H.\! Saito. We also construct examples to illustrate some new phenomena which do not occur in the case of $\SL(N)$ or classical groups.
\end{abstract}

\tableofcontents

\section{Introduction}
Let $G$ be a connected reductive group over a local field $F$ and $G(F)$ be the locally compact group of the $F$-points of $G$. The study of the tempered representations of $G(F)$ is a crucial ingredient of the monumental work of Harish-Chandra on his Plancherel formula. Denote by $\Pi_{\text{temp}}(G)$ the set of isomorphism classes of irreducible tempered representations, and by $\Pi_{2,\text{temp}}(G)$ its subset of representations which are square-integrable modulo the center. Roughly speaking, elements in $\Pi_{\text{temp}}(G)$ can be obtained as subrepresentations of $I^G_P(\sigma)$, where $P=MU$ is a parabolic subgroup, $\sigma \in \Pi_{2,\text{temp}}(M)$ and $I^G_P(\sigma)$ is the normalized parabolic induction. Assuming the knowledge of square-integrable representations, the study of $\Pi_{\text{temp}}(G)$ then boils down to that of the decomposition of $I^G_P(\sigma)$, for $P$ and $\sigma$ as above.

Knapp, Stein and Silberger (for the non-archimedean case) described the decomposition of $I^G_P(\sigma)$ in terms of the \textit{Knapp-Stein $R$-group} $R_\sigma$. More precisely, we have a central extension of groups
$$ 1 \to \C^\times \to \tilde{R}_\sigma \to R_\sigma \to 1 $$
defined using the normalized intertwining operators $R_P(w, \sigma)$. It is the set $\Pi_-(\tilde{R}_\sigma)$ of the irreducible representations of $\tilde{R}_\sigma$ by which $\C^\times$ acts by $z \mapsto z \cdot\identity$ which governs the decomposition of $I^G_P(\sigma)$. Equivalently, we are given a cohomology class $\mathbf{c}_\sigma \in H^2(R_\sigma, \C^\times)$ attached to this central extension. The group $R_\sigma$ itself suffices to determine whether $I^G_P(\sigma)$ is reducible or not. To extract further information, such as the description of elliptic tempered representations, some knowledge about $\tilde{R}_\sigma$ is also needed. We refer the reader to \cite[\S 2]{Ar93} for details.

On the other hand, the tempered part of the local Langlands correspondence predicts a map $\phi \mapsto \Pi_\phi$ which assigns a finite subset $\Pi_\phi$ of $\Pi_{\text{temp}}(G)$ to every bounded L-parameter $\phi \in \Phi_{\text{bdd}}(G)$, taken up to equivalence, such that
$$ \Pi_{\text{temp}}(G) = \bigsqcup_{\phi \in \Phi_{\text{bdd}}(G)} \Pi_\phi. $$
The internal structure of the tempered L-packets $\Pi_\phi$ is conjectured to be controlled by the $S$-group $S_\phi := Z_{\hat{G}}(\Im(\phi))$. More precisely, following \cite{Ar06}, one has to introduce a central extension
$$ 1 \to \tilde{Z}_\phi \to \tilde{\mathscr{S}}_\phi \to \mathscr{S}_\phi \to 1$$
of finite groups defined in terms of $S_\phi$. The L-packet $\Pi_\phi$ should be in bijection with a set $\Pi(\tilde{\mathscr{S}}_\phi, \chi_G)$ of representations of $\tilde{\mathscr{S}}_\phi$ where $\chi_G$ is a character of $\tilde{Z}_\phi$ coming from Galois cohomology. The relevant definitions will be reviewed later in this article.

The tempered local Langlands correspondence is expected to behave well under normalized parabolic induction, namely for $P=MU$ as above and $\phi_M \in \Phi_{\text{bdd}}(M)$, we deduce $\phi \in \Phi_{\text{bdd}}(G)$ by composing $\phi_M$ with the inclusion $\Lgrp{M} \to \Lgrp{G}$ of L-groups, which is well-defined up to conjugacy. Then $\Pi_\phi$ should be the union of the irreducible constituents of $I^G_P(\sigma)$, where $\sigma$ ranges over the elements of $\Pi_{\phi_M}$. A natural question arises: Is it possible to describe $R_\sigma$, or even $\tilde{R}_\sigma$, in terms of the $S$-groups?

For archimedean $F$ this has been answered by Shelstad \cite{Sh82}; in that case, the extension $\tilde{R}_\sigma \to R_\sigma$ splits and $R_\sigma$ is abelian of exponent two. For general $F$ of characteristic zero, Arthur proposed a generalization in \cite[\S 7]{Ar89-unip} as follows. For every $\phi \in \Phi_{\text{bdd}}(G)$ coming from $\phi_M \in \Phi_{2,\text{bdd}}(M)$ (i.e.\! a parameter for $M$ which is square-integrable modulo the center), he introduced the \textit{dual $R$-group} (also known as the \textit{endoscopic $R$-group}) $R_\phi \simeq \mathscr{S}_{\phi}/\mathscr{S}_{\phi_M}$ and a subgroup $R_{\phi, \sigma} \subset R_\phi$ for every $\sigma \in \Pi_{\phi_M}$. Arthur conjectures a natural isomorphism
$$ R_{\phi, \sigma} \simeq R_\sigma. $$

This has been verified for quasisplit classical groups and unitary groups by Arthur \cite{ArEndo} and Mok \cite{Mok1}, respectively. In their construction of L-packets, the dual $R$-groups play a pivotal role through the \textit{local intertwining relations} (see \cite[Chapter 2]{ArEndo}). It turns out that in these cases, we have $R_\phi = R_{\phi,\sigma}$ and $\tilde{R}_\sigma \to R_\sigma$ splits (see \cite[\S 6.5]{ArEndo}). Similar results are obtained independently by Ban, Goldberg and Zhang \cite{BZ05,Go11,BG12} for non-archimedean $F$. For quaternionic unitary groups, see \cite{Ha04}.

We shall assume hereafter that $F$ is a non-archimedean local field of characteristic zero.
 
Another good test ground for Arthur's conjectures is the group $\SL(N)$ and its inner forms. Indeed, the case $N=2$ is the genesis of endoscopy \cite{LL79}; for general $N$, the local Langlands correspondence for the inner forms $G^\sharp$ of $\SL(N)$ is established in \cite{HS12}, at least in the tempered case. This is based on the local Langlands correspondence for the inner forms $G$ of $\GL(N)$, which satisfies the following nice properties:
\begin{itemize}
  \item the L-packets $\Pi_\phi$ for $G$ are all singletons;
  \item for any parabolic subgroup $P=MU$ and $\sigma \in \Pi_{\text{temp}}(M)$, the induced representation $I^G_P(\sigma)$ is irreducible.
\end{itemize}
In fact, the latter property holds for all unitary $\sigma$, known as Tadić's property (U0) \cite{Sec09}.

The (tempered) local Langlands correspondence for $G^\sharp$ can be obtained by restriction from $G(F)$ to $G^\sharp(F)$; the procedure is somehow dual to the natural projection of L-groups
$$ \mathbf{pr}: \Lgrp{G} \twoheadrightarrow \Lgrp{G^\sharp}. $$

The same recipe can be applied to any Levi subgroup $M$, with respect to $M^\sharp := M \cap G^\sharp$.

The method of restriction provides a convenient device, but we still have to study the internal structure of L-packets for $G^\sharp$ and their behaviour under normalized parabolic induction. For the quasisplit case $G^\sharp = \SL(N)$, such issues can be addressed by the multiplicity-one property of Whittaker models. In that case, the Knapp-Stein $R$-groups are studied in depth in \cite{GK81,GK82,Sh83,Ta92,Go94}. Roughly speaking, let $\sigma^\sharp \in \Pi_{2,\text{temp}}(M^\sharp)$ which lies in the L-packet $\Pi_{\phi^\sharp}$. We may choose $\sigma \in \Pi_{2,\text{temp}}(M)$ such that $\sigma^\sharp \hookrightarrow \sigma|_{M^\sharp}$. Set $\pi := I^G_P(\sigma)$, which is irreducible, then $R_{\sigma^\sharp}$ is described in terms of
$$ X^G(\pi) = \left\{ \eta \in (G(F)/G^\sharp(F))^D : \eta \otimes \pi \simeq \pi \right\} $$
and its analogue $X^M(\sigma)$ for the Levi subgroup $M$ with respect to $M^\sharp$, where $(G(F)/G^\sharp(F))^D$ means the group of continuous characters of $G(F)/G^\sharp(F)$. It is then easy to relate $R_{\sigma^\sharp}$ with $R_{\phi^\sharp}$, and we deduce a canonical isomorphism $R_{\phi^\sharp} = R_{\phi^\sharp, \sigma^\sharp} \simeq R_{\sigma^\sharp}$ as well as a splitting for $\tilde{R}_{\sigma^\sharp} \twoheadrightarrow R_{\sigma^\sharp}$. Note that we used the notations $\phi^\sharp$, $\sigma^\sharp$, etc. to denote the objects attached to $G^\sharp$ and its Levi subgroups.

Whittaker models are no longer available for the non-quasisplit inner forms $G^\sharp$ of $\SL(N)$. What saves the day is the work of Hiraga and Saito \cite{HS12}. They defined a central extension
$$ 1 \to \C^\times \to S^G(\pi) \to X^G(\pi) \to 1 $$
and related it to the central extension of $\mathscr{S}$-groups alluded above. This allows us to study the internal structure of the L-packets obtained by restriction. In the main Theorem \ref{prop:main}, we will prove, among others, that there are
\begin{enumerate}
  \item a canonical isomorphism $R_{\phi^\sharp, \sigma^\sharp} \simeq R_{\sigma^\sharp}$ as conjectured by Arthur;
  \item a ``concrete'' description of the dual $R$-groups for $G^\sharp$, namely
    \begin{align*}
      R_{\phi^\sharp} & \simeq X^G(\pi)/X^M(\sigma), \\
      R_{\phi^\sharp, \sigma^\sharp} & \simeq Z^M(\sigma)^\perp/X^M(\sigma);
    \end{align*}
  \item a description of the class $\mathbf{c}_{\sigma^\sharp} \in H^2(R_{\sigma^\sharp}, \C^\times)$ attached to $\tilde{R}_{\sigma^\sharp} \twoheadrightarrow R_{\sigma^\sharp}$, in terms of the obstruction for extending the representation $\rho$ of $\tilde{\mathscr{S}}_{\phi^\sharp_M}$ to the preimage of $R_{\phi, \sigma^\sharp}$ in $\tilde{\mathscr{S}}_{\phi^\sharp}$.
\end{enumerate}
We refer the reader to \S\ref{sec:res-2} for unexplained notations. Note that the description of $\mathbf{c}_{\sigma^\sharp}$ is also conjectured by Arthur; see \cite[p.537]{Ar96}, \cite[\S 3]{Ar08} or \cite[\S 2.4]{ArEndo} for further discussions.

Arthur's conjecture on $R$-groups for the inner forms of $\SL(N)$ is thus verified. The examples are probably more interesting, however. In \S\ref{sec:examples} we will give conceptual constructions of $\phi^\sharp$ and $\sigma^\sharp$ as above such that
\begin{enumerate}
  \item $\tilde{R}_{\sigma^\sharp} \twoheadrightarrow R_{\sigma^\sharp}$ is not split, or
  \item $R_{\phi^\sharp, \sigma^\sharp} \subsetneq R_{\phi^\sharp}$.
\end{enumerate}

Such phenomena do not occur to the quasisplit classical groups, the quaternionic unitary groups, or $\SL(N)$. The first example is perhaps more surprising, since $\tilde{R}_{\sigma^\sharp} \twoheadrightarrow R_{\sigma^\sharp}$ always splits for generic inducing data. Keys \cite[\S 6]{Ke87} constructed a Knapp-Stein $R$-group with non-split cocycle in the non-connected setting; our example seems to be the only known case for connected reductive groups. In both examples, the relation between $R_{\sigma^\sharp}$ and the $S$-groups is crucial.

In view of the possible applications to automorphic representations, one should also consider certain nontempered unitary representations, namely those appearing in the \textit{A-packets}; see Remark \ref{rem:nontempered} for a short discussion.

Shortly after the release of the first version of our preprint, we were informed of the independent work \cite{CG12} by Choiy and Goldberg that treats the same problems except that of cocycles. Despite some overlaps, their work has a completely different technical core, namely the transfer of Plancherel measures between inner forms, which should have wide-ranging applications.

\paragraph{Organization of this article}
In \S\ref{sec:preliminaries}, we recapitulate the formalism of normalized intertwining operators and Knapp-Stein $R$-groups. We follow Arthur's notations in \cite{Ar89-IOR1,Ar93} closely. In particular, the $R$-group $R_\sigma$ is regarded as a quotient of the isotropy group $W_\sigma$, instead of a subgroup.

In \S\ref{sec:res}, we set up a general formalism of restriction of representations. These results are scattered in \cite{Sh83,Ke87,Ta92,HS12}, just to mention a few. In view of the possible sequels of this work, the behaviour under restriction of normalized intertwining operators is treated in generality.

A special assumption is made in \S\ref{sec:res-2} (Hypothesis \ref{hyp:irred}), namely the parabolically induced representations in question should be irreducible. We are then able to deduce finer information on $R$-groups and their cocycles in this setting. The arguments are not too difficult, but require some careful manipulations.

In \S\ref{sec:SL}, we will specialize to the inner forms of $\SL(N)$ and reformulate the results of Hiraga and Saito \cite{HS12} on the local Langlands correspondence and the $S$-groups. In order to study parabolic induction , we also have to generalize these results to the Levi subgroups.

In \S\ref{sec:dual-R}, we recapitulate Arthur's definition of dual $R$-groups via the omnipresent commutative diagram in Proposition \ref{prop:diagram}. The results obtained earlier can then be easily assembled, and Arthur's conjecture on $R$-groups for the inner forms of $\SL(N)$ (Theorem \ref{prop:main}) follows.

\paragraph{Acknowledgements}
The authors would like to thank Kwangho Choiy and David Goldberg for communicating their results to us, as well as the referee for meticulous reading. The second-named author is grateful to Dipendra Prasad for his interest in this work and helpful comments.

\section{Preliminaries}\label{sec:preliminaries}
\subsection{Conventions}
\paragraph{Local fields}
Throughout this article, $F$ always denotes a non-archimedean local field of characteristic zero. We set
\begin{itemize}
  \item $\Gamma_F$: the absolute Galois group of $F$, defined with respect to a chosen algebraic closure $\bar{F}$;
  \item $W_F$: the Weil group of $F$;
  \item $\WD_F := W_F \times \SU(2)$: the Weil-Deligne group of $F$;
  \item $|\cdot| = |\cdot|_F$: the normalized absolute value of $F$;
  \item $q_F$: the cardinality of the residue field of $F$.
\end{itemize}
When discussing the canonical family of normalizing factors for $\GL(N)$ and its inner forms, we will also fix a non-trivial additive character $\psi_F: F \to \C^\times$.

The usual Galois cohomology over $F$ is denoted by $H^\bullet(F,\cdot)$. Continuous cohomology of $W_F$ is denoted by $H^\bullet_\text{cont}(W_F, \cdots)$; the groups of continuous cocycles are denoted by $Z^\bullet_\text{cont}(W_F, \cdots)$.

\paragraph{Groups and representations}
For an $F$-group scheme $G$, the group of its $F$-points is denoted by $G(F)$; subgroups of $G$ mean the closed subgroup schemes. The identity connected component of $G$ is denoted by $G^0$. The center of $G$ is denoted by $Z_G$. Centralizers (resp. normalizers) in $G$ are denoted by $Z_G(\cdot)$ (resp. $N_G(\cdot)$). The algebraic groups over $\C$ are identified with their $\C$-points.

The derived group of $G$ is denoted by $G_\text{der}$. Now assume $G$ to be connected reductive. A simply connected cover of $G_\text{der}$, which is unique up to isomorphism, is denoted by $G_\text{SC} \to G_\text{der}$. We denote the adjoint group of $G$ by $G_\text{AD} := G/Z_G$. For every subgroup $H$ of $G$, we denote by $H_\text{sc}$ (resp. $H_\text{ad}$) the preimage of $H$ in $G_\text{SC}$ (resp. image in $G_\text{AD}$). The same formalism pertains to connected reductive $\C$-groups as well.

The definitions of the dual group $\Lgrp{G} = \hat{G} \rtimes W_F$ and the L-parameters will be reviewed in \S\ref{sec:res-L}.

The symbol $\Ad(\cdots)$ denotes the adjoint action of an abstract group on itself, namely $\Ad(x): g \mapsto gxg^{-1}$.

For any division algebra $D$ over $F$ and $n \in \Z_{\geq 1}$, we denote by $\GL_D(n)$ the group of invertible elements in $\End_D(D^n)$, where $D^n$ is viewed as a right $D$-module. It is also regarded as a connected reductive $F$-group.

The representations considered in this article are all over $\C$-vector spaces. For a connected reductive $F$-group $G$, we define
\begin{itemize}
  \item $\Pi(G)$: the set of equivalence classes of irreducible smooth representations of $G(F)$;
  \item $\Pi_{\text{unit}}(G)$: the subset consisting of unitary (i.e. unitarizable) representations;
  \item $\Pi_{\text{temp}}(G)$: the subset consisting of tempered representations;
  \item $\Pi_{2,\text{temp}}(G)$: the subset consisting of unitary representations which are square-integrable modulo the center.
\end{itemize}

For an abstract group $S$, we will also denote by $\Pi(S)$ the set of its irreducible representations up to equivalence.

The central character of $\pi \in \Pi(G)$ is denoted by $\omega_\pi$. The group of morphisms (resp. the set of isomorphisms) in the category of representations of $G(F)$ is denoted by $\Hom_G(\cdots)$ (resp. $\Isom_G(\cdots)$).

For any topological group $H$, we set
$$ H^D := \{\chi: H \to \C^\times, \text{ continuous character} \}. $$
For any representation $\pi$ of $G(F)$ and any $\eta \in G(F)^D$, we write $\eta\pi := \eta \otimes \pi$ for abbreviation. Also note that $\pi$ and $\eta\pi$ have the same underlying $\C$-vector spaces. If $M$ is a subgroup of $G$ and $\pi$ is a smooth representation of $G(F)$, we shall denote the restriction of $\pi$ to $M(F)$ by $\pi|_M$.

\paragraph{Combinatorics}
Let $G$ be a connected reductive $F$-group. We employ the following notations in this article. Let $M$ be a Levi subgroup, we write
\begin{itemize}
  \item $\mathcal{P}(M)$: the set of parabolic subgroups of $G$ with Levi component $M$;
  \item $\mathcal{L}(M)$: the set of Levi subgroups of $G$ containing $M$;
  \item $\mathcal{F}(M)$: the set of parabolic subgroups of $G$ containing $M$;
  \item $W(M) := N_G(M)(F)/M(F)$: the Weyl group (in a generalized sense) relative to $M$;
\end{itemize}
The Levi decompositions are written as $P=MU$, where $U$ denotes the unipotent radical of $P$. For $M$ chosen, the opposite parabolic of $P=MU$ is denoted by $\bar{P} = M\bar{U}$. When we have to emphasize the role of $G$, the notations $\mathcal{P}^G(M)$, $\mathcal{L}^G(M)$, $\mathcal{F}^G(M)$ and $W^G(M)$ will be used.

Let $w \in W(M)$ with a representative $\tilde{w} \in G(F)$. For $\sigma \in \Pi(M)$, we define $\tilde{w}\sigma$ to be the representation on the same underlying vector space, with the new action
$$ (\tilde{w}\sigma)(m) := \sigma(\tilde{w}^{-1} m \tilde{w}), \quad m \in M(F). $$
The equivalence class of $\tilde{w}\sigma$ depends only on $w \in W(M)$, and we will write $w\sigma$ instead, if there is no confusion.

Define $\mathcal{X}(G) := \Hom_{F-\text{grp}}(G, \Gm)$ and $\mathfrak{a}_G := \Hom(\mathcal{X}(G), \R)$. For every Levi subgroup $M$, there is a canonically split short exact sequence of finite-dimensional $\R$-vector spaces
$$ 0 \to \mathfrak{a}_G \to \mathfrak{a}_M \leftrightarrows \mathfrak{a}^G_M \to 0. $$
The linear duals of these spaces are denoted by $\mathfrak{a}^*_G$, etc. We also write $\mathfrak{a}_{G,\C} := \mathfrak{a}_G \otimes_\R \C$, etc. Sometime we also write $\mathfrak{a}_P$ instead of $\mathfrak{a}_M$ if $P=MU$.

The Harish-Chandra map $H_G: G(F) \to \mathfrak{a}_G$ is the homomorphism characterized by
$$ \angles{\chi, H_G(x)} = \log|\chi(x)|_F, \quad \chi \in \mathcal{X}(G). $$
For $\lambda \in \mathfrak{a}^*_{G,\C}$ and $\pi \in \Pi(G)$, we define $\pi_\lambda \in \Pi(G)$ by
\begin{gather}\label{eqn:pi-twist}
  \pi_\lambda := e^{\angles{\lambda, H_G(\cdot)}} \otimes \pi.
\end{gather}

Fix a minimal parabolic subgroup $P_0 = M_0 U_0$ of $G$. We define $\Delta_0$, $\Delta_0^\vee$ to be the set of simple roots and coroots, which form bases of $(\mathfrak{a}^G_{M_0})^*$ and $\mathfrak{a}^G_{M_0}$, respectively. The set of positive roots is denoted by $\Sigma_0$, and its subset of reduced roots by $\Sigma^{\text{red}}_0$. They form a bona fide root system. For every $P = MU \supset P_0$, we define $\Delta_P \subset \Sigma_P \subset \Sigma^{\text{red}}_P$ by taking the set of nonzero restrictions to $(\mathfrak{a}^G_M)^*$ of elements in  $\Delta_0 \subset \Sigma_0 \subset \Sigma^\text{red}_0$. To each $\alpha \in \Sigma_P$ we may associate the coroot $\alpha^\vee \in \mathfrak{a}^G_M$: it is defined as the restriction of the coroot in $\Delta^\vee_0$. For a given $P$, the objects above are independent of the choice of $P_0$. We can emphasize the role of $G$ by using the notations $\Delta^G_P$, etc. whenever needed.

\paragraph{Induction}
We always consider a parabolic subgroup $P=MU$ of $G$. The modulus character of $P(F)$ is denoted by $\delta_P$, i.e.
$$ (\text{left Haar measure}) = \delta_P \cdot (\text{right Haar measure}). $$

The usual smooth induction functor is denoted by $\Ind(\cdots)$. The normalized parabolic induction functor from $P$ to $G$ is denoted by $I^G_P(\cdot) := \Ind^G_P(\delta^{\frac{1}{2}}_P \otimes \cdot)$. Recall that for $\sigma \in \Pi(M)$ with underlying vector space $V_\sigma$, we use the usual model to realize $I^G_P(\sigma)$ as the space of functions $\varphi: G(F) \to V_\sigma$ such that $\varphi$ is invariant under right translation by an open compact subgroup of $G(F)$, and that $\varphi(umx) = \delta_P(m)^{\frac{1}{2}}\sigma(m)(\varphi(x))$ for all $m \in M(F)$, $u \in U(F)$. The group $G(F)$ acts on this function space by the right regular representation.

For $\sigma, \sigma' \in \Pi(M)$ and $f \in \Hom_M(\sigma, \sigma')$, the induced morphism is denoted by $I^G_P(f)$; it sends $\varphi$ to $f(\varphi)$.

\subsection{Normalized intertwining operators}
Our basic reference for normalized intertwining operators is \cite{Ar93}. Consider the following data
\begin{itemize}
  \item $G$: a connected reductive $F$-group.
  \item $M$: a Levi subgroup of $G$,
  \item $P,Q \in \mathcal{P}(M)$,
  \item $\sigma: M(F) \to \Aut_\C(V_\sigma)$: a smooth representation of $M(F)$ of finite length,
  \item $\lambda \in \mathfrak{a}^*_{M,\C}$.
\end{itemize}

For every $\alpha \in \Delta_P$, we denote by $r_\alpha$ the smallest positive rational number such that $r_\alpha \cdot \alpha^\vee$ lies in the lattice $H_M(M(F))$. We define
\begin{gather}\label{eqn:alpha-check}
  \check{\alpha} := r_\alpha \alpha^\vee .
\end{gather}

By recalling \eqref{eqn:pi-twist}, we form the normalized parabolic induction $I^G_P(\sigma_\lambda)$, $I^G_Q(\sigma_\lambda)$. Their underlying spaces are denoted by $I^G_P(V_{\sigma_\lambda})$, $I^G_Q(V_{\sigma_\lambda})$. The standard intertwining operator
$$ J_{Q|P}(\sigma_\lambda): I^G_P(\sigma_\lambda) \longrightarrow I^G_Q(\sigma_\lambda) $$
is defined by the absolutely convergent integral
\begin{gather}\label{eqn:J-int}
  (J_{Q|P}(\sigma_\lambda)\varphi)(x) = \int_{U_P(F) \cap U_Q(F) \backslash U_Q(F)} \varphi(u x) \dd u, \quad x \in G(F),
\end{gather}
when $\angles{\Re(\lambda), \alpha^\vee } \gg 0$ for all $\alpha \in \Sigma^\text{red}_P \cap \Sigma^\text{red}_{\bar{Q}}$; see \cite[IV.1]{Wa03} for the precise meaning of absolute convergence. Recall that upon choosing a special maximal compact open subgroup $K \subset G(F)$ in good position relative to $M$, these induced representations can be realized on a vector space that is independent of $\lambda$. It is known that $J_{Q|P}(\sigma_\lambda)$ is a rational function in the variables
$$ \left\{ q_F^{-\angles{\lambda, \check{\alpha}}} : \alpha \in \Delta_P \right\}. $$
In particular, as a function in $\lambda$, $J_{Q|P}(\sigma_\lambda)$ admits a meromorphic continuation to $\mathfrak{a}^*_{M,\C}$. When $\sigma \in \Pi_{\text{temp}}(M)$, it is known that \eqref{eqn:J-int} is absolutely convergent for $\angles{\Re(\lambda), \alpha^\vee} > 0$ for all $\alpha \in \Sigma^\text{red}_P \cap \Sigma^\text{red}_{\bar{Q}}$. Moreover, as a meromorphic family of operators, it satisfies $\text{ord}_{\lambda=0} (J_{Q|P}(\sigma_\lambda)) \geq -1$.

Henceforth we assume $\sigma$ irreducible, i.e. $\sigma \in \Pi(M)$. Take any $P \in \mathcal{P}(M)$, define the $j$-functions as
\begin{gather}\label{eqn:j-function}
  j(\sigma_\lambda) := J_{P|\bar{P}}(\sigma_\lambda) J_{\bar{P}|P}(\sigma_\lambda).
\end{gather}

It is known that $\lambda \mapsto j(\sigma_\lambda)$ a scalar-valued meromorphic function, which is not identically zero. Moreover, $j(\sigma_\lambda)$ is independent of $P$ and admits a product decomposition
$$ j(\sigma_\lambda) = \prod_{\alpha \in \Sigma^\text{red}_P} j_\alpha(\sigma_\lambda) $$
where $j_\alpha$ denotes the $j$-function defined relative to the Levi subgroup $M_\alpha \in \mathcal{L}(M)$ such that $\Sigma^{M_\alpha,\text{red}}_M = \{\pm \alpha\}$.

Now assume $\sigma \in \Pi_{2,\text{temp}}(M)$. In this paper, we define Harish-Chandra's $\mu$-function as the meromorphic function
$$ \mu(\sigma_\lambda) := j(\sigma_\lambda)^{-1}. $$
Accordingly, $\mu$ also admits a product decomposition $\mu = \prod_\alpha \mu_\alpha$. It is analytic and non-negative for $\lambda \in i\mathfrak{a}^*_M$. Note that our definitions of $j$-functions and $\mu$-functions depend on the choice of Haar measures on unipotent radicals. In particular, our $\mu$-function differs from that in \cite[V.2]{Wa03} by some harmless constant.

\begin{definition}[cf. {\cite[\S 2]{Ar89-IOR1}}]\label{def:normalizing}
  In this article, a family of normalizing factors is a family of meromorphic functions on the $\mathfrak{a}^*_{M,\C}$-orbits in $\Pi(M)$, for all Levi subgroup $M$ of $G$, written as
  $$ r_{Q|P}(\sigma_\lambda), \quad P,Q \in \mathcal{P}(M), \sigma_\lambda \in \mathfrak{a}^*_{M,\C} $$
  satisfying the following conditions. First of all, we define the corresponding normalized intertwining operators as
  $$ R_{Q|P}(\sigma_\lambda) := r_{Q|P}(\sigma_\lambda)^{-1} J_{Q|P}(\sigma_\lambda) $$
  which is a meromorphic family (in $\lambda$) of intertwining operators $I^G_P(\sigma_\lambda) \to I^G_Q(\sigma_\lambda)$.

  We shall also assume that a family of normalizing factors is chosen for every proper Levi subgroup.
  \renewcommand{\labelenumi}{$\mathbf{R}_\arabic{enumi}$}\begin{enumerate}
    \item For all $P, P', P'' \in \mathcal{P}(M)$, we have $R_{P''|P}(\sigma_\lambda) = R_{P''|P'}(\sigma_\lambda) R_{P'|P}(\sigma_\lambda)$.
    \item If $\sigma \in \Pi_\text{unit}(M)$, then
      $$ R_{Q|P}(\sigma_\lambda) = R_{P|Q}(\sigma_{-\bar{\lambda}})^*, \quad \lambda \in \mathfrak{a}_{M,\C}^* . $$
      In particular, $R_{Q|P}(\sigma)$ is a well-defined unitary operator.
    \item This family is compatible with conjugacy, namely
      $$ R_{gQg^{-1}|gPg^{-1}}(g\sigma_\lambda) = \ell(g) R_{Q|P}(\sigma_\lambda) \ell(g)^{-1} $$
      for all $g \in G(F)$, where $\ell(g)$ is the map $\varphi(\cdot) \mapsto \varphi(g^{-1} \cdot)$
    \item We have
      $$ r_{Q|P}(\sigma_\lambda) = \prod_{\alpha \in \Sigma_P^\text{red} \cap \Sigma_{\bar{Q}}^\text{red}} r^{\tilde{M}_\alpha}_{\overline{P_\alpha}|P_\alpha}(\sigma_\lambda), $$
      where $P_\alpha := P \cap M_\alpha$, and $r^{M_\alpha}_{\overline{P_\alpha}|P_\alpha}$ comes from the family of normalizing factors for $M_\alpha$.
    \item Let $S=LU \in \mathcal{F}(M)$ containing both $P$ and $Q$, then $R_{Q|P}(\sigma_\lambda)$ is the operator deduced from $R^L_{P \cap L|Q \cap L}(\sigma_\lambda)$ by the functor $I^G_S(\cdot)$.
    \item The function $\lambda \mapsto r_{Q|P}(\sigma_\lambda)$ is rational in the variables $\left\{ q_F^{-\angles{\lambda, \check{\alpha}}} : \alpha \in \Delta_P \right\}$.
    \item If $\sigma \in \Pi_\text{temp}(M)$, then the meromorphic function $\lambda \mapsto r_{Q|P}(\sigma_\lambda)$ is invertible whenever $\Re\angles{\lambda, \alpha^\vee} > 0$ for all $\alpha \in \Delta_P$.
  \renewcommand{\labelenumi}{\arabic{enumi}}\end{enumerate}
\end{definition}

Observe that $\mathbf{R}_2$ is equivalent to that $r_{Q|P}(\sigma_\lambda) = \overline{r_{P|Q}(\sigma_{-\bar{\lambda}})}$ for $\sigma \in \Pi_\text{unit}(M)$, as the unnormalized operators $J_{Q|P}(\sigma_\lambda)$ satisfy a similar condition. Similarly, $\mathbf{R}_3$ is equivalent to that $r_{gQg^{-1}|gPg^{-1}}(g\sigma_\lambda) = r_{Q|P}(\sigma_\lambda)$.

The fundamental result about the normalizing factors is that they exist \cite[Theorem 2.1]{Ar89-IOR1}.

\begin{remark}\label{rem:normalization}
  According to Langlands \cite[Appendix 2]{Lan76}, there is a conjectural canonical family of normalizing factors $r_{Q|P}(\sigma_\lambda)$ in terms of local factors, namely
  $$ r_{Q|P}(\sigma_\lambda) = \varepsilon(0, \rho_{Q|P}^\vee \circ \phi_\lambda, \psi_F)^{-1} L(0, \phi_\lambda, \rho_{Q|P}^\vee) L(1, \phi_\lambda, \rho_{Q|P}^\vee)^{-1}, $$
  where
  \begin{itemize}
    \item $\phi_\lambda$ is the Langlands parameter for $\sigma_\lambda$;
    \item let $\hat{\mathfrak{u}}_Q$ (resp. $\hat{\mathfrak{u}}_P$) denote the Lie algebra of the unipotent radical of the dual parabolic subgroup $\hat{Q}$ (resp. $\hat{P}$) in $\hat{G}$;
    \item $\rho_{Q|P}$ is the adjoint representation of $\Lgrp{M}$ on $\hat{\mathfrak{u}}_Q/(\hat{\mathfrak{u}}_Q \cap \hat{\mathfrak{u}}_P)$, and $\rho_{Q|P}^\vee$ denotes its contragredient;
    \item $\psi_F: F \to \C^\times$ is a chosen non-trivial additive character.
  \end{itemize}

  We will invoke this description only in the case $G=\GL_F(n)$. In that case, the local factors in sight are essentially those associated with pairs $(\phi_1, \phi_2)$ where $\phi_1, \phi_2$ are among the L-parameters parametrizing the components of $\sigma$. Such Artin local factors are known to agree with their representation-theoretic avatars, say those defined by Rankin-Selberg convolution or by the Langlands-Shahidi method.
\end{remark}

\begin{remark}\label{rem:construction-normalization}
  The construction of normalizing factors can be reduced to the case that $M$ is a maximal proper Levi subgroup of $G$ and $\sigma \in \Pi_{2,\text{temp}}(M)$, as illustrated in \cite{Ar89-IOR1}. Let us give a quick sketch of this reduction.
  \begin{enumerate}
    \item In view of $\mathbf{R}_4$, we are led to the case $M$ maximal proper. Moreover, it suffices to verify $\mathbf{R}_3$ for the representatives in $G(F)$ of the elements in $W(M) := N_{G(F)}(M)/M(F)$, which has at most two elements.
    \item Assume that $\sigma \in \Pi_\text{temp}(M)$. By the classification of tempered representations, there exist a parabolic subgroup $R=M_R U_R$ of $M$ and $\tau \in \Pi_\text{temp}(M_R)$ such that $\sigma \hookrightarrow I^M_R(\tau)$. The pair $(M,\tau)$ is unique up to conjugacy. There is a unique element $P(R)$ in $\mathcal{P}(M_R)$, characterized by the properties
      \begin{itemize}
        \item $P(R) \subset P$,
        \item $P(R) \cap M = R$.
      \end{itemize}
      Consequently, parabolic induction in stages gives $I^G_P I^M_R(\tau) = I^G_{P(R)}(\tau)$. The same construction works when $P$ is replaced by $Q$. Set
      $$ r_{Q|P}(\sigma_\lambda) := r_{Q(R)|P(R)}(\tau_\lambda). $$

      In view of $\mathbf{R}_5$ together with parabolic induction in stages, we see that $R_{Q|P}(\sigma_\lambda)$ is the restriction of $R_{Q(R)|P(R)}(\sigma_\lambda)$ to $I^G_P(\sigma_\lambda)$. The required conditions can be readily verified.
    \item For general $\sigma$, we may realize it as the Langlands quotient $I^M_R(\tau_\mu) \twoheadrightarrow \sigma$, where $R = M_R U_R$ is a parabolic subgroup of $M$, $\tau \in \Pi_\text{temp}(M_R)$ and $\mu \in \mathfrak{a}^*_{M_R}$ satisfies $\Re\angles{\mu, \beta^\vee} > 0$ for all $\beta \in \Delta^M_R$. The triplet $(M,\tau,\mu)$ is again unique up to conjugacy. Let $P,Q \in \mathcal{P}(M)$, define $P(R), Q(R) \in \mathcal{P}(M_R)$ as before and set
    $$ r_{Q|P}(\sigma_\lambda) := r_{Q(R)|P(R)}(\tau_{\lambda+\mu}). $$

      Recall that
      $$ \Ker[I^M_R(\tau_\mu) \twoheadrightarrow \sigma] = \Ker(J^M_{\bar{R}|R}(\tau_\mu)). $$

      The condition $\mathbf{R}_7$ applied to $\tau_\mu$ tells us that $\Ker(J^M_{\bar{R}|R}(\tau_\mu)) = \Ker(R^M_{\bar{R}|R}(\tau_\mu))$. Idem for $\mu$ replaced by $\mu+\lambda$. Using $\mathbf{R}_5$, one sees that $R_{Q(R)|P(R)}(\tau_{\lambda+\mu})$ factors into $R_{Q|P}(\sigma_\lambda)$ on $I^G_P(\sigma_\lambda)$. All conditions except $\mathbf{R}_2$ follow from this. The proof of $\mathbf{R}_2$ requires somehow more efforts to deal with the unitarizability of Langlands quotients; the reader can consult \cite[p.30]{Ar89-IOR1} for details.
    \item Reverting to our original assumption that $M$ is maximal proper and $\sigma \in \Pi_{2,\text{temp}}(M)$, it is clear that it remains to verify conditions $\mathbf{R}_1$, $\mathbf{R}_2$, $\mathbf{R}_3$, $\mathbf{R}_6$, $\mathbf{R}_7$. Furthermore, one can reduce $\mathbf{R}_3$ to the assertion that $r_{w\bar{P}w^{-1}|wPw^{-1}}(w(\sigma_\lambda)) = r_{\bar{P}|P}(\sigma_\lambda)$, for $w \in W(G)$ being the non-trivial element in $W(M)$ if it exists.
  \end{enumerate}
\end{remark}

\subsection{Knapp-Stein $R$-groups}
Fix a family of normalizing factors for $G$. Assume henceforth that $M$ is a Levi subgroup of $G$ and $\sigma \in \Pi_{2,\text{temp}}(M)$. Define the isotropy group
$$ W_\sigma := \{w \in W(M) : w\sigma \simeq \sigma \}. $$

Fix $P \in \mathcal{P}(M)$. For $w \in W(M)$ with a representative $\tilde{w} \in G(F)$, we define the operator $r_P(\tilde{w}, \sigma) \in \Isom_G(I^G_P(\sigma), I^G_P(\tilde{w}\sigma))$ by
\begin{gather}\label{eqn:r_P}
  r_P(\tilde{w}, \sigma): I^G_P(\sigma) \xrightarrow{R_{w^{-1}Pw|P}(\sigma)} I^G_{w^{-1}Pw}(\sigma) \xrightarrow{[\ell(\tilde{w}): \phi \mapsto \phi(\tilde{w}^{-1} \cdot)]} I^G_P(\tilde{w}\sigma).
\end{gather}
We notice the property that for any $w, w' \in W(M)$ with representatives $\tilde{w}, \tilde{w}' \in G(F)$, we have
\begin{gather}\label{eqn:r_P-property}
  r_P(\tilde{w}\tilde{w}', \sigma) = r_P(\tilde{w}, \tilde{w}'\sigma) \circ r_P(\tilde{w}', \sigma).
\end{gather}

Assume now $w \in W_\sigma$. Choose a representative $\tilde{w}$ of $w$ and $\sigma(\tilde{w}) \in \Isom_M(\tilde{w}\sigma, \sigma)$ to define the operator
$$ R_P(\tilde{w}, \sigma) := I^G_P(\sigma(\tilde{w})) \circ r_P(\tilde{w}, \sigma). $$

The class $R_P(\tilde{w}, \sigma) \text{ mod } \C^\times$ is independent of the choices of $\sigma(\tilde{w})$ and the representative $\tilde{w}$. We also have
\begin{gather*}
  R_P(\tilde{w}, \sigma) \in \Aut_G(I^G_P(\sigma)), \\
  R_P(\tilde{w}, \sigma) R_P(\tilde{w}', \sigma) = R_P(\tilde{w} \tilde{w}', \sigma) \mod \C^\times, \quad w, w' \in W_\sigma.
\end{gather*}

Now we can define the Knapp-Stein $R$-group as follows.
\begin{align*}
  W^0_\sigma & := \{w \in W_\sigma : R_P(\tilde{w},\sigma) \in \C^\times \identity \}, \\
  R_\sigma & := W_\sigma/W^0_\sigma.
\end{align*}

We will also make use of the following alternative description of $R_\sigma$.
\begin{proposition}
  The subgroup $W^0_\sigma$ is the Weyl group of the root system on $\mathfrak{a}_M$ composed of the multiples of the roots in $\{ \alpha \in \Sigma^\mathrm{red}_P : \mu_\alpha(\sigma)=0 \}$.

  Given any Weyl chamber $\mathfrak{a}^+_\sigma \subset \mathfrak{a}_M$ for the aforementioned root system, there is then a unique section $R_\sigma \hookrightarrow W_\sigma$ that sends $r \in R_\sigma$ to the representative $w \in W_\sigma$ such that $w\mathfrak{a}^+_\sigma = \mathfrak{a}^+_\sigma$. Consequently, we can write $W_\sigma = W^0_\sigma \rtimes R_\sigma$.
\end{proposition}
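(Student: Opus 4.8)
The plan is to prove this by analyzing Harish-Chandra's $\mu$-function and the structure of the normalized intertwining operators $R_P(\tilde{w},\sigma)$. First I would recall the product decomposition $\mu(\sigma_\lambda) = \prod_{\alpha} \mu_\alpha(\sigma_\lambda)$ over $\alpha \in \Sigma^{\mathrm{red}}_P$, together with the fundamental theorem of Harish-Chandra and Silberger relating the reducibility of $I^G_P(\sigma)$ and the poles/zeros of intertwining operators to the vanishing locus of the $\mu_\alpha$. The set $\{\alpha \in \Sigma^{\mathrm{red}}_P : \mu_\alpha(\sigma)=0\}$ is stable under the action of $W_\sigma$ (because $\mu$ is $W_\sigma$-invariant, a consequence of $\mathbf{R}_3$ and the definition $\mu = j^{-1}$), and one checks it is closed under the operations that make the $\Q$-span of its multiples a sub-root-system $\Sigma_\sigma$ of the root system generated by $\Sigma^{\mathrm{red}}_P$ inside $\mathfrak{a}^*_M$. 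Call its Weyl group $W'$.

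The key step is the double inclusion $W^0_\sigma = W'$. For the inclusion $W' \subseteq W^0_\sigma$: given $\alpha$ with $\mu_\alpha(\sigma)=0$, one passes to the rank-one Levi $M_\alpha$ and shows that the reflection $s_\alpha$ lies in $W_\sigma$ and that $R_{P}(\tilde{s}_\alpha,\sigma)$ is scalar. This is precisely where the hypothesis that $\sigma$ is square-integrable mod center enters: Harish-Chandra's theory tells us that for $M_\alpha$ with $M$ maximal proper, $I^{M_\alpha}_{P_\alpha}(\sigma)$ is irreducible exactly when $\mu_\alpha(\sigma)=0$ and the operator $R_{\bar{P}_\alpha|P_\alpha}(\sigma)$, composed with the $\ell(\tilde{s}_\alpha)$-twist and an intertwiner $\sigma(\tilde{s}_\alpha)$, is a scalar on the irreducible space $I^{M_\alpha}_{P_\alpha}(\sigma)$ by Schur; using $\mathbf{R}_5$ this scalar statement propagates to $G$. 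Conversely, for $W^0_\sigma \subseteq W'$: if $w \in W_\sigma$ but $w \notin W'$, then $w$ moves some positive root $\alpha$ with $\mu_\alpha(\sigma) \neq 0$ to a negative one outside the closure of $\Sigma_\sigma$, and one argues via the product formula for intertwining operators (factoring $R_{w^{-1}Pw|P}(\sigma)$ through rank-one operators, each of which is holomorphic and nonzero at $\lambda=0$ precisely when $\mu_\alpha(\sigma)\neq 0$) that $R_P(\tilde{w},\sigma)$ cannot be a scalar — it fails to be so on the nontrivial decomposition of $I^G_P(\sigma)$ governed by the reducibility. This gives the first assertion.

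For the second assertion, I would argue as follows. Since $W^0_\sigma = W'$ is a normal subgroup of $W_\sigma$ (normality follows from the $W_\sigma$-stability of the root set, as $W_\sigma$ permutes the roots defining $\Sigma_\sigma$), and since $W_\sigma$ acts on $\mathfrak{a}_M$ preserving $\Sigma_\sigma$, every element of $W_\sigma$ maps the set of Weyl chambers of $\Sigma_\sigma$ to itself. Standard Coxeter-group theory says $W^0_\sigma = W'$ acts simply transitively on these chambers. Therefore, fixing one chamber $\mathfrak{a}^+_\sigma$, each coset $wW^0_\sigma$ contains exactly one representative $w$ with $w\mathfrak{a}^+_\sigma = \mathfrak{a}^+_\sigma$: existence because $W^0_\sigma$ is transitive on chambers so we can adjust $w$ on the right by an element of $W^0_\sigma$ to fix $\mathfrak{a}^+_\sigma$; uniqueness because if $w, w'$ both fix $\mathfrak{a}^+_\sigma$ and lie in the same coset then $w^{-1}w' \in W^0_\sigma$ fixes $\mathfrak{a}^+_\sigma$, hence is trivial by simple transitivity. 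The resulting map $R_\sigma \to W_\sigma$ is a group homomorphism: the product of two chamber-fixing elements again fixes the chamber and lies in the correct coset, so it equals the chosen representative of the product coset. It is visibly a section of $W_\sigma \twoheadrightarrow R_\sigma$, so $W_\sigma = W^0_\sigma \rtimes R_\sigma$.

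The main obstacle is the inclusion $W^0_\sigma \subseteq W'$, i.e. showing that an element $w \in W_\sigma$ acting nontrivially on $\Sigma_\sigma^\perp$-directions (outside $W'$) cannot give a scalar operator. This requires the precise dictionary between $\mu_\alpha(\sigma) = 0$, irreducibility of rank-one inductions, and the scalar-versus-nonscalar behavior of the normalized rank-one operators — essentially Harish-Chandra's commuting-algebra theorem together with the compatibility axioms $\mathbf{R}_1$, $\mathbf{R}_5$ that let one assemble the rank-one information. One must be careful that the normalization does not introduce spurious scalars, which is handled by property $\mathbf{R}_7$ guaranteeing invertibility of normalizing factors in the relevant range, so that zeros and poles of $R$ and of $J$ (equivalently of $\mu$) coincide along the walls.
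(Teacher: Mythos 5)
This proposition is stated in the paper without proof; it is a classical result of Knapp--Stein--Silberger theory (the paper cites \cite{Si78} a few lines later for the attendant commuting-algebra theorem, and the description of $W^0_\sigma$ via the $\mu$-function goes back to Harish-Chandra, Silberger and, for the reformulation used here, Arthur). So there is no ``paper proof'' to compare against, and your proposal must stand on its own.

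Your treatment of $W' \subseteq W^0_\sigma$ (rank-one reduction, Schur's lemma, propagation via $\mathbf{R}_5$) is on the right track, though the rank-one dichotomy is misstated: $I^{M_\alpha}_{P_\alpha}(\sigma)$ is irreducible iff $\mu_\alpha(\sigma)=0$ \emph{or} $s_\alpha\sigma\not\simeq\sigma$, and reducible iff $\mu_\alpha(\sigma)\neq 0$ \emph{and} $s_\alpha\sigma\simeq\sigma$; the phrase ``irreducible exactly when $\mu_\alpha(\sigma)=0$'' is correct only once $s_\alpha\in W_\sigma$ has already been fixed. The parenthetical ``holomorphic and nonzero at $\lambda=0$ precisely when $\mu_\alpha(\sigma)\neq 0$'' is also off: the whole point of normalization is that $R_{\bar{P}_\alpha|P_\alpha}(\sigma_\lambda)$ is holomorphic and invertible at $\lambda=0$ regardless of whether $\mu_\alpha$ vanishes; what changes at a zero of $\mu_\alpha$ is the scalar-vs-nonscalar dichotomy, not invertibility. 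The genuine gap, though, is the reverse inclusion $W^0_\sigma\subseteq W'$. You appeal to ``the nontrivial decomposition of $I^G_P(\sigma)$ governed by the reducibility,'' but that reducibility is precisely what $R_\sigma=W_\sigma/W^0_\sigma$ is supposed to control, so the argument as stated is circular. The real content of the proposition is Silberger's theorem that $W^0_\sigma$ is generated by reflections $s_\alpha$; only after one knows this can one reduce the scalar question for a general $w\in W_\sigma$ to rank-one situations, and your proposed factorization of $R_{w^{-1}Pw|P}(\sigma)$ through rank-one pieces crosses walls $\alpha$ where $s_\alpha\notin W_\sigma$, in which case the rank-one operator is an isomorphism between two non-isomorphic representations and no scalar/non-scalar dichotomy is available. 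Establishing reflection-generation requires harmonic-analytic input beyond the axioms $\mathbf{R}_1$--$\mathbf{R}_7$ (Harish-Chandra's commuting-algebra theorem and the structure of the Plancherel measure), which your sketch does not supply. The second part of the proposition---the Weyl-chamber section and $W_\sigma=W^0_\sigma\rtimes R_\sigma$---is argued correctly, granting the first part.
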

In the literature, $R_\sigma$ is sometimes viewed as a subgroup of $W_\sigma$ in this manner, eg. \cite{Go06}.

Write $V_\sigma$ (resp. $I^G_P(V_\sigma)$) for the underlying vector space of the representation $\sigma$ (resp. $I^G_P(\sigma)$). It follows that $w \mapsto R_P(\tilde{w}, \sigma)$ induces a projective representation of $R_\sigma$ on $I^G_P(V_\sigma)$, where $\tilde{w} \in G(F)$ is any representative of $w \in W_\sigma$. We denote this projective representation provisionally by $r \mapsto R_P(r, \sigma)$, for $r \in R_\sigma$.

There is a standard way to lift $R_P(\cdot,\sigma)$ to an authentic representation of some group $\tilde{R}_\sigma$ which sits in a central extension
$$ 1 \to \C^\times \to \tilde{R}_\sigma \to R_\sigma \to 1, $$
such that $\C^\times$ acts by $z \mapsto z \cdot \identity$. Namely, we can set $\tilde{R}_\sigma$ to be the group of elements $(r, M[r]) \in R_\sigma \times \Aut_\C(I^G_P(V_\sigma))$ such that $M[r] \text{ mod } \C^\times$ gives $R_P(r, \sigma)$. The lifted representation, denoted by $\tilde{r} \mapsto R_P(\tilde{r},\sigma)$, is then $\tilde{r} = (r, M[r]) \mapsto M[r]$. Such a central extension by $\C^\times$ that lifts $R_P(\cdot, \sigma)$ is unique up to isomorphism.

Note that the central extension above can also be described by the class $\mathbf{c}_\sigma \in H^2(R_\sigma, \C^\times)$ coming from the $\C^\times$-valued $2$-cocycle $c_\sigma$ defined by
\begin{gather}\label{eqn:R-cocycle}
  R_P(\widetilde{r_1 r_2}, \sigma) = c_\sigma(r_1, r_2) R_P(\tilde{r}_1, \sigma) R_P(\tilde{r}_2, \sigma), \quad r_1, r_2 \in R_\sigma
\end{gather}
where we choose a preimage $\tilde{r} \in \tilde{R}_\sigma$ for every $r \in R_\sigma$.

\begin{theorem}[Harish-Chandra, Silberger \cite{Si78}]
  Fix a preimage $\tilde{r} \in \tilde{R}_\sigma$ for every $r \in R_\sigma$, then the operators $\{ R_P(\tilde{r}, \sigma) : r \in R_\sigma \}$ form a basis of $\End_G(I^G_P(\sigma))$.
\end{theorem}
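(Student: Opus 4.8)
The plan is to deduce this from the standard theory of intertwining operators together with the decomposition $W_\sigma = W^0_\sigma \rtimes R_\sigma$ and the local analysis of the $\mu$-function around $\lambda = 0$. First I would recall that, since $\sigma \in \Pi_{2,\text{temp}}(M)$ is irreducible, the commuting algebra $\End_G(I^G_P(\sigma))$ is finite-dimensional, so it suffices to show the operators $R_P(\tilde r, \sigma)$ (i) all lie in $\End_G(I^G_P(\sigma))$, (ii) are linearly independent, and (iii) span. Property (i) is immediate from the construction: $R_P(\tilde w, \sigma) \in \Aut_G(I^G_P(\sigma))$ for $w \in W_\sigma$, and for $w \in W^0_\sigma$ it is a scalar, so $R_P(\tilde r, \sigma)$ is well-defined up to scalar for $r \in R_\sigma$ and lies in the commuting algebra; a fixed choice of preimage $\tilde r$ pins down an actual operator.

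For spanning, the input is Harish-Chandra's theory: every $G(F)$-morphism between two parabolically induced tempered representations is a linear combination of (normalized) intertwining operators, and for $\sigma$ square-integrable the relevant operators $R_{wP w^{-1}\,|\,P}(\sigma)$ composed with the translation operators $\ell(\tilde w)$ and an isomorphism $\sigma(\tilde w)$ exhaust the possibilities as $w$ ranges over $W(M)$; only those $w$ with $w\sigma \simeq \sigma$ contribute endomorphisms rather than maps to a different induced representation. Hence $\End_G(I^G_P(\sigma))$ is spanned by $\{R_P(\tilde w, \sigma) : w \in W_\sigma\}$. Using $W_\sigma = W^0_\sigma \rtimes R_\sigma$ and the multiplicativity $R_P(\tilde w, \sigma) R_P(\tilde w', \sigma) = R_P(\widetilde{w w'}, \sigma) \bmod \C^\times$ together with the fact that $R_P(\tilde w, \sigma) \in \C^\times \identity$ for $w \in W^0_\sigma$, every such operator is a scalar multiple of some $R_P(\tilde r, \sigma)$ with $r \in R_\sigma$. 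This reduces the spanning set to $\{R_P(\tilde r, \sigma) : r \in R_\sigma\}$, giving $\dim_\C \End_G(I^G_P(\sigma)) \le |R_\sigma|$.

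For linear independence (equivalently, $\dim_\C \End_G(I^G_P(\sigma)) \ge |R_\sigma|$), I would argue that the number of irreducible constituents of $I^G_P(\sigma)$, counted with multiplicity, equals $|R_\sigma|$; since $I^G_P(\sigma)$ is a unitary (hence semisimple) representation, $\dim_\C \End_G(I^G_P(\sigma))$ equals $\sum_i m_i^2$ where $m_i$ are the multiplicities, and one shows this equals $|R_\sigma|$, forcing all $m_i = 1$ and the decomposition to be indexed by $R_\sigma$. The cleanest route is via the projective representation $r \mapsto R_P(r, \sigma)$ of $R_\sigma$ on $I^G_P(V_\sigma)$: its lift to $\tilde R_\sigma$ is a genuine representation, the constituents of $I^G_P(\sigma)$ are in bijection with the isotypic components under $\tilde R_\sigma$, and the commuting algebra is identified with the image of the group algebra $\C[\tilde R_\sigma]$ acting via characters on which $\C^\times$ acts by $z \mapsto z\,\identity$ — whose dimension is $\sum_{\rho \in \Pi_-(\tilde R_\sigma)} (\dim \rho)^2 = |R_\sigma|$ by the theory of twisted group algebras. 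I expect the main obstacle to be making the spanning argument genuinely rigorous: one must control the poles of the normalized operators $R_{wPw^{-1}|P}(\sigma_\lambda)$ at $\lambda = 0$ and invoke Harish-Chandra's completeness theorem (or Silberger's refinement) to know that no further endomorphisms arise beyond those coming from $W_\sigma$; this is exactly where the analytic properties of the $\mu$-function — encoded in the Proposition identifying $W^0_\sigma$ — and the commuting-algebra theorem of Harish-Chandra and Silberger do the real work, and I would cite \cite{Si78} (and \cite{Ar93}) rather than reprove it.
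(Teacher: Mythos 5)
The paper does not prove this statement at all: it is stated with an attribution to Harish-Chandra and Silberger \cite{Si78} and used as a black box, exactly as you propose to do at the end of your sketch. So there is no ``paper's own proof'' to compare against; the comparison is between your outline and the standard argument in the literature.

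Your sketch of the spanning step is the right shape (Harish-Chandra's completeness of intertwining operators plus the observation that only $w\in W_\sigma$ give endomorphisms, then collapse via $W_\sigma=W^0_\sigma\rtimes R_\sigma$ and the scalar-mod-$\C^\times$ multiplicativity). But the linear independence step contains a genuine error. You assert that the number of irreducible constituents of $I^G_P(\sigma)$ \emph{counted with multiplicity} equals $|R_\sigma|$ and that this, together with $\dim\End = \sum_i m_i^2$, forces all $m_i=1$. Neither the premise nor the conclusion is correct in general: from the Corollary immediately following the theorem, the multiplicity of $\pi_\rho$ is $\dim\rho$, so the count with multiplicity is $\sum_{\rho\in\Pi_-(\tilde R_\sigma)}\dim\rho$, and multiplicities do exceed one exactly when $\tilde R_\sigma\to R_\sigma$ does not split --- which the paper's Example \ref{ex:nonsplit} shows actually happens for inner forms of $\SL(N)$. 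Your ``cleanest route'' paragraph instead identifies $\End_G(I^G_P(\sigma))$ with (the relevant quotient of) $\C[\tilde R_\sigma]$ and counts $\sum_\rho(\dim\rho)^2=|R_\sigma|$; the dimension count is right, but that identification \emph{is} the injectivity you are trying to prove, so this reading is circular. The way out is the one you gesture at but do not state: Harish-Chandra's commuting-algebra theorem gives $\dim_\C\End_G(I^G_P(\sigma))=[W_\sigma:W^0_\sigma]=|R_\sigma|$ directly, and then the surjection from the $|R_\sigma|$-dimensional twisted group algebra onto $\End_G$ coming from your spanning argument is automatically an isomorphism. That dimension formula is precisely the hard analytic content one cites \cite{Si78} for, so citing it is the honest move --- but the multiplicity-one claim should be excised, since it is false and would propagate if taken literally.
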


Following Arthur, we reformulate this fundamental result as follows. Let
\begin{align*}
  \Pi_\sigma(G) & := \{ \text{irreducible constituents of } I^G_P(\sigma) \}/\simeq , \\
  \Pi_-(\tilde{R}_\sigma) & := \{ \rho \in \Pi(\tilde{R}_\sigma) : \forall z \in \C^\times, \; \rho(z) = z\cdot\identity \}.
\end{align*}
Note that $\Pi_\sigma(G)$, $\Pi_-(\tilde{R}_\sigma)$ are both finite sets, and each $\rho \in \Pi_-(\tilde{R}_\sigma)$ is finite-dimensional.

\begin{corollary}
  Let $\mathcal{R}$ be the representation of $\tilde{R}_\sigma \times G(F)$ on $I^G_P(V_\sigma)$ defined by
  $$ \mathcal{R}(\tilde{r}, x) = R_P(\tilde{r}, \sigma) I^G_P(\sigma,x), \quad \tilde{r} \in \tilde{R}_\sigma, \; x \in G(F). $$
  Then there is a decomposition
  \begin{gather}\label{eqn:R-decomp}
    \mathcal{R} \simeq \bigoplus_{\rho \in \Pi_-(\tilde{R}_\sigma)} \rho \boxtimes \pi_\rho,
  \end{gather}
  where $\rho \mapsto \pi_\rho$ is a bijection from $\Pi_-(\tilde{R}_\sigma)$ to $\Pi_\sigma(G)$, characterized by \eqref{eqn:R-decomp}.
\end{corollary}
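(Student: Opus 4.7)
The plan is to deduce the corollary as a formal consequence of the Harish-Chandra--Silberger theorem combined with Artin--Wedderburn theory. The key observation is that the commuting algebra $A := \End_{G(F)}(I^G_P(\sigma))$ is identified, via $\tilde{r} \mapsto R_P(\tilde{r},\sigma)$, with the twisted group algebra of $R_\sigma$ attached to the cocycle $c_\sigma$, realized as the quotient $\C[\tilde{R}_\sigma]_-$ of $\C[\tilde{R}_\sigma]$ on which the central $\C^\times$ acts by its tautological character. The latter algebra is isomorphic to $\bigoplus_{\rho \in \Pi_-(\tilde{R}_\sigma)} \End_\C(V_\rho)$, and this multimatrix decomposition forces the claimed splitting of $I^G_P(V_\sigma)$.

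First I would verify that $\mathcal{R}$ is an honest group representation of $\tilde{R}_\sigma \times G(F)$. The operators $R_P(\tilde{r},\sigma)$ lie in $A$ by construction, so they commute with $G(F)$; multiplicativity in $\tilde{r}$ is built into the very definition of $\tilde{R}_\sigma$, which was crafted precisely to absorb the cocycle $c_\sigma$ of \eqref{eqn:R-cocycle} and upgrade the projective representation $r \mapsto R_P(r,\sigma)$ to a genuine one. Moreover, the embedding $\C^\times \hookrightarrow \tilde{R}_\sigma$ acts on $I^G_P(V_\sigma)$ by its tautological character, by the defining property of the lift.

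Next I would invoke Harish-Chandra--Silberger: the map $\C[\tilde{R}_\sigma] \to A$ sending $\tilde{r}$ to $R_P(\tilde{r},\sigma)$ is surjective, and it factors through the quotient $\C[\tilde{R}_\sigma]_-$ of dimension $|R_\sigma|$. Since $A$ also has dimension $|R_\sigma|$ by the theorem, the induced map $\C[\tilde{R}_\sigma]_- \to A$ is an isomorphism of $\C$-algebras. Setting $\pi_\rho := \Hom_{\tilde{R}_\sigma}(V_\rho, I^G_P(V_\sigma))$, standard multimatrix algebra produces a $\tilde{R}_\sigma \times G(F)$-equivariant isomorphism
$$ I^G_P(V_\sigma) \simeq \bigoplus_{\rho \in \Pi_-(\tilde{R}_\sigma)} V_\rho \otimes \pi_\rho, $$
where the left factor carries $\rho$ and the right factor carries the natural $G(F)$-action inherited from $I^G_P(\sigma)$. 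This is precisely the decomposition \eqref{eqn:R-decomp}.

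It remains to show that each $\pi_\rho$ is a non-zero irreducible $G(F)$-representation and that distinct $\rho$'s yield pairwise inequivalent $\pi_\rho$'s, so that $\rho \mapsto \pi_\rho$ lands bijectively onto $\Pi_\sigma(G)$. This is a double-commutant argument in the finite-dimensional semisimple setting: since $A \simeq \bigoplus_\rho \End_\C(V_\rho)$ is the full commutant of $G(F)$ on $I^G_P(V_\sigma)$, its own commutant is $\bigoplus_\rho \End_\C(\pi_\rho)$ and is generated by the image of $G(F)$; hence each $\pi_\rho$ is non-zero and irreducible, and distinct blocks yield inequivalent $G(F)$-types. The only real subtlety I foresee is bookkeeping, namely ensuring that only $\rho \in \Pi_-(\tilde{R}_\sigma)$ rather than all of $\Pi(\tilde{R}_\sigma)$ appears, which is automatic since $\C^\times \subset \tilde{R}_\sigma$ already acts on $I^G_P(V_\sigma)$ by its tautological character via $\mathcal{R}$.
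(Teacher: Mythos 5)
Your proposal is the standard argument and is exactly what the paper leaves implicit: the corollary is presented as a ``reformulation'' of Harish-Chandra--Silberger, following Arthur, with no separate proof. The line of reasoning --- identify $\End_G(I^G_P(\sigma))$ with the twisted group algebra $\C[\tilde{R}_\sigma]_-$, apply Artin--Wedderburn, then read off the $\tilde{R}_\sigma \times G(F)$-bimodule decomposition --- is the right one.

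One step is phrased loosely enough to be misleading. You assert that the commutant of $A$ on $I^G_P(V_\sigma)$ ``is generated by the image of $G(F)$''; this is neither needed nor literally true (that commutant is infinite-dimensional, and one would at best expect the Hecke algebra, not $G(F)$ itself, to be dense in it). What the argument actually requires is that $I^G_P(\sigma)$ be a \emph{semisimple} $G(F)$-module, and you should say so explicitly: $\sigma \in \Pi_{2,\mathrm{temp}}(M)$ is unitary, hence $I^G_P(\sigma)$ is unitary, admissible and of finite length, hence completely reducible. Granting this, the isotypic decomposition of $I^G_P(V_\sigma)$ under the semisimple algebra $A \simeq \bigoplus_\rho \End_\C(V_\rho)$ is $\bigoplus_\rho V_\rho \otimes \pi_\rho$ with $\pi_\rho := \Hom_{\tilde{R}_\sigma}(V_\rho, I^G_P(V_\sigma))$, and one then observes that $\End_G(I^G_P(\sigma)) = \bigoplus_{\rho,\rho'} \Hom_\C(V_\rho, V_{\rho'}) \otimes \Hom_G(\pi_\rho, \pi_{\rho'})$. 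Equating this with $A$ forces $\Hom_G(\pi_\rho, \pi_{\rho'}) = \delta_{\rho\rho'}\C$, which together with semisimplicity gives that each $\pi_\rho$ is irreducible and that $\rho \mapsto \pi_\rho$ is injective. Surjectivity onto $\Pi_\sigma(G)$ and the restriction to $\Pi_-(\tilde{R}_\sigma)$ are then as you describe. With that adjustment the proof is complete.
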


Consequently, $I^G_P(\sigma)$ is irreducible if and only if $R_\sigma=\{1\}$.

\begin{remark}
  When $G$ is quasisplit and $\sigma$ is generic with respect to a given Whittaker datum for $M$, the work of Shahidi \cite{Sh90} furnishes
  \renewcommand{\labelenumi}{(\roman{enumi})}\begin{enumerate}
    \item a canonical family of normalizing factors $r_{Q|P}(\sigma)$;
    \item a canonically defined homomorphism $w \mapsto R_P(w, \sigma)$;
    \item a canonical splitting of the central extension $1 \to \C^\times \to \tilde{R}_\sigma \to R_\sigma \to 1$.
  \end{enumerate}\renewcommand{\labelenumi}{\arabic{enumi}.}

  These properties are not expected in general. Indeed, we shall see in Example \ref{ex:nonsplit} that (iii) may fail.
\end{remark}

\begin{remark}
  The formalism above depends not only on $(M,\sigma)$, but also on the choice of $P \in \mathcal{P}(M)$. One can easily pass to another choice $Q \in \mathcal{P}(M)$ by transport of structure using $R_{Q|P}(\sigma)$. For example, one has
  \begin{align*}
    r_Q(\tilde{w}, \sigma) & = R_{P|Q}(\tilde{w}\sigma)^{-1} r_P(\tilde{w}, \sigma) R_{P|Q}(\sigma), \\
    R_Q(\tilde{w}, \sigma) & = R_{P|Q}(\sigma)^{-1} R_P(\tilde{w}, \sigma) R_{P|Q}(\sigma)
  \end{align*}
  for all $w \in W(M)$ with a representative $\tilde{w} \in G(F)$ and some chosen $\sigma(\tilde{w})$.
\end{remark}

\section{Restriction}\label{sec:res}
Let $G$, $G^\sharp$ be connected reductive $F$-groups such that
$$ G_\text{der} \subset G^\sharp \subset G. $$

\subsection{Restriction of representations}\label{sec:res-rep}
In this subsection, we will review the basic results in \cite[\S 2]{Ta92} and \cite[Chapter 2]{HS12} concerning the restriction of a smooth representation from $G(F)$ to $G^\sharp(F)$. The objects associated to $G^\sharp$ are endowed with the superscript $\sharp$, eg. $\pi^\sharp \in \Pi(G^\sharp)$.

\begin{proposition}[\cite{Si79}, {\cite[Lemma 2.1 and Proposition 2.2]{Ta92}}]\label{prop:lifting}
  Let $\pi \in \Pi(G)$, then $\pi|_{G^\sharp}$ decomposes into a finite direct sum of smooth irreducible representations. Each irreducible constituent of $\pi|_{G^\sharp}$ has the same multiplicity.

  Conversely, every $\pi^\sharp \in \Pi(G^\sharp)$ embeds into $\pi|_{G^\sharp}$ for some $\pi \in \Pi(G)$. If the central character $\omega_{\pi^\sharp}$ is unitary, one can choose $\pi$ so that $\omega_\pi$ is also unitary.
\end{proposition}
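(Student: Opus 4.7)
Here is how I would approach the proof.

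First I would establish structural preliminaries. Since $G^\sharp \supset G_\text{der}$ and $G/G_\text{der}$ is a torus, the quotient $G/G^\sharp$ is abelian, so $G^\sharp$ is normal in $G$ and $G^\sharp(F)$ is a closed normal subgroup of $G(F)$. Moreover, since $G = Z_G^0 \cdot G_\text{der}$ for any connected reductive $G$, we have $G = Z_G \cdot G^\sharp$ algebraically, so applying Galois cohomology to $1 \to Z_G \cap G^\sharp \to Z_G \times G^\sharp \to G \to 1$ shows that $G(F)/Z_G(F)G^\sharp(F)$ embeds into the finite group $H^1(F, Z_G \cap G^\sharp)$. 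Being a finite-index extension of the closed $G^\sharp(F)$, the subgroup $H := Z_G(F)G^\sharp(F)$ is closed, hence open in $G(F)$.

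Next, for the first assertion I would apply Clifford theory to the normal, open, finite-index subgroup $H \subset G(F)$. The restriction $\pi|_H$ is admissible (the compact open subgroups of $H$ are exactly those of $G(F)$ contained in $H$), so it contains an irreducible $H$-submodule $\tilde\sigma_0$. For $g \in G(F)$, the translate $\pi(g)\tilde\sigma_0 \simeq \Ad(g^{-1})\tilde\sigma_0$ is again irreducible; their sum is $G(F)$-stable and thus equals $\pi$, forcing $\pi|_H$ to be semisimple. The finitely many isomorphism classes of constituents form a single $G(F)/H$-orbit, appearing with equal multiplicities because $\pi(g)$ permutes isotypic components. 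To transfer from $H$ to $G^\sharp(F)$, note that $Z_G(F)$ acts on every $H$-summand by the scalar character $\omega_\pi|_{Z_G(F)}$, so restriction to $G^\sharp(F)$ preserves irreducibility; and two $H$-summands whose $G^\sharp(F)$-restrictions agree must coincide as $H$-modules because they share the central character $\omega_\pi|_{Z_G(F)}$. The decomposition of $\pi|_{G^\sharp}$ with equal multiplicities then drops out.

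For the converse, given $\pi^\sharp \in \Pi(G^\sharp)$, I would extend $\omega_{\pi^\sharp}|_{Z_G(F) \cap G^\sharp(F)}$ to a character $\omega$ of the abelian locally compact group $Z_G(F)$ by Pontryagin duality, choosing a unitary extension whenever $\omega_{\pi^\sharp}$ is unitary. Setting $\tilde\pi^\sharp(zh) = \omega(z)\pi^\sharp(h)$ defines an irreducible representation of $H = Z_G(F)G^\sharp(F)$ (well-defined by the compatibility on $Z_G(F) \cap G^\sharp(F)$). Because $H$ is open of finite index, $\Ind_H^{G(F)}\tilde\pi^\sharp$ has finite length, so it admits an irreducible quotient $\pi \in \Pi(G)$ with $\omega_\pi|_{Z_G(F)} = \omega$. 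Frobenius reciprocity, valid since induction coincides with compact induction for an open finite-index subgroup, supplies a nonzero map $\tilde\pi^\sharp \to \pi|_H$, which is injective by irreducibility of $\tilde\pi^\sharp$; restricting to $G^\sharp(F)$ yields $\pi^\sharp \hookrightarrow \pi|_{G^\sharp}$.

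The main obstacle is the Clifford-theoretic finite-length step for $\pi|_H$, resting on the existence of an irreducible $H$-submodule of the admissible $\pi$. The finiteness of the relevant $G(F)/H$-orbit is immediate from $|G(F)/H| < \infty$, while the admissibility and existence of irreducible submodules require carefully comparing compact open subgroups of $G(F)$ with those of $H$. Everything else, including the transition from $H$ to $G^\sharp(F)$ and the Frobenius argument, is essentially routine bookkeeping once this is settled.
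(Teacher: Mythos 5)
Your proof is correct, and it follows the standard Clifford-theoretic route — passing through the open, normal, finite-index subgroup $H := Z_G(F)G^\sharp(F)$ of $G(F)$, applying Clifford theory there, and then descending to $G^\sharp(F)$ using that $Z_G(F)$ acts by scalars — which is essentially the argument in the cited references of Silberger, Tadi\'c and Gelbart--Knapp (the paper itself gives no proof, only citations). One small point you should spell out rather than gloss: to extract an irreducible $H$-submodule of $\pi|_H$ you need finite length, not just admissibility; this follows because $\pi$ is finitely generated over $G(F)$ (being irreducible) and $[G(F):H] < \infty$ forces $\pi|_H$ to be finitely generated over $H$, and finitely generated plus admissible implies finite length. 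Similarly, the openness of $H$ is most cleanly seen by noting that $Z_G^0(F)G_{\mathrm{der}}(F) \subset H$ is already open of finite index in $G(F)$, since $Z_G^0 \times G_{\mathrm{der}} \to G$ is a central isogeny. With those details filled in, the argument is complete, including the unitary central character refinement via a unitary extension of $\omega_{\pi^\sharp}|_{Z_G(F)\cap G^\sharp(F)}$.
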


\begin{proposition}[{\cite[Corollary 2.5]{Ta92}}]\label{prop:res-disjoint}
  Let $\pi_1, \pi_2 \in \Pi(G)$. The following are equivalent:
  \begin{enumerate}
    \item $\Hom_{G^\sharp}(\pi_1, \pi_2) \neq \{0\}$;
    \item $\pi_1|_{G^\sharp} \simeq \pi_2|_{G^\sharp}$;
    \item there exists $\eta \in (G(F)/G^\sharp(F))^D$ such that $\eta \pi_1 \simeq \pi_2$.
  \end{enumerate}
\end{proposition}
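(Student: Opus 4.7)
The plan is to prove the cyclic implications $(3) \Rightarrow (2) \Rightarrow (1) \Rightarrow (3)$; the first two are essentially formal, while all of the content lies in $(1) \Rightarrow (3)$, which I will establish by exhibiting $\eta$ as the eigencharacter of a simultaneous eigenvector. For $(3) \Rightarrow (2)$: if $\pi_2 \simeq \eta\pi_1$ with $\eta \in (G(F)/G^\sharp(F))^D$, then the underlying vector-space isomorphism already intertwines the two $G^\sharp(F)$-actions because $\eta|_{G^\sharp(F)} = 1$. The implication $(2) \Rightarrow (1)$ is trivial since an isomorphism is in particular a nonzero morphism.

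For $(1) \Rightarrow (3)$, I first record two structural facts: since $G_{\text{der}} \subset G^\sharp$ and $G/G_{\text{der}}$ is commutative, $G^\sharp$ is normal in $G$ and the quotient $H := G(F)/G^\sharp(F)$ is an abelian group. Normality allows $G(F)$ to act on $V := \Hom_{G^\sharp}(\pi_1, \pi_2)$ by the conjugation-type formula
\[ (g \cdot T)(v) := \pi_2(g) T(\pi_1(g)^{-1} v), \]
and a direct check shows that $G^\sharp(F)$ acts trivially, so this descends to an action of $H$ on $V$. By Proposition~\ref{prop:lifting} each $\pi_i|_{G^\sharp}$ is of finite length, hence $V$ is finite-dimensional; it is nonzero by hypothesis.

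A commuting family of linear operators on a nonzero finite-dimensional complex vector space always admits a simultaneous eigenvector, so there exist a nonzero $T \in V$ and a character $\eta: H \to \C^\times$ with $g \cdot T = \eta(g) T$ for all $g \in G(F)$. Unpacking the eigenvector equation yields $T \circ \pi_1(g) = \eta(g)^{-1} \pi_2(g) \circ T$, so $T$ is a nonzero $G(F)$-intertwiner from $\pi_1$ to $\eta^{-1}\pi_2$, hence an isomorphism by Schur's lemma. This gives $\pi_2 \simeq \eta\pi_1$, as required.

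The one genuinely delicate point---the main obstacle, though a mild one---is that (3) requires $\eta$ to be \emph{continuous} on the possibly non-compact abelian group $H$, not merely an abstract set-theoretic homomorphism. This reduces to verifying that the $G(F)$-action on the finite-dimensional space $V$ is smooth, equivalently that every individual $T \in V$ has open stabilizer in $G(F)$; this is then extracted from the smoothness of $\pi_1$ and $\pi_2$ (and, if needed, from the admissibility of their $G^\sharp(F)$-restrictions supplied by Proposition~\ref{prop:lifting}).
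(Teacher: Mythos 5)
The paper offers no proof of this proposition; it simply cites Tadić \cite[Corollary 2.5]{Ta92}. Your argument is correct and is essentially the standard Clifford-theoretic argument that underlies Tadić's result: normality of $G^\sharp$ from $G_{\mathrm{der}} \subset G^\sharp$, the induced action of the abelian quotient $H$ on the finite-dimensional intertwining space $V = \Hom_{G^\sharp}(\pi_1,\pi_2)$, a simultaneous eigenvector, and Schur's lemma. The one place you gesture rather than prove is the smoothness of the $G(F)$-action on $V$, which is what makes $\eta$ continuous. To fill this in: by Proposition~\ref{prop:lifting}, $\pi_1|_{G^\sharp}$ is a finite direct sum of irreducibles, so one may choose a compact open $K^\sharp \subset G^\sharp(F)$ small enough that the finite-dimensional space $V_1^{K^\sharp}$ meets every summand and hence generates $V_1$ over $G^\sharp(F)$; the restriction map $T \mapsto T|_{V_1^{K^\sharp}}$ is then injective on $V$. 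Let $W \subset V_2$ be the finite-dimensional span of $\{T(v) : T \in V,\ v \in V_1^{K^\sharp}\}$. Any $g$ in the intersection of the $\pi_1$-stabilizer of a basis of $V_1^{K^\sharp}$ with the $\pi_2$-stabilizer of a basis of $W$ --- an open subgroup of $G(F)$ by smoothness of $\pi_1$, $\pi_2$ --- satisfies $g\cdot T = T$ for every $T\in V$. Thus the action factors through a discrete quotient of $H$, so any eigencharacter has open kernel and is continuous, completing your proof.
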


For $\pi \in \Pi(G)$, we define a finite ``packet'' of smooth irreducible representations of $G^\sharp(F)$ as
$$ \Pi_\pi := \{\text{irreducible constituents of } \pi|_{G^\sharp} \}/\simeq . $$
Consequently, Proposition \ref{prop:res-disjoint} implies that $\Pi(G^\sharp) = \bigsqcup_\pi \Pi_\pi$, when $\pi$ is taken over the $(G(F)/G^\sharp(F))^D$-orbits in $\Pi(G)$.

\begin{proposition}[{\cite[Proposition 2.7]{Ta92}}]\label{prop:heredity}
  Let $\pi \in \Pi(G)$ and assume that $\omega_\pi$ is unitary. Let $\mathbf{P}$ be one of the following properties of smooth irreducible representations of $G(F)$ or $G^\sharp(F)$:
  \begin{enumerate}
    \item unitary,
    \item tempered,
    \item square-integrable modulo the center,
    \item cuspidal.
  \end{enumerate}
  Then we have equivalences of the form
  $$ [ \pi \text{ satisfies } \mathbf{P} ] \Leftrightarrow [\exists \pi^\sharp \in \Pi_\pi, \; \pi^\sharp \text{ satisfies } \mathbf{P} ] \Leftrightarrow [\forall \pi^\sharp \in \Pi_\pi, \; \pi^\sharp \text{ satisfies } \mathbf{P} ]. $$
\end{proposition}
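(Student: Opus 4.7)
The plan is to leverage two structural facts that hold because of the containment $G_\text{der} \subset G^\sharp \subset G$. First, the quotient $G/G^\sharp$ is a quotient of the torus $G/G_\text{der}$, so $G(F)/Z_G(F) G^\sharp(F)$ is a finite abelian group (via the usual cohomological arguments over a non-archimedean local field). Second, every parabolic subgroup $P = MU$ of $G$ gives, by intersection, a parabolic subgroup $P^\sharp = M^\sharp U$ of $G^\sharp$ with the same unipotent radical (since $U \subset G_\text{der} \subset G^\sharp$), and these assignments are inverse to each other. The ``for some $\pi^\sharp$'' $\Leftrightarrow$ ``for all $\pi^\sharp$'' equivalence is immediate once one observes that the irreducible constituents of $\pi|_{G^\sharp}$ are permuted by the conjugation action of $G(F)$, and each of the four properties is invariant under such conjugation (and under twisting by characters of $G(F)/G^\sharp(F)$, which connects them by Proposition~\ref{prop:res-disjoint}).

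It then suffices to prove, for each property $\mathbf{P}$, the equivalence $[\pi \text{ satisfies } \mathbf{P}] \Leftrightarrow [\text{every } \pi^\sharp \in \Pi_\pi \text{ satisfies } \mathbf{P}]$. For temperedness and square-integrability modulo center, I would use the matrix coefficient characterization: $\pi$ is tempered (resp.\ square-integrable) iff its matrix coefficients lie in $L^{2+\epsilon}(G(F)/Z_G(F))$ for every $\epsilon > 0$ (resp.\ in $L^2$). For $v, w \in V_{\pi^\sharp} \subset V_\pi$, the function $g \mapsto \langle \pi(g)v, w \rangle$ restricts on $G^\sharp(F)$ to a matrix coefficient of $\pi^\sharp$, and conversely every matrix coefficient of $\pi$ can be expressed, via the finite quotient $G(F)/Z_G(F) G^\sharp(F)$ and the central character $\omega_\pi$, as a finite combination of translates of matrix coefficients of the various $\pi^\sharp \in \Pi_\pi$. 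The desired integrability conditions on $G(F)/Z_G(F)$ and on $G^\sharp(F)/(Z_G(F) \cap G^\sharp(F))$ then transfer across this finite index. For cuspidality, the parabolic correspondence $P \leftrightarrow P^\sharp$ yields a natural isomorphism between Jacquet modules: $r^{G^\sharp}_{M^\sharp}(\pi|_{G^\sharp}) \simeq r^G_M(\pi)|_{M^\sharp}$, so one side vanishes for all proper $P$ iff the other does.

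For the unitary case, the direction $\pi$ unitary $\Rightarrow$ all $\pi^\sharp$ unitary is trivial by restriction of the inner product. For the converse, given a $G^\sharp(F)$-invariant inner product on some $V_{\pi^\sharp}$, I would first translate it by $G(F)$ to obtain compatible invariant inner products on each isotypic component of $\pi|_{G^\sharp}$, sum them to get a $G^\sharp(F)$-invariant inner product on $V_\pi$, then use unitarity of $\omega_\pi$ to extend the invariance from $G^\sharp(F)$ to $Z_G(F) G^\sharp(F)$, and finally average over the finite group $G(F)/Z_G(F) G^\sharp(F)$ to promote to full $G(F)$-invariance.

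The main subtlety lies in the unitary case: the hypothesis that $\omega_\pi$ is unitary is indispensable, because averaging over the infinite quotient $G(F)/G^\sharp(F)$ would diverge, and we genuinely need the intermediate step through $Z_G(F) G^\sharp(F)$. The temperedness and square-integrability cases also require some care in tracking constants when moving matrix coefficients between $G(F)/Z_G(F)$ and $G^\sharp(F)/(Z_G(F) \cap G^\sharp(F))$, but these are routine once the finite-index structure is in place.
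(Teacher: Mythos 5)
The paper does not prove this statement; it simply cites Tadić \cite[Proposition 2.7]{Ta92}. So there is no in-paper proof to compare against, only the expected standard argument, which is what you reproduce.

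Your proof is essentially correct and follows the usual Clifford-theoretic lines: finiteness of $G(F)/Z_G(F)G^\sharp(F)$ (via the isogeny $Z^0_G \times G_\text{der} \to G$ and $H^1$ of its finite kernel), the parabolic/Levi correspondence $P \leftrightarrow P^\sharp$ and the induced isomorphism of Jacquet modules for cuspidality, matrix coefficient comparison across the finite-index subgroup for temperedness and square-integrability, and the averaging argument over $G(F)/Z_G(F)G^\sharp(F)$ (which is where $\omega_\pi$ unitary is genuinely used) for the unitary case. One phrasing to tighten: you say the constituents of $\pi|_{G^\sharp}$ are ``permuted'' by $G(F)$-conjugation; what you need and what is true is that this action is \emph{transitive} on $\Pi_\pi$ (Clifford theory, given that $\pi|_{G^\sharp}$ is a finite semisimple sum by Proposition~\ref{prop:lifting}). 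With that said explicitly, the ``for some $\Leftrightarrow$ for all'' step is complete, and the parenthetical appeal to twisting by characters and Proposition~\ref{prop:res-disjoint} becomes unnecessary: that proposition compares constituents across different $\pi$'s, not within a single $\Pi_\pi$. A further minor point worth a line in a full write-up: for square-integrability one compares $L^2$ modulo $Z_G(F)$ on $G$ with $L^2$ modulo $Z_{G^\sharp}(F)$ on $G^\sharp$, and $Z_{G^\sharp}/(Z_G \cap G^\sharp)$ is finite (the identity components coincide), so the two normalizations are equivalent.
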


Now comes the decomposition of $\pi|_{G^\sharp}$.

\begin{definition}
  Let $\pi \in \Pi(G)$ with the underlying $\C$-vector space $V_\pi$. Note that $V_{\eta\pi} = V_\pi$ for all $\eta \in (G(F)/G^\sharp(F))^D$. Introduce the following groups
  \begin{align*}
    X^G(\pi) & := \{\eta \in (G(F)/G^\sharp(F))^D : \eta\pi \simeq \pi \}, \\
    S^G(\pi) & := \langle I^G_\eta \in \Isom_G(\eta\pi, \pi) : \eta \in X^G(\pi) \rangle \; \subset \Aut_{G^\sharp}(\pi).
  \end{align*}
  Observe that $\Isom_G(\eta\pi, \pi)$ is a $\C^\times$-torsor by Schur's lemma, and an element $I^G_\eta \in S^G(\pi)$ uniquely determines $\eta$. The group law is given by composition in $\Aut_\C(V_\pi)$, namely by
  $$ \Isom_G(\eta\pi, \pi) \times \Isom_G(\eta'\pi, \pi) = \Isom_G(\eta'\eta\pi, \eta'\pi) \times \Isom_G(\eta'\pi, \pi) \to \Isom_G(\eta'\eta\pi, \pi) $$
  for all $\eta, \eta' \in X^G(\pi)$.

  Thus we obtain a central extension of locally compact groups
  \begin{gather}\label{eqn:res-S}
    1 \to \C^\times \to S^G(\pi) \to X^G(\pi) \to 1,
  \end{gather}
  where the first arrow is $z \mapsto z \cdot \identity$ and the second one is $I^G_\eta \mapsto \eta$.
\end{definition}

Also note that $X^G(\pi)=X^G(\xi\pi)$, $S^G(\pi)=S^G(\xi\pi)$ for any $\xi \in (G(F)/G^\sharp(F))^D$.

\textit{Attention}: implicit in the notations above is the reference to $G^\sharp$, which is usually clear from the context. Indications to $G^\sharp$ will be given when necessary.

It is easy to see that $X^G(\pi)$ is finite abelian. As in the setting of $R$-groups, we define the finite set
$$ \Pi_-(S^G(\pi)) := \left\{ \rho \in \Pi(S^G(\pi)) : \forall z \in \C^\times, \; \rho(z) = z\cdot\identity \right\}. $$

\begin{theorem}[{\cite[Lemma 2.5 and Corollary 2.7]{HS12}}]\label{prop:S-decomp}
  Let $\mathfrak{S} = \mathfrak{S}(\pi)$ be the representation of $S^G(\pi) \times G^\sharp(F)$ on $V_\pi$ defined by
  $$ \mathfrak{S}(I,x) = I \circ \pi(x), \quad I \in S^G(\pi), x \in G^\sharp(F). $$
  Then there is a decomposition
  \begin{gather}\label{eqn:S-decomp}
    \mathfrak{S} \simeq \bigoplus_{\rho \in \Pi_-(S^G(\pi))} \rho \boxtimes \pi^\sharp_\rho,
  \end{gather}
  where $\rho \mapsto \pi^\sharp_\rho$ is a bijection from $\Pi_-(S^G(\pi))$ to $\Pi_\pi$, characterized by \eqref{eqn:S-decomp}.
\end{theorem}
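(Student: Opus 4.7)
The plan is to recognize the decomposition as an instance of Jacobson's double commutant theorem, applied to the subalgebra of $\End_\C(V_\pi)$ generated by $S^G(\pi)$. First I would verify that $\mathfrak{S}$ is a well-defined representation of $S^G(\pi) \times G^\sharp(F)$: since $S^G(\pi) \subset \Aut_{G^\sharp}(V_\pi)$ by construction, each $I \in S^G(\pi)$ commutes with $\pi(x)$ for every $x \in G^\sharp(F)$, which gives the multiplicativity
$$ \mathfrak{S}(I_1, x_1)\,\mathfrak{S}(I_2, x_2) = I_1 \pi(x_1) I_2 \pi(x_2) = I_1 I_2 \pi(x_1 x_2) = \mathfrak{S}(I_1 I_2, x_1 x_2). $$

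The technical heart of the proof is to show that the $\C$-linear span of $S^G(\pi)$ in $\End_\C(V_\pi)$ coincides with the full commutant $A := \End_{G^\sharp}(V_\pi)$. Conjugation by $\pi(g)$ defines an action of $\Gamma := G(F)/G^\sharp(F)$ on $A$, and I would introduce the character eigenspaces
$$ A_\eta := \{ T \in A : \pi(g) T \pi(g)^{-1} = \eta(g) T, \; g \in G(F)\}, \qquad \eta \in \Gamma^D. $$
A direct manipulation of the intertwining relation identifies $A_\eta$ with $\Hom_G(\eta\pi, \pi)$, which by Schur's lemma is one-dimensional if $\eta \in X^G(\pi)$ and zero otherwise; moreover each $I^G_\eta$ manifestly belongs to $A_\eta$. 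The family $\{I^G_\eta\}_{\eta \in X^G(\pi)}$ is linearly independent in $\End_\C(V_\pi)$ by the standard linear-independence-of-characters argument applied to the twist relation $I^G_\eta \pi(g) = \eta(g)^{-1} \pi(g) I^G_\eta$. It then suffices to establish the identity $A = \bigoplus_{\eta \in X^G(\pi)} A_\eta$, or equivalently the dimension equality $\dim_\C A = |X^G(\pi)|$. This I would obtain through Frobenius reciprocity,
$$ \dim_\C A = \dim \Hom_G\bigl(\pi,\, \Ind^G_{G^\sharp}(\pi|_{G^\sharp})\bigr), $$
combined with a decomposition of the right-hand side along the characters of $\Gamma$ that actually appear, using the finiteness of $X^G(\pi)$ granted by Proposition \ref{prop:lifting}.

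Once the spanning property is in hand, Jacobson's double commutant theorem yields that $\pi(G^\sharp(F))$ and the image of $\C[S^G(\pi)]$ in $\End_\C(V_\pi)$ are mutual commutants, and the standard decomposition of a semisimple bimodule produces $V_\pi \simeq \bigoplus_\rho W_\rho \boxtimes V_{\pi^\sharp_\rho}$, with $\rho$ running over the irreducible representations of the algebra generated by $S^G(\pi)$. Since the central $\C^\times \subset S^G(\pi)$ acts on $V_\pi$ by honest scalars, only those $\rho \in \Pi_-(S^G(\pi))$ can occur, and dimension counting using $\sum_\rho (\dim \rho)^2 = |X^G(\pi)| = \dim_\C A$ forces every element of $\Pi_-(S^G(\pi))$ to appear exactly once. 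The induced map $\rho \mapsto \pi^\sharp_\rho$ is then a bijection onto $\Pi_\pi$, and its characterization by \eqref{eqn:S-decomp} follows from the uniqueness of isotypic decomposition.

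The main obstacle is the dimension equality $\dim_\C A = |X^G(\pi)|$ in the middle step. The non-compactness of $\Gamma$, which can be for instance $F^\times$ in the inner-form-of-$\SL(N)$ setting, means one cannot simply invoke Peter--Weyl to decompose $\Ind^G_{G^\sharp}\mathbf{1}$ into characters; instead I would exploit the explicit semisimple structure $A \simeq \prod_{\pi^\sharp \in \Pi_\pi} M_{m_{\pi^\sharp}}(\C)$ coming from Proposition \ref{prop:lifting} together with the transitivity of the $G(F)$-action permuting the simple factors of $A$, following the argument of Hiraga--Saito in \cite[Lemma 2.5]{HS12}.
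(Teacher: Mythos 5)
The paper does not prove this statement; it is quoted directly from Hiraga--Saito \cite[Lemma 2.5 and Corollary 2.7]{HS12}, so there is no proof in the paper to compare against. Your reconstruction is the right strategy: reduce to a double commutant argument after showing that the $\C$-linear span of $S^G(\pi)$ exhausts $A := \End_{G^\sharp}(V_\pi)$, then sort the isotypic pieces by the $\C^\times$-centrality condition. The preliminary steps are correct: $\mathfrak{S}$ is well-defined because $S^G(\pi)\subset\Aut_{G^\sharp}(V_\pi)$; the identification $A_\eta \cong \Hom_G(\eta\pi,\pi)$ and the dichotomy $\dim A_\eta\in\{0,1\}$ by Schur are fine; linear independence of the $I^G_\eta$ is also fine.

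The genuine gap, which you yourself flag, is the equality $\dim_\C A = |X^G(\pi)|$, equivalently $A=\bigoplus_{\eta\in X^G(\pi)} A_\eta$. Your first idea --- Frobenius reciprocity plus a character decomposition of $\Ind^G_{G^\sharp}(\pi|_{G^\sharp})$ --- cannot be pushed through as stated, precisely because $\Gamma:=G(F)/G^\sharp(F)\cong F^\times$ is non-compact: the $\Gamma$-action on the finite-dimensional space $A$ has no a priori semisimplicity, so one cannot conclude $A=\bigoplus_\eta A_\eta$ from the eigenspace picture alone. Your fallback invokes the semisimple structure $A\simeq\prod_{\pi^\sharp\in\Pi_\pi} M_{m}(\C)$ and transitivity of the $G(F)$-action on $\Pi_\pi$, but this is underdeveloped on two counts. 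First, transitivity itself requires proof (it is part of the Clifford-theoretic input, not a freebie). Second, even granting $\dim A = m^2\,|\Pi_\pi|$ and transitivity, one must still relate $m^2\,|\Pi_\pi|$ to $|X^G(\pi)|$ --- this requires analysing the stabiliser in $\Gamma$ of a fixed constituent $\pi^\sharp$, the projective action of that stabiliser on the multiplicity space $\C^m$, and the characters of $\Gamma$ arising from this data. That bookkeeping is exactly the content of the cited lemma of Hiraga--Saito, so in the end your argument still defers to \cite{HS12} at the one place where something substantive has to happen. Everything downstream of the dimension count (Jacobson double commutant, restriction to $\Pi_-(S^G(\pi))$ via the scalar action of $\C^\times$, and the sum-of-squares count $\sum_\rho(\dim\rho)^2=|X^G(\pi)|$ forcing all of $\Pi_-$ to appear with multiplicity one) is correct.
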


\subsection{Relation to parabolic induction}\label{sec:res-ind}
Let $P$ be a parabolic subgroup of $G$ with a Levi decomposition $P=MU$. In this article, we denote systematically
\begin{align*}
  P^\sharp & := P \cap G^\sharp, \\
  M^\sharp & := M \cap M^\sharp.
\end{align*}
Then $P^\sharp$ is a parabolic subgroup of $G^\sharp$ with Levi decomposition $P^\sharp = M^\sharp U$, since every unipotent subgroup of $G$ is contained in $G_\text{der}$. The map $P \mapsto P^\sharp$ (resp. $M \mapsto M^\sharp$) induces a bijection between the parabolic subgroups (resp. Levi subgroups) of $G$ and $G^\sharp$, which leaves the unipotent radicals intact. We also have a canonical identification $W(M^\sharp) = W(M)$. In what follows, we will fix Haar measures on the unipotent radicals of parabolic subgroups of $G$ and $G^\sharp$, which are compatible with the identifications above.

Obviously, the modulus functions satisfy $\delta_P(m) = \delta_{P^\sharp}(m)$ for all $m \in M^\sharp(F)$.

\begin{lemma}[{\cite[Lemma 1.1]{Ta92}}]\label{prop:res-induction}
  Let $\sigma \in \Pi(M)$. Then we have the following isomorphism between smooth representations of $G^\sharp(F)$
  \begin{align*}
    I^G_P(\sigma)|_{G^\sharp} & \longrightarrow I^{G^\sharp}_{P^\sharp}(\sigma|_{M^\sharp}) \\
    \varphi & \longmapsto \varphi|_{G^\sharp(F)}
  \end{align*}
  which is functorial in $\sigma$.
\end{lemma}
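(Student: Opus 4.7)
The plan is to prove both well-definedness of the restriction map into $I^{G^\sharp}_{P^\sharp}(\sigma|_{M^\sharp})$ and its bijectivity, the crux being the decomposition $G(F) = P(F) \cdot G^\sharp(F)$. First I would establish this decomposition. As algebraic groups one has $G = M \cdot G_\text{der}$ (since $M$ contains a central torus of $G$ that already surjects onto $G/G_\text{der}$), and since $G_\text{der} \subset G^\sharp$ this yields $G = M \cdot G^\sharp \subset P \cdot G^\sharp$. At the level of $F$-points, one combines this with the Iwasawa decomposition $G(F) = P(F) \cdot K$ for a special maximal compact subgroup $K \subset G(F)$ chosen inside $G^\sharp(F)$ (using that $G^\sharp(F)$ contains a good maximal compact subgroup of $G(F)$, e.g., via Bruhat--Tits theory). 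Note also the trivial identity $P(F) \cap G^\sharp(F) = P^\sharp(F)$.

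Next I would check that restriction maps $I^G_P(\sigma)$ into $I^{G^\sharp}_{P^\sharp}(\sigma|_{M^\sharp})$: for $\varphi \in I^G_P(\sigma)$, the function $\varphi|_{G^\sharp(F)}$ is clearly smooth, and for $u \in U(F)$, $m \in M^\sharp(F)$, $g^\sharp \in G^\sharp(F)$ the transformation rule $\varphi(umg^\sharp) = \delta_P(m)^{1/2} \sigma(m) \varphi(g^\sharp)$ together with the identification $\delta_P|_{M^\sharp(F)} = \delta_{P^\sharp}$ gives exactly the defining property of $I^{G^\sharp}_{P^\sharp}(\sigma|_{M^\sharp})$. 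Injectivity is then immediate from the decomposition: if $\varphi|_{G^\sharp(F)} = 0$, then left $P(F)$-equivariance forces $\varphi \equiv 0$ on $P(F) \cdot G^\sharp(F) = G(F)$.

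For surjectivity, given $\psi \in I^{G^\sharp}_{P^\sharp}(\sigma|_{M^\sharp})$, I would define $\tilde\varphi$ on $G(F) = P(F) \cdot G^\sharp(F)$ by
$$ \tilde\varphi(umg^\sharp) := \delta_P(m)^{\frac{1}{2}} \sigma(m) \psi(g^\sharp), \quad u \in U(F),\; m \in M(F),\; g^\sharp \in G^\sharp(F). $$
Well-definedness reduces to checking that if $u_1 m_1 g_1^\sharp = u_2 m_2 g_2^\sharp$, then both sides agree; this follows from $P(F) \cap G^\sharp(F) = P^\sharp(F)$, which forces $g_1^\sharp (g_2^\sharp)^{-1} \in P^\sharp(F)$ with corresponding Levi component $m_1^{-1} m_2 \in M^\sharp(F)$, and then the left $P^\sharp(F)$-equivariance of $\psi$ yields the required compatibility. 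Smoothness of $\tilde\varphi$ is inherited from that of $\psi$ and the local product structure of $G(F)$. Functoriality in $\sigma$ is transparent, since both sides of the isomorphism are defined by composition with morphisms in the $\sigma$-argument. The only nontrivial obstacle is the decomposition $G(F) = P(F) \cdot G^\sharp(F)$; once secured, everything else is bookkeeping.
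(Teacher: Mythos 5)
Your reduction of the lemma to the decomposition $G(F) = P(F)\cdot G^\sharp(F)$ together with $P(F)\cap G^\sharp(F) = P^\sharp(F)$ is the right idea, and the subsequent bookkeeping (the restriction map lands in the right space, injectivity, the extension construction for surjectivity, functoriality) is correct; it is an explicit unpacking of the paper's one-line argument via the chain $P^\sharp(F)\backslash G^\sharp(F) = (P^\sharp\backslash G^\sharp)(F) \rightiso (P\backslash G)(F) = P(F)\backslash G(F)$. The flaw is your proof of the decomposition itself: there is \emph{no} special maximal compact subgroup of $G(F)$ contained in $G^\sharp(F)$ once $G^\sharp \neq G$. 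Already for $G=\GL_F(N)$, $G^\sharp=\SL_F(N)$, every maximal compact of $G(F)$ is conjugate to $\GL_N(\mathcal{O}_F)$ ($\mathcal{O}_F$ the ring of integers), and $\det(\GL_N(\mathcal{O}_F)) = \mathcal{O}_F^\times \neq \{1\}$; in general, compactness of $K$ only forces its image in $G(F)/G^\sharp(F)$ to lie in a maximal compact subgroup of $(G/G^\sharp)(F)$, which is never trivial. So the clause ``$K$ chosen inside $G^\sharp(F)$'' cannot be realized, and the argument collapses.

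The decomposition $G(F) = P(F)\cdot G^\sharp(F)$ does hold, but it needs a rational (not merely $\bar F$-points) argument. The paper's route: $P^\sharp\backslash G^\sharp \to P\backslash G$ is an isomorphism of $F$-varieties (bijective over $\bar F$ since $G = P\cdot G^\sharp$ over $\bar F$ and $P\cap G^\sharp = P^\sharp$, both sides smooth projective), and on $F$-points one has $(P\backslash G)(F) = P(F)\backslash G(F)$ and $(P^\sharp\backslash G^\sharp)(F) = P^\sharp(F)\backslash G^\sharp(F)$ because $H^1(F,P) = H^1(F,M) \hookrightarrow H^1(F,G)$ (the unipotent radical has trivial $H^1$, and $H^1$ of a Levi injects into $H^1$ of the ambient group). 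The resulting bijection $P^\sharp(F)\backslash G^\sharp(F) \rightiso P(F)\backslash G(F)$ is exactly the statement $G(F)=P(F)G^\sharp(F)$. If you insist on an Iwasawa-style argument, the repair is to show that $M(F)\cap K$ and $K$ have the same image in $G(F)/G^\sharp(F)$ (for $\GL_D(n)$ this follows from surjectivity of $\Nrd: \mathcal{O}_D^\times \to \mathcal{O}_F^\times$), so that each $k\in K$ factors as $m\cdot(m^{-1}k)$ with $m\in M(F)\cap K$ and $m^{-1}k\in K\cap G^\sharp(F)$, giving $K\subset P(F)G^\sharp(F)$; but the cohomological argument is cleaner and works uniformly.
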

\begin{proof}
  Upon recalling the definitions of $I^G_P(\sigma)$ and $I^{G^\sharp}_{P^\sharp}(\sigma|_{M^\sharp})$ as function spaces, the assertion follows from the canonical isomorphisms
  $$ P^\sharp(F) \backslash G^\sharp(F) = (P^\sharp \backslash G^\sharp)(F) \rightiso (P \backslash G)(F) = P(F) \backslash G(F) $$
  and the fact that $\delta_P|_{M^\sharp} = \delta_{P^\sharp}$.
\end{proof}

The next result will not be used in this article; we include it only for the sake of completeness Recall that the normalized Jacquet functor $r^G_P$ is the left adjoint of $I^G_P$. Idem for $r^{G^\sharp}_{P^\sharp}$.
\begin{lemma}\label{prop:res-Jacquet}
  Let $\pi \in \Pi(G)$. The restriction of representations induces an isomorphism
  $$ r^G_P(\pi)|_{M^\sharp} \rightiso r^{G^\sharp}_{P^\sharp}(\pi|_{G^\sharp}) $$
  between smooth representations of $M^\sharp(F)$, which is functorial in $\pi$.
\end{lemma}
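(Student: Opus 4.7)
The plan is to unwind the definition of the normalized Jacquet functor and observe that the only data that enter are the unipotent radical and the modulus character, both of which are shared between $P$ and $P^\sharp$. Recall that $r^G_P(\pi)$ is realized on the Jacquet module $V_\pi/V_\pi(U)$, where $V_\pi(U) := \langle \pi(u)v - v : u \in U(F),\, v \in V_\pi \rangle$, with the $M(F)$-action $m \cdot \bar{v} := \delta_P(m)^{-1/2} \pi(m)v$ passed to the quotient. Since $P^\sharp = M^\sharp U$ involves the \emph{same} unipotent radical $U$, the subspace of coinvariants is identical: $V_\pi(U) = V_{\pi|_{G^\sharp}}(U)$ as $\C$-subspaces of $V_\pi$. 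Therefore $r^G_P(\pi)$ and $r^{G^\sharp}_{P^\sharp}(\pi|_{G^\sharp})$ are naturally realized on the same quotient vector space.

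Next I would compare the $M^\sharp(F)$-actions. The unnormalized actions agree tautologically on $M^\sharp(F) \subset M(F)$. For the normalization, the paper records just before Lemma \ref{prop:res-induction} that $\delta_P|_{M^\sharp(F)} = \delta_{P^\sharp}$; hence the two normalized actions of $M^\sharp(F)$ coincide. This gives the desired isomorphism as $M^\sharp(F)$-representations, and the whole construction is evidently natural in $\pi$ since everything is built from quotients and restrictions of $V_\pi$.

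As a sanity check (and an alternative route one could take if one preferred a formal argument), the stated isomorphism is forced by Frobenius reciprocity: for any smooth $\tau \in \Pi(M^\sharp)$, one has
\begin{align*}
  \Hom_{M^\sharp}\bigl(r^G_P(\pi)|_{M^\sharp},\, \tau\bigr)
   &= \Hom_G\bigl(\pi,\, I^G_P(\tau')\bigr)\big|_{\text{restricted}}
\end{align*}
in a suitable sense, combined with Lemma \ref{prop:res-induction} which says $I^G_P(\tau')|_{G^\sharp} = I^{G^\sharp}_{P^\sharp}(\tau'|_{M^\sharp})$; however this adjoint route is genuinely heavier than the direct identification of Jacquet modules.

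There is really no serious obstacle: the statement is a structural compatibility whose content reduces to the two observations $U^\sharp = U$ and $\delta_P|_{M^\sharp} = \delta_{P^\sharp}$. The only point that deserves a line of care is verifying that the displayed map is defined by restriction in the natural way, i.e.\ that passing $V_\pi \twoheadrightarrow V_\pi/V_\pi(U)$ commutes with restricting the $M(F)$-action to $M^\sharp(F)$; this is immediate from the set-theoretic equality of the two coinvariant subspaces.
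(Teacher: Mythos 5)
Your proof is correct and is exactly the content behind the paper's one-word proof (``Evident''): both $P$ and $P^\sharp$ share the unipotent radical $U$, so the space of $U(F)$-coinvariants of $V_\pi$ is literally the same for $\pi$ and $\pi|_{G^\sharp}$, and the normalized $M^\sharp(F)$-actions agree because $\delta_P|_{M^\sharp} = \delta_{P^\sharp}$. The Frobenius-reciprocity aside is a fine sanity check but, as you note, unnecessary.
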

\begin{proof}
  Evident.
\end{proof}

\begin{proposition}
  Let $M$ be a Levi subgroup of $G$. Then the inclusion map $M \hookrightarrow G$ induces an isomorphism between locally compact abelian groups
  $$ M(F)/M^\sharp(F) \rightiso G(F)/G^\sharp(F). $$
\end{proposition}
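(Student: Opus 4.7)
The plan is to treat injectivity and surjectivity separately, with the latter being the main content. Injectivity is immediate: the kernel of $M(F)/M^\sharp(F) \to G(F)/G^\sharp(F)$ is $M(F) \cap G^\sharp(F) = (M \cap G^\sharp)(F) = M^\sharp(F)$, which is trivial in the quotient.

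For surjectivity, I would first establish the algebraic-group identity $G = M \cdot G^\sharp$. This rests on the standard structural decomposition $G = Z_G^0 \cdot G_\text{der}$ valid for any connected reductive $G$; since $Z_G \subset M$ (automatic for a Levi subgroup) and $G_\text{der} \subset G^\sharp$ (by hypothesis), one obtains $G \subset M \cdot G^\sharp$, hence equality. Together with $M \cap G^\sharp = M^\sharp$, this already gives an isomorphism $M/M^\sharp \rightiso G/G^\sharp$ of algebraic $F$-groups, both identifying with the quotient torus $T := G/G^\sharp$.

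Promoting this to $F$-points amounts to showing that every $g \in G(F)$ factors as $g = m h$ with $m \in M(F)$ and $h \in G^\sharp(F)$. Starting from a factorization $g = m_0 h_0$ over $\bar{F}$ (available by the algebraic identity just established), the Galois cocycle $\sigma \mapsto m_0^{-1}\sigma(m_0) = h_0 \sigma(h_0)^{-1}$ takes values in $M^\sharp(\bar{F})$ and represents a class in $H^1(F, M^\sharp)$ that is manifestly a coboundary in $G^\sharp$ (witnessed by $h_0$); the task is to show that it is already a coboundary in $M^\sharp$, which would allow us to adjust $m_0$ and $h_0$ into $F$-rational representatives.

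The main obstacle is this last triviality, which I expect to deduce from a comparison of the long exact sequences of Galois cohomology attached to $1 \to M^\sharp \to M \to T \to 1$ and $1 \to G^\sharp \to G \to T \to 1$. Since $M_\text{der}$ is perfect and is contained in the normal subgroup $M^\sharp$ with abelian quotient, one has $M^\sharp_\text{der} = M_\text{der}$, and analogously $G^\sharp_\text{der} = G_\text{der}$; this enables a reduction to an analogous statement for the tori $M^\sharp/M_\text{der}$ and $G^\sharp/G_\text{der}$ (sitting inside $M/M_\text{der}$ and $G/G_\text{der}$ as the kernels of the surjections onto $T$), where Hilbert 90 and the structure theory of tori over a non-archimedean local field can be invoked to yield the desired vanishing.
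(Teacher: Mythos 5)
Your opening moves are sound and parallel the paper's: identifying $M(F)\cap G^\sharp(F) = M^\sharp(F)$ for injectivity, proving the algebraic-group equality $G = M\cdot G^\sharp$ via $G = Z_G^0\cdot G_\text{der}$ together with $Z_G \subset M$ and $G_\text{der}\subset G^\sharp$ (this is equivalent to the paper's observation that $M/M^\sharp \to G/G^\sharp$ is an isomorphism of $F$-tori), and then recasting surjectivity on $F$-points as the vanishing in $H^1(F, M^\sharp)$ of a cocycle class that is visibly trivial in $H^1(F, G^\sharp)$. In other words, both you and the paper reduce to showing that the map of pointed sets $H^1(F, M^\sharp)\to H^1(F, G^\sharp)$ is injective, at least on classes arising from $T(F)$.

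The gap is in your final paragraph. You propose to deduce this from the perfectness of derived groups and a reduction to the tori $M^\sharp/M_\text{der}$ and $G^\sharp/G_\text{der}$, invoking Hilbert 90. But the quotient map $M^\sharp \to M^\sharp/M_\text{der}$ kills precisely the part of $H^1$ you cannot afford to lose: the exact sequence $1 \to M_\text{der}\to M^\sharp \to M^\sharp/M_\text{der} \to 1$ shows $H^1(F, M^\sharp) \to H^1(F, M^\sharp/M_\text{der})$ has ``kernel'' controlled by $H^1(F, M_\text{der})$, and for a general Levi $M$ of a general connected reductive $G$ the semisimple group $M_\text{der}$ need not have vanishing $H^1$ (it is not assumed simply connected; e.g. take $G$ containing a Levi with an $\mathrm{SO}$- or $\mathrm{PGL}$-factor). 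So the cocycle class you care about could die in the torus and still be nontrivial in $H^1(F, M^\sharp)$; Hilbert 90 on the torus quotient says nothing about it. The proposition is stated in full generality in \S\ref{sec:res-ind}, so you cannot appeal to the special structure of inner forms of $\mathrm{SL}(N)$ here.

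The paper's actual key input, which your sketch is missing, is the parabolic subgroup trick. Fix $P\in\mathcal{P}(M)$ and set $P^\sharp = M^\sharp U$. One factors
$$H^1(F, M^\sharp) \longrightarrow H^1(F, P^\sharp) \longrightarrow H^1(F, G^\sharp),$$
where the first arrow is a bijection because $H^1$ of the (twists of the) unipotent radical $U$ vanishes in characteristic zero (its inverse is induced by $P^\sharp\twoheadrightarrow P^\sharp/U = M^\sharp$), and the second arrow is injective because $P^\sharp$ is a parabolic subgroup of the connected group $G^\sharp$ --- a standard fact about parabolics. This gives the injectivity of $H^1(F, M^\sharp)\to H^1(F, G^\sharp)$ unconditionally, after which the diagram chase you describe closes the argument. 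You should replace the torus reduction with this.
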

\begin{proof}
  The inclusion map induces an isomorphism $M/M^\sharp \hookrightarrow G/G^\sharp$ as $F$-tori. Hence the short exact sequence $1 \to M^\sharp \to M \to M/M^\sharp \to 1$ and its avatar for $G$ provide a commutative diagram of pointed sets with exact rows
  $$\xymatrix{
    1 \ar[r] & G^\sharp(F) \ar[r] & G(F) \ar[r] & (G/G^\sharp)(F) \ar[r] & H^1(F, G^\sharp) \\
    1 \ar[r] & M^\sharp(F) \ar[u] \ar[r] & M(F) \ar[u] \ar[r] & (M/M^\sharp)(F) \ar@{=}[u] \ar[r] & H^1(F, M^\sharp) \ar[u]
  }.$$
  Fix a parabolic subgroup $P$ of $G$ with a Levi decomposition $P=MU$. The rightmost vertical arrow factorizes as
  $$ H^1(F, M^\sharp) \to H^1(F,P^\sharp) \to H^1(F, G^\sharp).$$

  The first map is an isomorphism whose inverse is induced by $P^\sharp \twoheadrightarrow P^\sharp/U = M^\sharp$. It is well known that the second map is injective, hence so is the composition. A simple diagram chasing shows that $M(F)/M^\sharp(F) \rightiso G(F)/G^\sharp(F)$, as asserted.
\end{proof}

\begin{corollary}\label{prop:char-G-M}
  The restriction map induces an isomorphism
  $$ (G(F)/G^\sharp(F))^D \rightiso (M(F)/M^\sharp(F))^D.$$
\end{corollary}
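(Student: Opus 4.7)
The plan is to deduce this immediately from the preceding proposition, which establishes a topological isomorphism $M(F)/M^\sharp(F) \rightiso G(F)/G^\sharp(F)$ induced by the inclusion $M \hookrightarrow G$. Applying the contravariant Pontryagin-type functor $H \mapsto H^D = \Hom_\text{cont}(H, \C^\times)$ then converts this into an isomorphism in the opposite direction
$$ (G(F)/G^\sharp(F))^D \rightiso (M(F)/M^\sharp(F))^D, $$
which is exactly the restriction map: a continuous character $\eta$ of $G(F)/G^\sharp(F)$ pulls back along $M(F)/M^\sharp(F) \hookrightarrow G(F)/G^\sharp(F)$ to its restriction to $M(F)/M^\sharp(F)$. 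So the whole argument amounts to one sentence: dualize the isomorphism furnished by the previous proposition.

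The only point that warrants a brief check is that the dualization is genuinely functorial for continuous characters into $\C^\times$; but this is standard, since any isomorphism of locally compact abelian groups induces an isomorphism of their groups of continuous characters. There is no real obstacle here — the content of the corollary is entirely absorbed by the preceding proposition, and the proof is a one-line consequence.
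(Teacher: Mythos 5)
Your argument is correct and is exactly what the paper intends: the corollary is stated without proof precisely because it is the immediate consequence of applying $H \mapsto H^D$ to the isomorphism $M(F)/M^\sharp(F) \rightiso G(F)/G^\sharp(F)$ furnished by the preceding proposition. Nothing more is needed.
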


Here is a trivial but important consequence: any $\eta \in (M(F)/M^\sharp(F))^D$ is invariant under the $W(M)$-action.

\subsection{Relation to intertwining operators}
Let $M$ be a Levi subgroup of $G$. First of all, observe that there is a natural decomposition
$$ \mathfrak{a}^*_M = \mathfrak{a}^*_{M^\sharp} \oplus \mathfrak{b}^* $$
where
$$ \mathfrak{b}^* := \mathcal{X}(M/M^\sharp) \otimes_\Z \R \hookrightarrow \mathfrak{a}^*_G .$$
Henceforth, we shall identify $\mathfrak{a}^*_{M^\sharp}$ as a vector subspace of $\mathfrak{a}^*_M$. Idem for their complexifications. This is compatible with restrictions in the following sense
$$ (\sigma_\lambda)|_{M^\sharp} = (\sigma|_{M^\sharp})_\lambda, \quad \sigma \in \Pi(M), \; \lambda \in \mathfrak{a}^*_{M^\sharp, \C}. $$

\begin{lemma}\label{prop:compatible-intertwinner}
  Let $\sigma \in \Pi(M)$, $P, Q \in \mathcal{P}(M)$. For $\lambda \in \mathfrak{a}^*_{M^\sharp, \C}$ in general position, the following diagram is commutative
  $$\xymatrix{
    I^G_P(\sigma_\lambda)|_{G^\sharp} \ar[d]_{\simeq} \ar[rrr]^{J_{Q|P}(\sigma_\lambda)} & & & I^G_Q(\sigma_\lambda)|_{G^\sharp} \ar[d]^{\simeq} \\
    I^{G^\sharp}_{P^\sharp}(\sigma_\lambda|_{M^\sharp}) \ar[rrr]_{J_{Q^\sharp|P^\sharp}(\sigma_\lambda|_{M^\sharp})} & & & I^{G^\sharp}_{Q^\sharp}(\sigma_\lambda|_{M^\sharp})
  }$$
  where the vertical isomorphisms are those defined in Lemma \ref{prop:res-induction}.
\end{lemma}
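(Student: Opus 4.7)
My plan is to verify the commutativity first on the cone of absolute convergence of the defining integrals, and then extend by meromorphic continuation.

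The key structural observation is that the unipotent radicals match: $U_P = U_{P^\sharp}$ and $U_Q = U_{Q^\sharp}$, since any unipotent subgroup of $G$ lies in $G_{\mathrm{der}} \subset G^\sharp$. Combined with the compatible Haar measures fixed in \S\ref{sec:res-ind}, the measure space $(U_P(F) \cap U_Q(F)) \backslash U_Q(F)$ appearing in \eqref{eqn:J-int} is literally the same whether we read it as a datum for $G$ or for $G^\sharp$. Moreover, the roots of $G$ relative to $M$ coincide with those of $G^\sharp$ relative to $M^\sharp$ under the identification $\mathfrak{a}^G_M = \mathfrak{a}^{G^\sharp}_{M^\sharp}$, so the positivity condition $\angles{\Re(\lambda), \alpha^\vee} \gg 0$ for $\alpha \in \Sigma^\mathrm{red}_P \cap \Sigma^\mathrm{red}_{\bar{Q}}$ has the same meaning on both sides.

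Choose $\lambda \in \mathfrak{a}^*_{M^\sharp,\C}$ in this cone of absolute convergence. For $\varphi \in I^G_P(\sigma_\lambda)$ and $x \in G^\sharp(F)$, the defining integral \eqref{eqn:J-int} reads
$$ (J_{Q|P}(\sigma_\lambda)\varphi)(x) = \int_{U_P(F) \cap U_Q(F) \backslash U_Q(F)} \varphi(u x) \dd u. $$
Since $ux \in G^\sharp(F)$ whenever $x \in G^\sharp(F)$, the integrand depends only on $\varphi|_{G^\sharp(F)}$, which by Lemma \ref{prop:res-induction} lies in $I^{G^\sharp}_{P^\sharp}((\sigma|_{M^\sharp})_\lambda)$ (using the compatibility $\sigma_\lambda|_{M^\sharp} = (\sigma|_{M^\sharp})_\lambda$). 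Comparing with the integral defining $J_{Q^\sharp|P^\sharp}((\sigma|_{M^\sharp})_\lambda)$ applied to $\varphi|_{G^\sharp(F)}$ evaluated at $x$ shows the two compositions in the diagram agree pointwise on $G^\sharp(F)$, proving commutativity on this cone.

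Finally, both $\lambda \mapsto J_{Q|P}(\sigma_\lambda)$ and $\lambda \mapsto J_{Q^\sharp|P^\sharp}((\sigma|_{M^\sharp})_\lambda)$ are meromorphic families of operators in $\lambda \in \mathfrak{a}^*_{M^\sharp,\C}$, so the identity persists by analytic continuation to all $\lambda$ in general position. There is no genuine obstacle here: the argument is essentially a bookkeeping exercise once the matching of unipotent radicals, Haar measures, and root data has been correctly set up.
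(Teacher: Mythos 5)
Your proof is correct and follows the same route as the paper: check commutativity on the cone of absolute convergence by directly comparing the defining integrals (using the matching of unipotent radicals, Haar measures, and the identification from Lemma \ref{prop:res-induction}), then extend by meromorphic continuation. You simply spell out the bookkeeping in more detail than the paper's one-line argument.
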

\begin{proof}
  It suffices to check this for $\Re(\lambda)$ in the cone of absolute convergence of the integrals \eqref{eqn:J-int} defining $J_{Q|P}$ and $J_{Q^\sharp|P^\sharp}$. The commutativity then follows from \eqref{eqn:J-int} and the definition of the isomorphism in Lemma \ref{prop:res-induction}.
\end{proof}

\begin{proposition}\label{prop:mu-compatibility}
  Let $\sigma \in \Pi_{2,\mathrm{temp}}(M)$, $\sigma^\sharp \in \Pi_{2,\mathrm{temp}}(M^\sharp)$ such that $\sigma^\sharp \hookrightarrow \sigma|_{M^\sharp}$. Then for all $\lambda \in \mathfrak{a}^*_{M^\sharp,\C}$, we have $\mu(\sigma_\lambda)=\mu(\sigma^\sharp_\lambda)$; more precisely,
  $$ \mu_\alpha(\sigma_\lambda) = \mu_\alpha(\sigma^\sharp_\lambda), \quad \forall \alpha \in \Sigma^{\mathrm{red}}_P = \Sigma^{\mathrm{red}}_{P^\sharp}, \; P \in \mathcal{P}(M). $$
\end{proposition}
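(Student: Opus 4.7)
The plan is to work with the $j$-functions, since $\mu = j^{-1}$, and to reduce the pointwise (in $\alpha$) statement to the global statement applied to each $M_\alpha$.

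First I would prove the global equality $\mu(\sigma_\lambda) = \mu(\sigma^\sharp_\lambda)$, i.e., $j(\sigma_\lambda) = j(\sigma^\sharp_\lambda)$, on the level of $G$ versus $G^\sharp$. The strategy is to restrict the scalar operator $j(\sigma_\lambda) \cdot \identity$ on $I^G_P(\sigma_\lambda)$ along the chain
\[
  I^G_P(\sigma_\lambda) \;\supset\; I^{G^\sharp}_{P^\sharp}\bigl((\sigma|_{M^\sharp})_\lambda\bigr) \;\supset\; I^{G^\sharp}_{P^\sharp}(\sigma^\sharp_\lambda),
\]
where the first inclusion is the restriction-of-induction of Lemma~\ref{prop:res-induction}, and the second comes by functoriality of $I^{G^\sharp}_{P^\sharp}$ in the inducing datum applied to the embedding $\sigma^\sharp \hookrightarrow \sigma|_{M^\sharp}$. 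By Lemma~\ref{prop:compatible-intertwinner} the operators $J_{P|\bar{P}}(\sigma_\lambda)$ and $J_{\bar{P}|P}(\sigma_\lambda)$ restrict to $J_{P^\sharp|\bar{P}^\sharp}((\sigma|_{M^\sharp})_\lambda)$ and $J_{\bar{P}^\sharp|P^\sharp}((\sigma|_{M^\sharp})_\lambda)$ respectively; by naturality of the unnormalized intertwining integrals \eqref{eqn:J-int} in the inducing datum, these in turn preserve the subspace $I^{G^\sharp}_{P^\sharp}(\sigma^\sharp_\lambda)$ and act there as $J_{P^\sharp|\bar{P}^\sharp}(\sigma^\sharp_\lambda)$ and $J_{\bar{P}^\sharp|P^\sharp}(\sigma^\sharp_\lambda)$. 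Composing, the scalar $j(\sigma_\lambda)$ (read off on the big space) and the scalar $j(\sigma^\sharp_\lambda)$ (read off on the small space) must coincide.

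To upgrade this to the factor-by-factor equality, I would simply apply the same argument with $G$ replaced by $M_\alpha$ and $G^\sharp$ replaced by $M_\alpha \cap G^\sharp$, which is a Levi subgroup of $G^\sharp$ and plays the role of $M_\alpha^\sharp$. The derived-group inclusions $M_{\alpha,\text{der}} \subset M_\alpha \cap G^\sharp \subset M_\alpha$ are of the type required by the restriction set-up of \S\ref{sec:res}, so all preceding lemmas apply verbatim within $M_\alpha$. Since by definition $\mu_\alpha(\sigma_\lambda) = \mu^{M_\alpha}(\sigma_\lambda)$ and likewise for $\sigma^\sharp$, the first step gives $\mu_\alpha(\sigma_\lambda) = \mu_\alpha(\sigma^\sharp_\lambda)$ as desired.

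The only point that requires care is the naturality claim that $J_{P^\sharp|\bar{P}^\sharp}((\sigma|_{M^\sharp})_\lambda)$ preserves $I^{G^\sharp}_{P^\sharp}(\sigma^\sharp_\lambda)$ and restricts to $J_{P^\sharp|\bar{P}^\sharp}(\sigma^\sharp_\lambda)$ there; this is immediate for $\Re(\lambda)$ deep in the cone of absolute convergence (from the integral formula, applied pointwise to $V_{\sigma^\sharp} \subset V_\sigma$), and then extends to all $\lambda$ by meromorphic continuation. I expect this to be the only nontrivial verification; once it is in place, the chain of restrictions closes and both equalities drop out.
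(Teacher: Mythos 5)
Your proof is correct and follows essentially the paper's route: the paper also derives the identity from the definition \eqref{eqn:j-function} of the $j$-function together with Lemma~\ref{prop:compatible-intertwinner}, with the further restriction along $I^{G^\sharp}_{P^\sharp}(\sigma^\sharp_\lambda)\hookrightarrow I^{G^\sharp}_{P^\sharp}(\sigma_\lambda|_{M^\sharp})$ and the passage to each $M_\alpha$ left implicit. You have merely spelled out the naturality-in-the-inducing-datum step and the reduction to the rank-one Levi subgroups $M_\alpha$, which is exactly what is needed.
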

\begin{proof}
  In view of our choice of measures on unipotent radicals, the identities of $\mu$-functions follow from \eqref{eqn:j-function} and Lemma \ref{prop:compatible-intertwinner}.
\end{proof}

\begin{proposition}\label{prop:mu-invariance}
  Let $\sigma \in \Pi(M)$ and $\eta \in (G(F)/G^\sharp(F))^D$. Then we have
  $$ j(\sigma) = j(\eta\sigma). $$
  In particular, for $\sigma \in \Pi_{2,\text{temp}}(M)$, we have $\mu(\sigma)=\mu(\eta\sigma)$.
\end{proposition}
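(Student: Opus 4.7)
The plan is to observe that, on the common underlying vector space $I^G_P(V_\sigma)$ of $I^G_P(\sigma_\lambda)$ and $I^G_P(\eta\sigma_\lambda)$, there is a natural linear isomorphism that conjugates the unnormalized intertwining operators into one another; from this the equality of $j$-functions, and hence of $\mu$-functions, is immediate.

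First, via Corollary \ref{prop:char-G-M} I view $\eta$ simultaneously as a character of $G(F)$ (pulled back from $G(F)/G^\sharp(F)$) and of $M(F)$, compatibly. Since $\eta$ is a character it kills every commutator; in particular the unipotent radical $U(F)$ of any parabolic of $G$ lies in $G_{\mathrm{der}}(F) \subset G^\sharp(F)$, so $\eta|_{U(F)} \equiv 1$.

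Next, on $I^G_P(V_\sigma)$ define
\[ (T_\eta\varphi)(x) := \eta(x)\,\varphi(x), \qquad x \in G(F). \]
Using $\eta|_{U(F)} \equiv 1$ together with multiplicativity of $\eta$, one verifies directly that $T_\eta$ sends sections of $I^G_P(\sigma_\lambda)$ to sections of $I^G_P(\eta\sigma_\lambda)$, and likewise with $P$ replaced by $\bar{P}$. (Incidentally $T_\eta$ realizes $I^G_P(\eta\sigma_\lambda) \simeq \eta \otimes I^G_P(\sigma_\lambda)$ as $G(F)$-representations, but this will not be needed.) For $\Re(\lambda)$ in the cone of absolute convergence of \eqref{eqn:J-int} and any section $\varphi$ of $I^G_P(\sigma_\lambda)$,
\[ \bigl(J_{Q|P}(\eta\sigma_\lambda)\,T_\eta\varphi\bigr)(x) = \int \eta(ux)\varphi(ux)\,\dd u = \eta(x)\bigl(J_{Q|P}(\sigma_\lambda)\varphi\bigr)(x), \]
the second equality using $\eta(u) = 1$ throughout the unipotent integration domain. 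Hence $J_{Q|P}(\eta\sigma_\lambda) \circ T_\eta = T_\eta \circ J_{Q|P}(\sigma_\lambda)$, which extends to all $\lambda \in \mathfrak{a}^*_{M,\C}$ by meromorphic continuation.

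Composing the cases $Q = P$ and $Q = \bar{P}$ yields $j(\eta\sigma_\lambda)\,T_\eta = T_\eta\,j(\sigma_\lambda)$ on $I^G_P(V_\sigma)$; since both $j$'s are scalar-valued, they coincide as meromorphic functions of $\lambda$, and specializing $\lambda = 0$ gives $j(\sigma) = j(\eta\sigma)$. For $\sigma \in \Pi_{2,\mathrm{temp}}(M)$ the definitional identity $\mu = j^{-1}$ then yields $\mu(\sigma) = \mu(\eta\sigma)$. No step presents a substantial obstacle; the whole argument pivots on the trivial but decisive fact $\eta|_{U(F)} \equiv 1$.
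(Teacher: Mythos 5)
Your argument is correct and is essentially the paper's own proof: the operator $T_\eta: \varphi \mapsto \eta(\cdot)\varphi(\cdot)$ is exactly the vertical isomorphism $\eta I^G_P(\sigma_\lambda) \rightiso I^G_P(\eta\sigma_\lambda)$ in the commuting square the authors write down, and the commutation with $J_{Q|P}$ is verified in both cases by the convergent integral \eqref{eqn:J-int} together with $\eta|_{U(F)} \equiv 1$ and meromorphic continuation. (One small phrasing quibble: the reason $\eta|_{U(F)}\equiv 1$ is that $U \subset G_{\mathrm{der}} \subset G^\sharp$ and $\eta$ factors through $G(F)/G^\sharp(F)$, not that ``$\eta$ kills commutators''---but you state the correct reason in the same sentence, so the proof stands.)
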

\begin{proof}
  In view of the definition of $j$-function \eqref{eqn:j-function}, it suffices to observe that for all $P,Q \in \mathcal{P}(M)$ and $\lambda \in \mathfrak{a}^*_{M,\C}$ in general position, the following diagram commutes
  $$\xymatrix{
    \eta I^G_P(\sigma_\lambda) \ar[rr]^{J_{Q|P}(\sigma_\lambda)} \ar[d]_{\simeq} & & \eta I^G_Q(\sigma_\lambda) \ar[d]^{\simeq} \\
    I^G_P(\eta\sigma_\lambda) \ar[rr]_{J_{Q|P}(\eta\sigma_\lambda)} & & I^G_Q(\eta\sigma_\lambda)
  }$$
  where the vertical arrows are given by $\varphi(\cdot) \mapsto \eta(\cdot)\varphi(\cdot)$; note that we used the natural identification $\Hom_G(\pi_1, \pi_2) = \Hom_G(\eta\pi_1, \eta\pi_2)$ for all $\pi_1, \pi_2 \in \Pi(G)$. Indeed, the commutativity can be seen from the definition \eqref{eqn:J-int} of $J_{Q|P}(\cdot)$ when $\Re(\lambda)$ lies in the cone of absolute convergence.
\end{proof}

\begin{theorem}\label{prop:normalizing-invariance}
  One can choose a family of normalizing factors for $G$ such that
  $$ r_{Q|P}(\sigma_\lambda) = r_{Q|P}(\eta\sigma_\lambda) $$
  for all $(M, \sigma)$, $P,Q \in \mathcal{P}(M)$ and $\eta \in (G(F)/G^\sharp(F))^D$ which is unitary. Given such a family of normalizing factors, one can define normalizing factors $r_{Q^\sharp|P^\sharp}(\sigma^\sharp_\lambda)$ for those $\sigma^\sharp$ such that $\omega_{\sigma^\sharp}$ is unitary, by setting
  \begin{gather}\label{eqn:r-equality}
    r_{Q^\sharp|P^\sharp}(\sigma^\sharp_\lambda) := r_{Q|P}(\sigma_\lambda), \quad \lambda \in \mathfrak{a}^*_{M^\sharp,\C}
  \end{gather}
  where $\sigma \in \Pi(M)$ is as in Proposition \ref{prop:lifting} with $\omega_\sigma$ unitary.

  Moreover, let $\sigma$, $\sigma^\sharp$ be as above and $\iota: \sigma^\sharp \hookrightarrow \sigma|_{M^\sharp}$ be an embedding. Let $P,Q \in \mathcal{P}(M)$, $w \in W(M)$ with a representative $\tilde{w} \in G^\sharp(F)$. The following diagrams of $G^\sharp(F)$-representations are commutative
  $$\xymatrix{
    I^G_P(\sigma_\lambda) \ar[rr]^{R_{Q|P}(\sigma_\lambda)} & & I^G_Q(\sigma_\lambda) \\
    I^{G^\sharp}_{P^\sharp}(\sigma^\sharp_\lambda) \ar@{^{(}->}[u] \ar[rr]_{R_{Q^\sharp|P^\sharp}(\sigma^\sharp_\lambda)} & & I^{G^\sharp}_{Q^\sharp}(\sigma^\sharp_\lambda) \ar@{^{(}->}[u]
  } \qquad
  \xymatrix{
    I^G_P(\sigma_\lambda) \ar[rr]^{r_P(\tilde{w}, \sigma_\lambda)} & & I^G_P(\tilde{w}(\sigma_\lambda)) \\
    I^{G^\sharp}_{P^\sharp}(\sigma^\sharp_\lambda) \ar[rr]_{r_{P^\sharp}(\tilde{w}, \sigma^\sharp_\lambda)} \ar@{^{(}->}[u] & & I^{G^\sharp}_{P^\sharp}(\tilde{w}(\sigma^\sharp_\lambda)) \ar@{^{(}->}[u]
  }
  $$
  for $\lambda \in \mathfrak{a}^*_{M,\C}$ in general position, where the vertical arrows are given by
  $$ I^{G^\sharp}_{P^\sharp}(\sigma^\sharp_\lambda) \xrightarrow{I^{G^\sharp}_{P^\sharp}(\iota)} I^{G^\sharp}_{P^\sharp}(\sigma_\lambda|_{M^\sharp}) \xrightarrow{\sim} I^G_P(\sigma_\lambda)|_{G^\sharp}. $$
\end{theorem}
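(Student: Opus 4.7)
The plan has three parts, addressed in order: (a) construct a family of normalizing factors on $G$ that is invariant under twisting by unitary $\eta \in (G(F)/G^\sharp(F))^D$; (b) check that formula \eqref{eqn:r-equality} then yields a well-defined family of normalizing factors on $G^\sharp$; (c) deduce the commutativity of the two diagrams.

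For (a), I follow the reduction sketched in Remark \ref{rem:construction-normalization}, which reduces the problem to the case where $M$ is a maximal proper Levi and $\sigma \in \Pi_{2,\text{temp}}(M)$. In that basic case, Arthur's existence result \cite[Theorem 2.1]{Ar89-IOR1} already furnishes a family. To arrange invariance, I would choose a system of orbit representatives for the $(G(F)/G^\sharp(F))^D$-action on $\Pi_{2,\text{temp}}(M)$, apply Arthur's construction to each representative, and transport along the orbit by setting $r_{\bar{P}|P}(\eta\sigma_\lambda) := r_{\bar{P}|P}(\sigma_\lambda)$ for every unitary $\eta$. By Proposition \ref{prop:mu-invariance} we have $\mu(\sigma_\lambda) = \mu(\eta\sigma_\lambda)$, and this is the key input that keeps the conditions $\mathbf{R}_1$--$\mathbf{R}_7$ intact: $\mathbf{R}_1$, $\mathbf{R}_4$, $\mathbf{R}_5$, $\mathbf{R}_6$ depend only on combinatorial data involving $P, Q, \lambda$, which are $\eta$-invariant; $\mathbf{R}_2$ and $\mathbf{R}_7$ are constraints on the modulus and analytic behaviour that survive because $\mu$ is $\eta$-invariant; and $\mathbf{R}_3$ uses the observation, recorded after Corollary \ref{prop:char-G-M}, that every $\eta \in (M(F)/M^\sharp(F))^D$ is $W(M)$-invariant. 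The other reductions in Remark \ref{rem:construction-normalization} (to arbitrary tempered $\sigma$ and then to arbitrary $\sigma$ via Langlands quotients) preserve invariance tautologically.

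For (b), the main point is well-definedness of \eqref{eqn:r-equality}. Given $\sigma^\sharp$ with unitary central character, any two lifts $\sigma_1, \sigma_2 \in \Pi(M)$ with unitary central characters that both contain $\sigma^\sharp$ satisfy $\sigma_2 \simeq \eta\sigma_1$ for some $\eta \in (M(F)/M^\sharp(F))^D$ by Proposition \ref{prop:res-disjoint}; since both central characters are unitary, $\eta$ may be chosen unitary, and via Corollary \ref{prop:char-G-M} we identify $\eta$ with an element of $(G(F)/G^\sharp(F))^D$. The invariance of (a) then gives $r_{Q|P}(\sigma_{1,\lambda}) = r_{Q|P}(\sigma_{2,\lambda})$, so \eqref{eqn:r-equality} is unambiguous. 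The axioms $\mathbf{R}_1$--$\mathbf{R}_7$ for the $G^\sharp$-family are then inherited from the $G$-family through the canonical bijections $P \leftrightarrow P^\sharp$, $M \leftrightarrow M^\sharp$, $W(M) = W(M^\sharp)$, together with Proposition \ref{prop:mu-compatibility}.

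Finally (c) is almost immediate. For the first diagram, the definition $R_{Q|P}(\sigma_\lambda) = r_{Q|P}(\sigma_\lambda)^{-1} J_{Q|P}(\sigma_\lambda)$, Lemma \ref{prop:compatible-intertwinner} for the unnormalized intertwining operator, and the defining equation \eqref{eqn:r-equality} combine to give commutativity, once one identifies the underlying function spaces via Lemma \ref{prop:res-induction}. For the second diagram, the factorization \eqref{eqn:r_P} of $r_P(\tilde{w}, \sigma_\lambda)$ as $\ell(\tilde{w}) \circ R_{w^{-1}Pw|P}(\sigma_\lambda)$ reduces the claim to the first diagram, using that $\tilde{w} \in G^\sharp(F)$ makes $\ell(\tilde{w})\colon \varphi \mapsto \varphi(\tilde{w}^{-1}\cdot)$ compatible with restriction to $G^\sharp(F)$. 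The principal obstacle is (a): producing an invariant family without breaking any of the required properties, and this is exactly where the $\mu$-invariance of Proposition \ref{prop:mu-invariance} does the essential work.
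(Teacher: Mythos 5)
Your approach is essentially the paper's: reduce via Remark~\ref{rem:construction-normalization} to the case of a maximal proper Levi and $\sigma\in\Pi_{2,\text{temp}}(M)$, set $r_{Q|P}(\eta\sigma_\lambda):=r_{Q|P}(\sigma_\lambda)$, check the axioms, then handle well-definedness and the two diagrams exactly as you outline. However, there is a concrete error in the bookkeeping of which axiom uses Proposition~\ref{prop:mu-invariance}. You attribute the $\mu$-invariance to $\mathbf{R}_2$ and $\mathbf{R}_7$, and you claim that $\mathbf{R}_1$ ``depends only on combinatorial data involving $P,Q,\lambda$, which are $\eta$-invariant.'' This is precisely backwards. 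The conditions $\mathbf{R}_2$, $\mathbf{R}_6$, $\mathbf{R}_7$ (and $\mathbf{R}_3$, modulo the conjugation-invariance of $\eta$ that you correctly flag) are statements about the scalar functions $r_{Q|P}$ themselves; once one declares $r_{Q|P}(\eta\sigma_\lambda):=r_{Q|P}(\sigma_\lambda)$ they hold for $\eta\sigma$ tautologically, with no reference to $\mu$. What is \emph{not} tautological is $\mathbf{R}_1$: it is a cocycle identity on the normalized operators $R_{Q|P}=r_{Q|P}^{-1}J_{Q|P}$, and the unnormalized operators produce the $j$-function upon composition with the opposite parabolic, $J_{P|\bar{P}}(\eta\sigma_\lambda)J_{\bar{P}|P}(\eta\sigma_\lambda)=j(\eta\sigma_\lambda)\cdot\identity$. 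After imposing $r_{Q|P}(\eta\sigma_\lambda):=r_{Q|P}(\sigma_\lambda)$, the only nontrivial content of $\mathbf{R}_1$ for $\eta\sigma$ is the identity $r_{P|\bar{P}}(\sigma_\lambda)\,r_{\bar{P}|P}(\sigma_\lambda)=j(\eta\sigma_\lambda)$; the left side equals $j(\sigma_\lambda)$ by $\mathbf{R}_1$ for $\sigma$, so the needed input is exactly $j(\eta\sigma_\lambda)=j(\sigma_\lambda)$, i.e.\ Proposition~\ref{prop:mu-invariance}. Were $\mathbf{R}_1$ purely combinatorial as you assert, the theorem would be trivial and the $\mu$-invariance would be superfluous. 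Parts (b) and (c) of your argument, including the observation that $\eta$ is necessarily unitary because $\omega_{\sigma_1}$ and $\omega_{\sigma_2}$ are, and the use of Lemmas~\ref{prop:res-induction} and~\ref{prop:compatible-intertwinner} plus the factorization~\eqref{eqn:r_P}, agree with the paper.
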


Observe that the asserted invariance under $\eta$-twist is satisfied by Langlands' conjectural family of normalizing factors in Remark \ref{rem:normalization}, since they are defined in terms of local factors through the adjoint representation of $\Lgrp{M}$ on the Lie algebra of $\hat{G}$.

\begin{proof}
  We will show that $r_{Q|P}(\sigma_\lambda) = r_{Q|P}(\eta\sigma_\lambda)$ by reviewing the construction in Remark \ref{rem:construction-normalization}. More precisely, we will start from the square-integrable case and show that the equality is preserved throughout the inductive construction.

  To begin with, suppose that $M$ is maximal proper and $\sigma \in \Pi_{2,\text{temp}}(M)$. Suppose that $r_{Q|P}(\sigma_\lambda)$ is chosen so that $\mathbf{R}_1$, $\mathbf{R}_2$, $\mathbf{R}_3$, $\mathbf{R}_6$, $\mathbf{R}_7$ are satisfied. Put $r_{Q|P}(\eta\sigma_\lambda) := r_{Q|P}(\sigma_\lambda)$, then all the conditions above except $\mathbf{R}_1$ are trivially satisfied for $\eta\sigma$. As for $\mathbf{R}_1$, all what we need to check is that when $Q=\bar{P}$,
  $$ r_{P|Q}(\sigma_\lambda) r_{Q|P}(\sigma_\lambda) = j(\eta\sigma_\lambda), \quad \lambda \in \mathfrak{a}^*_{M,\C}. $$
  By Proposition \ref{prop:mu-invariance}, the right hand side is equal to $j(\sigma_\lambda)$, hence the equality holds. This completes the case of $\sigma \in \Pi_{2,\text{temp}}(M)$.

  Suppose now $\sigma \in \Pi_{\text{temp}}(M)$. By the classification of tempered representations, we may write $\sigma \hookrightarrow I^M_R(\tau)$ for some parabolic subgroup $R=M_R U_R$ of $M$ and $\tau \in \Pi_{2,\text{temp}}(M_R)$. Twisting everything by $\eta$, we obtain $\eta\sigma \hookrightarrow I^M_R(\eta\tau)$ in the classification of tempered representations. We have $r_{Q|P}(\sigma) = r_{Q(R)|P(R)}(\tau) = r_{Q(R)|P(R)}(\eta\tau)$ by the previous case; on the other hand, the inductive construction of normalizing factors says that $r_{Q|P}(\eta\sigma)=r_{Q(R)|P(R)}(\eta\tau)$, hence $r_{Q|P}(\eta\sigma)=r_{Q|P}(\sigma)$.

  The case of general $\sigma$ is similar. We may write $\sigma$ as the Langlands quotient $I^M_R(\tau_\mu) \twoheadrightarrow \sigma$ where $R=M_R U_R$ is as before, $\tau \in \Pi_\text{temp}(M_R)$ and $\Re\angles{\mu,\alpha^\vee} > 0$ for all $\alpha \in \Delta^M_R$. Twisting everything by $\eta$, we have $I^M_R(\eta\tau_\mu) \twoheadrightarrow \eta\sigma$, which is still a Langlands quotient. The inductive construction of normalizing factors says that $r_{Q|P}(\sigma) = r_{Q(R)|P(R)}(\tau_{\mu+\lambda})$. Repeating the arguments for the previous case, it follows that $r_{Q|P}(\sigma) = r_{Q|P}(\eta\sigma)$.

  Now we can check that 
  $$ r_{Q^\sharp|P^\sharp}(\sigma^\sharp_\lambda) := r_{Q|P}(\sigma_\lambda) $$
  is well-defined. Recall that $\omega_{\sigma^\sharp}$ and $\omega_\sigma$ are assumed to be unitary. If $\sigma'$ is another choice such that $\sigma^\sharp \hookrightarrow \sigma|_{M^\sharp}$ and $\omega_{\sigma'}$ is unitary, then there exists $\eta$ such that $\sigma \simeq \eta\sigma'$. This would imply that $\eta|_{Z_G(F)}$ is unitary, hence so is $\eta$ itself. Therefore $r_{Q|P}(\eta\sigma_\lambda)=r_{Q|P}(\sigma_\lambda)$.

  Finally, the commutativity of the diagram results from Lemma \ref{prop:compatible-intertwinner} and \eqref{eqn:r-equality}.
\end{proof}

\subsection{Relation to $R$-groups}\label{sec:rel-R}
In this subsection, we will fix
\begin{itemize}
  \item a parabolic subgroup $P=MU$ of $G$;
  \item and the corresponding parabolic subgroup $P^\sharp := P \cap G^\sharp = M^\sharp U$ of $G^\sharp$;
  \item $\sigma^\sharp \in \Pi_{2,\text{temp}}(M^\sharp)$;
  \item $\sigma \in \Pi_{2,\text{temp}}(M)$ such that $\sigma^\sharp \hookrightarrow \sigma|_{M^\sharp}$.
\end{itemize}
Given $\sigma^\sharp$, the existence of such a $\sigma$ is guaranteed by Propositions \ref{prop:lifting} and \ref{prop:heredity}.

\begin{lemma}[cf. {\cite[Lemma 2.3]{Go06}}]\label{prop:W^0-equality}
  Under the identification $W(M)=W(M^\sharp)$, we have $W^0_\sigma = W^0_{\sigma^\sharp}$.
\end{lemma}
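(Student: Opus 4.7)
The plan is to use the description of $W^0_\sigma$ given in the Proposition following the definition of $R_\sigma$, which identifies $W^0_\sigma$ with the Weyl group of the root system on $\mathfrak{a}_M$ generated by the multiples of those $\alpha \in \Sigma^{\mathrm{red}}_P$ with $\mu_\alpha(\sigma) = 0$. The same description applies to $\sigma^\sharp$ on $\mathfrak{a}_{M^\sharp}$, using the root system $\Sigma^{\mathrm{red}}_{P^\sharp}$.

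First I would recall the two compatibilities established earlier: the identification $W(M) = W(M^\sharp)$ (from the bijection of parabolic/Levi subgroups in \S\ref{sec:res-ind}), and the equality $\Sigma^{\mathrm{red}}_P = \Sigma^{\mathrm{red}}_{P^\sharp}$ of reduced root systems that indexes the $\mu$-functions. Under the natural inclusion $\mathfrak{a}^*_{M^\sharp} \hookrightarrow \mathfrak{a}^*_M$ coming from the decomposition $\mathfrak{a}^*_M = \mathfrak{a}^*_{M^\sharp} \oplus \mathfrak{b}^*$, the roots $\alpha$ and their coroots are compatibly matched, so it is meaningful to compare the two root subsystems cut out by the vanishing condition on $\mu_\alpha$.

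The key ingredient is then Proposition \ref{prop:mu-compatibility}, which states precisely that $\mu_\alpha(\sigma) = \mu_\alpha(\sigma^\sharp)$ for each $\alpha \in \Sigma^{\mathrm{red}}_P$ (using that $\sigma^\sharp \hookrightarrow \sigma|_{M^\sharp}$ and both lie in $\Pi_{2,\mathrm{temp}}$). Consequently
\[
\{\alpha \in \Sigma^{\mathrm{red}}_P : \mu_\alpha(\sigma) = 0\} = \{\alpha \in \Sigma^{\mathrm{red}}_{P^\sharp} : \mu_\alpha(\sigma^\sharp) = 0\},
\]
so the two Weyl groups characterized by the Proposition coincide as subgroups of the common Weyl group $W(M) = W(M^\sharp)$. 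This gives $W^0_\sigma = W^0_{\sigma^\sharp}$.

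I don't anticipate any real obstacle here: the statement is essentially a bookkeeping consequence of the $\mu$-function compatibility proven in Proposition \ref{prop:mu-compatibility}, combined with the general fact that $W^0$ is intrinsically determined by the zero locus of the $\mu_\alpha$. The only point that warrants a brief remark is the compatibility of coroots $\check{\alpha}$ (as defined in \eqref{eqn:alpha-check}) under the identification $\mathfrak{a}_{M^\sharp} \hookrightarrow \mathfrak{a}_M$, which might differ by the rational rescaling factor $r_\alpha$; but since we only care about the Weyl group generated by reflections in these roots, the rescaling is irrelevant.
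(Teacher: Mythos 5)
Your proposal is correct and follows essentially the same approach as the paper's proof: both rely on the characterization of $W^0_\sigma$ (resp.\ $W^0_{\sigma^\sharp}$) via the vanishing of $\mu_\alpha$, and both deduce the equality from Proposition \ref{prop:mu-compatibility} ($\mu_\alpha(\sigma) = \mu_\alpha(\sigma^\sharp)$) together with the identifications $W(M) = W(M^\sharp)$ and $\Sigma^{\mathrm{red}}_P = \Sigma^{\mathrm{red}}_{P^\sharp}$. The only minor difference is that the paper spells out the biconditional $\mu_\alpha(\sigma) = 0 \Leftrightarrow s_\alpha \in W^0_\sigma$ root reflection by root reflection, invoking \cite[Proposition IV.2.2]{Wa03} to justify that $\mu_\alpha(\sigma)=0$ already forces $s_\alpha \in W_\sigma$, whereas you appeal directly to the stated characterization of $W^0_\sigma$ as a Weyl group; this is cosmetic.
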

\begin{proof}
  Since both sides are generated by root reflections, it suffices to fix $\alpha \in \Sigma^{\text{red}}_P = \Sigma^{\text{red}}_{P^\sharp}$ and show that $s_\alpha \in W^0_\sigma$ if and only if $s_\alpha \in W^0_{\sigma^\sharp}$, where $s_\alpha$ denotes the root reflection with respect to $\alpha$.

  By \cite[Proposition IV.2.2]{Wa03}, $\mu_\alpha(\sigma)=0$ implies $s_\alpha \sigma \simeq \sigma$. Idem for $\sigma^\sharp$ instead of $\sigma$. According to the description of $W^0_\sigma$ (resp. $W^0_{\sigma^\sharp}$) in terms of $\mu$-functions, we obtain
  $$ \mu_\alpha(\sigma)=0 \Leftrightarrow s_\alpha \in W^0_\sigma \quad (\text{resp. } \mu_\alpha(\sigma^\sharp)=0 \Leftrightarrow s_\alpha \in W^0_{\sigma^\sharp}). $$

  On the other hand, Proposition \ref{prop:mu-compatibility} implies $\mu_\alpha(\sigma)=\mu_\alpha(\sigma^\sharp)$. The assertion follows immediately.
\end{proof}

\begin{definition}\label{def:L}
  Set
  \begin{align*}
    L(\sigma) & := \left\{ \eta \in (M(F)/M^\sharp(F))^D : \exists w \in W(M), \; w\sigma \simeq \eta\sigma \right\}, \\
    L(\sigma^\sharp) & := \left\{ \eta \in (M(F)/M^\sharp(F))^D : \exists w \in W(M), \; w\sigma \simeq \eta\sigma, \; w\sigma^\sharp \simeq \sigma^\sharp \right\}.
  \end{align*}
  They are subgroups of $(M(F)/M^\sharp(F))^D$. Indeed, let $\eta, \eta' \in L(\sigma)$ and $w, w' \in W(M)$ such that $\eta\sigma \simeq w\sigma$, $\eta'\sigma \simeq w'\sigma$. Then one has
  \begin{gather}\label{eqn:group-law-L}
    \eta'\eta\sigma \simeq \eta' w\sigma = w \eta'\sigma \simeq ww'\sigma.
  \end{gather}
  Hence $\eta\eta' \in L(\sigma)$. The case of $L(\sigma^\sharp)$ is similar. Note that $X^M(\sigma) \subset L(\sigma^\sharp) \subset L(\sigma)$.

  There is an obvious counterpart for the Weyl group, namely
  $$ \bar{W}_\sigma := \left\{ w \in W(M) : \exists \eta \in (M(F)/M^\sharp(F))^D, \; w\sigma \simeq \eta\sigma \right\}. $$
  It is clear that $\bar{W}_\sigma \supset W_\sigma$. On the other hand, Proposition \ref{prop:res-disjoint} implies that $\bar{W}_\sigma \supset W_{\sigma^\sharp}$.
\end{definition}

The following result is clear in view of the preceding definitions.
\begin{lemma}\label{prop:barGamma}
  There is a homomorphism given by
  \begin{align*}
    \bar{\Gamma}: \bar{W}_\sigma & \longrightarrow L(\sigma)/X^M(\sigma) \\
    w & \longmapsto \text{ the } \left[ \eta \text{ mod } X^M(\sigma) \right] \text{ such that } w\sigma \simeq \eta\sigma
  \end{align*}
  which satisfies
  \begin{enumerate}
    \item $\bar{\Gamma}$ is surjective;
    \item $\Ker(\bar{\Gamma}) = W_\sigma$;
    \item the preimage of $L(\sigma^\sharp)/X^M(\sigma)$ is $W_{\sigma^\sharp} W_\sigma$.
  \end{enumerate}
\end{lemma}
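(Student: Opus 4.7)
The plan is to establish well-definedness and the homomorphism property first, then treat the three numbered assertions in turn. For well-definedness, if $w \in \bar{W}_\sigma$ and two characters $\eta,\eta'$ both satisfy $w\sigma \simeq \eta\sigma \simeq \eta'\sigma$, then $\eta(\eta')^{-1}\sigma \simeq \sigma$, so $\eta(\eta')^{-1} \in X^M(\sigma)$; hence the class $[\eta \bmod X^M(\sigma)]$ depends only on $w$, and it lies in $L(\sigma)$ by the definition of the latter. For the homomorphism property, the crucial input is the remark following Corollary \ref{prop:char-G-M}: every $\eta \in (M(F)/M^\sharp(F))^D$ is $W(M)$-invariant. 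Thus if $w\sigma \simeq \eta\sigma$ and $w'\sigma \simeq \eta'\sigma$, then
\[
  w'w\sigma \simeq w'(\eta\sigma) = \eta \cdot (w'\sigma) \simeq \eta\eta'\sigma,
\]
which is exactly the computation appearing in \eqref{eqn:group-law-L}, giving $\bar{\Gamma}(w'w) = \bar{\Gamma}(w')\bar{\Gamma}(w)$.

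Assertion (1) is then immediate from the definition of $L(\sigma)$: for any $\eta \in L(\sigma)$, pick $w \in W(M)$ with $w\sigma \simeq \eta\sigma$; by construction $w \in \bar{W}_\sigma$ and $\bar{\Gamma}(w) = [\eta]$. Assertion (2) is equally direct: $\bar{\Gamma}(w) = 1$ means $w\sigma \simeq \eta\sigma$ for some $\eta \in X^M(\sigma)$, i.e.\ $w\sigma \simeq \sigma$, which is precisely the condition $w \in W_\sigma$.

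Assertion (3) is the main point. First I would verify the inclusion $W_{\sigma^\sharp} \subset \bar{W}_\sigma$: if $w\sigma^\sharp \simeq \sigma^\sharp$, then combining with $\sigma^\sharp \hookrightarrow \sigma|_{M^\sharp}$ gives a nonzero element of $\Hom_{M^\sharp}(w\sigma, \sigma)$, so Proposition \ref{prop:res-disjoint} furnishes $\eta \in (M(F)/M^\sharp(F))^D$ with $w\sigma \simeq \eta\sigma$. Since $W_\sigma = \Ker(\bar{\Gamma})$ is normal in $\bar{W}_\sigma$, the product $W_{\sigma^\sharp} W_\sigma = W_\sigma W_{\sigma^\sharp}$ is a subgroup. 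For the easy inclusion, write $w = w_1 w_2$ with $w_1 \in W_{\sigma^\sharp}$ and $w_2 \in W_\sigma$; then $\bar{\Gamma}(w) = \bar{\Gamma}(w_1)$, and if $\eta$ represents $\bar{\Gamma}(w_1)$, the element $w_1$ itself witnesses $\eta \in L(\sigma^\sharp)$. Conversely, assume $\bar{\Gamma}(w) = [\eta \bmod X^M(\sigma)]$ with $\eta \in L(\sigma^\sharp)$. The definition of $L(\sigma^\sharp)$ supplies $w' \in W(M)$ with $w'\sigma \simeq \eta\sigma$ and $w'\sigma^\sharp \simeq \sigma^\sharp$; the latter says $w' \in W_{\sigma^\sharp}$, while the former gives $\bar{\Gamma}(w') = \bar{\Gamma}(w)$, so $w(w')^{-1} \in W_\sigma$ and $w \in W_\sigma W_{\sigma^\sharp}$.

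No serious obstacle is expected: the whole argument is formal bookkeeping around two ingredients, namely the $W(M)$-invariance of characters of $M(F)/M^\sharp(F)$ (used for the homomorphism property and to recognize $W_\sigma$ as a normal subgroup via $\Ker\bar{\Gamma}$), and the dictionary of Proposition \ref{prop:res-disjoint} translating ``agreement after restriction to $M^\sharp$'' into ``twist by an element of $(M(F)/M^\sharp(F))^D$'' (used to show $W_{\sigma^\sharp} \subset \bar{W}_\sigma$ and to extract a suitable $w'$ in the converse direction of (3)).
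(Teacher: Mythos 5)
Your argument is correct and fills in exactly the details that the paper glosses over with ``clear in view of the preceding definitions.'' The two ingredients you isolate — the $W(M)$-invariance of characters of $M(F)/M^\sharp(F)$ and Proposition~\ref{prop:res-disjoint} — are the same ones the paper invokes implicitly: the homomorphism property is the computation already written out as \eqref{eqn:group-law-L} in Definition~\ref{def:L}, and the containment $W_{\sigma^\sharp}\subset\bar W_\sigma$ is also noted there. Your treatment of (3) is a careful unwinding of the definitions, in particular the observation that $W_\sigma=\Ker\bar\Gamma$ makes $W_{\sigma^\sharp}W_\sigma$ a subgroup, and the two directions of the inclusion are handled correctly. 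No gaps.
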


\begin{definition}
  Let
  $$ \Gamma: \bar{W}_\sigma/W_\sigma \rightiso L(\sigma)/X^M(\sigma) $$
  be the isomorphism obtained from $\bar{\Gamma}$ in the previous Lemma.
\end{definition}

\begin{proposition}[{Goldberg \cite[Proposition 3.2]{Go06}}]\label{prop:Goldberg}
  Set
  $$ R_\sigma[\sigma^\sharp] := (W_\sigma \cap W_{\sigma^\sharp})/W^0_{\sigma^\sharp} $$
  which is legitimate by Lemma \ref{prop:W^0-equality}. It is a subgroup of $R_{\sigma^\sharp}$.
  \begin{enumerate}
    \item The homomorphism $\bar{\Gamma}$ induces an isomorphism
      $$ \Gamma: R_{\sigma^\sharp}/R_\sigma[\sigma^\sharp] \rightiso L(\sigma^\sharp)/X^M(\sigma). $$
    \item If $R_\sigma = \{1\}$, or equivalently if $I^G_P(\sigma)$ is irreducible, then $\Gamma$ induces an isomorphism $R_{\sigma^\sharp} \rightiso L(\sigma^\sharp)/X^M(\sigma)$. Consequently, $R_{\sigma^\sharp}$ is abelian in this case.
  \end{enumerate}
\end{proposition}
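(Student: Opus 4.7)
The proof is essentially a bookkeeping exercise that combines Lemma \ref{prop:W^0-equality}, Lemma \ref{prop:barGamma}, and two applications of the standard isomorphism theorems. I would organize it as follows.

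First, I would verify that $R_\sigma[\sigma^\sharp]$ is well-defined and lies inside $R_{\sigma^\sharp}$. By Lemma \ref{prop:W^0-equality}, $W^0_\sigma = W^0_{\sigma^\sharp}$, and this common subgroup is normal in both $W_\sigma$ and $W_{\sigma^\sharp}$. Hence it is normal in their intersection, and $(W_\sigma \cap W_{\sigma^\sharp})/W^0_{\sigma^\sharp}$ embeds as a subgroup both of $R_{\sigma^\sharp} = W_{\sigma^\sharp}/W^0_{\sigma^\sharp}$ and of $R_\sigma = W_\sigma/W^0_\sigma$.

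Next, for assertion (1), I would restrict $\bar{\Gamma}$ to the subgroup $W_{\sigma^\sharp}W_\sigma \subset \bar{W}_\sigma$. By item (3) of Lemma \ref{prop:barGamma} its image under $\bar{\Gamma}$ is precisely $L(\sigma^\sharp)/X^M(\sigma)$, and by item (2) its kernel is $W_\sigma$. The first isomorphism theorem therefore gives
\[
  W_{\sigma^\sharp}W_\sigma / W_\sigma \;\rightiso\; L(\sigma^\sharp)/X^M(\sigma).
\]
Since $W_\sigma = \Ker(\bar{\Gamma})$ is normal in $\bar{W}_\sigma$, the second isomorphism theorem yields
\[
  W_{\sigma^\sharp}W_\sigma / W_\sigma \;\simeq\; W_{\sigma^\sharp}/(W_\sigma \cap W_{\sigma^\sharp}).
\]
Finally, since $W^0_{\sigma^\sharp}$ is contained in $W_\sigma \cap W_{\sigma^\sharp}$, the third isomorphism theorem identifies the right-hand side with $R_{\sigma^\sharp}/R_\sigma[\sigma^\sharp]$. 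Composing these three natural isomorphisms yields the $\Gamma$ of the statement, and by construction it is induced by $\bar{\Gamma}$.

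For assertion (2), the equivalence between $R_\sigma = \{1\}$ and the irreducibility of $I^G_P(\sigma)$ was recorded as a corollary to the Harish-Chandra--Silberger theorem. Assuming $R_\sigma = \{1\}$, we have $W_\sigma = W^0_\sigma = W^0_{\sigma^\sharp} \subset W_{\sigma^\sharp}$, so $W_\sigma \cap W_{\sigma^\sharp} = W^0_{\sigma^\sharp}$ and therefore $R_\sigma[\sigma^\sharp]$ is trivial. Part (1) then collapses to $R_{\sigma^\sharp} \rightiso L(\sigma^\sharp)/X^M(\sigma)$, and abelianness of $R_{\sigma^\sharp}$ is automatic since $L(\sigma^\sharp)/X^M(\sigma)$ is a subquotient of the abelian group $(M(F)/M^\sharp(F))^D$.

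The only substantive point, rather than routine formalism, is checking that the second isomorphism theorem is applicable, which in turn hinges on $W_\sigma$ being normal in $W_{\sigma^\sharp}W_\sigma$; this is free because $W_\sigma$ is normal in the whole of $\bar{W}_\sigma$ as the kernel of $\bar{\Gamma}$. No analytic input beyond Lemma \ref{prop:W^0-equality} (which already used Proposition \ref{prop:mu-compatibility}) is needed.
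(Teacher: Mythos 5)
Your proof is correct and takes essentially the same route as the paper: restrict $\bar{\Gamma}$ to $W_{\sigma^\sharp}W_\sigma$, obtain $W_{\sigma^\sharp}/(W_\sigma \cap W_{\sigma^\sharp}) \rightiso L(\sigma^\sharp)/X^M(\sigma)$ from Lemma \ref{prop:barGamma}, then pass to the quotient by $W^0_\sigma = W^0_{\sigma^\sharp}$ via Lemma \ref{prop:W^0-equality}. The paper's published proof is simply a compressed version of your argument; your more explicit invocation of the isomorphism theorems adds no new content but is accurate, and the treatment of assertion (2) agrees with the paper's observation that $R_\sigma[\sigma^\sharp]$ embeds in $R_\sigma$.
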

\begin{proof}
  Lemma \ref{prop:barGamma} gives an isomorphism
  $$ W_{\sigma^\sharp}/(W_\sigma \cap W_{\sigma^\sharp}) \rightiso L(\sigma^\sharp)/X^M(\sigma) $$
  that can be viewed as a restriction of $\Gamma$. By Lemma \ref{prop:W^0-equality}, we can take the quotients by $W^0_\sigma=W^0_{\sigma^\sharp}$ on the left hand side. The first assertion follows immediately.

  For the second assertion, it suffices to note that $R_\sigma[\sigma^\sharp]$ embeds into $R_\sigma$ as well, since $W^0_\sigma=W^0_{\sigma^\sharp}$.
\end{proof}

\subsection{L-parameters}\label{sec:res-L}
Let $G$ be a connected reductive $F$-group equipped with a quasisplit inner twist
$$ \psi: G \times_F \bar{F} \to G^* \times_F \bar{F}.$$

We identify $\hat{G}$ with $\widehat{G^*}$, thus $\Lgrp{G}=\Lgrp{G^*}$. The reader should recall that the definition of the complex reductive group $\widehat{G^*}$ and the $\Gamma_F$-action thereof depend on the choice of a $\Gamma_F$-stable splitting $(B^*, T^*, (E_\alpha)_{\alpha \in \Delta(B^*,T^*)})$ (also known as an $F$-splitting) of $G^*(\bar{F})$ (see \cite[\S 1]{Ko84}). These choices permit to define a correspondence $M^* \leftrightarrow \Lgrp{M^*}$ between the conjugacy classes of Levi subgroups of $G^*$ and their dual avatars inside $\Lgrp{G^*}$. Using the inner twist $\psi$, it also makes sense to say if a Levi subgroup $M^*$ of $G^*$ comes from $G$; this notion only depends on the conjugacy classes of Levi subgroups.

For any Levi subgroup $M$ of $G$, there is a canonical bijection between $W^G(M)$ and $W^{\hat{G}}(\hat{M})$ coming from the bijection between roots and coroots.

An L-parameter for $G^*$ is a homomorphism
$$ \phi: \WD_F \to \Lgrp{G^*} = \Lgrp{G} $$
such that
\begin{itemize}
  \item $\phi$ is an L-homomorphism, i.e. the composition of $\phi$ with the projection $\Lgrp{G} \to W_F$ equals $\WD_F \to W_F$;
  \item $\phi$ is continuous;
  \item the projection of $\Im(\phi)$ to $\hat{G}$ is formed of semisimple elements.
\end{itemize}

Two L-parameters $\phi_1$, $\phi_2$ are called equivalent, denoted by $\phi_1 \sim \phi_2$, if they are conjugate by $\hat{G}$. We say that $\phi$ is bounded if the projection of $\Im(\phi)$ to $\hat{G}$ is bounded (i.e.\! relatively compact); this property depends only on the equivalence class of $\phi$.

Given an L-parameter $\phi$, we define
$$ S_\phi := Z_{\hat{G}}(\Im(\phi)). $$
The connected component $S^0_\phi$ is a connected reductive subgroup of $\hat{G}$. We record the following basic properties.
\begin{enumerate}
  \item the Levi subgroups $\Lgrp{M^*} \subset \Lgrp{M}$ which contain $\Im(\phi)$ minimally are conjugate by $S^0_\phi$.
  \item Let $\Lgrp{M^*}$ be a Levi subgroup containing $\Im(\phi)$ minimally, then $Z_{\widehat{M^*}}^{\Gamma_F, 0}$ is a maximal torus of $S^0_\phi$.
\end{enumerate}
Indeed, these assertions follow from \cite[Proposition 3.6]{Bo79} and its proof, applied to the subgroup $\Im(\phi)$ of $\Lgrp{G}$.

So far, everything depends only on the quasisplit inner form $G^*$. We say that $\phi$ is $G$-relevant if $M^*_\phi$ corresponds to a Levi subgroup of $G$; in this case, we write $M^*_\phi = M_\phi$. Put

\begin{align*}
  \Phi(G) & := \{\phi : \WD_F \to \Lgrp{G}, \; \phi \text{ is a $G$-relevant L-parameter}\}/\sim, \\
  \Phi_\text{bdd}(G) & := \{\phi \in \Phi(G) : \phi \text{ is bounded} \}, \\
  \Phi_{2,\text{bdd}}(G) & := \{\phi \in \Phi_\text{bdd}(G) : M_\phi = G \}.
\end{align*}

Since the relevance condition is vacuous if $G=G^*$, we have $\Phi(G) \subset \Phi(G^*)$, etc.

Now let $G^\sharp$ be a subgroup of $G$ such that $G_\text{der} \subset G^\sharp \subset G$. We will study the lifting of L-parameters from $G^\sharp$ to $G$, which is in some sense dual to the restriction of representations. In what follows, the L-groups of $G$ and $G^\sharp$ will be defined using compatible choices of quasisplit inner twists and $F$-splittings.

There is a natural, $\Gamma_F$-equivariant central extension
$$ 1 \to \hat{Z}^\sharp \to \hat{G} \xrightarrow{\mathbf{pr}} \widehat{G^\sharp} \to 1 $$
which is dual to $G^\sharp \to G$; here $\hat{Z}^\sharp$ is the $\C$-torus dual to $G/G^\sharp$.

For $\phi \in \Phi(G)$, we shall set $\phi^\sharp := \mathbf{pr} \circ \phi \in \Phi(G^\sharp)$. When this equality holds, $\phi$ is called a lifting of $\phi^\sharp$.

\begin{theorem}\label{prop:lifting-parameter}
  For any $\phi^\sharp \in \Phi(G^\sharp)$, there exists a lifting $\phi \in \Phi(G)$ of $\phi^\sharp$ which is unique up to twists by $H^1_\mathrm{cont}(W_F, \hat{Z}^\sharp)$. If $\phi^\sharp \in \Phi_\mathrm{bdd}(G^\sharp)$ (resp. $\phi^\sharp \in \Phi_{2,\mathrm{bdd}}(G^\sharp)$), then $\phi$ can be chosen so that $\phi \in \Phi_\mathrm{bdd}(G)$ (resp. $\phi \in \Phi_{2,\mathrm{bdd}}(G)$).
\end{theorem}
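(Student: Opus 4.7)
The plan is to lift $\phi^\sharp$ through the central extension $1 \to \hat{Z}^\sharp \to \hat{G} \xrightarrow{\mathbf{pr}} \widehat{G^\sharp} \to 1$ by cohomological means, and then to adjust the resulting lift so as to respect the boundedness and squareness conditions. I would treat the two factors of $\WD_F = W_F \times \SU(2)$ separately. For $\phi^\sharp|_{\SU(2)}: \SU(2) \to \widehat{G^\sharp}$, consider the pulled-back central extension of $\SU(2)$ by the complex torus $\hat{Z}^\sharp$; the $\SU(2)$-action on $\hat{Z}^\sharp$ induced by $\phi^\sharp$ is trivial (there is no nonconstant homomorphism from a connected semisimple group to an abelian group), so this extension is classified by $H^2_\text{cont}(\SU(2), \hat{Z}^\sharp)$, which vanishes since $\SU(2) \simeq S^3$ is $2$-connected; hence a homomorphism lift $\phi_{\SU(2)}: \SU(2) \to \hat{G}$ exists, unique up to $\hat{Z}^\sharp$. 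For $\phi^\sharp|_{W_F}$, the obstruction to lifting through the central extension lies in $H^2_\text{cont}(W_F, \hat{Z}^\sharp)$, which vanishes for non-archimedean $F$ of characteristic zero by the standard splitting theorem for extensions of $W_F$ by complex tori (Langlands--Labesse). The two partial lifts amalgamate into a single $\phi: \WD_F \to \Lgrp{G}$ because the commutator map $W_F \times \SU(2) \to \hat{Z}^\sharp$ between their images in $\hat{G}$ is trivial, being a homomorphism on the $\SU(2)$-factor into an abelian group; and $\phi$ is $G$-relevant since $\mathbf{pr}$ induces a bijection between conjugacy classes of Levi subgroups of $\hat{G}$ and of $\widehat{G^\sharp}$.

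For the uniqueness clause, given two liftings $\phi_1, \phi_2$, the pointwise ratio $a(w) := \phi_1(w)\phi_2(w)^{-1}$ takes values in $\hat{Z}^\sharp$, and the condition that both $\phi_i$ are L-homomorphisms forces $a$ to be a continuous $1$-cocycle on $W_F$ valued in $\hat{Z}^\sharp$, with trivial restriction to $\SU(2)$ by the rigidity argument above. A direct computation in $\hat{G} \rtimes W_F$ shows that $\hat{G}$-conjugation of $\phi_2$ by $b \in \hat{Z}^\sharp$ multiplies it pointwise by the coboundary $w \mapsto b \cdot w(b)^{-1}$, so equivalence classes of lifts form a torsor under $H^1_\text{cont}(W_F, \hat{Z}^\sharp)$, as asserted.

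Finally, for the refinements: if $\phi^\sharp \in \Phi_\mathrm{bdd}(G^\sharp)$, the image of a chosen lift $\phi$ in $\hat{G}$ can fail to be bounded only along the toral direction of $\hat{Z}^\sharp$. Splitting $\hat{Z}^\sharp$ $\Gamma_F$-equivariantly into its maximal compact subtorus and an $\R$-split complement (via $\C^\times \simeq S^1 \times \R_{>0}$), one would produce a $1$-cocycle valued in the non-compact part whose twist absorbs the unbounded contribution and yields a bounded lift. For $\phi^\sharp \in \Phi_{2,\mathrm{bdd}}(G^\sharp)$, squareness is automatic: the minimal Levi of $\hat{G}$ containing $\Im(\phi)$ maps under $\mathbf{pr}$ onto the minimal Levi of $\widehat{G^\sharp}$ containing $\Im(\phi^\sharp)$, so $M_\phi = G$ iff $M_{\phi^\sharp} = G^\sharp$. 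The hard part will be the vanishing $H^2_\text{cont}(W_F, \hat{Z}^\sharp) = 0$ underlying existence of any lift; the boundedness adjustment, while elementary in spirit, still requires careful bookkeeping of the $\Gamma_F$-module structure of $\hat{Z}^\sharp$ in order to place the corrective cocycle inside $Z^1_\text{cont}(W_F, \hat{Z}^\sharp)$.
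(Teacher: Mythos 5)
Your existence argument is correct but takes a different route from the paper. The paper simply cites Labesse \cite[Théorème 8.1]{Lab85} for existence; you re-derive the lift by treating $W_F$ and $\SU(2)$ separately, using the vanishing of $H^2_{\text{cont}}(W_F, \hat{Z}^\sharp)$ for the $W_F$-part, the simple-connectedness of $\SU(2)$ for the other factor, and the perfectness of $\SU(2)$ to amalgamate via the (central-valued, hence bimultiplicative) commutator. This unpacks essentially the same cohomological fact that underlies Labesse's theorem; it is a legitimate alternative, though it costs you the citation. The uniqueness and $G$-relevance parts match the paper's proof.

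There is, however, a genuine gap in your boundedness adjustment. You assert that a lift $\phi$ "can fail to be bounded only along the toral direction of $\hat{Z}^\sharp$" and that you can "produce a $1$-cocycle valued in the non-compact part" to fix it — but you provide no mechanism for extracting a corrective cocycle from $\phi$. Boundedness of $\mathbf{pr}\circ\phi$ does not by itself present $\phi(w)$ as a product $z(w)\phi_0(w)$ with $z$ a cocycle in $\hat{Z}^\sharp$ and $\phi_0$ bounded: the preimages under $\mathbf{pr}$ are $\hat{Z}^\sharp$-torsors and no continuous cocycle-valued choice of unbounded factor is given. The paper closes exactly this gap with the central isogeny $G^\sharp \times C \to G$ and its dual $\hat{G} \to \widehat{G^\sharp}\times\hat{C}$: composing $\phi$ with this isogeny yields a pair $(\phi^\sharp, \phi_C)$, the first component bounded by hypothesis, and $\phi_C$ an honest $\hat{C}$-valued cocycle on $\WD_F$. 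Since $\hat{Z}^\sharp\to\hat{C}$ is an isogeny, boundedness of $\phi$ is equivalent (up to the finite kernel) to boundedness of $\phi_C$, and one can now twist $\phi_C$ within $Z^1_{\text{cont}}(W_F,\hat{C})$ and pull the twist back through the isogeny. Without this (or an equivalent device), your "absorb the unbounded contribution" step is not a proof — and indeed you flag it yourself as "requiring careful bookkeeping." You should supply the isogeny or an explicit projection $\hat{G}\to\hat{C}$ (e.g.\ the determinant for $\GL_N$) before the argument can stand. The square-integrability observation at the end is correct and matches the paper's.
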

Note that by local class field theory, $H^1_\mathrm{cont}(W_F, \hat{Z}^\sharp)$ parametrizes the continuous characters of $(G(F)/G^\sharp(F))^D$.

\begin{proof}
  The existential part is just \cite[Théorème 8.1]{Lab85} and the uniqueness follows easily. Assume that $\phi^\sharp \in \Phi_\mathrm{bdd}(G^\sharp)$ (resp. $\phi^\sharp \in \Phi_{2,\mathrm{bdd}}(G^\sharp)$) and let $\phi$ be any lifting of $\phi^\sharp$; we have to show that there exists a continuous $1$-cocycle $a: W_F \to \hat{Z}^\sharp$ such that the twisted L-parameter $a\phi$ is bounded (resp. bounded and satisfying $M_{a\phi}=M_\phi=G$).

  Note that there exists a central isogeny of connected reductive groups
  $$ G^\sharp \times C \to G $$
  given by multiplication, where $C$ is some subtorus of $Z^0_G$. Hence $C \to G/G^\sharp$ is also an isogeny. By duality, we obtain a $\Gamma_F$-equivariant central isogeny of connected reductive complex groups
  $$ \hat{G} \to \widehat{G^\sharp} \times \hat{C}. $$

  Let $\phi'$ be the composition of $\phi$ (projected to the $\hat{G}$ component) with the aforementioned central isogeny. Let us show that $\Im(\phi')$ is bounded upon twisting $\phi$. The first component of $\phi'$ is automatically bounded since $\phi^\sharp$ is. On the other hand, $\hat{C}$ is isogeneous to $\hat{Z}^\sharp$, therefore upon replacing $\phi$ by $a\phi$ for some suitable $1$-cocycle $a: W_F \to \hat{Z}^\sharp$, the second component can be made bounded. Hence $a\phi$ is a bounded L-parameter. 

  To finish the proof, it remains to observe that assuming $\phi^\sharp = \mathbf{pr} \circ \phi$, the preimage of $M^\sharp_{\phi^\sharp}$ in $\hat{G}$ is equal to $M_\phi$.
\end{proof}

Here we record a construction related to inner forms which will be required later. Recall that the inner forms of $G^*$ are parametrized by $H^1(F, G^*_\text{AD})$. Kottwitz \cite[\S 6]{Ko84} defined the ``abelianization'' map $\text{ab}^1: H^1(F, G^*_\text{AD}) \to (Z^{\Gamma_F}_{\hat{G}_\text{SC}})^D$ between pointed sets. Hence we can associate to $G$ a character $\chi_G$ of $Z^{\Gamma_F}_{\hat{G}_\text{SC}}$, namely by
\begin{equation}\label{eqn:chi-G}\begin{split}
  \{ \text{inner forms of } G^* \}  = H^1(F, G^*_\text{AD}) \xrightarrow{\text{ab}^1} (Z^{\Gamma_F}_{\hat{G}_\text{SC}})^D, \\
  G \longmapsto \left[ \chi_G : Z^{\Gamma_F}_{\hat{G}_\text{SC}} \to \C^\times \right].
\end{split}\end{equation}

\section{Restriction, continued}\label{sec:res-2}
This section is devoted to the study of restriction under parabolic induction. As before, we fix connected reductive $F$-groups $G$, $G^\sharp$ such that $G_\text{der} \subset G^\sharp \subset G$. We also fix a Levi subgroup $M$ of $G$ and $P \in \mathcal{P}(M)$. The bijection between Levi subgroups (resp. parabolic subgroups) $M \mapsto M^\sharp$ (resp. $P \mapsto P^\sharp$) is defined in \S\ref{sec:res-ind}.

The normalizing factors for $G$, $G^\sharp$ are chosen as in Theorem \ref{prop:normalizing-invariance} for the representations with unitary central character.

Let $\sigma \in \Pi(M)$. We shall make the following (rather restrictive) hypothesis on $\sigma$ throughout this section.
\begin{hypothesis}\label{hyp:irred}
  We assume that $\pi := I^G_P(\sigma)$ is irreducible.
\end{hypothesis}

\subsection{Embedding of central extensions}\label{sec:embedding}
\begin{proposition}\label{prop:S-embedding}
  Let $\sigma \in \Pi(M)$ and $\pi := I^G_P(\sigma) \in \Pi(G)$. 
  \begin{enumerate}
    \item Under the identification of Corollary \ref{prop:char-G-M}, we have $X^M(\sigma) \hookrightarrow X^G(\pi)$.
    \item Let $\omega \in X^M(\sigma)$, $I^M_\omega \in \Isom_M(\omega\sigma, \sigma)$, define the operator $I^G_\omega$ as the composition of
    \begin{align*}
        A_\omega: \omega I^G_P(\sigma) & \longrightarrow I^G_P(\omega\sigma) \\
        \varphi & \longmapsto \omega(\cdot)\varphi(\cdot)
    \end{align*}
    with $I^G_P(I^M_\omega): I^G_P(\omega\sigma) \rightiso I^G_P(\sigma)$. Then $I^G_\omega \in \Isom_G(\omega\pi, \pi)$.
    \item We have the following commutative diagram of groups with exact rows
    $$\xymatrix{
      1 \ar[r] & \C^\times \ar[r] & S^G(\pi) \ar[r] & X^G(\pi) \ar[r] & 1 \\
      1 \ar[r] & \C^\times \ar@{=}[u] \ar[r] & S^M(\sigma) \ar@{^{(}->}[u] \ar[r] & X^M(\sigma) \ar@{^{(}->}[u] \ar[r] & 1
    }$$
    where the arrow $S^M(\sigma) \to S^G(\pi)$ is the map $I^M_\omega \mapsto I^G_\omega$ defined above.
  \end{enumerate}
\end{proposition}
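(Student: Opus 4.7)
The plan is to prove (i) and (ii) simultaneously by constructing $I^G_\omega$ and checking its properties, and then to deduce (iii) by verifying that $I^M_\omega \mapsto I^G_\omega$ respects the central extension data. The essential mechanism throughout is that any $\omega \in (M(F)/M^\sharp(F))^D$, viewed via Corollary \ref{prop:char-G-M} as a character of $G(F)$, is trivial on $G^\sharp(F) \supset G_{\text{der}}(F)$, and in particular on every unipotent radical.

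For (ii), I first check that $A_\omega : \omega\pi \to I^G_P(\omega\sigma)$, defined by $\varphi \mapsto \omega(\cdot)\varphi(\cdot)$, is a well-defined $G(F)$-equivariant isomorphism. The triviality of $\omega$ on $U(F)$ gives $\omega(umx) = \omega(m)\omega(x)$, which makes $A_\omega\varphi$ satisfy the $P^\sharp$-equivariance defining $I^G_P(\omega\sigma)$; the $G(F)$-equivariance $A_\omega \circ (\omega\pi)(g) = I^G_P(\omega\sigma)(g) \circ A_\omega$ is a direct check, and $\varphi \mapsto \omega^{-1}(\cdot)\varphi(\cdot)$ is an inverse. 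The composite $I^G_\omega := I^G_P(I^M_\omega) \circ A_\omega$ is then the required element of $\Isom_G(\omega\pi,\pi)$. Part (i) follows at once: since the map $X^M(\sigma) \to X^G(\pi)$ is the restriction of the bijection of Corollary \ref{prop:char-G-M}, existence of $I^G_\omega$ shows the image lies in $X^G(\pi)$ and injectivity is automatic.

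For (iii), the exactness of the rows and the commutativity of the square on the right are immediate from the definitions. For the left square, the map sends $z \cdot \identity \in S^M(\sigma)$ (corresponding to $\omega = 1$ and $I^M_1 = z\identity$) to $I^G_1 = I^G_P(z\identity) \circ A_1 = z\identity$, since $A_1$ is the identity; this also shows injectivity on the $\C^\times$ subgroup, hence on all of $S^M(\sigma)$ by the five-lemma. It remains to show that $I^M_\omega \mapsto I^G_\omega$ is a homomorphism. Unwinding the definitions yields the explicit formula
\begin{equation*}
  I^G_\omega(\varphi)(x) \;=\; \omega(x)\, I^M_\omega(\varphi(x)),\qquad x \in G(F),\ \varphi \in V_\pi,
\end{equation*}
where $I^M_\omega$ is viewed as a $\C$-linear endomorphism of $V_\sigma$. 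Under the composition law on $S^G(\pi)$ recalled after equation~\eqref{eqn:res-S} (where $I^G_\omega$ is reinterpreted as a map $\omega'\omega\pi \to \omega'\pi$ before composing with $I^G_{\omega'}$), this formula gives directly
\begin{equation*}
  (I^G_{\omega'} \cdot I^G_\omega)(\varphi)(x) \;=\; \omega'(x)\omega(x)\, I^M_{\omega'}\bigl(I^M_\omega(\varphi(x))\bigr),
\end{equation*}
which is precisely $I^G_{\omega'\omega}(\varphi)(x)$ for the product $I^M_{\omega'\omega} := I^M_{\omega'} \cdot I^M_\omega$ computed inside $S^M(\sigma)$.

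The argument presents no deep conceptual obstacle; the main difficulty is purely notational, namely keeping rigorous track of the twistings $\omega\pi$ versus $I^G_P(\omega\sigma)$ and the convention on the group law of the central extensions $S^M(\sigma)$ and $S^G(\pi)$. The sole substantive input is the triviality of $\omega$ on unipotent radicals, which is what permits the formula $\omega \cdot I^G_P(\sigma) \simeq I^G_P(\omega\sigma)$ in the first place and thereby bridges restriction on $M$ with restriction on $G$.
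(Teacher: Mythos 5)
Your proof is correct and follows essentially the same route as the paper: the paper's argument reduces exactly to the explicit formula $I^G_\omega(\varphi)(x) = \omega(x)\, I^M_\omega(\varphi(x))$, which you derive and use in the same way, then observes (as you verify in detail) that this formula makes $I^M_\omega \mapsto I^G_\omega$ compatible with the composition laws on $S^M(\sigma)$ and $S^G(\pi)$. You simply fill in the bookkeeping (well-definedness of $A_\omega$ via triviality of $\omega$ on unipotent radicals, equivariance, the five-lemma for injectivity) that the paper leaves as "clear."
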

\begin{proof}
  It follows from the definition that $I^G_\omega \in S^G(\pi)$ for all $\omega \in X^M(\sigma)$, hence $X^M(\sigma) \subset X^G(\pi)$. On the other hand, $I^G_\omega$ is simply the map $\varphi \mapsto \omega(\cdot) I^M_\omega(\varphi(\cdot))$. It is clear that $I^M_\omega \mapsto I^G_\omega$ is a group homomorphism. The commutativity of the diagram is then clear.
\end{proof}

We denote by $\mathbf{K}_0(\Pi_-(S^M(\sigma)))$ the space of virtual characters of $S^M(\sigma)$ generated by the elements of $\Pi_-(S^M(\sigma))$. Similarly, $\mathbf{K}_0(\Pi_\sigma)$ denotes the space of virtual characters of $M^\sharp(F)$ generated by the elements of $\Pi_\sigma$. The bijection $\rho \mapsto \pi^\sharp_\rho$ in Theorem \ref{prop:S-decomp} extends to an isomorphism $\mathbf{K}_0(\Pi_-(S^M(\sigma))) \rightiso \mathbf{K}_0(\Pi_\sigma)$. Assuming $\pi = I^G_P(\sigma)$ irreducible, we have the analogous isomorphism $\mathbf{K}_0(\Pi_-(S^G(\pi))) \rightiso \mathbf{K}_0(\Pi_\pi)$, as well as the linear maps
\begin{align*}
  \text{Ind}^{S^G(\pi)}_{S^M(\sigma)}: & \mathbf{K}_0(\Pi_-(S^M(\sigma))) \longrightarrow \mathbf{K}_0(\Pi_-(S^G(\pi))), \\
  I^{G^\sharp}_{P^\sharp}: & \mathbf{K}_0(\Pi_\sigma) \longrightarrow \mathbf{K}_0(\Pi_\pi),
\end{align*}
given by the usual induction and normalized parabolic induction, respectively. Note that Lemma \ref{prop:res-induction} is invoked here.

\begin{proposition}\label{prop:K_0-diagram}
  The following diagram commutes.
  $$\xymatrix{
    \mathbf{K}_0(\Pi_-(S^G(\pi))) \ar[rr]^{\simeq} & & \mathbf{K}_0(\Pi_\pi) \\
    \mathbf{K}_0(\Pi_-(S^M(\sigma))) \ar[rr]_{\simeq} \ar[u]^{\mathrm{Ind}^{S^G(\pi)}_{S^M(\sigma)}} & & \mathbf{K}_0(\Pi_\sigma) \ar[u]_{I^{G^\sharp}_{P^\sharp}}
  }$$
\end{proposition}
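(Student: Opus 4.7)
The plan is to compute the $S^M(\sigma) \times G^\sharp(F)$-module structure of $V_\pi = I^G_P(V_\sigma)$ in two ways and match them. By Proposition \ref{prop:S-embedding}(2), the action of $I^M_\omega \in S^M(\sigma)$ on $V_\pi$ is $\varphi \mapsto \omega(\cdot)\,I^M_\omega(\varphi(\cdot))$. Identifying $V_\pi$ with $I^{G^\sharp}_{P^\sharp}(V_\sigma)$ via Lemma \ref{prop:res-induction}, and noting that $\omega \in (M(F)/M^\sharp(F))^D = (G(F)/G^\sharp(F))^D$ is trivial on $G^\sharp(F)$, this reduces to the pointwise composition with the action of $S^M(\sigma)$ on $V_\sigma$. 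Thus $V_\pi$, viewed as an $S^M(\sigma) \times G^\sharp(F)$-module, is obtained by applying $I^{G^\sharp}_{P^\sharp}$ to $V_\sigma$ regarded as an $S^M(\sigma) \times M^\sharp(F)$-module.

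Applying Theorem \ref{prop:S-decomp} at the Levi level yields $V_\sigma \simeq \bigoplus_{\rho \in \Pi_-(S^M(\sigma))} V_\rho \otimes V_{\sigma^\sharp_\rho}$ as $S^M(\sigma) \times M^\sharp(F)$-module, so exactness of $I^{G^\sharp}_{P^\sharp}$ gives
$$ V_\pi \;\simeq\; \bigoplus_{\rho \in \Pi_-(S^M(\sigma))} V_\rho \otimes I^{G^\sharp}_{P^\sharp}(V_{\sigma^\sharp_\rho}) $$
as $S^M(\sigma) \times G^\sharp(F)$-module. On the other hand, Hypothesis \ref{hyp:irred} permits us to apply Theorem \ref{prop:S-decomp} at the level of $G$: we obtain $V_\pi \simeq \bigoplus_{\rho' \in \Pi_-(S^G(\pi))} V_{\rho'} \otimes V_{\pi^\sharp_{\rho'}}$ as $S^G(\pi) \times G^\sharp(F)$-module. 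Restricting to $S^M(\sigma)$ and isolating the $\rho$-isotypic component (for fixed $\rho \in \Pi_-(S^M(\sigma))$) leads to
$$ V_\rho \otimes I^{G^\sharp}_{P^\sharp}(V_{\sigma^\sharp_\rho}) \;\simeq\; V_\rho \otimes \bigoplus_{\rho' \in \Pi_-(S^G(\pi))} \bigl[V_{\rho'}|_{S^M(\sigma)} : \rho\bigr] \cdot V_{\pi^\sharp_{\rho'}} $$
as $G^\sharp(F)$-modules. Cancelling $V_\rho$ and invoking Frobenius reciprocity between $S^M(\sigma)$ and $S^G(\pi)$ (these are central extensions by a common $\C^\times$ of finite groups, so reciprocity holds verbatim for representations in $\Pi_-$), one concludes
$$ I^{G^\sharp}_{P^\sharp}(\sigma^\sharp_\rho) \;=\; \sum_{\rho' \in \Pi_-(S^G(\pi))} \bigl[\Ind^{S^G(\pi)}_{S^M(\sigma)} \rho : \rho'\bigr] \cdot \pi^\sharp_{\rho'} $$
in $\mathbf{K}_0(\Pi_\pi)$, which is exactly the commutativity of the diagram evaluated at $\rho$.

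The only delicate point is the opening identification: one must use Proposition \ref{prop:S-embedding}(2) to see that the $S^M(\sigma)$-action inherited from $S^G(\pi)$ acts purely fibrewise on $I^{G^\sharp}_{P^\sharp}(V_\sigma)$, so that the functor $I^{G^\sharp}_{P^\sharp}$ can be slipped through the $M$-level decomposition without disturbing the $S^M(\sigma)$-structure. Once this is secured, the rest is formal character theory of the finite central extensions in play.
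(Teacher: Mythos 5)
Your proof is correct, and it takes a genuinely different route from the paper's. The paper argues at the level of distributions: it writes $\Theta_{\sigma^\sharp}$ and $\Theta_{\pi^\sharp_1}$ as finite Fourier-type inversion sums indexed by $Z^M(\sigma)$ and $Z^G(\pi)$ involving the twisted characters $\Theta_\sigma[I^M_\omega]$, $\Theta_\pi[I^G_\eta]$, invokes the Frobenius character formula for $\Tr(\Ind \rho)$, and then matches the two sums term by term using the parabolic descent of $\omega$-characters from \cite[Th\'eor\`eme 3.8.2]{L10} together with a vanishing argument for the classes $\omega \in Z^M(\sigma) \setminus Z^G(\pi)$. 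Your argument stays entirely at the module level: you identify the $S^M(\sigma)\times G^\sharp(F)$-module $V_\pi$ as $I^{G^\sharp}_{P^\sharp}$ applied to the $S^M(\sigma)\times M^\sharp(F)$-module $V_\sigma$ (which is exactly the content of Proposition~\ref{prop:S-embedding}(2) combined with Lemma~\ref{prop:res-induction}, since $\omega|_{G^\sharp(F)}=1$ makes the $S^M(\sigma)$-action pointwise), decompose $V_\pi$ once through the Levi and once through $G$ via Theorem~\ref{prop:S-decomp}, extract the $\rho$-multiplicity space, and close with Frobenius reciprocity for the central extensions in sight. This avoids the machinery of $\omega$-representations and twisted character descent altogether, and in fact establishes a genuine $G^\sharp(F)$-isomorphism $I^{G^\sharp}_{P^\sharp}(\sigma^\sharp_\rho) \simeq \bigoplus_{\rho'} \Hom_{S^M(\sigma)}(\rho,\rho') \otimes \pi^\sharp_{\rho'}$ rather than merely an identity of virtual characters, so your version is, if anything, slightly sharper. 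The one informal phrase is "cancelling $V_\rho$," which is more precisely "passing to the multiplicity space $\Hom_{S^M(\sigma)}(V_\rho,-)$"; and the standard caveat noted in the paper after the statement — replace the $\C^\times$-extensions by $\mu_m$-extensions to make $\Ind$ and Frobenius reciprocity unambiguous — applies equally to your use of $\Ind^{S^G(\pi)}_{S^M(\sigma)}$. Neither of these is a gap.
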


To prove this, some harmonic analysis on the groups $S^M(\sigma)$, $S^G(\pi)$ is needed. These groups are infinite; nonetheless, the usual theory carries over as we are only concerned about the representations in $\Pi_-(S^G(\pi))$, $\Pi_-(S^M(\sigma))$ or their contragredients. The worried reader may reduce $S^G(\pi) \to X^G(\pi)$ (resp. $S^M(\sigma) \to X^M(\sigma)$) to a central extension by $\mu_m := \{z \in \C^\times : z^m = 1 \}$ for some $m \in \Z$, which is always possible.

\begin{proof}
  Let $\sigma^\sharp \in \Pi_\sigma$ and $\rho \in \Pi_-(S^M(\sigma))$ be the corresponding element. Define
  \begin{align*}
    \tau & := \text{Ind}^{S^G(\pi)}_{S^M(\sigma)}(\rho) \in \mathbf{K}_0(\Pi_-(S^G(\pi))), \\
    \pi^\sharp & := I^{G^\sharp}_{P^\sharp}(\sigma^\sharp) \in \mathbf{K}_0(\Pi_\pi).
  \end{align*}

  We have to show that $\tau$ corresponds to $\pi^\sharp$. To begin with, set
  $$ \sigma[I^M_\omega] := \sigma(\cdot) \circ I^M_\omega : M(F) \to \Aut_\C(V_\sigma), \quad I^M_\omega \in S^M(\sigma) $$
  where $V_\sigma$ is the underlying vector space of $\sigma$. Then $(\sigma[I^M_\omega],\sigma)$ is a smooth $\omega$-representation of $M(F)$, that is,
  $$ \sigma[I^M_\omega](xy) = \omega(y) \sigma[I^M_\omega](x)\sigma(y), \quad x,y \in M(F). $$
  This notion appears in the study of automorphic induction, and more generally it fits into the formalism of twisted endoscopy. Cf. \cite[\S 0.4]{L10}. It is easy to see that $\Theta_\sigma[I^M_\sigma] := \Tr\sigma[I^M_\sigma]$ is well-defined as a distribution on $M(F)$. We may restrict $\sigma[I^M_\omega]$ to $M^\sharp(F)$; by abuse of notations, the corresponding distribution, which is also well-defined by Proposition \ref{prop:lifting}, is again denoted by $\Theta_\sigma[I^M_\sigma]$. The same definition applies to $\pi$.

  Theorem \ref{prop:S-decomp} implies the following identity of distributions on $M^\sharp(F)$
  \begin{gather}\label{eqn:sigma-sharp-inversion}
    \Theta_{\sigma^\sharp} = \frac{1}{|X^M(\sigma)|} \sum_{\omega \in X^M(\sigma)} \Tr(\rho^{\vee})\left( I^M_\omega \right) \cdot \Theta_\sigma[I^M_\omega]
  \end{gather}
  where $\rho^\vee$ is the contragredient of $\rho$ and $I^M_\omega \in S^M(\sigma)$ is any preimage $\omega$; the summand does not depend on the choice of $I^M_\omega$.

  Define $Z^M(\sigma)$ to be the subgroup of elements $\omega \in X^M(\sigma)$ such that every preimage of $\omega$ in $S^M(\sigma)$ is central. Define $Z^G(\pi)$ similarly. The sum in \eqref{eqn:sigma-sharp-inversion} can be taken over $Z^M(\sigma)$, since $\rho|_{\C^\times} = \identity$ implies that $\Tr(\rho^{\vee})$ is zero outside the center.

  Let $\pi^\sharp_1 \in \mathbf{K}_0(\Pi_\pi)$ be the character corresponding to $\tau$. By the same reasoning, there is an identity of distributions on $G^\sharp(F)$
  \begin{gather}\label{eqn:pi-sharp_1-inversion}
    \Theta_{\pi^\sharp_1} = \frac{1}{|X^G(\pi)|} \sum_{\eta \in Z^G(\pi)} \Tr(\tau^{\vee})\left( I^G_\eta \right) \cdot \Theta_\pi[I^G_\eta]
  \end{gather}
  where $I^G_\eta \in S^G(\pi)$ is any preimage of $\eta$, as before. It remains to show $\Theta_{\pi^\sharp_1}(f^\sharp) = \Theta_{\pi^\sharp}(f^\sharp)$ for every $f^\sharp \in C^\infty_c(G^\sharp(F))$.

  Choose a special maximal compact open subgroup $K \subset G(F)$ in good position relative to $M$, and set $K^\sharp := K \cap G^\sharp(F)$. Equip $K$ and $K^\sharp$ with appropriate Haar measures that are compatible with the Iwasawa decomposition (see \cite[I.1]{Wa03}). The parabolic descent of characters implies
  \begin{gather}\label{eqn:pi-sharp-inversion}
    \Theta_{\pi^\sharp}(f^\sharp) = \Theta_{\sigma^\sharp}(f^\sharp_{P^\sharp})
    = \frac{1}{|X^M(\sigma)|} \sum_{\omega \in Z^M(\sigma)} \Tr(\rho^{\vee})\left( I^M_\omega \right) \cdot \Theta_\sigma[I^M_\sigma](f^\sharp_{P^\sharp}),
  \end{gather}
  where
  $$ f^\sharp_{P^\sharp}(m) = \delta^{\frac{1}{2}}_P(m) \iint_{U(F) \times K^\sharp} f^\sharp(k^{-1}muk) \dd u \dd k, \quad m \in M^\sharp(F). $$

  Since $\tau = \text{Ind}^{S^G(\pi)}_{S^M(\sigma)}(\rho)$, we have
  $$ \Tr(\tau^\vee)\left( I^G_\eta \right) = \begin{cases}
    \dfrac{1}{|X^M(\sigma)|} \sum_{\xi \in X^G(\pi)} \Tr(\rho^\vee) \left((I^G_\xi)^{-1} I^M_\eta I^G_\xi \right), & \text{if } \eta \in X^M(\sigma), \; I^M_\eta \mapsto I^G_\eta, \\
    0, & \text{otherwise},
  \end{cases}$$
  where $I^G_\xi \in S^G(\pi)$ is any preimage of $\xi$. Cf. \cite[Proposition 20]{Se67}. This may be rewritten as
  $$ \Tr(\tau^\vee)\left( I^G_\eta \right) = \begin{cases}
    \dfrac{|X^G(\pi)|}{|X^M(\sigma)|} \Tr(\rho^\vee) \left(I^M_\eta \right), & \text{if } \eta \in X^M(\sigma) \cap Z^G(\pi), \; I^M_\eta \mapsto I^G_\eta, \\
    0, & \text{otherwise}.
  \end{cases}$$

  We claim that $\Theta_\pi[I^G_\omega](f^\sharp) = \Theta_\sigma[I^M_\omega](f^\sharp_{P^\sharp})$ if $I^M_\sigma \mapsto I^G_\sigma \in S^G(\pi)$. First of all, note that $\Theta_\pi[I^G_\omega]$ is the normalized parabolic induction of $\Theta_\sigma[I^M_\omega]$ in the setting of $\omega$-representations \cite[\S 1.7 and \S 3.8]{L10}. Hence we have the parabolic descent of $\omega$-characters \cite[Théorème 3.8.2]{L10}, namely
  $$ \Theta_\pi[I^G_\omega](f) = \Theta_\sigma[I^M_\omega](f_{P,\omega}), \quad f \in C^\infty_c(G(F)) $$
  where
  $$ f_{P,\omega}(m) = \delta^{\frac{1}{2}}_P(m) \iint_{U(F) \times K} \omega(k) f(k^{-1}muk) \dd u \dd k, \quad m \in M(F). $$
  To prove the claim, let us sketch how to ``restrict'' the $\omega$-character relation above to $G^\sharp(F)$. There exists a compact open subgroup $C \subset Z_G(F)$, verifying
  \begin{enumerate}
    \item $C \cap G^\sharp(F) = \{1\}$;
    \item $C \subset K$;
    \item $\omega$ and $\omega_\sigma$ are trivial on $C$;
    \item the multiplication maps $C \times G^\sharp(F) \hookrightarrow G(F)$ and $C \times M^\sharp(F) \hookrightarrow M(F)$ are submersive.
  \end{enumerate}
  Define $\mathbbm{1}_C$ to be the constant function $1$ on $C$. Choose the unique Haar measure on $C$ such that the submersions above preserve measures locally. Given $f^\sharp \in C^\infty_c(G^\sharp(F))$, we set $f = \text{vol}(C)^{-1} \mathbbm{1}_C \otimes f^\sharp$ on $C \times G^\sharp(F)$, and zero elsewhere. For such $f$, by inspecting the proof in \cite[Proposition 1.8.1]{L10}, we may redefine $f_{P,\omega}$ by taking the double integral of $f(k^{-1}muk)$ over $U(F) \times K^\sharp$, so that $f_{P,\omega} = \text{vol}(C)^{-1} \mathbbm{1}_C \otimes f^\sharp_{P^\sharp}$ on $C \times M^\sharp(F)$ and zero elsewhere. Therefore
  \begin{align*}
    \Theta_\pi[I^G_\omega](f) & = \Theta_\pi[I^G_\omega](f^\sharp), \\
    \Theta_\sigma[I^M_\sigma](f_{P,\omega}) & = \Theta_\sigma[I^M_\sigma](f^\sharp_{P^\sharp}).
  \end{align*}
  Hence our claim follows.

  All in all, \eqref{eqn:pi-sharp_1-inversion} becomes
  $$ \Theta_{\pi^\sharp_1}(f^\sharp) =  \frac{1}{|X^M(\sigma)|} \sum_{\omega \in X^M(\sigma) \cap Z^G(\pi)} \Tr(\rho^\vee)(I^M_\omega) \cdot \Theta_\sigma[I^M_\omega](f^\sharp_{P^\sharp}) $$
  where $I^M_\sigma \in S^M(\sigma)$ is any preimage of $\omega$ and $I^M_\sigma \mapsto I^G_\sigma \in S^G(\pi)$. In comparison with \eqref{eqn:pi-sharp-inversion}, it suffices to show that $\Theta_\sigma[I^M_\omega](f^\sharp_{P^\sharp}) = 0$ if $\omega \in Z^M(\sigma)$ but $\omega \notin Z^G(\pi)$. Indeed, for $I \in S^G(\pi)$, we have
  \begin{align*}
    \Theta_\sigma[I^M_\omega](f^\sharp_{P^\sharp}) & = \Theta_\pi[I^G_\omega](f^\sharp) = \Tr\mathfrak{S}(I^G_\omega, f^\sharp) \\
    & = \Tr\mathfrak{S}(I^{-1} I^G_\omega I, f^\sharp)
  \end{align*}
  since $\mathfrak{S}$ is a representation of $S^G(\pi) \times G^\sharp(F)$. Since $I$ is arbitrary and $\mathfrak{S}|_{\C^\times \times \{1\}} = \identity$, we conclude that $\Theta_\sigma[I^M_\omega](f^\sharp_{P^\sharp}) \neq 0$ only if $\omega \in Z^G(\pi)$.
\end{proof}

\subsection{Description of $R$-groups}\label{sec:desc-R}
Let $w \in W(M)$ with a chosen representative $\tilde{w} \in G^\sharp(F)$. Recall the operator $r_P(\tilde{w}, \sigma): I^G_P(\sigma) \to I^G_P(\tilde{w}\sigma)$ defined in \eqref{eqn:r_P}, which is the composition of $R_{w^{-1}Pw|P}(\sigma): I^G_P(\sigma) \to I^G_{w^{-1}Pw|P}(\sigma)$ with the isomorphism
\begin{align*}
  \ell(\tilde{w}): I^G_{w^{-1}Pw}(\sigma) & \longrightarrow I^G_P(\tilde{w}\sigma) \\
  \varphi(\cdot) & \longmapsto \varphi(\tilde{w}^{-1} \cdot).
\end{align*}

For $\eta \in (G(F)/G^\sharp(F))^D$, recall the isomorphism $A_\eta$ defined as
\begin{align*}
  \eta I^G_P(\sigma) & \longrightarrow I^G_P(\eta\sigma) \\
  \varphi(\cdot) & \longmapsto \eta(\cdot)\varphi(\cdot).
\end{align*}

Note that the representations $\sigma$, $\eta\sigma$, $\tilde{w}\sigma$ and $\eta\tilde{w}\sigma$ share the same underlying vector space $V_\sigma$. As usual, we will compose the operators above after appropriate twists by $\eta$ or $\tilde{w}$. For example, given $\eta, \eta'$, we may define $A_\eta A_{\eta'}$ which is equal to $A_{\eta\eta'}: \eta\eta' I^G_P(\sigma) \to I^G_P(\eta\eta' \sigma)$.

\begin{proposition}\label{prop:L-X^G}
  Let $L(\sigma) \subset (M(F)/M^\sharp(F))^D$ be the subgroup defined in Definition \ref{def:L}. Upon identifying $M(F)/M^\sharp(F)$ and $G(F)/G^\sharp(F)$, we have
  $$ L(\sigma) \subset X^G(\pi).$$

  If $\sigma \in \Pi_{2,\mathrm{temp}}(M)$, then equality holds.
\end{proposition}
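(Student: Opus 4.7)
The plan is to prove the two inclusions separately.

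For $L(\sigma) \subset X^G(\pi)$: given $\eta \in L(\sigma)$, choose $w \in W(M)$ with a representative $\tilde{w} \in G(F)$ such that $w\sigma \simeq \eta\sigma$. The first step is to observe that the map $\varphi \mapsto \eta(\cdot)\varphi(\cdot)$ furnishes a natural isomorphism $\eta \otimes I^G_P(\sigma) \rightiso I^G_P(\eta\sigma)$; this is well-defined because the character $\eta$, viewed on $G(F)$ via Corollary \ref{prop:char-G-M}, is trivial on the unipotent radical $U(F) \subset G_{\text{der}}(F) \subset G^\sharp(F)$. Coupled with any intertwiner in $\Isom_M(w\sigma, \eta\sigma)$ and with the operator $\ell(\tilde{w}): I^G_{w^{-1}Pw}(\sigma) \rightiso I^G_P(\tilde{w}\sigma)$, this yields
\[
\eta\pi \;\simeq\; I^G_P(\eta\sigma) \;\simeq\; I^G_P(\tilde{w}\sigma) \;\simeq\; I^G_{w^{-1}Pw}(\sigma).
\]
Since $\pi = I^G_P(\sigma)$ is irreducible by Hypothesis \ref{hyp:irred} and any generic specialisation of the meromorphic family $J_{P|w^{-1}Pw}(\sigma_\lambda)$ is a nonzero intertwiner between the Bernstein-equivalent representations $I^G_{w^{-1}Pw}(\sigma)$ and $\pi$, irreducibility forces $I^G_{w^{-1}Pw}(\sigma) \simeq \pi$. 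Chaining the isomorphisms gives $\eta\pi \simeq \pi$, so $\eta \in X^G(\pi)$.

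For the reverse inclusion when $\sigma \in \Pi_{2,\mathrm{temp}}(M)$: take $\eta \in X^G(\pi)$. A preliminary observation is that $\eta$ is necessarily unitary. Indeed, $\pi$ is tempered (hence unitarizable) because $\sigma$ is square-integrable tempered; a standard argument --- comparing the Harish-Chandra Schwartz class of matrix coefficients on the two sides of $\eta\pi \simeq \pi$, or equivalently using the uniqueness up to positive scalar of a $G(F)$-invariant Hermitian form --- forces $|\eta| \equiv 1$. Therefore $\eta\sigma \in \Pi_{2,\mathrm{temp}}(M)$, and running the above chain of isomorphisms in reverse yields $I^G_P(\eta\sigma) \simeq \eta\pi \simeq \pi = I^G_P(\sigma)$.

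The decisive input is now Harish-Chandra's theorem on the uniqueness of the square-integrable support of a tempered parabolically induced representation (see e.g.\ \cite[Proposition VI.5.2]{Wa03}): if $\sigma_1, \sigma_2 \in \Pi_{2,\mathrm{temp}}(M)$ and $I^G_P(\sigma_1) \simeq I^G_P(\sigma_2)$, then there exists $w \in W(M)$ with $\sigma_2 \simeq w\sigma_1$. Applied to $\sigma_1 = \sigma$ and $\sigma_2 = \eta\sigma$, this produces the required $w$ and so $\eta \in L(\sigma)$. The main obstacle is precisely this appeal to Harish-Chandra's uniqueness theorem, together with establishing the automatic unitarity of $\eta$; everything else is a direct chase of definitions.
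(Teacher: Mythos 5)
Your proof is correct and takes essentially the same route as the paper: the forward inclusion runs through the chain $\eta\pi \simeq I^G_P(\eta\sigma) \simeq I^G_P(\tilde{w}\sigma) \simeq \pi$ (the paper implements the last step directly with the operator $r_P(\tilde{w}^{-1},\tilde{w}\sigma)$ instead of the generic-$\lambda$/Jordan--H\"older argument you sketch, but these are equivalent given Hypothesis \ref{hyp:irred}), and the reverse inclusion invokes the uniqueness of discrete support for tempered parabolic induction, which the paper cites as \cite[Proposition~III.4.1]{Wa03}. Your preliminary observation that $\eta$ is unitary is a detail the paper leaves implicit; the quickest justification is simply that $X^G(\pi)$ is a finite group, so $\eta$ has finite order.
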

\begin{proof}
  Let $\eta \in L(\sigma)$. By definition, there exists $w \in W(M)$ with a representative $\tilde{w} \in G^\sharp(F)$ such that $\eta\sigma \simeq \tilde{w}\sigma$. Hence $\eta\pi \simeq I^G_P(\eta\sigma) \simeq I^G_P(\tilde{w}\sigma)$. There is also an isomorphism $r_P(\tilde{w}^{-1}, \tilde{w}\sigma): I^G_P(\tilde{w}\sigma) \rightiso I^G_P(\sigma)$. Hence $\eta\pi \simeq \pi$.

  Assume $\sigma \in \Pi_{2,\mathrm{temp}}(M)$ and let $\eta \in X^G(\pi)$, then $I^G_P(\eta\sigma) \simeq I^G_P(\sigma)$. By \cite[Proposition III.4.1]{Wa03}, there exists $w \in W(M)$ with $w\sigma \simeq \eta\sigma$. 
\end{proof}

Henceforth we take $w \in \bar{W}_\sigma$. Take any $\eta \in L(\sigma)$ whose class modulo $X^M(\sigma)$ equals $\bar{\Gamma}(w)$ (see Proposition \ref{prop:barGamma}). Then $\eta$ is of finite order by Proposition \ref{prop:L-X^G}, in particular $\eta$ is unitary. For any isomorphism
$$ i: \eta\sigma \rightiso \tilde{w}\sigma, $$
we deduce an isomorphism
\begin{align*}
  \Ad(i): S^M(\eta\sigma) & \longrightarrow S^M(\tilde{w}\sigma) \\
  I & \longmapsto iIi^{-1} =: \Ad(i)I.
\end{align*}

By the obvious identifications $S^M(\sigma) = S^M(\eta\sigma) = S^M(\tilde{w}\sigma)$ (without using $i$), one can view $\Ad(i)$ as an automorphism of $S^M(\sigma)$. It leaves $\C^\times$ intact and covers the identity map $X^M(\eta\sigma) \rightiso X^M(\tilde{w}\sigma)$, hence induces a bijection
\begin{align*}
  \Pi_-(S^M(\sigma)) & \longrightarrow \Pi_-(S^M(\sigma)) \\
  \rho & \longmapsto \rho \circ \Ad(i).
\end{align*}
We shall write
$$ \tilde{w}\rho := \rho \circ \Ad(i).$$
The puzzling notation will be justified by Lemma \ref{prop:rho-sigma-correspondence}.

Henceforth, we adopt the following convention: for $I^G_\eta \in S^G(\pi)$, we regard $\Ad(I^G_\eta)$ as an automorphism of $S^M(\sigma)$ via the embedding $S^M(\sigma) \hookrightarrow S^G(\pi)$ provided by Proposition \ref{prop:S-embedding}.

\begin{lemma}\label{prop:w-eta}
  Let $\eta$, $\tilde{w}$ and $i: \eta\sigma \rightiso \tilde{w}\sigma$ be as above. As automorphisms of $S^M(\sigma)$, we have
  $$ \Ad(i) = \Ad(I^G_\eta) $$
  for every $I^G_\eta \in S^G(\pi)$ in the preimage of $\eta$.
\end{lemma}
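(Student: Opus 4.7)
Given that $\Ad(I^G_\eta)$ is insensitive to scaling by $\C^\times$, it suffices to verify the equality for one specific lift. My plan is to pick the concrete candidate
$$ I^G_\eta \;:=\; r_P(\tilde{w},\sigma)^{-1} \circ I^G_P(i) \circ A_\eta, $$
which is manifestly a $G(F)$-equivariant isomorphism $\eta\pi \to \pi$ (using that the three factors are $G$-intertwiners between the appropriate representations on $V_\pi$, $V_{I^G_P(\eta\sigma)}$, $V_{I^G_P(\tilde{w}\sigma)}$, $V_\pi$). For $\omega \in X^M(\sigma)$ with lift $I^M_\omega \in S^M(\sigma)$, the image in $S^G(\pi)$ under the embedding of Proposition \ref{prop:S-embedding} is $I^G_\omega = I^G_P(I^M_\omega) \circ A_\omega$, and I want to show $\Ad(I^G_\eta)(I^G_\omega)$ coincides with the embedding of $\Ad(i)(I^M_\omega)$.

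The next step is a formal computation of $I^G_\eta I^G_\omega (I^G_\eta)^{-1}$ using two elementary facts: $A_\chi A_{\chi'} = A_{\chi\chi'}$ for characters $\chi,\chi'$ of $G(F)/G^\sharp(F)$, and $A_\chi$ commutes with every $I^G_P(T)$ for $T$ a linear endomorphism of $V_\sigma$ (direct from the pointwise formulas). After sliding the $A$-factors past the $I^G_P(\cdot)$-factors and cancelling $A_\eta A_{\eta^{-1}} = 1$, one arrives at
$$ \Ad(I^G_\eta)(I^G_\omega) \;=\; r_P(\tilde{w},\sigma)^{-1} \circ I^G_P(iI^M_\omega i^{-1}) \circ A_\omega \circ r_P(\tilde{w},\sigma). $$
Since $i I^M_\omega i^{-1}$ and $I^M_\omega$ are both intertwiners $\omega\tilde{w}\sigma \to \tilde{w}\sigma$ of an irreducible representation, Schur's lemma gives $iI^M_\omega i^{-1} = c \cdot I^M_\omega$ for some $c \in \C^\times$, and $c$ is by definition the scalar by which $\Ad(i)$ acts on $I^M_\omega$ in $S^M(\sigma)$ under the canonical identifications $S^M(\sigma) = S^M(\eta\sigma) = S^M(\tilde{w}\sigma)$.

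It therefore remains to verify that $r_P(\tilde{w},\sigma)$ commutes with $I^G_\omega$, in the sense that
$$ r_P(\tilde{w},\sigma)^{-1} \circ [I^G_P(I^M_\omega) \circ A_\omega] \circ r_P(\tilde{w},\sigma) \;=\; I^G_P(I^M_\omega) \circ A_\omega, $$
the middle $I^G_P(I^M_\omega) \circ A_\omega$ being reinterpreted as an operator on $V_{I^G_P(\tilde{w}\sigma)}$ via the natural $S^M(\sigma) = S^M(\tilde{w}\sigma)$. I would decompose $r_P(\tilde{w},\sigma) = \ell(\tilde{w}) \circ R_{w^{-1}Pw|P}(\sigma)$ and check each factor separately: for $R_{w^{-1}Pw|P}$, the naturality in $\sigma$ together with $\omega|_{U} = 1$ (so $\omega$ passes through the defining integral of $J$) and the invariance $r_{w^{-1}Pw|P}(\sigma) = r_{w^{-1}Pw|P}(\omega\sigma)$ from Theorem \ref{prop:normalizing-invariance} (applicable because $\omega \in X^M(\sigma)$ has finite order, hence is unitary, as $X^G(\pi)$ is finite under Hypothesis \ref{hyp:irred}); for $\ell(\tilde{w})$, simply that $\omega(\tilde{w}^{-1}x) = \omega(x)$ since $\tilde{w} \in G^\sharp(F)$. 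Combining gives $\Ad(I^G_\eta)(I^G_\omega) = c \cdot I^G_\omega$, which is precisely the image of $c \cdot I^M_\omega = \Ad(i)(I^M_\omega)$ under the embedding $S^M(\sigma) \hookrightarrow S^G(\pi)$ (this embedding being a $\C^\times$-equivariant group homomorphism).

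The main obstacle is bookkeeping rather than mathematics: $I^G_P(\sigma)$, $I^G_P(\eta\sigma)$ and $I^G_P(\tilde{w}\sigma)$ are genuinely different subspaces of functions on $G(F)$ with values in $V_\sigma$, and although the various operators $A_\chi$, $I^G_P(T)$, $R_{Q|P}$, $\ell(\tilde{w})$ land in well-defined spaces, one must keep track of which equivariance condition applies at each stage in order to correctly interpret ``commutation'' — in particular to recognize that the operator in the middle of the conjugate $r_P^{-1}(\cdots) r_P$ is the ``same'' formula $\psi \mapsto \omega(\cdot) I^M_\omega \psi(\cdot)$ but now on $V_{I^G_P(\tilde{w}\sigma)}$, and to use that $X^M(\sigma) = X^M(\tilde{w}\sigma) = X^M(\eta\sigma)$ identifies these ambient groups compatibly with $\Ad(i)$.
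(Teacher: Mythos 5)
Your proof is correct and follows essentially the same route as the paper: both select the concrete lift $I^G_\eta = r_P(\tilde{w},\sigma)^{-1} \circ I^G_P(i) \circ A_\eta$ (the paper writes it as $r_P(\tilde{w}^{-1},\tilde{w}\sigma) \circ I^G_P(i) \circ A_\eta$, which is the same by \eqref{eqn:r_P-property}) and then verify that conjugation by each of the three factors induces the expected map on $S^M(\cdot)$. The paper states these verifications more tersely (``by the definition,'' ``functorial properties''), whereas you make the bookkeeping explicit by sliding the $A$-factors, applying Schur to turn $\Ad(i)$ into a scalar, and splitting $r_P$ into $\ell(\tilde{w})$ and $R_{w^{-1}Pw|P}$; this is elaboration of the same argument, not a different one.
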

\begin{proof}
  Given $\eta$, the assertion is independent of the choice of $I^G_\eta$. Let us consider the specific choice as follows
  $$ I^G_\eta := r_P(\tilde{w}^{-1}, \tilde{w}\sigma) \circ I^G_P(i) \circ A_\eta: \eta I^G_P(\sigma) \to I^G_P(\sigma). $$

  By the definition of the embedding $S^M(\sigma) \hookrightarrow S^G(\pi)$ and that of $A_\eta$, one sees that $\Ad(A_\eta)$ induces the identity map $S^M(\sigma) \rightiso S^M(\eta\sigma)$. Similarly, the functorial properties of $r_P(\tilde{w}^{-1}, \cdot)$ implies that  $\Ad(r_P(\tilde{w}^{-1}, \tilde{w}\sigma))$ induces the identity map $S^M(\tilde{w}\sigma) \rightiso S^M(\sigma)$. One readily checks that $\Ad(I^G_P(i))$ induces $\Ad(i): S^M(\eta\sigma) \rightiso S^M(\tilde{w}\sigma)$, hence the assertion follows.
\end{proof}

Before stating the next result, recall that $\sigma$, $\tilde{w}\sigma$ and $\eta\sigma$ share the same underlying space $V_\sigma$.

\begin{lemma}\label{prop:rho-sigma-correspondence}
  Let $\rho \in \Pi_-(S^M(\sigma))$ and $\sigma^\sharp \in \Pi_\sigma$. By identifying the groups $S^M(\sigma)$, $S^M(\tilde{w}\sigma)$ and $S^M(\eta\sigma)$, the following are equivalent:
  \begin{enumerate}
    \item $\rho \in \Pi_-(S^M(\sigma))$ corresponds to $\sigma^\sharp \hookrightarrow \sigma|_{M^\sharp}$;
    \item $\rho \in \Pi_-(S^M(\tilde{w}\sigma))$ corresponds to $\tilde{w}\sigma^\sharp \hookrightarrow \tilde{w}\sigma|_{M^\sharp}$;
    \item $\tilde{w}\rho \in \Pi_-(S^M(\eta\sigma))$ corresponds to $\tilde{w}\sigma^\sharp \hookrightarrow \eta\sigma|_{M^\sharp}$;
    \item $\tilde{w}\rho \in \Pi_-(S^M(\sigma))$ corresponds to $\tilde{w}\sigma^\sharp \hookrightarrow \sigma|_{M^\sharp}$.
  \end{enumerate}
\end{lemma}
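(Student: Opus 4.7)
The plan is to apply Theorem \ref{prop:S-decomp} directly, once we have settled the (purely notational) issue of comparing the three groups $S^M(\sigma)$, $S^M(\eta\sigma)$, $S^M(\tilde{w}\sigma)$ and the three restrictions $\sigma|_{M^\sharp}$, $\eta\sigma|_{M^\sharp}$, $\tilde{w}\sigma|_{M^\sharp}$. The four correspondences are all characterized by a decomposition of the representation $\mathfrak{S}$ on the single vector space $V_\sigma$, so the entire proof reduces to comparing these decompositions under the twists by $\tilde{w}$ and $\eta$ and under the isomorphism $i$.

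First, I would verify the tacit identifications. The defining equation for membership in $S^M(\sigma)$ is $\omega(m) I \sigma(m) = \sigma(m) I$ for all $m \in M(F)$. Since $\eta$ is a character of $M(F)$, the analogous equation for $S^M(\eta\sigma)$ simplifies to the same relation; since $\omega \in X^M(\sigma)$ is $W(M)$-invariant (by Corollary \ref{prop:char-G-M}) and $\tilde{w}$ normalizes $M$, the equation for $S^M(\tilde{w}\sigma)$ reduces to the same relation after substituting $m' = \tilde{w}^{-1}m\tilde{w}$. Hence $S^M(\sigma) = S^M(\eta\sigma) = S^M(\tilde{w}\sigma)$ as subgroups of $\Aut_\C(V_\sigma)$. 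Moreover, $\eta|_{M^\sharp(F)} = 1$, so $\eta\sigma|_{M^\sharp} = \sigma|_{M^\sharp}$ literally, and $\tilde{w} \in G^\sharp(F)$ normalizes $M^\sharp$, so $\tilde{w}\sigma|_{M^\sharp}$ coincides with $\tilde{w}(\sigma|_{M^\sharp})$ under the natural identification. The equivalence (c) $\Leftrightarrow$ (d) is then immediate, since $\mathfrak{S}_{\eta\sigma}(I,x) = I\circ\eta(x)\sigma(x) = I\circ\sigma(x) = \mathfrak{S}_\sigma(I,x)$ for $x \in M^\sharp(F)$.

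For (a) $\Leftrightarrow$ (b), a direct computation gives $\mathfrak{S}_{\tilde{w}\sigma}(I,x) = I \circ \sigma(\tilde{w}^{-1}x\tilde{w}) = \mathfrak{S}_\sigma(I, \tilde{w}^{-1}x\tilde{w})$ for $I \in S^M(\tilde{w}\sigma) = S^M(\sigma)$ and $x \in M^\sharp(F)$. Applying Theorem \ref{prop:S-decomp} to $\sigma$, a decomposition $\mathfrak{S}_\sigma \simeq \bigoplus_\rho \rho \boxtimes \pi^\sharp_\rho$ yields $\mathfrak{S}_{\tilde{w}\sigma} \simeq \bigoplus_\rho \rho \boxtimes \tilde{w}\pi^\sharp_\rho$, whence the $\rho$ corresponding to $\sigma^\sharp$ in the $\sigma$-correspondence is the same $\rho$ that corresponds to $\tilde{w}\sigma^\sharp$ in the $\tilde{w}\sigma$-correspondence.

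For (b) $\Leftrightarrow$ (c), since $\iota := i$ is $M$-linear it is in particular $M^\sharp$-linear, and its restriction satisfies $i \sigma(x) = \sigma(\tilde{w}^{-1}x\tilde{w}) i$ for $x \in M^\sharp(F)$ (using $\eta|_{M^\sharp}=1$). Combined with the definition $\Ad(i)(I) = iIi^{-1}$, we get
\[
  i \circ \mathfrak{S}_{\eta\sigma}(I,x) = \mathfrak{S}_{\tilde{w}\sigma}\!\bigl(\Ad(i)(I),\, x\bigr) \circ i
  \qquad (I \in S^M(\sigma),\; x \in M^\sharp(F)),
\]
so the pair $(\Ad(i), i)$ is an intertwiner between $\mathfrak{S}_{\eta\sigma}$ and the $\Ad(i)$-pullback of $\mathfrak{S}_{\tilde{w}\sigma}$. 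Substituting a decomposition $\mathfrak{S}_{\tilde{w}\sigma} \simeq \bigoplus_\rho \rho \boxtimes \pi^\sharp_\rho$ produces $\mathfrak{S}_{\eta\sigma} \simeq \bigoplus_\rho (\rho\circ\Ad(i)) \boxtimes \pi^\sharp_\rho = \bigoplus_\rho \tilde{w}\rho \boxtimes \pi^\sharp_\rho$, which translates the correspondence exactly as asserted. The main subtlety is not analytical but bookkeeping: one must keep track of the fact that although the three groups $S^M(-)$ are literally equal, the intrinsic parametrization of $\Pi_{\tilde{w}\sigma}$ versus $\Pi_{\eta\sigma}$ is shifted, in the former case by $\tilde{w}$ on the $M^\sharp$-side, in the latter by $\Ad(i)$ on the $S^M$-side, and these are balanced precisely as the definition $\tilde{w}\rho := \rho\circ\Ad(i)$ requires.
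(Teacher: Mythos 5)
Your proof is correct and follows the same route as the paper: (a) $\Leftrightarrow$ (b) by transport of structure via $\Ad(\tilde{w})$, (c) $\Leftrightarrow$ (d) because $\eta$ is trivial on $M^\sharp(F)$, and (b) $\Leftrightarrow$ (c) by pulling $\rho$ back along $\Ad(i)$. You have simply spelled out the computations and the group-level identifications that the paper leaves implicit; the substance is identical.
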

\begin{proof}
  Recall that $\rho$ corresponds to $\sigma^\sharp \hookrightarrow \sigma|_{M^\sharp}$ means that $\rho \boxtimes \sigma^\sharp \hookrightarrow \mathfrak{S}(\sigma)$, where $\mathfrak{S}(\sigma)$ is the $S^M(\sigma) \times M^\sharp(F)$-representation on $V_\sigma$ defined in Theorem \ref{prop:S-decomp}.

  The first two properties are equivalent by a transport of structure via $\Ad(\tilde{w}): M^\sharp \to M^\sharp$. The last two properties are evidently equivalent. Finally, the equivalence between the second and the third properties follows by pulling $\rho$ back via $\Ad(i): S^M(\eta\sigma) \rightiso S^M(\tilde{w}\sigma)$.
\end{proof}

Now, recall that $Z^M(\sigma)$ is the projection to $X^M(\sigma)$ of the center of $S^M(\sigma)$. Temporarily fix a preimage $I^G_\eta$ for every $\eta \in X^G(\pi)$ and define
\begin{gather}\label{eqn:perp}
  Z^M(\sigma)^\perp := \left\{ \eta \in X^G(\pi) : \forall \omega \in Z^M(\sigma), \; I^G_\eta I^G_\omega = I^G_\omega I^G_\eta \right\} \supset X^M(\sigma).
\end{gather}

\begin{proposition}\label{prop:Z-perp}
  Let $\sigma$, $w$, $\eta$ as before. For any $\sigma^\sharp \in \Pi_\sigma$ (resp. $\rho \in \Pi_-(S^M(\sigma))$), we have $\tilde{w}\sigma^\sharp \simeq \sigma^\sharp$ (resp. $\tilde{w}\rho \simeq \rho$) if and only if $\eta \in Z^M(\sigma)^\perp$.
\end{proposition}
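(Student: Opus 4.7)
The strategy will be to translate the question about $\sigma^\sharp$ into a commutator computation on the central extension $1 \to \C^\times \to S^M(\sigma) \to X^M(\sigma) \to 1$, via the bijection provided by Lemma \ref{prop:rho-sigma-correspondence}.

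First, Lemma \ref{prop:rho-sigma-correspondence} shows that the assertions for $\sigma^\sharp$ and for the corresponding $\rho \in \Pi_-(S^M(\sigma))$ are equivalent, so I will focus on $\rho$. Lemma \ref{prop:w-eta} identifies $\tilde{w}\rho$ with $\rho \circ \Ad(I^G_\eta)$ for any preimage $I^G_\eta$ of $\eta$ in $S^G(\pi)$. Since $X^G(\pi)$ is abelian and $S^M(\sigma)$ is the preimage of $X^M(\sigma)$ inside $S^G(\pi)$, the automorphism $\Ad(I^G_\eta)$ preserves $S^M(\sigma)$ and covers the identity on $X^M(\sigma)$; consequently, for every $\omega \in X^M(\sigma)$ and any preimage $I^M_\omega \in S^M(\sigma)$, one has
$$ \Ad(I^G_\eta)(I^M_\omega) = c(\omega)\, I^M_\omega, \qquad c(\omega) \in \C^\times. $$
The assignment $\omega \mapsto c(\omega)$ will be a character of $X^M(\sigma)$ independent of the choice of $I^M_\omega$, and by definition $c(\omega) = 1$ precisely when $I^G_\eta$ commutes with $I^M_\omega$. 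Hence the condition $\eta \in Z^M(\sigma)^\perp$ is tantamount to $c|_{Z^M(\sigma)} \equiv 1$.

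Next I need to check that $\tilde{w}\rho \simeq \rho$ iff $c|_{Z^M(\sigma)} \equiv 1$. After reducing to a finite central extension by $\mu_m$ as suggested in the paper, $S^M(\sigma)$ becomes a Heisenberg-type extension of the finite abelian group $X^M(\sigma)$, and its actual center projects precisely onto $Z^M(\sigma)$. The standard theory of irreducible projective representations of finite abelian groups then ensures that every $\rho \in \Pi_-(S^M(\sigma))$ is determined up to isomorphism by its central character. For $\omega \in Z^M(\sigma)$, the element $I^M_\omega$ is central in $S^M(\sigma)$, so $\rho(I^M_\omega)$ is a scalar and
$$ \rho\bigl(\Ad(I^G_\eta)\, I^M_\omega\bigr) \;=\; c(\omega)\, \rho(I^M_\omega). $$
Thus $\rho \circ \Ad(I^G_\eta)$ and $\rho$ will share the same central character iff $c(\omega) = 1$ for every $\omega \in Z^M(\sigma)$, which closes the equivalence.

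The only non-routine step will be the appeal to the parametrization of $\Pi_-(S^M(\sigma))$ by central characters of its actual center, a standard input from the representation theory of Heisenberg-type central extensions of finite abelian groups; with this tool in hand, everything else amounts to bookkeeping about how $\Ad(I^G_\eta)$ acts on the fibers of $S^M(\sigma) \to X^M(\sigma)$.
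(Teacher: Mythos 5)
Your proof is correct and follows the same route as the paper: reduce from $\sigma^\sharp$ to $\rho$ via Lemma \ref{prop:rho-sigma-correspondence}, rewrite $\tilde{w}\rho$ as $\rho\circ\Ad(I^G_\eta)$ via Lemma \ref{prop:w-eta}, and invoke the Stone--von Neumann/Mackey parametrization of $\Pi_-(S^M(\sigma))$ by central characters to conclude. Your introduction of the commutator character $c(\omega)$ merely spells out the final step that the paper dismisses as ``immediate.''
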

\begin{proof}
  Let $\rho \in \Pi_-(S^M(\sigma))$ be the representation corresponding to $\sigma^\sharp \in \Pi_\sigma$ by Theorem \ref{prop:S-decomp}. By Lemma \ref{prop:rho-sigma-correspondence}, it suffices to show that $\tilde{w}\rho \simeq \rho$ if and only if $\eta \in Z_M(\sigma)^\perp$.

  The elements in $\Pi_-(S^M(\sigma))$ are described by a variant of the Stone-von Neumann theorem for the central extension $1 \to \C^\times \to S^M(\sigma) \to X^M(\sigma) \to 1$. Namely, consider the data $(L, \rho_0)$ where
  \begin{itemize}
    \item $L$: a maximal abelian subgroup of $S^M(\sigma)$;
    \item $\rho_0$: an irreducible representation of $L$ such that $\rho_0(z)=z$ for all $z \in \C^\times \subset L$.
  \end{itemize}
  Then $\rho := \text{Ind}^{S^M(\sigma)}_L(\rho_0)$ is an element of $\Pi_-(S^M(\sigma))$. Every $\rho \in \Pi_-(S^M(\sigma))$ arises in this way. Moreover, the isomorphism class of $\rho$ is determined by its central character. These facts are standard consequences of Mackey's theory. See \cite[0.3]{KP84} and the remark after Proposition \ref{prop:K_0-diagram}.

  We have $\tilde{w}\rho = \rho \circ \Ad(I^G_\eta)$ by Lemma \ref{prop:w-eta}. To conclude the proof, it suffices to show that $\Ad(I^G_\eta)$ fixes the central character of $\rho$ if and only if $\eta \in Z^M(\sigma)^\perp$. This is immediate.
\end{proof}

\begin{corollary}\label{prop:L-sharp}
  Assume $\sigma \in \Pi_{2,\mathrm{temp}}(M)$ and $\sigma^\sharp \in \Pi_\sigma$. Then we have $L(\sigma^\sharp)=Z^M(\sigma)^\perp$, and the map $\Gamma$ in Proposition \ref{prop:Goldberg} is an isomorphism
  \begin{align*}
    \Gamma: R_{\sigma^\sharp} & \longrightarrow Z^M(\sigma)^\perp/X^M(\sigma) \\
    w W^0_{\sigma^\sharp} & \longmapsto \eta X^M(\sigma)
  \end{align*}
  where $w \in W_{\sigma^\sharp}$ and $\eta \in Z^M(\sigma)^\perp$ satisfy the relation
  $$ w\sigma \simeq \eta\sigma. $$
\end{corollary}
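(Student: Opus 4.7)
The plan is to combine Proposition \ref{prop:Goldberg}(ii) with Proposition \ref{prop:Z-perp} and the square-integrable case of Proposition \ref{prop:L-X^G}. The proof should take two short steps.

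First, I would establish the equality $L(\sigma^\sharp) = Z^M(\sigma)^\perp$ as subgroups of $(M(F)/M^\sharp(F))^D$. Take $\eta \in L(\sigma^\sharp)$, so by definition there is some $w \in W(M)$ with a representative $\tilde w \in G(F)$ (which, up to multiplication by an element of $M(F) \subset G^\sharp(F)$, can be arranged to lie in $G^\sharp(F)$) satisfying $w\sigma \simeq \eta\sigma$ and $\tilde w\sigma^\sharp \simeq \sigma^\sharp$. By Proposition \ref{prop:L-X^G} the first condition already implies $\eta \in L(\sigma) = X^G(\pi)$, so $\eta$ is one of the elements governed by Proposition \ref{prop:Z-perp}, and the second condition $\tilde w\sigma^\sharp \simeq \sigma^\sharp$ translates precisely into $\eta \in Z^M(\sigma)^\perp$. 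Conversely, given $\eta \in Z^M(\sigma)^\perp \subset X^G(\pi) = L(\sigma)$ (using again the identification from Proposition \ref{prop:L-X^G} in the square-integrable tempered case), there exists $w \in \bar W_\sigma$ with $w\sigma \simeq \eta\sigma$, and the same invocation of Proposition \ref{prop:Z-perp} produces $\tilde w\sigma^\sharp \simeq \sigma^\sharp$, showing $\eta \in L(\sigma^\sharp)$.

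Second, because Hypothesis \ref{hyp:irred} is in force, $I^G_P(\sigma)$ is irreducible, which by the Knapp--Stein theorem is equivalent to $R_\sigma = \{1\}$. Consequently Proposition \ref{prop:Goldberg}(ii) applies and yields an isomorphism
\[
  \Gamma: R_{\sigma^\sharp} \;\stackrel{\sim}{\longrightarrow}\; L(\sigma^\sharp)/X^M(\sigma),
\]
whose concrete description is exactly that $wW^0_{\sigma^\sharp}$ is sent to the class of any $\eta \in L(\sigma^\sharp)$ with $w\sigma \simeq \eta\sigma$. Substituting the identification $L(\sigma^\sharp) = Z^M(\sigma)^\perp$ from the first step gives the desired isomorphism onto $Z^M(\sigma)^\perp/X^M(\sigma)$.

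There is essentially no obstacle here beyond bookkeeping: all the substance lives in Proposition \ref{prop:Z-perp} (where $Z^M(\sigma)^\perp$ was engineered precisely to characterize $\tilde w$-stability of $\sigma^\sharp$) and in the square-integrable case of Proposition \ref{prop:L-X^G} (where $L(\sigma)=X^G(\pi)$ thanks to the $W(M)$-conjugacy of square-integrable representations with equivalent parabolic inductions, via \cite[Proposition III.4.1]{Wa03}). The only minor point to verify is that representatives $\tilde w$ can be chosen in $G^\sharp(F)$, which is harmless since $W(M^\sharp)=W(M)$ and $M(F) \subset G^\sharp(F)\cdot Z_G(F)$-type adjustments do not affect the conjugation action on $\sigma^\sharp$.
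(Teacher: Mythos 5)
Your proposal is correct and takes essentially the same route as the paper, whose proof of this corollary is the single sentence ``This results immediately from the definition of $L(\sigma^\sharp)$'': namely, unwind Definition~\ref{def:L} in the light of Proposition~\ref{prop:Z-perp} to get $L(\sigma^\sharp)=Z^M(\sigma)^\perp$, then apply Proposition~\ref{prop:Goldberg}(2) using $R_\sigma=\{1\}$ (Hypothesis~\ref{hyp:irred}). One small slip: you write ``$M(F)\subset G^\sharp(F)$,'' which is false in general; the correct reason representatives may be taken in $G^\sharp(F)$ is simply the identification $W(M)=W(M^\sharp)=N_{G^\sharp}(M^\sharp)(F)/M^\sharp(F)$ from \S\ref{sec:res-ind}, which you invoke correctly later in the paragraph.
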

\begin{proof}
  This results immediately from the definition of $L(\sigma^\sharp)$.
\end{proof}

\subsection{Cocycles}\label{sec:cocycle}
\begin{definition}\label{def:obstruction}
  Suppose for a moment that $H$ is a finite group and $N$ is a normal subgroup of $H$. Let $\rho$ be an irreducible representation of $N$ and assume that $h\rho := \rho \circ \Ad(h)^{-1} \simeq \rho$ for all $h \in H$. This is a necessary condition for extending $\rho$ to an irreducible representation of $H$, but not sufficient in general. Recall the following construction of an obstruction $\mathbf{c}_\rho \in H^2(H/N, \C^\times)$ for extending $\rho$, where $\C^\times$ is equipped with the trivial $H/N$-action. We can choose intertwining operators $\rho(h) \in \Isom_N(h\rho, \rho)$ for each $h \in H$, such that
  \begin{align*}
    \rho(nh) & = \rho(n) \rho(h), \\
    \rho(hn) & = \rho(h) \rho(n)
  \end{align*}
  for every $h \in H$, $n \in N$. Note that either of the equations above implies the other. There is a $\C^\times$-valued $2$-cocycle $c_\rho$ characterized by
  \begin{gather}\label{eqn:obstruction}
    \rho(h_1 h_2) = c_\rho(h_1, h_2) \rho(h_1) \rho(h_2), \quad h_1, h_2 \in H.
  \end{gather}

  One readily checks that $c_\rho$ factors through $H/N \times H/N$, thus defines a class $\mathbf{c}_\rho \in H^2(H/N, \C^\times)$. This cohomology class only depends on $\rho$ itself.

  The formalism can also be generalized to the case where $H,N$ are central extensions of finite groups by $\C^\times$, and $\rho(z) = z\cdot\identity$ for all $z \in \C^\times$.
\end{definition}

Let us return to the formalism of the previous subsection. In particular, we assume $P=MU \subset G$ and $\sigma \in \Pi_{2,\text{temp}}(M)$ with the underlying vector space $V_\sigma$. Set $\pi := I^G_P(\sigma)$ as usual. For every $\eta \in Z^M(\sigma)^\perp$, we fix $w \in W(M)$, a representative $\tilde{w} \in G^\sharp(F)$, and an isomorphism
$$ i: \eta\sigma \rightiso \tilde{w}\sigma. $$

Let $\rho \in \Pi_-(S^M(\sigma))$ be corresponding to $\sigma^\sharp \in \Pi_\sigma$. Proposition \ref{prop:Z-perp} implies that $\tilde{w}\rho \simeq \rho$ for every $\eta$ as above. Equivalently, $\rho \circ \Ad((I^G_\eta)^{-1}) \simeq \rho$ for every $I^G_\eta \in S^G(\pi)$ in the preimage of $\eta$ by Lemma \ref{prop:w-eta}. We will use the shorthand
$$ {}^\eta \rho := \rho \circ \Ad((I^G_\eta)^{-1}). $$

As in Definition \ref{def:obstruction}, one considers the problem of extending $\rho$ to the preimage of $Z^M(\sigma)^\perp$ in $S^G(\pi)$. Recall that $Z^M(\sigma)^\perp/X^M(\sigma) = R_{\sigma^\sharp}$ by Corollary \ref{prop:L-sharp}. The goal of this subsection is to describe the obstruction class $\mathbf{c}_\rho \in H^2(R_{\sigma^\sharp}, \C^\times)$ so-obtained.

Recall that in Theorem \ref{prop:S-decomp}, we have defined an $S^M(\sigma) \times M^\sharp(F)$-representation $\mathfrak{S} = \mathfrak{S}(\sigma)$ on $V_\sigma$. Analogously, we define $\mathfrak{S}(\eta\sigma)$ and $\mathfrak{S}(\tilde{w}\sigma)$; all of them are realized on $V_\sigma$. We fix an embedding $\iota: \rho \boxtimes \sigma^\sharp \hookrightarrow \mathfrak{S}(\sigma)$ of $S^M(\sigma) \times M^\sharp(F)$-representations. By Lemma \ref{prop:rho-sigma-correspondence}, the same map gives $\iota: \rho \boxtimes \tilde{w}\sigma^\sharp \hookrightarrow \mathfrak{S}(\tilde{w}\sigma)$ and $\iota: \rho \boxtimes \sigma^\sharp \hookrightarrow \mathfrak{S}(\eta\sigma)$ with appropriate equivariances.

\begin{lemma}\label{prop:existential}
  For $\eta$, $\tilde{w}$, $\sigma^\sharp$ fixed as before, we define $\mathfrak{S}'(\eta\sigma)$ to be the $S^M(\eta\sigma) \times M^\sharp(F)$-representation on $V_\sigma$ defined by
  $$ \mathfrak{S}'(\eta\sigma)(I,x) = \mathfrak{S}(\eta\sigma)(\Ad(I^G_\eta)^{-1} I, x), \quad I \in S^M(\eta\sigma), x \in M^\sharp(F). $$
  Then the map $\iota$ induces an embedding of $S^M(\eta\sigma) \times M^\sharp(F)$-representations
  $$ \iota: {}^\eta \rho \boxtimes \sigma^\sharp \hookrightarrow \mathfrak{S}'(\eta\sigma) $$
  and there exists a unique equivariant isomorphism
  $$ \alpha \boxtimes \sigma^\sharp(\tilde{w})^{-1}: {}^\eta \rho \boxtimes \sigma^\sharp \rightiso \rho \boxtimes \tilde{w}\sigma^\sharp, $$
  for some $\alpha \in \Isom_{S^M(\sigma)}({}^\eta \rho, \rho)$ and $\sigma^\sharp(\tilde{w}) \in \Isom_{M^\sharp}(\tilde{w}\sigma^\sharp, \sigma^\sharp)$, which makes the following diagram commutative.
  $$\xymatrix{
    \mathfrak{S}'(\eta\sigma) \ar[rr]^{i}_{\simeq} & & \mathfrak{S}(\tilde{w}\sigma) \\
    {}^\eta \rho \boxtimes \sigma^\sharp \ar[u]^{\iota} \ar[rr]_{\alpha \boxtimes \sigma^\sharp(\tilde{w})^{-1}} & & \rho \boxtimes \tilde{w}\sigma^\sharp \ar[u]_{\iota}
  }$$
\end{lemma}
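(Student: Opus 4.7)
The plan is to first unwind $\mathfrak{S}'(\eta\sigma)$ so that the $\Ad(I^G_\eta)^{-1}$-twist exactly compensates for the conjugation by $i$ on the $S^M(\sigma)$-side, and then reduce the existence and uniqueness of $\alpha \boxtimes \sigma^\sharp(\tilde{w})^{-1}$ to a Schur-type statement inside $\mathfrak{S}(\tilde{w}\sigma)$.

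First, I would verify that $\mathfrak{S}'(\eta\sigma)$ is a well-defined representation of $S^M(\eta\sigma) \times M^\sharp(F)$: since $\Ad(I^G_\eta)^{-1}$ is a group automorphism of $S^M(\sigma) = S^M(\eta\sigma)$ (via the embedding $S^M(\sigma) \hookrightarrow S^G(\pi)$ of Proposition \ref{prop:S-embedding}), precomposing the first factor of $\mathfrak{S}(\eta\sigma)$ with it yields a representation. The asserted embedding $\iota : {}^\eta\rho \boxtimes \sigma^\sharp \hookrightarrow \mathfrak{S}'(\eta\sigma)$ is then tautological: the underlying $\C$-linear map $\iota$ is $M^\sharp(F)$-equivariant (note that $\eta|_{M^\sharp(F)} = 1$ makes $\mathfrak{S}(\eta\sigma)$ and $\mathfrak{S}(\sigma)$ agree on $M^\sharp(F)$ as maps on $V_\sigma$), and for $I \in S^M(\eta\sigma)$ the action ${}^\eta\rho(I) = \rho(\Ad(I^G_\eta)^{-1}I)$ matches $\mathfrak{S}'(\eta\sigma)(I, \cdot)$ thanks to the original $\iota$-equivariance applied to $\Ad(I^G_\eta)^{-1}I$.

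Next I would check that $i : \mathfrak{S}'(\eta\sigma) \to \mathfrak{S}(\tilde{w}\sigma)$ is an isomorphism of $S^M(\sigma) \times M^\sharp(F)$-representations. The $M^\sharp(F)$-equivariance is inherited from $i : \eta\sigma \rightiso \tilde{w}\sigma$. For the $S^M(\sigma)$-equivariance, the pivotal input is Lemma \ref{prop:w-eta} ($\Ad(i) = \Ad(I^G_\eta)$ on $S^M(\sigma)$): pulling $i$ through any $J \in \End_\C(V_\sigma)$ gives $i \circ J = \Ad(i)(J) \circ i$, so for $I \in S^M(\sigma)$,
\[
i \circ \mathfrak{S}'(\eta\sigma)(I, x) = \Ad(i)(\Ad(I^G_\eta)^{-1}I) \circ i \circ (\eta\sigma)(x) = I \circ (\tilde{w}\sigma)(x) \circ i = \mathfrak{S}(\tilde{w}\sigma)(I, x) \circ i.
\]

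Finally, $i \circ \iota$ becomes an $S^M(\sigma) \times M^\sharp(F)$-equivariant embedding of ${}^\eta\rho \boxtimes \sigma^\sharp$ into $\mathfrak{S}(\tilde{w}\sigma)$. By Proposition \ref{prop:Z-perp} we have ${}^\eta\rho \simeq \rho$ and $\tilde{w}\sigma^\sharp \simeq \sigma^\sharp$, and Lemma \ref{prop:rho-sigma-correspondence}(2) combined with Theorem \ref{prop:S-decomp} identifies the image of $\iota : \rho \boxtimes \tilde{w}\sigma^\sharp \hookrightarrow \mathfrak{S}(\tilde{w}\sigma)$ as the (multiplicity-one) $\rho$-isotypic summand of $\mathfrak{S}(\tilde{w}\sigma)$. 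Composing with $\iota^{-1}$ on this summand produces a unique equivariant isomorphism ${}^\eta\rho \boxtimes \sigma^\sharp \rightiso \rho \boxtimes \tilde{w}\sigma^\sharp$, which by the standard identification $\Hom_{S^M(\sigma) \times M^\sharp(F)}(\cdot,\cdot) = \Hom_{S^M(\sigma)}(\cdot,\cdot) \otimes_\C \Hom_{M^\sharp(F)}(\cdot,\cdot)$ for external tensor products of irreducibles factors as $\alpha \boxtimes \sigma^\sharp(\tilde{w})^{-1}$; the unavoidable rescaling $(\alpha, \sigma^\sharp(\tilde{w})) \mapsto (c\alpha, c^{-1}\sigma^\sharp(\tilde{w}))$ is absorbed by the tensor product. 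The only real content is the computation in the third step, where Lemma \ref{prop:w-eta} makes the twist compatibility work; everything else is bookkeeping and Schur's lemma, so I do not expect any further obstacle.
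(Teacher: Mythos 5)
Your proof is correct and follows essentially the same route as the paper's (quite terse) argument: you make explicit the role of Lemma \ref{prop:w-eta} in showing that $i$ is $S^M(\sigma)\times M^\sharp(F)$-equivariant once the $\Ad(I^G_\eta)^{-1}$-twist is built into $\mathfrak{S}'(\eta\sigma)$, and then you identify the two images inside $\mathfrak{S}(\tilde{w}\sigma)$ via Proposition \ref{prop:Z-perp}, Lemma \ref{prop:rho-sigma-correspondence} and the multiplicity-one decomposition of Theorem \ref{prop:S-decomp}, finishing by Schur. The only cosmetic difference is that the paper characterizes the image as the $\tilde{w}\sigma^\sharp$-isotypic component for the $M^\sharp(F)$-action whereas you phrase it as the $\rho$-isotypic (equivalently, $\rho\boxtimes\tilde{w}\sigma^\sharp$-isotypic) piece; these are equivalent because $\rho\leftrightarrow\pi^\sharp_\rho$ is a bijection.
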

Observe that the pair $(\alpha, \sigma^\sharp(\tilde{w})^{-1})$ is unique up to $\{(z,z^{-1}) : z \in \C^\times \}$.

\begin{proof}
  The $S^M(\eta\sigma)$-action on $\mathfrak{S}'(\eta\sigma)$ makes $i$ equivariant. The leftmost vertical arrow comes from the original embedding $\iota: \rho \boxtimes \sigma^\sharp \hookrightarrow \mathfrak{S}(\eta\sigma)$ by an $\Ad(I^G_\eta)^{-1}$-twist. The images of the vertical arrows are characterized as the $\sigma^\sharp$ (resp. $\tilde{w}\sigma^\sharp$) -isotypic parts under the $M^\sharp(F)$-action. Proposition \ref{prop:Z-perp} implies that $\sigma^\sharp \simeq \tilde{w}\sigma^\sharp$. Therefore there must exist an equivariant isomorphism ${}^\eta \rho \boxtimes \sigma^\sharp \rightiso \rho \boxtimes \tilde{w}\sigma^\sharp$ that makes the diagram commute. Such an isomorphism must be of the form $\alpha \boxtimes \sigma^\sharp(\tilde{w})^{-1}$.
\end{proof}

\begin{lemma}\label{prop:intertwining}
  Write $r_P := r_P(\tilde{w}, \sigma)$ and $r_{P^\sharp} := r_{P^\sharp}(\tilde{w}, \sigma^\sharp)$. There is a commutative diagram
  $$\xymatrix{
    I^G_P(\sigma) \ar[r]^{r_P} & I^G_P(\tilde{w}\sigma) \\
    \rho \boxtimes I^{G^\sharp}_{P^\sharp}(\sigma^\sharp) \ar[r]_{\identity \boxtimes r_{P^\sharp}} \ar[u]^{I^{G^\sharp}_{P^\sharp}(\iota)} & \rho \boxtimes I^{G^\sharp}_{P^\sharp}(\tilde{w}\sigma^\sharp) \ar[u]_{I^{G^\sharp}_{P^\sharp}(\iota)}
  }$$
  whose arrows are equivariant for the $S^M(\tilde{w}\sigma) \times G^\sharp(F)$ and $S^M(\sigma) \times G^\sharp(F)$-actions.
\end{lemma}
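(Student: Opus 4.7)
The plan is to deduce the commutativity of the diagram directly from Theorem \ref{prop:normalizing-invariance}, and then verify the stated equivariances arrow by arrow. First I would note the canonical identification $\rho \boxtimes I^{G^\sharp}_{P^\sharp}(\sigma^\sharp) = I^{G^\sharp}_{P^\sharp}(\rho \boxtimes \sigma^\sharp)$, valid because $V_\rho$ carries a trivial $M^\sharp(F)$-action and parabolic induction only operates on the $M^\sharp(F)$-factor; likewise for the $\tilde w \sigma^\sharp$ side, using Lemma \ref{prop:rho-sigma-correspondence} to view $\iota$ as an embedding $\rho \boxtimes \tilde w \sigma^\sharp \hookrightarrow \mathfrak{S}(\tilde w \sigma)$. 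With this identification, the vertical arrows are literally $I^{G^\sharp}_{P^\sharp}$ applied to $\iota$, and the commutativity is then exactly the commutative square for $r_P(\tilde w, \sigma)$ and $r_{P^\sharp}(\tilde w, \sigma^\sharp)$ supplied by Theorem \ref{prop:normalizing-invariance}.

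Next I would handle the equivariances. The $G^\sharp(F)$-equivariance of all four arrows is routine: the top arrow is $G(F)$-equivariant by construction, while the bottom and the verticals arise from parabolic induction of $M^\sharp(F)$-equivariant morphisms. The $S^M$-equivariance of the vertical arrows reduces to that of $\iota$ itself: by Proposition \ref{prop:S-embedding}, an element $I^M_\omega \in S^M(\sigma)$ acts on $I^G_P(V_\sigma)$ by $\varphi(\cdot) \mapsto \omega(\cdot)\, I^M_\omega \varphi(\cdot)$, and the scalar factor $\omega(\cdot)$ disappears upon restriction to $G^\sharp(F)$, so the action is simply pointwise application of $I^M_\omega$; since $\iota: V_\rho \otimes V_{\sigma^\sharp} \to V_\sigma$ is $S^M(\sigma)$-equivariant by construction, so is its pointwise application. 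The bottom arrow $\identity \boxtimes r_{P^\sharp}$ is $S^M$-equivariant trivially, because $S^M$ only acts on the $\rho$-factor.

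The one genuinely computational step---and the main obstacle I would expect---is verifying that the top arrow $r_P(\tilde w, \sigma)$ commutes with the $S^M(\sigma)$-action, where we identify $S^M(\sigma) = S^M(\tilde w \sigma)$ via the common underlying space $V_\sigma$. Write $r_P = \ell(\tilde w) \circ R_{w^{-1}Pw|P}(\sigma)$ as in \eqref{eqn:r_P}. For the normalized intertwiner, decompose $I^G_\omega = I^G_P(I^M_\omega) \circ A_\omega$ following Proposition \ref{prop:S-embedding}, and combine the functoriality of $R_{Q|P}$ under morphisms in $\Pi(M)$ with the identity $R_{Q|P}(\omega\sigma) \circ A_\omega = A_\omega \circ R_{Q|P}(\sigma)$ (which follows from the invariance $r_{Q|P}(\omega\sigma) = r_{Q|P}(\sigma)$ of Theorem \ref{prop:normalizing-invariance} combined with the diagram of Proposition \ref{prop:mu-invariance}) to conclude that $R_{w^{-1}Pw|P}(\sigma)$ commutes with $I^G_\omega$. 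For the left-translation factor, a direct unwinding of definitions gives $(\ell(\tilde w) \circ I^G_\omega \varphi)(x) = \omega(\tilde w^{-1}) \cdot (I^G_\omega \circ \ell(\tilde w)\varphi)(x)$, and since $\tilde w \in G^\sharp(F)$ we have $\omega(\tilde w) = 1$. Composing the two factors yields the desired $S^M(\sigma)$-equivariance of $r_P$, completing the proof.
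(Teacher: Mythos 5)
Your proof is correct and follows essentially the same route as the paper's: the commutativity of the square is pulled directly out of Theorem \ref{prop:normalizing-invariance}, and the equivariance of the horizontal arrows comes from the $\eta$-invariance of normalizing factors together with the functoriality of $r_P(\tilde w, \cdot)$. The paper streamlines the reduction to Theorem \ref{prop:normalizing-invariance} (which is stated for a single embedding of an irreducible $\sigma^\sharp$) by first assuming WLOG that $\rho=\Hom_{M^\sharp}(\sigma^\sharp,\sigma)$ and applying the theorem to each $\epsilon\in\rho$, whereas you repackage the same content via the formal identity $\rho\boxtimes I^{G^\sharp}_{P^\sharp}(\sigma^\sharp)=I^{G^\sharp}_{P^\sharp}(\rho\boxtimes\sigma^\sharp)$; and you make explicit the $S^M(\sigma)$-equivariance computation (factoring $I^G_\omega = I^G_P(I^M_\omega)\circ A_\omega$ and $r_P=\ell(\tilde w)\circ R_{w^{-1}Pw|P}(\sigma)$, noting $\omega(\tilde w)=1$) that the paper compresses to a one-line citation of functoriality. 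Both are valid; yours trades the paper's terseness for detail.
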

\begin{proof}
  Without loss of generality we may assume $\rho = \Hom_{M^\sharp}(\sigma^\sharp, \sigma)$, i.e.\! the multiplicity space. The embedding $\iota: \rho \boxtimes \sigma^\sharp \hookrightarrow \sigma$ can be taken to be $\epsilon \otimes v \mapsto \epsilon(v)$. Then the commutativity of the diagram follows by applying Theorem \ref{prop:normalizing-invariance} to each $\epsilon \in \Hom_{M^\sharp}(\sigma^\sharp, \sigma)$. The equivariance of the horizontal arrows results from Theorem \ref{prop:normalizing-invariance} and the functorial properties of $r_P(\tilde{w},\cdot)$, $r_{P^\sharp}(\tilde{w}, \cdot)$.
\end{proof}

\begin{lemma}\label{prop:pull-back}
  With the notations of Lemma \ref{prop:existential}, there is a commutative diagram
  $$\xymatrix{
    I^G_P(\eta\sigma) \ar[rr]^{r_P(\tilde{w}, \sigma)^{-1} \circ I^G_P(i)} & & I^G_P(\sigma) \\
    {}^\eta\rho \boxtimes I^{G^\sharp}_{P^\sharp}(\sigma^\sharp) \ar[rr]_{\alpha \boxtimes R_{P^\sharp}(\tilde{w}, \sigma^\sharp)^{-1}} \ar[u]^{I^{G^\sharp}_{P^\sharp}(\iota)} & & \rho \boxtimes I^{G^\sharp}_{P^\sharp}(\sigma^\sharp) \ar[u]_{I^{G^\sharp}_{P^\sharp}(\iota)}
  }$$
  where we set $R_{P^\sharp}(\tilde{w}, \sigma^\sharp) := \sigma^\sharp(\tilde{w}) \circ r_{P^\sharp}(\tilde{w}, \sigma^\sharp)$, by using the pair $(\alpha, \sigma^\sharp(\tilde{w})^{-1})$ of isomorphisms in Lemma \ref{prop:existential}.
\end{lemma}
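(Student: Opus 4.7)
The plan is to factor the rectangle into two smaller commutative squares by inserting the intermediate column $\rho \boxtimes I^{G^\sharp}_{P^\sharp}(\tilde{w}\sigma^\sharp) \xrightarrow{I^{G^\sharp}_{P^\sharp}(\iota)} I^G_P(\tilde{w}\sigma)$. The top row $r_P(\tilde{w},\sigma)^{-1} \circ I^G_P(i)$ is already presented as a composition through this vertex. For the bottom row, I would use the definition $R_{P^\sharp}(\tilde{w}, \sigma^\sharp)^{-1} = r_{P^\sharp}(\tilde{w}, \sigma^\sharp)^{-1} \circ \sigma^\sharp(\tilde{w})^{-1}$ to factor $\alpha \boxtimes R_{P^\sharp}(\tilde{w}, \sigma^\sharp)^{-1}$ as
$$ (\identity \boxtimes r_{P^\sharp}(\tilde{w}, \sigma^\sharp)^{-1}) \circ (\alpha \boxtimes \sigma^\sharp(\tilde{w})^{-1}) $$
passing through the same middle vertex.

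The right square—featuring $r_P(\tilde{w},\sigma)^{-1}$ on top and $\identity \boxtimes r_{P^\sharp}(\tilde{w},\sigma^\sharp)^{-1}$ on the bottom—is exactly the square of Lemma \ref{prop:intertwining} with horizontal arrows inverted, which is legitimate since both operators are isomorphisms. The left square—featuring $I^G_P(i)$ on top and $\alpha \boxtimes I^{G^\sharp}_{P^\sharp}(\sigma^\sharp(\tilde{w})^{-1})$ on the bottom—is obtained by applying the functor $I^{G^\sharp}_{P^\sharp}$ to the $M^\sharp \times S^M(\sigma)$-equivariant square of Lemma \ref{prop:existential} and then identifying $I^{G^\sharp}_{P^\sharp}$ of a representation on $V_\sigma$ with $I^G_P$ of the corresponding $M$-representation (via Lemma \ref{prop:res-induction}). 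The key observation here is that the distinction between $\mathfrak{S}'(\eta\sigma)$ and $\mathfrak{S}(\eta\sigma)$ concerns only the $S^M(\sigma)$-action, hence is invisible when one passes to parabolically induced $G^\sharp(F)$-representations. Stacking both squares along the common middle column yields the lemma.

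The main obstacle, though essentially bookkeeping, is the indeterminacy $(z, z^{-1})$ with $z \in \C^\times$ in the pair $(\alpha, \sigma^\sharp(\tilde{w})^{-1})$ supplied by Lemma \ref{prop:existential}. One fixes one such pair at the outset and propagates it consistently through both squares: the left square uses $\alpha$ and $\sigma^\sharp(\tilde{w})^{-1}$ through the isomorphism of Lemma \ref{prop:existential}, while the right square uses $\sigma^\sharp(\tilde{w})^{-1}$ through the definition $R_{P^\sharp}(\tilde{w}, \sigma^\sharp) := \sigma^\sharp(\tilde{w}) \circ r_{P^\sharp}(\tilde{w}, \sigma^\sharp)$. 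Since both squares are populated with the same chosen representative of the pair, the scalar ambiguities cancel in the composition, and the stated diagram commutes independently of any choice.
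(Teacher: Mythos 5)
Your proof is correct and takes essentially the same approach as the paper, which simply says the diagram is obtained by concatenating the squares from Lemmas \ref{prop:intertwining} and \ref{prop:existential} after applying $I^{G^\sharp}_{P^\sharp}(\cdot)$. You have correctly identified the intermediate column $\rho \boxtimes I^{G^\sharp}_{P^\sharp}(\tilde{w}\sigma^\sharp) \to I^G_P(\tilde{w}\sigma)$, noted that only the $M^\sharp(F)$-module structure (not the twisted $S^M$-action) is relevant after induction, and verified that the $(z, z^{-1})$ scalar indeterminacy cancels in the composition.
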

\begin{proof}
  This is the concatenation of the diagram in Lemma \ref{prop:intertwining} and that of Lemma \ref{prop:existential}, after applying $I^{G^\sharp}_{P^\sharp}(\cdot)$.
\end{proof}

\begin{proposition}\label{prop:cocycle}
  Let $\mathbf{c}_\rho$ be the obstruction of extending $\rho$ to the preimage of $Z^M(\sigma)^\perp$ in $S^G(\pi)$, and $\mathbf{c}_{\sigma^\sharp}$ be the class attached to $R_{\sigma^\sharp}$ in \eqref{eqn:R-cocycle}. Then we have
  $$ \mathbf{c}_\rho = \mathbf{c}_{\sigma^\sharp}^{-1} $$
  in $H^2(R_{\sigma^\sharp}, \C^\times)$.
\end{proposition}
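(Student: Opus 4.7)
The plan is to derive both cocycles from Lemma \ref{prop:pull-back} by making a single coherent system of choices that simultaneously controls the extension of $\rho$ and the lift in $\tilde R_{\sigma^\sharp}$. For each $r \in R_{\sigma^\sharp}$, fix data $(w, \tilde w, \eta, i)$, where $\tilde w \in G^\sharp(F)$ represents $w \in W_{\sigma^\sharp}$, $\eta \in Z^M(\sigma)^\perp$ projects to $\Gamma(r) \in Z^M(\sigma)^\perp / X^M(\sigma)$ (Corollary \ref{prop:L-sharp}), and $i \colon \eta\sigma \rightiso \tilde w\sigma$. Lemma \ref{prop:existential} then yields a pair $(\alpha, \sigma^\sharp(\tilde w)^{-1})$, unique up to the simultaneous scaling $(z, z^{-1})$. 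Using $\alpha$, I set $\rho(I^G_\eta) := \alpha$, extending $\rho$ to the preimage of $Z^M(\sigma)^\perp$ in $S^G(\pi)$, with $I^G_\eta$ the preimage constructed in Lemma \ref{prop:w-eta}; using $\sigma^\sharp(\tilde w)$, I set $R_{P^\sharp}(\tilde w, \sigma^\sharp) := I^{G^\sharp}_{P^\sharp}(\sigma^\sharp(\tilde w)) \circ r_{P^\sharp}(\tilde w, \sigma^\sharp)$ as a lift of $r$ in $\tilde R_{\sigma^\sharp}$. The crucial point is that these two choices are governed by the same scaling ambiguity, which is the source of the cocycle equality.

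For $r_1, r_2 \in R_{\sigma^\sharp}$, I use the coherent product choice $\tilde w_{12} := \tilde w_1 \tilde w_2$, $\eta_{12} := \eta_1 \eta_2$, and $i_{12} := i_2 \circ i_1$ as a linear map on $V_\sigma$; the latter is $M$-equivariant by the Weyl-invariance of characters in $(M(F)/M^\sharp(F))^D$ (Corollary \ref{prop:char-G-M}). Define the $G$-equivariant isomorphism $\Phi(r) := r_P(\tilde w, \sigma)^{-1} \circ I^G_P(i) \colon I^G_P(\eta\sigma) \rightiso \pi$. Using the functoriality \eqref{eqn:r_P-property} together with the naturality of $r_P(\tilde w_1, \cdot)$ in its second argument, and invoking Schur's lemma (valid since $\pi$ is irreducible by Hypothesis \ref{hyp:irred}), one obtains a scalar $\mu_{12} \in \C^\times$ with
\[
\Phi(r_1) \circ \Phi(r_2) = \mu_{12} \cdot \Phi(r_{12}).
\]
Since $\eta_1$ is trivial on unipotent elements and on $G^\sharp(F) \ni \tilde w_2$, the pointwise multiplication $A_{\eta_1}$ commutes past $\Phi(r_2)$, so the same scalar satisfies $I^G_{\eta_1} I^G_{\eta_2} = \mu_{12} \cdot I^G_{\eta_{12}}$ in $S^G(\pi)$.

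Applying Lemma \ref{prop:pull-back} to $r_1, r_2, r_{12}$ and chasing the three diagrams, the identity of $\Phi$'s translates, via the injection $I^{G^\sharp}_{P^\sharp}(\iota)$, into an equality of tensor-product operators
\[
(\alpha_1 \alpha_2) \boxtimes \bigl(R_{P^\sharp}(\tilde w_1, \sigma^\sharp)^{-1} R_{P^\sharp}(\tilde w_2, \sigma^\sharp)^{-1}\bigr) = \mu_{12} \cdot \alpha_{12} \boxtimes R_{P^\sharp}(\tilde w_{12}, \sigma^\sharp)^{-1}.
\]
Extracting the scalars from each tensor factor and applying the defining cocycle relations, the first factor yields $\alpha_1 \alpha_2 = (\mu_{12}/c_\rho(r_1, r_2)) \cdot \alpha_{12}$; for the second factor, the cocycle $c_{\sigma^\sharp}$ and the commutation relation (derivable from the abelianness of $R_{\sigma^\sharp}$ under Hypothesis \ref{hyp:irred}, by Proposition \ref{prop:Goldberg}) give $R_{P^\sharp, 1}^{-1} R_{P^\sharp, 2}^{-1} = c_{\sigma^\sharp}(r_2, r_1) \cdot R_{P^\sharp, 12}^{-1}$. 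Matching overall scalars yields $c_\rho(r_1, r_2) = c_{\sigma^\sharp}(r_2, r_1)$. For an abelian group, the 2-cocycle $(r_1, r_2) \mapsto c(r_2, r_1)$ is cohomologous to $c^{-1}$ (both have inverse skew-symmetrizations, which determine the class in $H^2$ for abelian coefficients), so $\mathbf{c}_\rho = \mathbf{c}_{\sigma^\sharp}^{-1}$.

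The main obstacle is the scalar bookkeeping, particularly the verification that a single scalar $\mu_{12}$ governs both $\Phi(r_1) \circ \Phi(r_2) = \mu_{12} \cdot \Phi(r_{12})$ and $I^G_{\eta_1} I^G_{\eta_2} = \mu_{12} \cdot I^G_{\eta_{12}}$. This requires unpacking the formula $I^G_\eta = r_P(\tilde w, \sigma)^{-1} \circ I^G_P(i) \circ A_\eta$ from Lemma \ref{prop:w-eta}, checking the commutation of $A_\eta$ with $r_P(\tilde w', \sigma)$ and $I^G_P(i')$, and reading off the scalar from the functoriality identity \eqref{eqn:r_P-property}. A secondary subtlety is the identification of the twisted tensor factor ${}^{\eta_i}\rho$ with $\rho$ at the level of underlying vector spaces, consistent with the definition of $\rho(I^G_\eta)$ from the data of Lemma \ref{prop:w-eta}.
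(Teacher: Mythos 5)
Your proposal is correct and follows essentially the same route as the paper's proof: it passes through Lemma \ref{prop:pull-back} to compare the pull-back of a chosen preimage $I^G_\eta$ with the tensor factorization $\alpha \boxtimes R_{P^\sharp}(\tilde w,\sigma^\sharp)^{-1}$, extracts the scalar discrepancy to reach $c_\rho(r_1,r_2)=c_{\sigma^\sharp}(r_2,r_1)$, and concludes with the commutator-pairing argument for abelian $R_{\sigma^\sharp}$. The one point treated more lightly than in the paper is the verification that your assignment $I^G_\eta\mapsto\alpha(I^G_\eta)$ genuinely extends to a compatible family on the whole preimage of $Z^M(\sigma)^\perp$ in $S^G(\pi)$ in the sense of Definition \ref{def:obstruction} (i.e.\ that $\alpha(I^G_\eta I^G_\omega)=\alpha(I^G_\eta)\rho(I^M_\omega)$ for $\omega\in X^M(\sigma)$); the paper checks this explicitly by examining how the diagram of Lemma \ref{prop:pull-back} transforms when $\eta$ is replaced by $\eta\omega$, and your argument implicitly relies on this but does not spell it out.
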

\begin{proof}
  Fix $\iota: \rho \boxtimes \sigma^\sharp \hookrightarrow \sigma$. Also fix a set of representatives $\tilde{w} \in G^\sharp(F)$ for each $w \in R_{\sigma^\sharp}$. For each $\eta$, together with the auxiliary choice $i: \eta\sigma \rightiso \tilde{w}\sigma$, the top row of the diagram in Lemma \ref{prop:pull-back} gives an operator $I^G_\eta \circ A_\eta^{-1}: I^G_P(\eta\sigma) \rightiso I^G_P(\sigma)$ for some $I^G_\eta \in S^G(\pi)$. The isomorphism $A_\eta: \eta I^G_P(\sigma) \rightiso I^G_P(\sigma)$ has no effect after restriction. Therefore Lemma \ref{prop:pull-back} asserts that $I^G_\eta$ is pulled-back to $\alpha \boxtimes R_{P^\sharp}(\tilde{w}, \sigma^\sharp)^{-1}$ under $I^{G^\sharp}_{P^\sharp}(\iota)$.

  Now we can forget $i$ and vary $I^G_\eta$ in the preimage of $\eta$ in $S^G(\pi)$, which is a $\C^\times$-torsor. Regard $\alpha = \alpha(I^G_\eta)$ as a function of $I^G_\eta$; it is well-defined once we have pinned down the operator $\sigma^\sharp(\tilde{w})$ coupled with $\alpha$.

  Suppose that $\eta$ is replaced by $\eta\omega$ where $\omega \in X^M(\sigma)$; accordingly, $I^G_\eta$ is replaced by $I^G_\eta I^G_\omega$ where $I^M_\omega \in S^M(\sigma)$ lies in the preimage of $\omega$ and $I^M_\omega \mapsto I^G_\omega$. This does not affect the chosen data $\tilde{w}$ and $\iota$. On the other hand, the diagram in Lemma \ref{prop:pull-back} says that $\alpha \boxtimes R_{P^\sharp}(\tilde{w}, \sigma^\sharp)^{-1}$ is replaced by
  $$ \alpha \circ \rho(I^M_\omega) \boxtimes R_{P^\sharp}(\tilde{w}, \sigma^\sharp)^{-1}. $$

  It follows that we can pin down the operators $\sigma^\sharp(\tilde{w})$, and well-define a function
  $$ I^G_\eta \mapsto \alpha(I^G_\eta) \in \Isom_{S^M(\sigma)}({}^\eta \rho, \rho), $$
  for every $I^G_\eta$ in the preimage of $\eta \in Z^M(\sigma)^\perp$ in $S^G(\pi)$, such that
  \begin{itemize}
    \item $I^G_\eta$ is pulled-back to $\alpha(I^G_\eta) \boxtimes R_{P^\sharp}(\tilde{w}, \sigma^\sharp)^{-1}$ under $I^{G^\sharp}_{P^\sharp}(\iota)$;
    \item $\alpha(I^G_\eta I^G_\omega) = \alpha(I^G_\eta) \rho(I^M_\omega)$ for every $\omega \in X^M(\sigma)$ and $I^M_\omega$ in its preimage.
  \end{itemize}

  Such a family of intertwining operators meets the requirements of Definition \ref{def:obstruction}, thus the obstruction can be accounted by the $\C^\times$-valued $2$-cocycle $c_\rho$ given by
  $$ \alpha(I^G_\xi I^G_\eta) = c_\rho(w_\xi, w_\eta) \alpha(I^G_\xi) \alpha(I^G_\eta), \quad \xi, \eta \in Z^M(\sigma)^\perp, $$
  where $w_\eta \in R_{\sigma^\sharp}$ denotes the element determined by $\eta$ as in Corollary \ref{prop:L-sharp}. Idem for $w_\xi$.

  On the other hand, write $w_\eta \mapsto \tilde{w}_\eta$ for the map that picks the chosen representative for $w_\eta \in R_{\sigma^\sharp}$. The equation \eqref{eqn:R-cocycle} defines $2$-cocycle $c_{\sigma^\sharp}$. For every $\xi, \eta \in Z^M(\sigma)^\perp$ we obtain
  $$ R_{P^\sharp}(\tilde{w}_{\xi\eta}, \sigma^\sharp) = R_{P^\sharp}(\tilde{w}_{\eta\xi}, \sigma^\sharp) = c_{\sigma^\sharp}(w_\eta, w_\xi) R_{P^\sharp}(\tilde{w}_\eta, \sigma^\sharp) R_{P^\sharp}(\tilde{w}_\xi, \sigma^\sharp). $$

  All in all, the pull-back of $I^G_\xi I^G_\eta$ by $I^{G^\sharp}_{P^\sharp}(\iota)$ equals
  $$ c_\rho(w_\xi, w_\eta) c_{\sigma^\sharp}(w_\eta, w_\xi)^{-1} \cdot (\text{the pull-back of } I^G_\xi) \circ (\text{the-pull back of } I^G_\eta ). $$

  Therefore $c_\rho(w_\xi, w_\eta) = c'_{\sigma^\sharp}(w_\xi, w_\eta) := c_{\sigma^\sharp}(w_\eta, w_\xi)$. It is routine to check that $c'_{\sigma^\sharp}: (R_{\sigma^\sharp})^2 \to \C^\times$ is also a $2$-cocycle. Denote by $\mathbf{c}'_{\sigma^\sharp}$ the cohomology class of $c'_{\sigma^\sharp}$. It remains to show that $\mathbf{c}'_{\sigma^\sharp} = \mathbf{c}^{-1}_{\sigma^\sharp}$. We use the following observation: let $A$ be finite abelian group acting trivially on $\C^\times$, we claim that there is an injective group homomorphism
  \begin{align*}
    \text{comm}: H^2(A, \C^\times) & \longrightarrow \Hom\left( \bigwedge^2 A, \C^\times \right), \\
    \mathbf{c} & \longmapsto [x \wedge y \mapsto c(y,x)c(x,y)^{-1}]
  \end{align*}
  where $c$ is any $2$-cocycle representing the class $\mathbf{c}$. Indeed, let
  $$ 1 \to \C^\times \to \tilde{A} \to A \to 1 $$
  be the central extension corresponding to $\mathbf{c}$, then $(x,y) \mapsto c(y,x)c(x,y)^{-1}$ is just the commutator pairing of this central extension. The injectivity results from the elementary fact that such an extension splits if and only if $\tilde{A}$ is commutative.

  Apply this to $A = R_{\sigma^\sharp}$. Since $\text{comm}(\mathbf{c}'_{\sigma^\sharp}) = \text{comm}(\mathbf{c}^{-1}_{\sigma^\sharp})$, we deduce $\mathbf{c}'_{\sigma^\sharp} = \mathbf{c}^{-1}_{\sigma^\sharp}$, as asserted.
\end{proof}

\section{The inner forms of $\SL(N)$}\label{sec:SL}
\subsection{The groups}\label{sec:groups}
Fix $N \in \Z_{\geq 1}$ and let $G^* := \GL_F(N)$. Let $A$ be a central simple algebra over $F$ of dimension $N^2$. There exist $n \in \Z_{\geq 1}$ and a central division algebra $D$ over $F$ satisfying
$$ n^2 \cdot \dim_F D = N^2, $$
such that $A$ is isomorphic to $\End_D(D^n)$. The division $F$-algebra $D$ is uniquely determined by $A$. We put
\begin{align*}
  \Nrd & := \text{ the reduced norm of } A, \\
  \GL_D(n) & := A^\times, \\
  \SL_D(n) & := \Ker(\Nrd: A^\times \to \Gm).
\end{align*}

We can regard $A^\times$ as a reductive $F$-group. It is well-known that $A \mapsto A^\times$ induces a bijection between the central simple $F$-algebras of dimension $N^2$ and the inner forms of $G^*$. Given $A$, or equivalently given $(n,D)$ as above, we shall always write
$$ G := \GL_D(n). $$
Under an inner twist $\psi: G \times_F \bar{F} \rightiso G^* \times_G \bar{F}$, the determinant map $\det: G^* \to \Gm$ corresponds to $\Nrd: G \to \Gm$. Since the parametrization of the inner forms of an $F$-group $G^*$ only depends on $G^*_\text{AD}$, the map $A \mapsto \SL_D(n)$ establishes a bijection between the central simple $F$-algebras of dimension $N^2$ and the inner forms of $\SL_N(F)$. We write
$$ G^\sharp := \SL_D(n) = G_\text{der}. $$

Note that $G(F)/G^\sharp(F) = (G/G^\sharp)(F) = F^\times$, since $H^1(F, G^\sharp)$ is trivial by Hasse principle.

As mentioned in \S\ref{sec:res-L}, the inner twist $\psi$ gives a correspondence between Levi subgroups: the Levi subgroups of $G$ is of the form
$$ M = \prod_{i=1}^r \GL_D(n_i), \quad n_1 + \cdots + n_r = n. $$
and the corresponding Levi subgroup of $G^*$, well-defined up to conjugacy, is simply
$$ M^* = \prod_{i=1}^r \GL_F(n_i \cdot \dim_F D). $$

The L-groups of $G$ and $G^\sharp$ are easily described. We have
\begin{align*}
  \hat{G} = \hat{G^*} & = \GL(N,\C), \\
  \widehat{G^\sharp} & = \PGL(N,\C), \\
  \hat{G}_\text{SC} = (\widehat{G^\sharp})_\text{SC} & = \SL(N,\C), \\
  Z_{\hat{G}_\text{SC}} = Z_{(\widehat{G^\sharp})_\text{SC}} & = \mu_N(\C) \\
  & := \{z \in \C^\times : z^N = 1 \}.
\end{align*}
These complex groups are endowed with the trivial Galois action, thus $\Lgrp{G} = \hat{G} \times W_F$ and $\Lgrp{G^\sharp} = \widehat{G^\sharp} \times W_F$. The inclusion $G^\sharp \hookrightarrow G$ is dual to the quotient homomorphism $\GL(N,\C) \to \PGL(N,\C)$.

It is also possible to describe the characters $\chi_G = \chi_{G^\sharp}$ in \eqref{eqn:chi-G} explicitly. Observe that $\Gamma_F$ acts trivially on $Z_{\hat{G}_\text{SC}}$, and one can identify the Pontryagin dual of $Z_{\hat{G}_\text{SC}} = \mu_N(\C)$, denoted by $Z_{\hat{G}_\text{SC}}^D$, with $\Z/N\Z$: a class $e \in \Z/N\Z$ corresponds to the character $z \mapsto z^e$. For the inner form $G = \GL_D(n)$ of $G^* = \GL_F(N)$, we have
\begin{gather}\label{eqn:chi-G-special}
  \chi_G \in Z_{\hat{G}_\text{SC}}^D \; \text{ corresponds to } (n \text{ mod } N) \in \Z/N\Z .
\end{gather}

Later on, the results of \S\ref{sec:res-2} will be applied to the tempered representations of $G(F)$. This is justified by the following general result.

\begin{theorem}[Sécherre {\cite{Sec09}}]\label{prop:irred}
  Let $P=MU$ be a parabolic subgroup of $G$ and $\sigma \in \Pi_{\mathrm{unit}}(M)$, then $I^G_P(\sigma)$ is irreducible. In particular, Hypothesis \ref{hyp:irred} is satisfied by $\sigma$.
\end{theorem}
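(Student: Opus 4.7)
The plan is to adapt Tadić's proof of property (U0) for $\GL_N(F)$ to the inner form $G = \GL_D(n)$, which is the strategy Sécherre carries out in the cited paper. First, one has to classify the unitary dual of each Levi factor $\GL_D(m)$ in terms of ``building blocks'' à la Tadić. The discrete series of $\GL_D(m)$ are determined (via Deligne--Kazhdan--Vignéras and Badulescu) by the local Jacquet--Langlands correspondence with the essentially square-integrable representations of $\GL_F(md)$, where $d^2 = \dim_F D$. Mimicking Tadić, one constructs Speh-type representations $u(\delta, k)$ as the unique irreducible quotient of a suitable standard module built from a discrete series $\delta$, together with their complementary-series deformations $\pi(u(\delta, k), \alpha)$ for $0 < \alpha < \tfrac{1}{2}$, and one shows that every $\sigma \in \Pi_{\text{unit}}(M)$ is isomorphic to a parabolic induction of a product of such blocks, each of which is itself unitary.

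By transitivity of parabolic induction (i.e.\ $\mathbf{R}_5$ applied at the level of underlying induced representations), the theorem then reduces to showing that a full induction of a product of these unitary building blocks from a larger parabolic subgroup of $G$ is irreducible. Here the tool of choice is the theory of types: by work of Sécherre and Sécherre--Stevens, each Bernstein block of $\GL_D(n)$ is equivalent to the module category over an affine Hecke algebra of type $A$ with explicit (and in general unequal) parameters encoding the arithmetic of $D$. Under this equivalence, parabolic induction corresponds to induction of Hecke-algebra modules, and the desired statement becomes a combinatorial assertion about irreducibility of standard modules associated to Zelevinsky-style segments.

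The main obstacle is precisely this last combinatorial step. In the split case $D = F$, Tadić's argument exploits residue computations of the standard intertwining operators $J_{\bar{P}|P}(\sigma_\lambda)$ or, equivalently, the linking/non-linking combinatorics of Zelevinsky segments, to show that inducing a product $u(\delta_1, k_1) \times \cdots \times u(\delta_r, k_r)$ — together with any admissible complementary-series twists — always yields an irreducible representation. To transport this to $\GL_D(n)$ one must work with generalized segments carrying the extra parameter $d$ coming from the division algebra, and verify that no ``new'' linking relations can appear that would force reducibility. This is the technical heart of Sécherre's paper and is where the bulk of the effort in writing out the proof would lie; by contrast, the first two steps of the plan are essentially formal reductions once the classification of the unitary dual of $\GL_D(m)$ is in place.
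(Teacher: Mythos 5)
The paper states this theorem as a citation of Sécherre [Sec09] and supplies no proof of its own, so the comparison can only be against Sécherre's actual argument. Your sketch does not reflect that argument, and its first step is circular. You propose to begin by classifying $\Pi_{\mathrm{unit}}(\GL_D(m))$ in terms of Speh blocks $u(\delta,k)$ and complementary series, and then deduce (U0) from the classification. But in Tadić's framework (U0) is one of the hypotheses from which the classification of the unitary dual is \emph{deduced}, not a consequence of it; the proof that every irreducible unitary representation factors through the expected building blocks invokes irreducibility of unitary parabolic induction repeatedly. For $\GL_D(n)$ this was precisely the status of the literature before Sécherre's paper: the Badulescu--Renard classification of the unitary dual was conditional on (U0), and Sécherre's contribution was to close exactly this gap. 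You cannot therefore take the classification as an input to a proof of (U0).

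Sécherre's actual proof bypasses the classification and also does not re-run a Zelevinsky-segment combinatorial analysis for $\GL_D(n)$. After reducing to a single Bernstein block, the Sécherre--Stevens theory of types identifies the block with a module category over a (product of) affine Hecke algebra(s) of type A, which coincides with the Hecke algebra attached to a Bernstein block of a \emph{split} general linear group over a suitable non-archimedean field, and under the resulting equivalence the parabolic induction functors on the two sides correspond. The real technical crux --- which your sketch mislocates in segment combinatorics --- is proving that this categorical equivalence is compatible with unitarity, i.e.\ that it intertwines the natural $*$-structures and Hermitian forms, so that unitary irreducibles on the $\GL_D(n)$ side match unitary irreducibles on the split side. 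Once that compatibility is in place, (U0) for $\GL_D(n)$ is transported directly from Bernstein's theorem (1984) that unitary parabolic induction is irreducible for split $\GL$; no fresh analysis of ``generalized linked segments'' is required, and no prior knowledge of the unitary dual of $\GL_D(m)$ enters.
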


Note that the tempered case is already established in \cite{DKV84}.

\subsection{Local Langlands correspondences}\label{sec:LLC}
This subsection is a summary of \cite[Chapter 11]{HS12}.

\paragraph{Local Langlands correspondence for $\GL_D(n)$}
Using the local Langlands correspondence for $G^*$, we can define the notion of $G^*$-generic elements in $\Phi(G)$: a parameter $\phi \in \Phi(G) \subset \Phi(G^*)$ is called $G^*$-generic if it parametrizes a generic representation of $G^*(F)$. This defines a subset $\Phi_{G^*-\text{gen}}(G)$ of $\Phi(G)$.

\begin{theorem}[{\cite[Lemma 11.1 and 11.2]{HS12}}]\label{prop:LLC-G}
  Let $G = \GL_D(n)$ and $G^* = \GL_F(N)$ as in \S\ref{sec:groups}. There exist a subset $\Pi_{G^*-\mathrm{gen}}(G)$ of $\Pi(G)$ satisfying
  \begin{itemize}
    \item $\Pi_{G^*-\mathrm{gen}}(G) \supset \Pi_{\mathrm{temp}}(G)$,
    \item $\Pi_{G^*-\mathrm{gen}}(G)$ is stable under twists by $(G(F)/G^\sharp(F))^D$,
  \end{itemize}
  and a canonically defined bijection between $\Pi_{G^*-\mathrm{gen}}(G)$ and $\Phi_{G^*-\mathrm{gen}}(G)$, denoted by $\pi \leftrightarrow \phi$, such that
  $$\xymatrix{
    \Pi_{G^*-\mathrm{gen}}(G) \ar@{<->}[r]^{\sim} & \Phi_{G^*-\mathrm{gen}}(G) \\
    \Pi_{\mathrm{temp}}(G) \ar@{<->}[r]^{\sim} \ar@{^{(}->}[u] & \Phi_{\mathrm{bdd}}(G) \ar@{^{(}->}[u] \\
    \Pi_{2,\mathrm{temp}}(G) \ar@{<->}[r]^{\sim} \ar@{^{(}->}[u] & \Phi_{2,\mathrm{bdd}}(G) \ar@{^{(}->}[u]
  }$$

  The correspondence satisfies the following compatibility properties.
  \begin{enumerate}
    \item When $G=G^*$, the usual Langlands correspondence for $\GL_F(N)$ is recovered.
    \item Given $\pi \leftrightarrow \phi$ and $\mathbf{a} \in H^1_{\mathrm{cont}}(W_F, Z_{\hat{G}})$, let $\eta$ be the character of $G(F)$ deduced from $\mathbf{a}$ by local class field theory, then we have $\omega\pi \leftrightarrow a\phi$.
    \item Given a Levi subgroup $M = \prod_{i \in I} \GL_D(n_i)$ of $G$ and let $\sigma := \boxtimes_{i \in I} \sigma_i \in \Pi_{\mathrm{temp}}(M)$. Let $\phi_M \in \Phi_{\mathrm{bdd}}(M)$ such that $\sigma \leftrightarrow \phi_M$ and let $\phi$ be the composition of $\phi_M$ with some L-embedding $\Lgrp{M} \hookrightarrow \Lgrp{G}$. Then for any $P \in \mathcal{P}(M)$ we have
    $$ I^G_P(\sigma) \leftrightarrow \phi .$$
  \end{enumerate}
\end{theorem}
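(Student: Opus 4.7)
The plan is to bootstrap from the established local Langlands correspondence for $\GL_F(N)$ using the Jacquet-Langlands correspondence, following the strategy of Hiraga-Saito \cite{HS12} (which in turn builds on Deligne-Kazhdan-Vignéras \cite{DKV84} and Badulescu). First I would invoke the known LLC for the quasisplit group $G^* = \GL_F(N)$, due to Harris-Taylor, Henniart, and Scholze: this furnishes a canonical bijection $\Pi(G^*) \leftrightarrow \Phi(G^*)$ compatible with central character twists, parabolic induction of tempered representations, and contragredients. In particular, this bijection restricts to bijections between $\Pi_{\text{temp}}(G^*)$ and $\Phi_\text{bdd}(G^*)$, and between $\Pi_{2,\text{temp}}(G^*)$ and $\Phi_{2,\text{bdd}}(G^*)$.

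Next I would construct the bijection for $G = \GL_D(n)$ on the essentially square-integrable part. The Jacquet-Langlands correspondence provides a canonical injection
\[ \mathrm{JL}: \Pi_{2,\text{temp}}(G) \hookrightarrow \Pi_{2,\text{temp}}(G^*), \]
whose image consists of those $\sigma^* \in \Pi_{2,\text{temp}}(G^*)$ whose Langlands parameter factors through a Levi $\Lgrp{M^*}$ that is $G$-relevant in the sense of \S\ref{sec:res-L}. For $\pi \in \Pi_{2,\text{temp}}(G)$, I define $\phi \in \Phi_{2,\text{bdd}}(G)$ to be the parameter attached to $\mathrm{JL}(\pi)$ by the quasisplit LLC. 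The relevance condition ensures $\phi$ takes values in (a representative of) $\Lgrp{G}$, giving a bijection $\Pi_{2,\text{temp}}(G) \leftrightarrow \Phi_{2,\text{bdd}}(G)$.

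Extending to the tempered case is then forced upon us by property 3: every $\pi \in \Pi_{\text{temp}}(G)$ can be written as $\pi = I^G_P(\sigma)$ for a (unique up to $W(M)$) pair $(M,\sigma)$ with $M = \prod_i \GL_D(n_i)$ and $\sigma = \boxtimes_i \sigma_i \in \Pi_{2,\text{temp}}(M)$, and this induced representation is irreducible by Theorem \ref{prop:irred} (or by the earlier result of \cite{DKV84}). Each factor $\sigma_i$ has a parameter $\phi_i \in \Phi_{2,\text{bdd}}(\GL_D(n_i))$, and one sets $\phi := \bigoplus_i \phi_i$, composed with the L-embedding $\Lgrp{M} \hookrightarrow \Lgrp{G}$. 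Well-definedness up to $\hat{G}$-conjugacy follows from the uniqueness of the Langlands data. Passing to generic parameters is analogous via the Langlands classification: for $\pi \in \Pi_{G^*-\text{gen}}(G)$, realize $\pi$ as the Langlands quotient of $I^G_P(\sigma_\mu)$ for some tempered $\sigma$ on a Levi $M$ and $\mu \in \mathfrak{a}^{*,+}_M$; attach to $\pi$ the parameter obtained by twisting the tempered parameter of $\sigma$ by $\mu$. The $G^*$-genericity condition guarantees that the resulting parameter lies in $\Phi_{G^*-\text{gen}}(G)$, and that this procedure is inverse to the obvious one on parameters.

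Finally, I would verify the three listed compatibilities. Property (i) is tautological when $G = G^*$, since $\mathrm{JL}$ is the identity and no relevance condition appears. Property (ii) follows from the corresponding compatibility on $G^*$ together with the fact that $\mathrm{JL}$ intertwines twists by characters (coming from $(G(F)/G^\sharp(F))^D = F^{\times,D}$ via the reduced norm, matching twists by $1$-cocycles in $Z^1_\text{cont}(W_F, Z_{\hat{G}})$ under local class field theory). Property (iii) holds by construction in the tempered case, and extends to the generic case by continuity of parameters under the Langlands quotient construction. The principal obstacle is the passage from discrete series to the larger class $\Pi_{G^*-\text{gen}}(G)$: this requires the standard-module formalism for inner forms and, crucially, Sécherre's irreducibility Theorem \ref{prop:irred}, which ensures that the Langlands data $(M,\sigma,\mu)$ indeed determine the induced representation (and hence the parameter) unambiguously. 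Everything else is a careful bookkeeping exercise that reduces the inner-form statement to the theorem for $\GL_F(N)$ via $\mathrm{JL}$.
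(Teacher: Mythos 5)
Your proposal correctly reconstructs the strategy that the paper merely cites from Hiraga--Saito (\cite[\S 11]{HS12}): bootstrap from LLC for $\GL_F(N)$ via the Jacquet--Langlands correspondence on essentially square-integrable representations, then extend to tempered and $G^*$-generic representations through parabolic induction (with irreducibility supplied by DKV and Sécherre) and the Langlands quotient data. One small imprecision worth flagging: for a square-integrable parameter one has $M^*_\phi = G^*$, which always corresponds to the full group $G$ and is therefore automatically relevant, so the relevance condition you impose to describe the image of $\mathrm{JL}$ is vacuous --- $\mathrm{JL}$ is in fact a bijection $\Pi_{2,\mathrm{temp}}(G) \rightiso \Pi_{2,\mathrm{temp}}(G^*)$ by DKV, and relevance only becomes a genuine constraint at the tempered and generic level, where it amounts to the requirement that each factor $\GL_F(N_i)$ of $M^*_\phi$ satisfy $d \mid N_i$ and hence comes from a Levi subgroup of $\GL_D(n)$.
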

Note that in the last assertion, $I^G_P(\sigma)$ is irreducible according to Theorem \ref{prop:irred}.

The definitions of $\Pi_{G^*-\text{gen}}(G)$ and $\pi \leftrightarrow \phi$ are based upon the local Langlands correspondence for $G^*$ and the Jacquet-Langlands correspondence for essentially square-integrable representations. We refer the reader to \cite[\S 11]{HS12} for details; the compatibility properties are also implicit therein. Only the tempered/bounded case of the theorem will be used in this article.

\paragraph{Local Langlands correspondence for $\SL_D(n)$}
Let $G = \GL_D(n)$ and $G^\sharp = G_\text{der} = \SL_D(n)$, so that the formalism in \S\ref{sec:res} is applicable. The idea is to define the packets $\Pi_{\phi^\sharp}$ via restriction, by combining the results in \S\ref{sec:res-rep} and \S\ref{sec:res-L}. Let $\Phi_{G^*-\text{gen}}(G^\sharp)$ be the set of $\phi^\sharp \in \Phi(G^\sharp)$ such that $\phi \in \Phi_{G^*-\text{gen}}(G)$ for some lifting $\phi$ of $\phi^\sharp$ (hence for all liftings, since twisting by characters does not affect $G^*$-genericity). For any $\phi^\sharp \in \Phi_{G^*-\text{gen}}(G^\sharp)$, define the corresponding packet by
$$ \Pi_{\phi^\sharp} := \Pi_\pi, \quad \pi \leftrightarrow \phi \text{ for some lifting } \phi \in \Phi_{G^*-\text{gen}}(G). $$
By Proposition \ref{prop:res-disjoint} and Theorem \ref{prop:lifting-parameter}, the definition of $\Pi_{\phi^\sharp}$ does not depend on the choice of lifting.

On the other hand, set
$$ \Pi_{G^*-\text{gen}}(G^\sharp) = \bigsqcup_{\pi} \Pi_\pi $$
where $\pi$ ranges over the $(G(F)/G^\sharp(F))^D$-orbits in $\Pi_{G^*-\text{gen}}(G)$. Our version of the local Langlands correspondence for $G^\sharp$ is stated as follows.

\begin{theorem}[{\cite[Chapter 12]{HS12}}]\label{prop:LLC-SL}
  We have $\Pi_\mathrm{temp}(G^\sharp) \subset \Pi_{G^*-\mathrm{gen}}(G^\sharp)$, and there is a decomposition
  \begin{gather}\label{eqn:LLC-SL}
    \Pi_{G^*-\mathrm{gen}}(G^\sharp) = \bigsqcup_{\phi^\sharp \in \Phi_{G^*-\mathrm{gen}}(G^\sharp)} \Pi_{\phi^\sharp}
  \end{gather}
  which restricts to
  \begin{align*}
    \Pi_{\mathrm{temp}}(G^\sharp) = \bigsqcup_{\phi^\sharp \in \Phi_{\mathrm{temp}}(G^\sharp)} \Pi_{\phi^\sharp}, \\
    \Pi_{2,\mathrm{temp}}(G^\sharp) = \bigsqcup_{\phi^\sharp \in \Phi_{2,\mathrm{temp}}(G^\sharp)} \Pi_{\phi^\sharp}.
  \end{align*}
\end{theorem}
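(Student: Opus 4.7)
The plan is to reduce everything to the correspondence for $G = \GL_D(n)$ in Theorem \ref{prop:LLC-G} by carefully matching character twists on the representation side with cocycle twists on the parameter side. I would proceed in four steps.

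First, I would verify that $\Pi_{\phi^\sharp}$ does not depend on the choice of lifting $\phi$. If $\phi_1, \phi_2 \in \Phi_{G^*-\mathrm{gen}}(G)$ both lift $\phi^\sharp$, then by Theorem \ref{prop:lifting-parameter} they differ by a cocycle $\mathbf{a} \in H^1_\mathrm{cont}(W_F, \hat{Z}^\sharp)$. Since $G/G^\sharp = \Gm$ (via $\Nrd$), one has $\hat{Z}^\sharp = Z_{\hat{G}}$ and local class field theory translates $\mathbf{a}$ into a character $\eta \in (G(F)/G^\sharp(F))^D$. By item 2 of Theorem \ref{prop:LLC-G}, the representations $\pi_1 \leftrightarrow \phi_1$ and $\pi_2 \leftrightarrow \phi_2$ satisfy $\pi_1 \simeq \eta \pi_2$, so Proposition \ref{prop:res-disjoint} yields $\Pi_{\pi_1} = \Pi_{\pi_2}$, as needed.

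Second, I would establish the inclusion $\Pi_\mathrm{temp}(G^\sharp) \subset \Pi_{G^*-\mathrm{gen}}(G^\sharp)$. Given $\pi^\sharp \in \Pi_\mathrm{temp}(G^\sharp)$, Proposition \ref{prop:lifting} produces some $\pi \in \Pi(G)$ with $\pi^\sharp \hookrightarrow \pi|_{G^\sharp}$, and Proposition \ref{prop:heredity} allows us to choose $\pi$ tempered. Since $\Pi_\mathrm{temp}(G) \subset \Pi_{G^*-\mathrm{gen}}(G)$ by Theorem \ref{prop:LLC-G}, we conclude $\pi^\sharp \in \Pi_\pi \subset \Pi_{G^*-\mathrm{gen}}(G^\sharp)$.

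Third, I would prove the decomposition \eqref{eqn:LLC-SL}. Covering is immediate from $\Pi_{G^*-\mathrm{gen}}(G^\sharp) = \bigsqcup_\pi \Pi_\pi$ (union over $(G(F)/G^\sharp(F))^D$-orbits in $\Pi_{G^*-\mathrm{gen}}(G)$): each representative $\pi$ has an L-parameter $\phi \in \Phi_{G^*-\mathrm{gen}}(G)$, and setting $\phi^\sharp := \mathbf{pr}\circ\phi$ gives $\Pi_\pi = \Pi_{\phi^\sharp}$. For disjointness, suppose $\Pi_{\phi^\sharp_1} \cap \Pi_{\phi^\sharp_2} \neq \emptyset$; fix liftings $\phi_i$ with $\pi_i \leftrightarrow \phi_i$, so $\Pi_{\pi_1} \cap \Pi_{\pi_2} \neq \emptyset$. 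By Proposition \ref{prop:res-disjoint}, $\pi_1 \simeq \eta \pi_2$ for some $\eta \in (G(F)/G^\sharp(F))^D$, and by Theorem \ref{prop:LLC-G}(2) together with local class field theory, this means $\phi_1 \sim \mathbf{a}\phi_2$ for the corresponding $\mathbf{a}: W_F \to \hat{Z}^\sharp$. Since $\mathbf{a}$ takes values in $\Ker(\mathbf{pr})$, we conclude $\phi^\sharp_1 = \mathbf{pr}\circ\phi_1 = \mathbf{pr}\circ\phi_2 = \phi^\sharp_2$.

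Finally, the tempered and square-integrable restrictions follow by combining Proposition \ref{prop:heredity} with Theorem \ref{prop:lifting-parameter}. In one direction, a tempered (resp. square-integrable modulo center) $\pi^\sharp \in \Pi_{\phi^\sharp}$ forces the existence of a tempered (resp. essentially square-integrable) lifting $\pi$, whose parameter $\phi$ is bounded (resp. bounded and $G$-discrete), so $\phi^\sharp$ inherits the same property. Conversely, given $\phi^\sharp \in \Phi_\mathrm{bdd}(G^\sharp)$ (resp. $\Phi_{2,\mathrm{bdd}}(G^\sharp)$), Theorem \ref{prop:lifting-parameter} supplies a bounded (resp. square-integrable modulo center) lifting $\phi$; the corresponding $\pi$ is tempered (resp. essentially square-integrable), and Proposition \ref{prop:heredity} transfers this property to every member of $\Pi_\pi = \Pi_{\phi^\sharp}$. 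No step here presents a real obstacle; the only subtle point is the systematic bookkeeping of the identification between $(G(F)/G^\sharp(F))^D$ and $H^1_\mathrm{cont}(W_F, \hat{Z}^\sharp)$, together with the fact that these twists sit precisely in the kernel of $\mathbf{pr}$, which is exactly what makes the restriction procedure compatible with the projection of L-groups.
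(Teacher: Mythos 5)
Your proof is correct and takes essentially the same approach as the paper, which simply cites Propositions \ref{prop:heredity}, \ref{prop:res-disjoint}, Theorem \ref{prop:LLC-G} and Theorem \ref{prop:lifting-parameter}; you have merely spelled out the bookkeeping (lifting up to $H^1_{\mathrm{cont}}(W_F,\hat{Z}^\sharp)$-twist, translation to $(G(F)/G^\sharp(F))^D$ via class field theory, disjointness via Proposition \ref{prop:res-disjoint}, and the tempered/square-integrable refinements via Proposition \ref{prop:heredity}) that the paper leaves implicit.
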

\begin{proof}
  This follows from Proposition \ref{prop:heredity}, Theorem \ref{prop:LLC-G}, \ref{prop:lifting-parameter} and Proposition \ref{prop:res-disjoint}.
\end{proof}

Note that each packet $\Pi_{\phi^\sharp}$ is finite. From the endoscopic point of view, in order to justify the correspondence \eqref{eqn:LLC-SL}, one has to explicate
\begin{enumerate}
  \item the internal structure of the packets $\Pi_{\phi^\sharp}$;
  \item their relation to $S$-groups;
  \item the endoscopic character identities for $G^\sharp$.
\end{enumerate}

We will recall the definition of $S$-groups (or more precisely their component groups, called $\mathscr{S}$-groups...) in the next subsection, then summarize its relation to the internal structure of packets; this is one of the main results in \cite{HS12}. The character identities will not be used in this article; we refer the interested reader to \cite[Theorem 12.7]{HS12}.

\paragraph{Normalizing factors}
Choose a non-trivial additive character $\psi_F: F \to \C^\times$. Now we can exhibit a canonical family of normalizing factors for $G$ and $G^\sharp$ with respect to $\psi_F$.

Let us begin with $G$. According to the construction in Remark \ref{rem:construction-normalization}, it suffices to consider the case of inducing representations $\sigma \in \Pi_{2,\text{temp}}(M)$ where $M$ is a Levi subgroup of $G$. When $D=F$ or equivalently $G=G^*=\GL_F(N)$, the formula in Remark \ref{rem:normalization} furnishes a family of normalizing factors in the tempered case, by the Langlands-Shahidi method. To pass to the non-quasisplit case, we use the preservation of $\mu$-functions by Jacquet-Langlands correspondence \cite[Theorem 7.2]{AP05} (up to a harmless constant depending only on $D$ and $n$).

From Theorem \ref{prop:normalizing-invariance}, we deduce a canonical family of normalizing factors for $G^\sharp$, at least for the inducing representations $\sigma^\sharp$ whose central character is unitary. In what follows, the normalized intertwining operators for $G$ and $G^\sharp$ are assumed to be defined with respect to these factors.

\subsection{Identification of $S$-groups}\label{sec:iden-S-groups}
\paragraph{Generalities}
To begin with, we summarize the definition of the $S$-groups in the non-quasisplit case by following \cite{Ar06}.

\begin{definition}
  Let $G$ be a connected reductive $F$-group. Choose a quasisplit inner twist $\psi: G \times_F \bar{F} \to G^* \times_F \bar{F}$ as well as an $F$-splitting for $G^*(\bar{F})$ to define the L-groups. Let $\phi \in \Phi(G^*)$, we set
  \begin{align*}
    S_{\phi, \text{ad}} & := Z_{\hat{G}}(\Im(\phi))/Z^{\Gamma_F}_{\hat{G}} \\
    & \rightiso \left( Z_{\hat{G}}(\Im(\phi)) Z_{\hat{G}} \right) / Z_{\hat{G}} \; \subset \hat{G}_\text{AD}, \\
    S_{\phi, \text{sc}} & := \text{ the preimage of } S_{\phi, \text{ad}} \text{ in } \hat{G}_\text{SC}, \\
    \mathscr{S}_{\phi} & := \pi_0(S_{\phi, \text{ad}}, 1), \\
    \tilde{\mathscr{S}}_{\phi} & := \pi_0(S_{\phi, \text{sc}}, 1).
  \end{align*}
  From the central extension $1 \to Z_{\hat{G}_\text{SC}} \to S_{\phi, \text{sc}} \to S_{\phi, \text{ad}} \to 1$, we obtain another central extension
  $$ 1 \to \tilde{Z}_\phi \to \tilde{\mathscr{S}}_{\phi} \to \mathscr{S}_\phi \to 1 $$
  where
  $$ \tilde{Z}_\phi := Z_{\hat{G}_\text{SC}}/(Z_{\hat{G}_\text{SC}} \cap S^0_{\phi, \text{sc}}) = \Im[  Z_{\hat{G}_\text{SC}} \to \tilde{\mathscr{S}}_{\phi} ]. $$
\end{definition}

\begin{remark}
  When $G$ is an inner form of $\SL(N)$, we recover the definition of the modified $S$-groups in \cite{HS12}.
\end{remark}

The relevance condition of L-parameters intervenes in the following result. Recall that we have defined a character $\chi_G$ of $Z^{\Gamma_F}_{\hat{G}_\text{SC}}$ in \eqref{eqn:chi-G}.

\begin{lemma}[{\cite[Lemma 9.1]{HS12}}]\label{prop:chi-G-factorization}
  If $\phi \in \Phi(G)$, then $\chi_G : Z^{\Gamma_F}_{\hat{G}_\text{SC}} \to \C^\times$ is trivial on $Z_{\hat{G}_\text{SC}} \cap S^0_{\phi, \text{sc}}$.
\end{lemma}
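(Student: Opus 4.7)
The plan is to reduce the lemma to an explicit lattice computation in the inner forms of $\GL(N)$. From \S\ref{sec:groups} the group $\Gamma_F$ acts trivially on $\hat{G}_\text{SC} = \SL(N,\C)$, so $Z^{\Gamma_F}_{\hat{G}_\text{SC}} = Z_{\hat{G}_\text{SC}} = \mu_N(\C)$, and \eqref{eqn:chi-G-special} identifies $\chi_G$ with the character $\zeta \mapsto \zeta^n$. Its kernel is $\mu_n(\C)$, so it suffices to prove the inclusion
\[
Z_{\hat{G}_\text{SC}} \cap S^0_{\phi, \text{sc}} \;\subset\; \mu_n(\C).
\]

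First, I would describe $Z_{\hat{G}_\text{SC}} \cap S^0_{\phi, \text{sc}}$ via a maximal torus. From the facts recalled in \S\ref{sec:res-L}, $Z^{\Gamma_F, 0}_{\hat{M}_\phi}$ is a maximal torus of $S^0_\phi$; its image in $\hat{G}_\text{AD}$ is then a maximal torus of $S^0_{\phi, \text{ad}}$, and lifting through the isogeny $\hat{G}_\text{SC} \to \hat{G}_\text{AD}$ I would set
\[
T^\sharp := \bigl\{ s \in \hat{G}_\text{SC} : \text{the image of } s \text{ in } \hat{G}_\text{AD} \text{ lies in } Z^{\Gamma_F, 0}_{\hat{M}_\phi}/Z^{\Gamma_F}_{\hat{G}} \bigr\}.
\]
A standard dimension-count shows that the identity component $T^{\sharp, 0}$ is a maximal torus of $S^0_{\phi, \text{sc}}$. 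Since the center of any connected reductive group is contained in each of its maximal tori, every element of $Z_{\hat{G}_\text{SC}} \cap S^0_{\phi, \text{sc}}$ (being central in $S^0_{\phi, \text{sc}}$) lies in $T^{\sharp, 0}$; consequently $Z_{\hat{G}_\text{SC}} \cap S^0_{\phi, \text{sc}} = \mu_N(\C) \cap T^{\sharp, 0}$.

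Next, I would make everything concrete. The $G$-relevance of $\phi$ guarantees $\hat{M}_\phi \simeq \prod_{i=1}^r \GL(dm_i, \C)$ for positive integers $m_i$ with $\sum_i m_i = n$, where $d^2 = \dim_F D$; hence $Z_{\hat{M}_\phi} \cong (\C^\times)^r$ embeds as the block-diagonal torus and a direct identification gives
\[
T^\sharp = \Bigl\{ (z_1, \ldots, z_r) \in (\C^\times)^r : \prod_{i=1}^r z_i^{dm_i} = 1 \Bigr\} \subset \SL(N,\C).
\]
The character lattice is $X^*(T^\sharp) = \Z^r / \Z(dm_1, \ldots, dm_r)$. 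Writing $g = \gcd(m_1, \ldots, m_r)$, its torsion subgroup is cyclic of order $dg$, generated by the class of the primitive vector $(m_1/g, \ldots, m_r/g)$. Evaluating this generator on the element $\zeta I_N \in \mu_N(\C) \cap T^\sharp$, which corresponds to the diagonal tuple $(\zeta, \ldots, \zeta)$, yields $\zeta^{n/g}$. Hence $\zeta I_N \in T^{\sharp, 0}$ iff $\zeta^{n/g} = 1$, so $\mu_N(\C) \cap T^{\sharp, 0} = \mu_{n/g}(\C)$.

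Finally, since $g$ divides every $m_i$ it divides $n$, whence $\mu_{n/g}(\C) \subset \mu_n(\C) = \ker(\chi_G)$, delivering the lemma. The main subtle step is the identification of the torsion of $X^*(T^\sharp)$ together with the verification that $T^{\sharp, 0}$ is indeed a maximal torus of $S^0_{\phi, \text{sc}}$; once these are in place, the arithmetic implication $\zeta^{n/g} = 1 \Rightarrow \zeta^n = 1$ closes the proof.
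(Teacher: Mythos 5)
Your proof is correct. The opening two steps coincide with the paper's: you locate the maximal torus of $S^0_{\phi,\mathrm{sc}}$ lying over $Z^{\Gamma_F,0}_{\widehat{M}_\phi}/Z^{\Gamma_F}_{\widehat{G}}$ (your $T^{\sharp,0}$ is exactly $Z^{\Gamma_F,0}_{\widehat{M}_{\mathrm{sc}}}$, the torus invoked in the paper) and then use that the center of a connected reductive group lies in every maximal torus to reduce to $Z_{\widehat{G}_{\mathrm{SC}}}\cap T^{\sharp,0}$. Where you diverge is the final step: the paper closes by citing Arthur's general result \cite[Corollary 2.2]{Ar99}, which says that $Z^{\Gamma_F,0}_{\widehat{M}_{\mathrm{sc}}}\cap Z^{\Gamma_F}_{\widehat{G}_{\mathrm{SC}}}\subset\Ker(\chi_G)$ for any connected reductive group and any $G$-relevant Levi, whereas you replace this citation by an explicit character-lattice computation valid for inner forms of $\GL(N)$. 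Concretely, you compute $X^*(T^\sharp)=\Z^r/\Z(dm_1,\dots,dm_r)$, identify the torsion generator $\lambda'=(m_i/g)_i$ of order $dg$, and evaluate to find $\mu_N(\C)\cap T^{\sharp,0}=\mu_{n/g}(\C)\subset\mu_n(\C)=\Ker(\chi_G)$; this even pins down the intersection exactly rather than merely bounding it. The trade-off is clear: your argument is elementary and self-contained (no appeal to \cite{Ar99}), but it is tied to type $A$ with trivial Galois action; the paper's citation keeps the lemma valid in the generality in which it is stated. Both are acceptable in context, since the application in this paper is anyway confined to $\GL_D(n)$ and $\SL_D(n)$.
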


By abuse of notations, the so-obtained character of $Z^{\Gamma_F}_{\hat{G}_\text{SC}}/(Z_{\hat{G}_\text{SC}} \cap S^0_{\phi, \text{sc}}) \subset \tilde{Z}_\phi$ is still denoted by $\chi_G$. Also note that $\chi_G$ depends only on $G_\text{AD}$.

\begin{proof}
  Let us reproduce the proof in \cite{HS12} here. Let $M = M_\phi$ be a minimal Levi subgroup of $G$ through which $\phi$ factorizes (we used the relevance condition here). By the recollections in \S\ref{sec:res-L}, $Z^{\Gamma_F, 0}_{\hat{M}_\text{sc}}$ is a maximal torus in $S^0_{\phi, \text{sc}}$. Therefore
  $$ S^0_{\phi, \text{sc}} \cap Z_{\hat{G}_\text{SC}} = Z^{\Gamma_F, 0}_{\hat{M}_\text{sc}} \cap Z^{\Gamma_F}_{\hat{G}_\text{SC}}, $$
  and the last group is contained in $\Ker(\chi_G)$ by \cite[Corollary 2.2]{Ar99}.
\end{proof}

Consider the familiar situation $G_\text{der} \subset G^\sharp \subset G$ (cf. \S\ref{sec:res-L}), so that we have the $\Gamma_F$-equivariant central extension
$$ 1 \to \hat{Z}^\sharp \to \hat{G} \xrightarrow{\mathbf{pr}} \hat{G^\sharp} \to 1. $$

Let $\phi \in \Phi(G)$ and $\phi^\sharp := \mathbf{pr} \circ \phi \in \Phi(G^\sharp)$. The definitions above pertain to $(G^\sharp, \phi^\sharp)$ as well. Set
$$ X^G(\phi) := \left\{ \mathbf{a} \in H^1_\text{cont}(W_F, \hat{Z}^\sharp) : \mathbf{a}\phi \sim \phi \right\}. $$

This is a finite abelian group (cf. the proof of Theorem \ref{prop:lifting-parameter}).

\begin{lemma}\label{prop:S-sequence}
  Let $s \in S_{\phi^\sharp, \mathrm{ad}}$, regarded as an element of $\widehat{G^\sharp}/Z^{\Gamma_F}_{\widehat{G^\sharp}}$. Then $s$ determines a class $\mathbf{a} \in H^1_\mathrm{cont}(W_F, \hat{Z}^\sharp)$ characterized by
  \begin{gather}\label{eqn:s-a-centralize}
    \tilde{s} \phi(w) \tilde{s}^{-1} = a(w) \phi(w), \quad w \in \WD_F
  \end{gather}
  where
  \begin{itemize}
    \item $\tilde{s} \in \hat{G}$ is a lifting of $s$;
    \item $a: W_F \to \hat{Z}^\sharp$ is some $1$-cocycle representing $\mathbf{a}$, inflated to $\WD_F$.
  \end{itemize}
  This induces an exact sequence
  $$ \mathscr{S}_\phi \to \mathscr{S}_{\phi^\sharp} \to X^G(\phi) \to 1. $$
\end{lemma}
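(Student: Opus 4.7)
The plan is to (i) construct, for each $s \in S_{\phi^\sharp, \mathrm{ad}}$, a canonical class $\mathbf{a}(s) \in X^G(\phi)$ via the defining relation \eqref{eqn:s-a-centralize}, and (ii) establish exactness. To construct $\mathbf{a}(s)$, I would first lift $s$ to some $s_0 \in Z_{\widehat{G^\sharp}}(\Im \phi^\sharp)$, then lift $s_0$ through the surjection $\mathbf{pr}: \hat{G} \twoheadrightarrow \widehat{G^\sharp}$ (surjective on $\mathbf{C}$-points since $\hat{Z}^\sharp$ is a torus) to some $\tilde{s} \in \hat{G}$, and set $a(w) := \tilde{s}\phi(w)\tilde{s}^{-1}\phi(w)^{-1}$. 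Since $\mathbf{pr}(\tilde{s}) = s_0$ commutes with $\phi^\sharp(w) = \mathbf{pr}(\phi(w))$, we have $a(w) \in \Ker(\mathbf{pr}) = \hat{Z}^\sharp$. The key elementary computation in $\Lgrp{G} = \hat{G} \rtimes W_F$ is that $\phi(w)\, z\, \phi(w)^{-1} = w(z)$ for $z \in \hat{Z}^\sharp$, which pins down the Weil-group action on $\hat{Z}^\sharp$; this gives the cocycle identity $a(w_1 w_2) = a(w_1) \cdot w_1(a(w_2))$ after a short direct manipulation. Continuity is obvious, and since $\SU(2)$ is simply connected while $\hat{Z}^\sharp$ is abelian, $a|_{\SU(2)} = 1$, so $a$ factors through $W_F$.

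Next I would show that the class $\mathbf{a}(s) \in H^1_\mathrm{cont}(W_F, \hat{Z}^\sharp)$ does not depend on the auxiliary choices: replacing $\tilde{s}$ by $z\tilde{s}$ with $z \in \hat{Z}^\sharp$ changes $a$ by the principal coboundary $w \mapsto z \cdot w(z)^{-1}$, while replacing $s_0$ by $s_0 z_0$ with $z_0 \in Z^{\Gamma_F}_{\widehat{G^\sharp}}$ amounts to multiplying $\tilde{s}$ by a lift $\tilde{z}_0 \in Z_{\hat{G}}$ (in general \emph{not} $\Gamma_F$-fixed), which alters $a$ by $w \mapsto w(w^{-1}(\tilde{z}_0)\tilde{z}_0^{-1}) = \tilde{z}_0 w(\tilde{z}_0)^{-1}$, again a coboundary. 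Centrality of $\hat{Z}^\sharp$ in $\hat{G}$ makes the assignment $s \mapsto \mathbf{a}(s)$ a group homomorphism from $S_{\phi^\sharp, \mathrm{ad}}$; since the target is discrete (in fact finite, because $X^G(\phi)$ is) while the source is a complex algebraic group, the homomorphism descends to $\pi_0(S_{\phi^\sharp, \mathrm{ad}}) = \mathscr{S}_{\phi^\sharp}$. By the defining equation, $\tilde{s}\phi\tilde{s}^{-1} = a\phi$, so $a\phi \sim \phi$ in $\hat{G}$ and the image lies in $X^G(\phi)$.

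For exactness, the composition $\mathscr{S}_\phi \to \mathscr{S}_{\phi^\sharp} \to X^G(\phi)$ vanishes because an element of $\mathscr{S}_\phi$ admits a lift $\tilde{s} \in Z_{\hat{G}}(\Im \phi)$, giving $a \equiv 1$. For the converse, suppose $s \in \mathscr{S}_{\phi^\sharp}$ maps to zero, so $a(w) = w(z)z^{-1}$ for some $z \in \hat{Z}^\sharp$; using $\phi(w) z^{-1} \phi(w)^{-1} = w(z)^{-1}$, a direct calculation shows that $z \tilde{s}$ centralizes $\phi(w)$ for every $w$, hence lies in $Z_{\hat{G}}(\Im\phi)$, and its image in $\mathscr{S}_\phi$ maps to $s$ in $\mathscr{S}_{\phi^\sharp}$ (since $z \in \Ker \mathbf{pr}$). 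Surjectivity onto $X^G(\phi)$ is dual: given $\mathbf{a}$ represented by $a$ with $a\phi \sim \phi$, pick $\tilde{s}' \in \hat{G}$ realizing this equivalence; because $a$ takes values in $\hat{Z}^\sharp$, the image $\mathbf{pr}(\tilde{s}')$ lies in $Z_{\widehat{G^\sharp}}(\Im \phi^\sharp)$ and yields a preimage of $\mathbf{a}$. The main technical obstacle throughout is the disciplined bookkeeping of the Weil-group action on $\hat{Z}^\sharp$ induced by the semidirect product structure of $\Lgrp{G}$, and keeping track of which equalities hold on the nose versus only up to coboundaries; once these conventions are fixed, each verification collapses to a short algebraic manipulation.
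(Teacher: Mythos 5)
Your proof follows the same route as the paper's: define $s\mapsto\mathbf{a}$ by the commutator relation \eqref{eqn:s-a-centralize}, check it is a cocycle factoring through $W_F$, check independence of the lift, and verify exactness directly. The details you fill in — the conjugation formula $\phi(w)z\phi(w)^{-1}=w(z)$, triviality on $\SU(2)$, the two exactness steps — are exactly what the paper's ``straightforward/obvious'' hides, and they are correct. One point deserves a flag: your claim that replacing $s_0$ by $s_0 z_0$ with $z_0\in Z^{\Gamma_F}_{\widehat{G^\sharp}}$ only changes $a$ by a coboundary is not automatic. Such a lift $\tilde z_0$ lies in $Z_{\hat{G}}$ but need not lie in $\hat{Z}^\sharp\cdot Z_{\hat{G}}^{\Gamma_F}$, so the class of $w\mapsto \tilde z_0\,w(\tilde z_0)^{-1}$ is $\delta(z_0)$, the image of $z_0$ under the connecting homomorphism $\delta\colon Z^{\Gamma_F}_{\widehat{G^\sharp}} = H^0(\Gamma_F, Z_{\widehat{G^\sharp}})\to H^1_{\mathrm{cont}}(W_F,\hat{Z}^\sharp)$ attached to $1\to\hat{Z}^\sharp\to Z_{\hat G}\to Z_{\widehat{G^\sharp}}\to 1$; this vanishes iff $z_0$ lifts to $Z^{\Gamma_F}_{\hat G}$. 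The paper's own proof is no more careful here (it simply asserts uniqueness of $\mathbf{a}$), and the issue is vacuous in every case the paper uses — for the inner forms of $\SL(N)$ and their Levi subgroups the $\Gamma_F$-action on $Z_{\hat G}$ is trivial, hence $\delta=0$ — so your argument is adequate for the paper's purposes, but it would be worth stating the hypothesis under which the coboundary claim actually holds rather than asserting it outright.
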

\begin{proof}
  Choose a lifting $\tilde{s} \in \hat{G}$. Since $s$ centralizes $\phi^\sharp$, there exists a continuous function $a: \WD_F \to \hat{Z}^\sharp$ satisfying \eqref{eqn:s-a-centralize}. It is straightforward to check that $a$ is inflated from a $1$-cocycle $W_F \to \hat{Z}^\sharp$. The $1$-cocycle $a$ does depend on the choice of $\tilde{s}$, but its class $\mathbf{a} \in H^1_\text{cont}(W_F, \hat{Z}^\sharp)$ is uniquely determined by $s$; it is also obvious that $s \mapsto \mathbf{a}$ is a homomorphism. Conversely, every $s$ that satisfies \eqref{eqn:s-a-centralize} for some $\tilde{s}, a$ clearly belongs to $S_{\phi^\sharp, \text{ad}}$. Hence the image of $s \mapsto \mathbf{a}$ equals $X^G(\phi)$, by the very definition of $X^G(\phi)$.

  If $s$ is mapped to the trivial class in $X^G(\phi)$, then we may choose $\tilde{s}$ so that $\tilde{s} \phi \tilde{s}^{-1} = \phi$; therefore $s$ comes from $S_{\phi, \text{ad}}$, and vice versa. Hence we have an exact sequence of locally compact groups
  $$ S_{\phi, \mathrm{ad}} \to S_{\phi^\sharp, \mathrm{ad}} \to X^G(\phi) \to 1 . $$

  By a connectedness argument, we may pass from the $S$-groups to the $\mathscr{S}$-groups that gives the asserted exact sequence.
\end{proof}

\paragraph{The case of the inner forms of $\SL(N)$}
Let us revert to the situation where
\begin{align*}
  G & = \GL_D(n), \\
  G^* & = \GL_F(N), \\
  G^\sharp & = \SL_D(n), \\
  \chi_G: & Z_{\hat{G}_\text{SC}} = \mu_N(\C) \to \C^\times.
\end{align*}

It is well-known that $\mathscr{S}_\phi = \{1\}$ for every $\phi \in \Phi(G^*)$. Indeed, $Z_{\hat{G}}(\Im(\phi))$ is a principal Zariski open subset in some linear subspace of $\text{Mat}_{N \times N}(\C)$, thus is connected; so is its quotient by $Z^{\Gamma_F}_{\hat{G}} = Z_{\hat{G}} = \C^\times$.

Let $\phi^\sharp \in \Phi(G^\sharp)$ with a lifting $\phi \in \Phi(G)$. Hence Lemma \ref{prop:S-sequence} yields a canonical isomorphism
$$ \mathscr{S}_{\phi^\sharp} \rightiso X^G(\phi). $$

Assume henceforth that $\phi^\sharp \in \Phi_{G^*-\text{gen}}(G^\sharp)$, so $\phi \in \Phi_{G^*-\text{gen}}(G)$ as well. The local Langlands correspondence for $G$ (Theorem \ref{prop:LLC-G}) is thus applicable. Since the local Langlands correspondence is compatible with twisting by characters, we have $X^G(\phi) = X^G(\pi)$ where $\pi \leftrightarrow \phi$. Therefore we deduce the natural isomorphism
\begin{gather}\label{eqn:S-X-isom}
  \mathscr{S}_{\phi^\sharp} \rightiso X^G(\pi), \quad \text{where } \pi \leftrightarrow \phi, \; \phi^\sharp = \mathbf{pr} \circ \phi .
\end{gather}

Also observe that $\chi_G$ induces a character of $\tilde{Z}_{\phi^\sharp}$ by Lemma \ref{prop:chi-G-factorization}, since $\Gamma_F$ acts trivially on $Z_{\hat{G}_\text{SC}}$.

\begin{theorem}[{\cite[Lemma 12.5]{HS12}}]\label{prop:Lambda}
  Let $\phi^\sharp \in \Phi_{G^*-\mathrm{gen}}(G^\sharp)$ with a chosen lifting $\phi \in \Phi_{G^*-\mathrm{gen}}(G)$. Let $\pi \in \Pi_{G^*-\mathrm{gen}}(G)$ such that $\pi \leftrightarrow \phi$ by Theorem \ref{prop:LLC-G}. Then there exists a homomorphism
  $$ \Lambda: \tilde{\mathscr{S}}_{\phi^\sharp} \to S^G(\pi) $$
  such that the following diagram is commutative with exact rows:
  $$\xymatrix{
    1 \ar[r] & \tilde{Z}_{\phi^\sharp} \ar[d]_{\chi_G} \ar[r] & \tilde{\mathscr{S}}_{\phi^\sharp} \ar[d]^{\Lambda} \ar[r] & \mathscr{S}_{\phi^\sharp} \ar[d]^{\simeq} \ar[r] & 1 \\
    1 \ar[r] & \C^\times \ar[r] & S^G(\pi) \ar[r] & X^G(\pi) \ar[r] & 1
  }$$
  where the rightmost vertical arrow is that of \eqref{eqn:S-X-isom}.

  Moreover, $\Lambda$ is unique up to $\Hom(X^G(\pi),\C^\times)$, i.e. up to the automorphisms of the lower central extension, and upon identifying $\mathscr{S}_{\phi^\sharp}$ and $X^G(\pi)$, this diagram is a push-forward of central extensions by $\chi_G$.
\end{theorem}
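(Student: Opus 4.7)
The plan is to construct $\Lambda$ element-by-element, following the blueprint of Hiraga--Saito. Given $\tilde{s} \in S_{\phi^\sharp, \mathrm{sc}} \subset \hat{G}_\mathrm{SC} = \SL(N,\C)$, I regard it as a matrix in $\hat{G} = \GL(N,\C)$; its image $s \in S_{\phi^\sharp, \mathrm{ad}}$ produces by Lemma \ref{prop:S-sequence} a cohomology class $\mathbf{a} = [a] \in H^1_\mathrm{cont}(W_F, \hat{Z}^\sharp)$ with $\tilde{s}\, \phi\, \tilde{s}^{-1} = a\phi$ as L-homomorphisms. Local class field theory converts $\mathbf{a}$ into a character $\eta$ of $F^\times = G(F)/G^\sharp(F)$, and Theorem \ref{prop:LLC-G} gives $\eta\pi \simeq \pi$, i.e.\ $\eta \in X^G(\pi)$. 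What remains is to promote this equivalence to a canonical element $\Lambda(\tilde{s}) \in \Isom_G(\eta\pi, \pi) \subset S^G(\pi)$ depending multiplicatively on $\tilde{s}$.

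The canonical lift is the delicate step. I would first treat the quasisplit case $G = G^* = \GL_F(N)$: here $\pi$ is generic, and the chosen additive character $\psi_F$ provides a Whittaker functional unique up to scalars. The pair $(\tilde{s}, a)$ is then an explicit isomorphism of L-parameters $\phi \rightiso a\phi$, and under the Whittaker-normalized LLC it corresponds to a unique element of $\Isom_{G^*}(\eta\pi, \pi)$, functorial in $\tilde{s}$. For the general inner form $G = \GL_D(n)$, I would transfer the construction via the Jacquet--Langlands correspondence: the LLC for $G$ is built from that of $G^*$ by reduction to the square-integrable case together with parabolic induction (Theorem \ref{prop:LLC-G}), and the same reduction applied to the intertwiners yields $\Lambda(\tilde{s})$. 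The preservation of $\mu$-functions recalled in \S\ref{sec:LLC} guarantees compatibility with the normalizing factors of Theorem \ref{prop:normalizing-invariance}.

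Granted the construction, the commutativity verification is natural. The right square commutes tautologically, since by construction $\tilde{s} \mapsto s \mapsto \mathbf{a} \mapsto \eta$ is precisely the isomorphism \eqref{eqn:S-X-isom}. For the left square, I restrict $\Lambda$ to $\tilde{Z}_{\phi^\sharp}$: for $\tilde{s} = \zeta \cdot I_N \in Z_{\hat{G}_\mathrm{SC}} = \mu_N(\C)$ one has $s = 1$ and $\eta = 1$, so $\Lambda(\tilde{s}) \in \C^\times$ is a scalar; identifying this scalar with $\chi_G(\tilde{s}) = \zeta^n$ amounts to computing how the Whittaker-plus-Jacquet--Langlands construction interacts with central characters, and it matches the Kottwitz character recorded in \eqref{eqn:chi-G-special}. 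Uniqueness up to $\Hom(X^G(\pi), \C^\times)$ is then formal: any two such $\Lambda, \Lambda'$ have ratio a character of $\tilde{\mathscr{S}}_{\phi^\sharp}$ trivial on $\tilde{Z}_{\phi^\sharp}$, hence factoring through $\mathscr{S}_{\phi^\sharp} \simeq X^G(\pi)$. The push-forward reformulation is a standard rephrasing of these two commutativities.

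The main obstacle is precisely the canonicity of $\Lambda(\tilde{s})$: producing some element of the $\C^\times$-torsor $\Isom_G(\eta\pi, \pi)$ is trivial, but producing one that depends multiplicatively on $\tilde{s}$ and restricts to $\chi_G$ on $\tilde{Z}_{\phi^\sharp}$ requires both the Whittaker rigidity of the LLC for $\GL_F(N)$ and a careful tracking of central-character behaviour under Jacquet--Langlands. This is the technical core of \cite[Lemma 12.5]{HS12}, which I would invoke for the details rather than reprove here.
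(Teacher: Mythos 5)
Your formal observations agree with the paper's own treatment: the rightmost square commutes by construction, the uniqueness up to $\Hom(X^G(\pi),\C^\times)$ and the push-forward reformulation are indeed evident once existence is granted, and the paper says exactly this ("the upshot is the existence of $\Lambda$") before deferring, as you ultimately do, to \cite[Lemma 12.5]{HS12}. Your recasting of the problem as the existence of a multiplicative lift $\tilde{s} \mapsto \Lambda(\tilde{s})$ with the correct restriction to $\tilde{Z}_{\phi^\sharp}$ is also the right one.

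The gap is in the sketch of the existence argument. The quasisplit half is sound: for $G=G^*=\GL_F(N)$, uniqueness of Whittaker functionals normalizes each $I_\eta$ so that $S^{G^*}(\pi^*) \to X^{G^*}(\pi^*)$ splits, which is exactly what is needed since $\chi_{G^*}=1$. But the step ``transfer the construction via the Jacquet--Langlands correspondence'' does not go through as stated. Jacquet--Langlands is a character-level bijection between discrete series of $\GL_D(n)(F)$ and of $\GL_F(N)(F)$; it is not a functor on representation spaces, and there is no natural map carrying an element of $\Isom_{G^*}(\eta\pi^*,\pi^*)$ to an element of $\Isom_G(\eta\pi,\pi)$. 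Nor does preservation of $\mu$-functions supply such a map: as used in \S\ref{sec:LLC}, that compatibility transports the scalar normalizing factors $r_{Q|P}$ from $G^*$ to $G$, but it says nothing about the operators $I_\eta \in S^G(\pi)$. What one must establish is the identity $\chi_{G,*}[\tilde{\mathscr{S}}_{\phi^\sharp}] = [S^G(\pi)]$ of classes in $H^2(X^G(\pi),\C^\times)$; your element-by-element construction is logically equivalent to this, and pinning down the class of $S^G(\pi)$ for non-quasisplit $G$ is precisely the hard content of \cite[Lemma 12.5]{HS12}, obtained there by comparing twisted characters (ultimately via global arguments), not by transporting Whittaker normalizations. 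Since you invoke \cite{HS12} for the details, the proposal is acceptable as a deferral, but the intermediate mechanism you describe should not be read as an accurate account of that proof, and on its own it would stall at the passage from $G^*$ to $G$.
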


Note that the assertions about uniqueness and the push-forward are evident; the upshot is the existence of $\Lambda$.

Let $\phi^\sharp$, $\phi$, $\pi$ be as above. Put
$$ \Pi(\tilde{\mathscr{S}}_{\phi^\sharp}, \chi_G) := \left\{ \rho \in \Pi(\tilde{\mathscr{S}}_{\phi^\sharp}) : \forall z \in \tilde{Z}_{\phi^\sharp}, \; \rho(z) = \chi_G(z) \identity \right\}. $$

The homomorphism $\Lambda$ in Theorem \ref{prop:Lambda} induces a bijection $\Pi(\tilde{\mathscr{S}}_{\phi^\sharp}, \chi_G) \rightiso \Pi_-(S^G(\pi))$. Recall that in the local Langlands correspondence for $G^\sharp$ (Theorem \ref{prop:LLC-SL}), the packet $\Pi_{\phi^\sharp}$ attached to $\phi^\sharp$ is defined as $\Pi_\pi$, the set of irreducible constituents of $\pi|_{G^\sharp}$. Combining Theorem \ref{prop:S-decomp} with Theorem \ref{prop:Lambda}, we arrive at the following description of the packet $\Pi_{\phi^\sharp}$.

\begin{corollary}\label{prop:S-S}
  Let $\phi^\sharp$, $\phi$, $\pi$ be as above. Let $\Hom(X^G(\pi), \C^\times)$ act on $\Pi_{\phi^\sharp}$ via the canonical isomorphisms $\Pi_{\phi^\sharp} = \Pi_\pi = \Pi_-(S^G(\pi))$. Then there is a bijection
  $$ \Pi(\tilde{\mathscr{S}}_{\phi^\sharp}, \chi_G) \rightiso \Pi_{\phi^\sharp}, $$
  which is canonical up to the $\Hom(X^G(\pi), \C^\times)$-action on $\Pi_{\phi^\sharp}$.
\end{corollary}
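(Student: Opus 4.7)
The plan is to construct the asserted bijection by composing two already-established bijections. First, Theorem \ref{prop:Lambda} will produce a bijection $\Pi(\tilde{\mathscr{S}}_{\phi^\sharp}, \chi_G) \rightiso \Pi_-(S^G(\pi))$; second, Theorem \ref{prop:S-decomp} together with the definition $\Pi_{\phi^\sharp} = \Pi_\pi$ (see Theorem \ref{prop:LLC-SL}) gives $\Pi_-(S^G(\pi)) \rightiso \Pi_{\phi^\sharp}$.

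For the first step I would invoke the commutative diagram from Theorem \ref{prop:Lambda} which exhibits $S^G(\pi)$ as the push-forward of $\tilde{\mathscr{S}}_{\phi^\sharp} \twoheadrightarrow \mathscr{S}_{\phi^\sharp}$ along $\chi_G : \tilde{Z}_{\phi^\sharp} \to \C^\times$, via the isomorphism $\mathscr{S}_{\phi^\sharp} \rightiso X^G(\pi)$ of \eqref{eqn:S-X-isom}. Concretely, $\Lambda$ factors through an isomorphism $\tilde{\mathscr{S}}_{\phi^\sharp}/\Ker(\chi_G) \rightiso S^G(\pi)$, so pull-back along $\Lambda$ restricts to a bijection between irreducible representations of $S^G(\pi)$ on which $\C^\times$ acts by $z \mapsto z\cdot\identity$ and irreducible representations of $\tilde{\mathscr{S}}_{\phi^\sharp}$ on which $\tilde{Z}_{\phi^\sharp}$ acts by $\chi_G$, that is, a bijection $\Pi_-(S^G(\pi)) \rightiso \Pi(\tilde{\mathscr{S}}_{\phi^\sharp}, \chi_G)$. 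The only verification required is that pull-back along $\Lambda$ preserves irreducibility, which follows from the above factorization.

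For the second step, Theorem \ref{prop:S-decomp} applied to $\pi \in \Pi_{G^*-\mathrm{gen}}(G)$ directly yields a canonical bijection $\Pi_-(S^G(\pi)) \rightiso \Pi_\pi$. By Theorem \ref{prop:LLC-SL} the packet $\Pi_{\phi^\sharp}$ is by definition equal to $\Pi_\pi$ for any lifting $\phi$ of $\phi^\sharp$, and this is independent of the chosen lifting by Proposition \ref{prop:res-disjoint} combined with Theorem \ref{prop:lifting-parameter}. Composing the two steps delivers the required bijection.

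It remains to account for the canonicity statement. According to Theorem \ref{prop:Lambda}, the homomorphism $\Lambda$ is unique only up to the $\Hom(X^G(\pi), \C^\times)$-action given by twisting $\Lambda$ by a character $\xi$ pulled back from $\mathscr{S}_{\phi^\sharp} \simeq X^G(\pi)$. Under pull-back, such a modification of $\Lambda$ tensors the resulting element of $\Pi(\tilde{\mathscr{S}}_{\phi^\sharp}, \chi_G)$ by the inflation of $\xi$, and through the bijection of Theorem \ref{prop:S-decomp} this is transported to the natural $\Hom(X^G(\pi), \C^\times)$-action on $\Pi_\pi = \Pi_{\phi^\sharp}$. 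This is precisely the ambiguity stated. There is no serious technical obstacle here beyond careful bookkeeping of central extensions; the nontrivial input, namely the existence of $\Lambda$, is already at our disposal.
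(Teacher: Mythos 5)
Your approach is the paper's: compose the bijection $\Pi(\tilde{\mathscr{S}}_{\phi^\sharp}, \chi_G) \rightiso \Pi_-(S^G(\pi))$ induced by $\Lambda$ of Theorem~\ref{prop:Lambda} with the bijection $\Pi_-(S^G(\pi)) \rightiso \Pi_\pi = \Pi_{\phi^\sharp}$ of Theorem~\ref{prop:S-decomp}, and observe that the ambiguity is exactly that of $\Lambda$. One slip needs fixing: the claim that ``$\Lambda$ factors through an isomorphism $\tilde{\mathscr{S}}_{\phi^\sharp}/\Ker(\chi_G) \rightiso S^G(\pi)$'' cannot hold, since the left side is finite while $S^G(\pi)$ contains $\C^\times$. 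What the push-forward structure of the diagram actually gives is that $\Lambda$ descends to an \emph{injection} $\tilde{\mathscr{S}}_{\phi^\sharp}/\Ker(\chi_G) \hookrightarrow S^G(\pi)$ whose image, together with the central $\C^\times$, generates $S^G(\pi)$; this is what makes pull-back by $\Lambda$ preserve irreducibility (as $\C^\times$ acts by scalars) and what lets one reconstruct a representation of $S^G(\pi)$ from one of $\tilde{\mathscr{S}}_{\phi^\sharp}$ lying over $\chi_G$ via $z\Lambda(\tilde{s}) \mapsto z\,\rho'(\tilde{s})$, well-defined by the commutativity of the left square of the diagram. With that correction the argument is sound and identical in substance to the paper's.
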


When $G$ is quasisplit, $\chi_G$ will be trivial and $\Pi(\tilde{\mathscr{S}}_{\phi^\sharp}, \chi_G) = \Pi(\mathscr{S}_{\phi^\sharp})$; the bijection in the Corollary can then be normalized by choosing a Whittaker datum for $G^\sharp$ (cf. \cite[Chapter 3]{HS12}). In general, however, there is no reason to expect a canonical choice of the bijection $\Pi(\tilde{\mathscr{S}}_{\phi^\sharp}, \chi_G) \rightiso \Pi_{\phi^\sharp}$.

\subsection{Generalization}\label{sec:generalization}
Consider the following abstract setting.
\begin{itemize}
  \item Let $M$, $M^\sharp$, $M^\sharp_0$ be connected reductive $F$-groups such that $M$ has an split inner form $M^*$, and
    $$ M_\text{der} \subset M^\sharp_0 \subset M^\sharp \subset M . $$
  \item For $\pi \in \Pi(M)$, let $S^M(\pi)$ and $X^M(\pi)$ be the groups defined in \S\ref{sec:res-rep} relative to $M^\sharp$, and denote by $S^M_0(\pi)$, $X^M_0(\pi)$ the groups defined relative to $M^\sharp_0$.
  \item Assume that there are subsets $\Pi_{\text{gen}}(M)$ and $\Phi_{\text{gen}}(M)$ of $\Pi(M)$ and $\Phi(M)$, respectively, together with a ``local Langlands correspondence'' $\pi \leftrightarrow \phi$ between $\Pi_{\text{gen}}(M)$ and $\Phi_{\text{gen}}(M)$ that is compatible with twist by characters, as in Theorem \ref{prop:LLC-G}. We define $\Phi_{\text{gen}}(M^\sharp)$ (resp. $\Phi_{\text{gen}}(M^\sharp_0)$) to be the set of L-parameters that lift to $\Pi_{\text{gen}}(M^\sharp)$ (resp. $\Pi_{\text{gen}}(M^\sharp_0)$) via Theorem \ref{prop:lifting-parameter}.
  \item Assume that $\mathscr{S}_\phi = \{1\}$ for every $\phi \in \Phi_{\text{gen}}(M)$.
\end{itemize}

Let $\phi \in \Phi_{\text{gen}}(M)$ and $\pi \in \Pi_\text{gen}(M)$ such that $\pi \leftrightarrow \phi$. As before, we deduce L-parameters $\phi^\sharp \in \Phi_{\text{gen}}(M^\sharp)$ and $\phi^\sharp_0 \in \Phi_{\text{gen}}(M^\sharp_0)$. First of all, let $\bullet$ be one of the subscripts ``ad'' or ``sc''. We have $S_{\phi^\sharp, \bullet} \subset S_{\phi^\sharp_0, \bullet}$ and $S^0_{\phi^\sharp, \bullet} \subset S^0_{\phi^\sharp_0, \bullet}$. In view of the definitions in \S\ref{sec:iden-S-groups}, we deduce natural isomorphisms $\mu$, $\tilde{\mu}$ that fit into the following commutative diagram.
\begin{gather}\label{eqn:mu}
\xymatrix{
  \mathscr{S}_{\phi^\sharp} \ar[rr]^{\mu} & & \mathscr{S}_{\phi^\sharp_0} \\
  \tilde{\mathscr{S}}_{\phi^\sharp} \ar[u] \ar[rr]^{\tilde{\mu}} & & \tilde{\mathscr{S}}_{\phi^\sharp_0} \ar[u] \\
  \tilde{Z}_{\phi^\sharp} \ar[u] \ar@{->>}[rr]^{\tilde{\mu}} \ar[rd]_{\chi_M} & & \tilde{Z}_{\phi^\sharp_0} \ar[u] \ar[ld]^{\chi_M} \\
  & \C^\times &
}\end{gather}

Secondly, we have $X^M(\pi) \subset X^M_0(\pi)$ as subgroups of $M(F)^D$. Consequently $S^M(\pi) \subset S^M_0(\pi)$. Iterating the arguments for \eqref{eqn:S-X-isom}, we obtain the commutative diagram
\begin{gather}\label{eqn:mu-2}
\xymatrix{
  \mathscr{S}_{\phi^\sharp} \ar[r]^{\simeq} \ar@{^{(}->}[d]_{\mu} & X^M(\pi) \ar@{^{(}->}[d] \\
  \mathscr{S}_{\phi^\sharp_0} \ar[r]^{\simeq} & X^M_0(\pi).
}\end{gather}

\begin{theorem}\label{prop:generalization}
  Let $\phi, \phi^\sharp, \phi^\sharp_0$ and $\pi$ be as above. Assume that there exists a homomorphism $\Lambda_0: \tilde{\mathscr{S}}_{\phi^\sharp_0} \to S^M_0(\pi)$ such that the following diagram is commutative with exact rows:
  \begin{gather}\label{eqn:comm-diag-0}
  \xymatrix{
    1 \ar[r] & \tilde{Z}_{\phi^\sharp_0} \ar[d]_{\chi_M} \ar[r] & \tilde{\mathscr{S}}_{\phi^\sharp_0} \ar[d]^{\Lambda_0} \ar[r] & \mathscr{S}_{\phi^\sharp_0} \ar[d]^{\simeq} \ar[r] & 1 \\
    1 \ar[r] & \C^\times \ar[r] & S^M_0(\pi) \ar[r] & X^M_0(\pi) \ar[r] & 1
  }\end{gather}
  Then by setting $\Lambda := \Lambda_0 \circ \tilde{\mu} : \tilde{\mathscr{S}}_{\phi^\sharp} \to S^M_0(\pi)$ (cf. \eqref{eqn:mu}), the image of $\Lambda$ lies in $S^M(\pi)$ and the analogous diagram below is commutative
  \begin{gather}\label{eqn:comm-diag-1}
  \xymatrix{
    1 \ar[r] & \tilde{Z}_{\phi^\sharp} \ar[d]_{\chi_M} \ar[r] & \tilde{\mathscr{S}}_{\phi^\sharp} \ar[d]^{\Lambda} \ar[r] & \mathscr{S}_{\phi^\sharp} \ar[d]^{\simeq} \ar[r] & 1 \\
    1 \ar[r] & \C^\times \ar[r] & S^M(\pi) \ar[r] & X^M(\pi) \ar[r] & 1 .
  }\end{gather}

  Consequently, there is a bijection
  $$ \Pi(\tilde{\mathscr{S}}_{\phi^\sharp}, \chi_G) \rightiso \Pi_{\phi^\sharp}, $$
  which is canonical up to the $\Hom(X^G(\pi), \C^\times)$-action on $\Pi_{\phi^\sharp}$.
\end{theorem}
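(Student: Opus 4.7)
The plan is to derive Diagram \eqref{eqn:comm-diag-1} from Diagram \eqref{eqn:comm-diag-0} by pulling back along the natural vertical arrows $\tilde\mu$, $\mu$, and the canonical inclusions, and then to extract the bijection by invoking Theorem \ref{prop:S-decomp} exactly as in Corollary \ref{prop:S-S}. Nothing new is really proved about $S$-groups here; the point is to check that the hypothesis on $\Lambda_0$ descends.

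First, I would verify that $\Lambda := \Lambda_0 \circ \tilde\mu$ takes values in the subgroup $S^M(\pi) \subset S^M_0(\pi)$. Projecting $\Lambda(\tilde{\mathscr{S}}_{\phi^\sharp})$ to $X^M_0(\pi)$ via the exact bottom row of \eqref{eqn:comm-diag-0} and using the right square of \eqref{eqn:comm-diag-0} together with \eqref{eqn:mu-2}, one sees that the image lands in the subgroup $X^M(\pi)$. Since both $S^M(\pi)$ and $S^M_0(\pi)$ are built from the very same $\C^\times$-torsors $\Isom_M(\omega\pi,\pi)$ --- only the set of admissible $\omega$ differs --- the preimage of $X^M(\pi)$ in $S^M_0(\pi)$ is canonically $S^M(\pi)$, which gives the desired factorization.

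Second, the commutativity of \eqref{eqn:comm-diag-1} is a diagram chase. The right square refines what was just checked, now using that the isomorphism $\mathscr{S}_{\phi^\sharp} \rightiso X^M(\pi)$ of \eqref{eqn:mu-2} is the restriction of $\mathscr{S}_{\phi^\sharp_0} \rightiso X^M_0(\pi)$. The left square follows from the left square of \eqref{eqn:comm-diag-0} together with the $\chi_M$-compatibility of $\tilde\mu$ recorded in the bottom triangle of \eqref{eqn:mu}. Exactness of the top row is by the definition of $\tilde{\mathscr{S}}_{\phi^\sharp}$, and of the bottom row is \eqref{eqn:res-S}.

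Third, to extract the bijection, observe that the commutativity of \eqref{eqn:comm-diag-1} together with the rightmost isomorphism forces $S^M(\pi)$ to be, up to the $\Hom(X^M(\pi),\C^\times)$-ambiguity of automorphisms of the central extension, the push-out of the top row along $\chi_M$. Pulling back representations along $\Lambda$ then yields a bijection $\Pi(\tilde{\mathscr{S}}_{\phi^\sharp},\chi_M) \rightiso \Pi_-(S^M(\pi))$, which we compose with the bijection $\Pi_-(S^M(\pi)) \rightiso \Pi_\pi = \Pi_{\phi^\sharp}$ furnished by Theorem \ref{prop:S-decomp} and the definition of $\Pi_{\phi^\sharp}$ in Theorem \ref{prop:LLC-SL}. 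The only subtle step is the very first one: identifying the preimage of $X^M(\pi)$ inside $S^M_0(\pi)$ with $S^M(\pi)$. Once this is unwound from the definition of $S^M(\pi)$ given in \S\ref{sec:res-rep}, the remainder of the argument is purely formal, and the ambiguities match up as claimed.
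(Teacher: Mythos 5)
Your proposal is essentially the paper's own proof. Both arguments hinge on the same key observation --- that the square relating $S^M(\pi) \twoheadrightarrow X^M(\pi)$ to $S^M_0(\pi) \twoheadrightarrow X^M_0(\pi)$ is cartesian (which you phrase as "the preimage of $X^M(\pi)$ in $S^M_0(\pi)$ is canonically $S^M(\pi)$") --- and both obtain the left square from the $\chi_M$-compatibility of $\tilde\mu$ in diagram \eqref{eqn:mu}, then extract the bijection via Theorem \ref{prop:S-decomp} as in Corollary \ref{prop:S-S}.
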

\begin{proof}
  Let $\tilde{s} \in \tilde{\mathscr{S}}_{\phi^\sharp}$, denote by $s$ its image in $\mathscr{S}_{\phi^\sharp}$ and set $s_0 := \mu(s) \in \mathscr{S}_{\phi^\sharp_0}$. Let $\eta \in X^M_0(\pi)$ be the character coming from $\Lambda(\tilde{s}) := \Lambda_0(\tilde{\mu}(\tilde{s})) \in S^M_0(\pi)$. Then by \eqref{eqn:comm-diag-0} and \eqref{eqn:mu}, $\eta$ is the image of $s_0$ under $\mathscr{S}_{\phi^\sharp_0} \rightiso X^M_0(\pi)$; using \eqref{eqn:mu-2}, it is also the image of $s$ under $\mathscr{S}_{\phi^\sharp} \rightiso X^M(\pi)$. If we can show $\Lambda(\tilde{s}) \in S^M(\pi)$ for all $\tilde{s}$, then the rightmost square in \eqref{eqn:comm-diag-1} will commute. Since the square
  $$\xymatrix{
    S^M(\pi) \ar@{^{(}->}[d] \ar@{->>}[r] & X^M(\pi) \ar@{^{(}->}[d] \\
    S^M_0(\pi) \ar@{->>}[r] & X^M_0(\pi) \\
  }$$
  is commutative and cartesian for trivial reasons, it follows that $\Lambda(\tilde{s}) \in S^M(\pi)$. Hence the image of  $\Lambda$ lies in $S^M(\pi)$. This also finishes the commutativity of the rightmost square in \eqref{eqn:comm-diag-1}.
  
  Consider the leftmost square in \eqref{eqn:comm-diag-1}. It follows from \eqref{eqn:mu} that for all $z \in \tilde{Z}_{\phi^\sharp}$, we have
  $$ \Lambda(z) = \Lambda_0(\tilde{\mu}(z)) = \chi_M(\tilde{\mu}(z)) = \chi_M(z). $$
  Hence the leftmost square is commutative as well.

  The bijection $\Pi(\tilde{\mathscr{S}}_{\phi^\sharp}, \chi_G) \rightiso \Pi_{\phi^\sharp}$ follows easily from the previous assertions, as in the proof of Corollary \ref{prop:S-S}.
\end{proof}

\begin{remark}
  The conditions of the Theorem are satisfied if $M$ is a Levi subgroup of $\GL_D(n)$, say of the form
  $$ M = \prod_{i \in I} \GL_D(n_i), \quad \sum_{i \in I} n_i = n $$
  and
  $$ M^\sharp_0 := M_\text{der} = \prod_{i \in I} \SL_D(n_i). $$
  We simply set $\Pi_{\text{gen}}(M) := \Pi_{M^*-\text{gen}}(M)$ and $\Phi_{\text{gen}}(M) := \Phi_{M^*-\text{gen}}(M)$ by a straightforward generalization of the definitions in \S\ref{sec:LLC}. The correspondence $\pi \leftrightarrow \phi$ follows from that in Theorem \ref{prop:LLC-G}, applied to each index $i \in I$. The group $M^\sharp$ can be any intermediate group between $M^\sharp_0$ and $M$, including the important case that
  $$ M^\sharp := M \cap \SL_D(n) = \left\{ (x_i)_{i \in I} \in M : \prod_{i \in I} \Nrd(x_i) = 1 \right\} .$$
  Therefore, Theorem \ref{prop:Lambda} and Corollary \ref{prop:S-S} can be generalized to the Levi subgroups of $\SL_D(n)$.

  Indeed, it suffices to verify the commutativity of the diagram \eqref{eqn:comm-diag-0}. Writing $\pi = \boxtimes_{i \in I} \pi_i$ and $\phi = (\phi_i)_{i \in I}$, the results in \S\ref{sec:iden-S-groups} applied to each $i \in I$ gives a commutative diagram similar to \eqref{eqn:comm-diag-0}, except that its bottom row is the central extension
  \begin{gather}\label{eqn:another-bottom-row}
    1 \to (\C^\times)^I \to \prod_{i \in I} S^{\GL_D(n_i)}(\pi_i) \to X^G_0(\pi) \to 1,
  \end{gather}
  and that $\chi_M$ is replaced by
  $$ \prod_{i \in I} \chi_{\GL_D(n_i)}: \tilde{Z}_{\phi^\sharp_0} \to (\C^\times)^I .$$
  To obtain the desired short exact sequence, it remains to take the push-forward of \eqref{eqn:another-bottom-row} by the multiplication map $(\C^\times)^I \to \C^\times$.
\end{remark}


\section{The dual $R$-groups}\label{sec:dual-R}
\subsection{A commutative diagram}
As in \S\ref{sec:groups}, we take 
\begin{align*}
  G & = \GL_D(n), \\
  G^* & = \GL_F(N), \\
  G^\sharp & = \SL_D(n)
\end{align*}
We also fix a Levi subgroup $M$ of $G$ and set $M^\sharp := M \cap G^\sharp$.

To define the dual groups $\Lgrp{G}$, $\Lgrp{M}$, etc., we fix a quasisplit inner twist $\psi: G \times_F \bar{F} \rightiso G^* \times_F \bar{F}$ which restricts to a quasisplit inner twist $M \times_F \bar{F} \rightiso M^* \times_F \bar{F}$, as well as an $F$-splitting for $G^*$ that is compatible with $M^*$. Therefore there is a canonical L-embedding $\Lgrp{M} \hookrightarrow \Lgrp{G}$. Idem for $\Lgrp{G^\sharp}$ and $\Lgrp{M^\sharp}$.

As usual, the natural projections $\Lgrp{G} \to \Lgrp{G^\sharp}$ and $\Lgrp{M} \to \Lgrp{M^\sharp}$ are denoted by $\mathbf{pr}$. Put
\begin{align*}
  A_{\widehat{M^\sharp}} & := Z_{\widehat{M^\sharp}} = Z^{\Gamma_F, 0}_{\widehat{M^\sharp}} \hookrightarrow \widehat{G^\sharp}.
\end{align*}

Consider $\phi_M \in \Phi_{2,\text{bdd}}(M)$. Let $\phi$ be its composition with $\Lgrp{M} \hookrightarrow \Lgrp{G}$. Set $\phi^\sharp_M := \mathbf{pr} \circ \phi_M \in \Phi_{2,\text{bdd}}(M^\sharp)$ and $\phi^\sharp := \mathbf{pr} \circ \phi \in \Phi_{\text{bdd}}(G^\sharp)$. Every $\phi^\sharp \in \Phi_{\text{bdd}}(G^\sharp)$ is obtained in this way (recall Theorem \ref{prop:lifting-parameter}).

The construction of the dual $R$-group associated to $\phi^\sharp$, denoted by $R_{\phi^\sharp}$, is given as follows. Define
\begin{align*}
  N_{\phi^\sharp, \text{ad}} & := N_{S_{\phi^\sharp, \text{ad}}}(A_{\widehat{M^\sharp}}), \\
  \mathfrak{N}_{\phi^\sharp} & := \pi_0(N_{\phi^\sharp, \text{ad}}, 1), \\
  W_{\phi^\sharp} & := W(S_{\phi^\sharp, \text{ad}}, A_{\widehat{M^\sharp}}) \hookrightarrow W^{\hat{G}}(\hat{M}), \\
  W^0_{\phi^\sharp} & := W(S^0_{\phi^\sharp, \text{ad}}, A_{\widehat{M^\sharp}}) \; \triangleleft W_{\phi^\sharp} , \\
  R_{\phi^\sharp} & := W_{\phi^\sharp}/W^0_{\phi^\sharp}.
\end{align*}
The meaning of $W(\cdots, \cdots)$ is as follows: for any pair of complex groups $a \subset A$, the symbol $W(A,a)$ denotes the group $N_A(a)/Z_A(a)$. Note that $W^0_{\phi^\sharp}$ is the Weyl group associated to some root system, as $S^0_{\phi^\sharp, \text{ad}}$ is connected and reductive.

Since the centralizer of $A_{\widehat{M^\sharp}}$ in the connected reductive group $S^0_{\phi^\sharp, \text{ad}}$ is connected, there exists a canonical injection $W^0_{\phi^\sharp} \hookrightarrow \mathfrak{N}_{\phi^\sharp}$. From the results recalled in \S\ref{sec:res-L}, the torus $A_{\widehat{M^\sharp}}$ is a maximal torus in $S^0_{\phi^\sharp, \text{ad}}$. Using the conjugacy of maximal tori, one sees that the inclusion map $N_{\phi^\sharp, \text{ad}} \hookrightarrow S_{\phi^\sharp, \text{ad}}$ induces a canonical isomorphism $\mathfrak{N}_{\phi^\sharp}/W^0_{\phi^\sharp} \rightiso \mathscr{S}_{\phi^\sharp}$.

On the other hand, we also have canonical injections
\begin{gather*}
  \mathscr{S}_{\phi^\sharp_M} \hookrightarrow \mathscr{S}_{\phi^\sharp}, \\
  \mathscr{S}_{\phi^\sharp_M} \hookrightarrow \mathfrak{N}_{\phi^\sharp}.
\end{gather*}
The first one follows from the fact that $Z^{\Gamma_F}_{\widehat{M^\sharp}} = Z^{\Gamma_F}_{\widehat{G^\sharp}} Z^{\Gamma_F, 0}_{\widehat{M^\sharp}}$ (see \cite[Lemma 1.1]{Ar99}). The injectivity of the second map follows; moreover, its image is characterized as the elements fixing $A_{\widehat{M^\sharp}}$ pointwise.

The relations among these groups are recapitulated in the following result.

\begin{proposition}[{\cite[\S 7]{Ar89-unip}}]\label{prop:diagram}
  The groups above fit into a commutative diagram
  $$\xymatrix{
    & & 1 \ar[d] & \ar[d] 1 & \\
    & & W^0_{\phi^\sharp} \ar@{=}[r] \ar[d] & W^0_{\phi^\sharp} \ar[d] & \\
    1 \ar[r] & \mathscr{S}_{\phi^\sharp_M} \ar[r] \ar@{=}[d] & \mathfrak{N}_{\phi^\sharp} \ar[r] \ar[d] & W_{\phi^\sharp} \ar[r] \ar[d] & 1 \\
    1 \ar[r] & \mathscr{S}_{\phi^\sharp_M} \ar[r] & \mathscr{S}_{\phi^\sharp} \ar[r] \ar[d] & R_{\phi^\sharp} \ar[r] \ar[d] & 1 \\
    & & 1 & 1 & \\
  }$$
  whose rows and columns are exact.
\end{proposition}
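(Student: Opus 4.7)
The plan is to verify each row and column of the diagram separately, exploiting the pivotal fact (recalled in \S\ref{sec:res-L}) that $A_{\widehat{M^\sharp}}$ is a \emph{maximal torus} of the connected reductive complex group $S^0_{\phi^\sharp, \text{ad}}$; consequently, the identity component of the normalizer $N_{S^0_{\phi^\sharp, \text{ad}}}(A_{\widehat{M^\sharp}})$ is $A_{\widehat{M^\sharp}}$ itself, so that $\pi_0\bigl(N_{S^0_{\phi^\sharp, \text{ad}}}(A_{\widehat{M^\sharp}})\bigr) = N_{S^0_{\phi^\sharp, \text{ad}}}(A_{\widehat{M^\sharp}})/A_{\widehat{M^\sharp}} = W^0_{\phi^\sharp}$. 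The rightmost column is tautological: it is the defining exact sequence $1 \to W^0_{\phi^\sharp} \to W_{\phi^\sharp} \to R_{\phi^\sharp} \to 1$.

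For the middle column $1 \to W^0_{\phi^\sharp} \to \mathfrak{N}_{\phi^\sharp} \to \mathscr{S}_{\phi^\sharp} \to 1$, the map $\mathfrak{N}_{\phi^\sharp} \to \mathscr{S}_{\phi^\sharp}$ is induced by the inclusion $N_{\phi^\sharp, \text{ad}} \hookrightarrow S_{\phi^\sharp, \text{ad}}$. Surjectivity: given $s \in S_{\phi^\sharp, \text{ad}}$, conjugation by $s$ carries $A_{\widehat{M^\sharp}}$ to another maximal torus of $S^0_{\phi^\sharp, \text{ad}}$; by conjugacy of maximal tori in a connected reductive group, there exists $g \in S^0_{\phi^\sharp, \text{ad}}$ with $gs \in N_{\phi^\sharp, \text{ad}}$, representing the same class as $s$. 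For the kernel, elements of $N_{\phi^\sharp, \text{ad}}$ mapping trivially to $\mathscr{S}_{\phi^\sharp}$ lie in $N_{\phi^\sharp, \text{ad}} \cap S^0_{\phi^\sharp, \text{ad}} = N_{S^0_{\phi^\sharp, \text{ad}}}(A_{\widehat{M^\sharp}})$; modulo $N^0_{\phi^\sharp, \text{ad}} = A_{\widehat{M^\sharp}}$ this yields $W^0_{\phi^\sharp}$.

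For the middle row $1 \to \mathscr{S}_{\phi^\sharp_M} \to \mathfrak{N}_{\phi^\sharp} \to W_{\phi^\sharp} \to 1$, begin with the tautological short exact sequence of algebraic groups
\[
1 \to Z_{S_{\phi^\sharp, \text{ad}}}(A_{\widehat{M^\sharp}}) \to N_{\phi^\sharp, \text{ad}} \to W_{\phi^\sharp} \to 1
\]
(the surjection is by definition of $W_{\phi^\sharp}$). Passing to $\pi_0$ and using that $W_{\phi^\sharp}$ is finite yields the desired sequence, provided we identify $\pi_0\bigl(Z_{S_{\phi^\sharp, \text{ad}}}(A_{\widehat{M^\sharp}})\bigr) \simeq \mathscr{S}_{\phi^\sharp_M}$. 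Since $A_{\widehat{M^\sharp}}$ is a maximal torus of $S^0_{\phi^\sharp, \text{ad}}$, its centralizer in $S^0_{\phi^\sharp, \text{ad}}$ is itself, so the identity component of $Z_{S_{\phi^\sharp, \text{ad}}}(A_{\widehat{M^\sharp}})$ equals $A_{\widehat{M^\sharp}}$; for the component group, use that $\widehat{M^\sharp} = Z_{\widehat{G^\sharp}}(A_{\widehat{M^\sharp}})$ together with the equality $Z^{\Gamma_F}_{\widehat{M^\sharp}} = Z^{\Gamma_F}_{\widehat{G^\sharp}} \cdot Z^{\Gamma_F, 0}_{\widehat{M^\sharp}}$ from \cite[Lemma 1.1]{Ar99} to translate the adjoint-quotient centralization condition into exactly the defining condition for $\mathscr{S}_{\phi^\sharp_M}$. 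This identification is compatible with the canonical embedding $\mathscr{S}_{\phi^\sharp_M} \hookrightarrow \mathfrak{N}_{\phi^\sharp}$ described in the text as the elements fixing $A_{\widehat{M^\sharp}}$ pointwise.

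Once the middle row and both columns are established, the bottom row $1 \to \mathscr{S}_{\phi^\sharp_M} \to \mathscr{S}_{\phi^\sharp} \to R_{\phi^\sharp} \to 1$ is obtained by a direct application of the nine lemma (or equivalently a snake-lemma chase) to the commutative square of surjections $\mathfrak{N}_{\phi^\sharp} \twoheadrightarrow \mathscr{S}_{\phi^\sharp}$ and $W_{\phi^\sharp} \twoheadrightarrow R_{\phi^\sharp}$ with compatible kernels $W^0_{\phi^\sharp}$ on both sides. The principal technical obstacle is the identification $\pi_0\bigl(Z_{S_{\phi^\sharp, \text{ad}}}(A_{\widehat{M^\sharp}})\bigr) \simeq \mathscr{S}_{\phi^\sharp_M}$, which must be done carefully in the adjoint quotient and which encodes the nontrivial interplay between centralization mod $Z^{\Gamma_F}_{\widehat{G^\sharp}}$ and the intrinsic definition of $\mathscr{S}_{\phi^\sharp_M}$; everything else is formal once this point is settled.
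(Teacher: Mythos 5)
Your argument is correct and essentially fills in the same ingredients the paper sketches just before stating the proposition (maximal-torus property of $A_{\widehat{M^\sharp}}$ in $S^0_{\phi^\sharp,\text{ad}}$, conjugacy of maximal tori for the exactness of the middle column, the reference to \cite[Lemma 1.1]{Ar99} to handle the adjoint quotients, and a final diagram chase for the bottom row). The paper itself defers details to Arthur \cite[\S 7]{Ar89-unip}, so your explicit verification of the key identification $\pi_0\bigl(Z_{S_{\phi^\sharp, \text{ad}}}(A_{\widehat{M^\sharp}})\bigr) \simeq \mathscr{S}_{\phi^\sharp_M}$ is a consistent elaboration of the same route rather than a different one.
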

The arrow $\mathscr{S}_{\phi^\sharp} \to R_{\phi^\sharp}$ is uniquely determined by the other terms in this diagram. Cf. the proof of Lemma \ref{prop:X-W} below.

The same constructions can be applied to $\phi$ and $\phi_M$. The corresponding objects are denoted by $W_{\phi}$, $W^0_\phi$, etc.

Upon identifying $W^{\hat{G}}(\hat{M})$ and $W^G(M)$, we can make $W_{\phi^\sharp}$ act on the tempered L-packet $\Pi_{\phi^\sharp}$. For any $\sigma^\sharp \in \Pi_{\phi^\sharp}$, define
\begin{align*}
  W_{\phi^\sharp, \sigma^\sharp} & := \text{Stab}_{W_{\phi^\sharp}}(\sigma^\sharp), \\
  W^0_{\phi^\sharp, \sigma^\sharp} & := \text{Stab}_{W^0_{\phi^\sharp}}(\sigma^\sharp), \\
  R_{\phi^\sharp, \sigma^\sharp} & := W_{\phi^\sharp, \sigma^\sharp}/W^0_{\phi^\sharp, \sigma^\sharp}.
\end{align*}

The last object $R_{\phi^\sharp, \sigma^\sharp}$, viewed as a subgroup of $R_{\phi^\sharp}$, is what we want to compare with the Knapp-Stein $R$-group $R_{\sigma^\sharp}$.

\subsection{Identification of $R$-groups}\label{sec:iden-R-groups}
Retain the notations of the previous subsection and fix a parabolic subgroup $P \in \mathcal{P}(M)$. We shall always make the identification $W^G(M)=W^{G^\sharp}(M^\sharp)$. The results in \S\ref{sec:res-2} are applicable to tempered representations of $M(F)$ by Theorem \ref{prop:irred}.

Henceforth, let $\sigma \in \Pi_{2,\text{temp}}(M)$ (resp. $\pi \in \Pi_{\text{temp}}(G)$) be the representations corresponding to $\phi_M$ (resp. $\phi$) by Theorem \ref{prop:LLC-G}. Then we have
$$ \pi \simeq I^G_P(\sigma). $$

Recall that we have defined canonical isomorphisms $\mathscr{S}_{\phi^\sharp} \rightiso X^G(\pi)$ and $\mathscr{S}_{\phi^\sharp_M} \rightiso X^M(\sigma)$ in \S\S\ref{sec:iden-S-groups}-\ref{sec:generalization}. By inspecting the construction in Lemma \ref{prop:S-sequence}, we see that these two isomorphisms are compatible with the embeddings $\mathscr{S}_{\phi^\sharp_M} \hookrightarrow \mathscr{S}_{\phi}$ and $X^M(\sigma) \hookrightarrow X^G(\sigma)$. Therefore we obtain $\gamma: X^G(\pi)/X^M(\sigma) \rightiso \mathscr{S}_{\phi^\sharp}/\mathscr{S}_{\phi^\sharp_M}$.

\begin{lemma}\label{prop:X-W}
  Define an isomorphism
  $$ {\hat{\Gamma}}^{-1}: X^G(\pi)/X^M(\sigma) \xrightarrow{\gamma} \mathscr{S}_{\phi^\sharp}/\mathscr{S}_{\phi^\sharp_M} \rightiso W_{\phi^\sharp}/W^0_{\phi^\sharp} =: R_{\phi^\sharp}, $$
  where the second arrow is given by Proposition \ref{prop:diagram}. Then it is characterized by the equation
  $$ \eta\sigma \simeq w\sigma $$
  whenever
  $$ {\hat{\Gamma}}^{-1}(\eta \text{ mod } X^M(\sigma)) = w \text{ mod } W^0_{\phi^\sharp}, $$
  for all $\eta \in X^G(\pi)$ and $w \in W_{\phi^\sharp}$.
\end{lemma}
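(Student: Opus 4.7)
The plan is to unpack the composition defining $\hat{\Gamma}^{-1}$, realize it concretely via a single element $\tilde{n} \in \hat{G}$ normalizing $\hat{M}$, and then conclude by the twisting compatibility of the local Langlands correspondence for $M$ (Theorem \ref{prop:LLC-G}). First I would unwind $\gamma$: by Lemma \ref{prop:S-sequence} combined with the vanishing $\mathscr{S}_\phi = \{1\}$ noted in \S\ref{sec:iden-S-groups}, an element $\eta \in X^G(\pi) = X^G(\phi)$ corresponds to the class of some $s \in S_{\phi^\sharp, \mathrm{ad}}$ admitting a lift $\tilde{s} \in \hat{G}$ with $\tilde{s}\phi\tilde{s}^{-1} = a\phi$, where $a: W_F \to \hat{Z}^\sharp$ represents the class $\mathbf{a}\leftrightarrow \eta$ under local class field theory. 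The second arrow $\mathscr{S}_{\phi^\sharp}/\mathscr{S}_{\phi^\sharp_M} \rightiso R_{\phi^\sharp}$ is obtained by chasing the middle column of the diagram in Proposition \ref{prop:diagram}: lift $s$ to some $n \in \mathfrak{N}_{\phi^\sharp}$, project to $w \in W_{\phi^\sharp}$, and reduce modulo $W^0_{\phi^\sharp}$.

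To perform the lift $s \leadsto n$ explicitly, I would invoke the recollections in \S\ref{sec:res-L}: since $\Lgrp{M^\sharp}$ contains $\mathrm{Im}(\phi^\sharp)$ minimally, $A_{\widehat{M^\sharp}}$ is a maximal torus of the connected reductive group $S^0_{\phi^\sharp, \mathrm{ad}}$. By transitivity of conjugation on maximal tori of $S^0_{\phi^\sharp,\mathrm{ad}}$, there exists $z \in S^0_{\phi^\sharp, \mathrm{ad}}$ such that $n := zs$ normalizes $A_{\widehat{M^\sharp}}$, so that $n \in N_{\phi^\sharp,\mathrm{ad}}$; its image in $W_{\phi^\sharp}$ is the representative $w$ of the class $\hat{\Gamma}^{-1}(\eta \bmod X^M(\sigma))$. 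The crucial observation is that $z$ admits a lift $\tilde{z} \in \hat{G}$ that centralizes $\phi$ on the nose: indeed, $z$ projects to $1 \in \mathscr{S}_{\phi^\sharp}$, hence to the trivial class in $X^G(\phi)$ under the isomorphism of Lemma \ref{prop:S-sequence}. Setting $\tilde{n} := \tilde{z}\tilde{s}$, this element normalizes $\hat{M}$ in $\hat{G}$ (the centralizer of $Z_{\widehat{M^\sharp}}$ in $\widehat{G^\sharp}$ is $\widehat{M^\sharp} = \hat{M}/\hat{Z}^\sharp$), its image in $N_{\hat{G}}(\hat{M})/\hat{M} = W^G(M)$ equals $w$, and
$$ \tilde{n}\phi\tilde{n}^{-1} = \tilde{z}(a\phi)\tilde{z}^{-1} = a\phi, $$
because $a$ takes values in the center of $\hat{G}$.

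The conclusion is then immediate. Restricting the displayed equation to $\Lgrp{M}$, which $\tilde{n}$ preserves, yields the identity $\tilde{n}\phi_M \tilde{n}^{-1} = a\phi_M$ of L-parameters into $\Lgrp{M}$. By Theorem \ref{prop:LLC-G}(2) applied to $M$, together with Corollary \ref{prop:char-G-M} identifying $\eta$ with its restriction to $M(F)$, the parameter $a\phi_M$ classifies $\eta\sigma$. On the other hand $\tilde{n}\phi_M\tilde{n}^{-1}$ is by definition the transport $w\phi_M$, which classifies $w\sigma$ by the Weyl compatibility of the same correspondence. Hence $w\sigma \simeq \eta\sigma$, as claimed. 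The main obstacle is engineering the simultaneous lift $\tilde{n} = \tilde{z}\tilde{s}$: the fact that $z \in S^0_{\phi^\sharp,\mathrm{ad}}$ can be promoted to $\tilde{z} \in \hat{G}$ centralizing $\phi$ exactly (and not just up to a residual cocycle) is what ensures $\tilde{n}\phi\tilde{n}^{-1} = a\phi$ with precisely the same cocycle $a$; this hinges specifically on the vanishing $\mathscr{S}_\phi = \{1\}$ peculiar to the inner forms of $\GL(N)$.
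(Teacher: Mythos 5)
Your proof is correct, and it runs in the opposite direction from the paper's. The paper's argument begins on the representation-theoretic side: it first invokes Proposition \ref{prop:L-X^G} (which rests on Waldspurger's disjointness result \cite[Proposition III.4.1]{Wa03}) to produce, for a given $\eta$, some $w\in W^G(M)$ with $\eta\sigma\simeq w\sigma$; it then translates this to the dual side, obtaining $t\in N_{\hat G}(\hat M)$ with $t\phi t^{-1}=a\phi$, and verifies by chasing the diagram in Proposition \ref{prop:diagram} that the class of $t$ realises $\hat\Gamma^{-1}(\eta)$. You instead work entirely on the dual side from the outset: starting from $s\in S_{\phi^\sharp,\mathrm{ad}}$ you correct it by an element $z\in S^0_{\phi^\sharp,\mathrm{ad}}$ (conjugacy of maximal tori) so as to land in $N_{\phi^\sharp,\mathrm{ad}}$, and you exploit the vanishing $\mathscr S_\phi=\{1\}$ — equivalently the connectedness of $Z_{\hat G}(\Im\phi)$ — to lift $z$ to a genuine centraliser $\tilde z\in Z_{\hat G}(\Im\phi)$, so that $\tilde n:=\tilde z\tilde s$ satisfies $\tilde n\phi\tilde n^{-1}=a\phi$ with the \emph{same} cocycle $a$ (since $a$ is $\hat Z^\sharp$-valued, hence central). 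Restricting to $\Lgrp{M}$ and invoking the character-twist and Weyl compatibility of the LLC for $M$ then \emph{derives} $\eta\sigma\simeq w\sigma$. The net effect is that you bypass Proposition \ref{prop:L-X^G} (in fact your argument reproves the inclusion $X^G(\pi)\subset L(\sigma)$), at the mild cost of using Weyl compatibility of the LLC for $M$, which the paper does not state explicitly in Theorem \ref{prop:LLC-G} but which holds essentially by construction for products of $\GL_D$'s — the paper's direction also tacitly uses this compatibility when it passes from $\eta\sigma\simeq w\sigma$ to $t\phi t^{-1}=a\phi$. Both proofs are correct; yours is slightly more self-contained on the dual side and makes the role of $\mathscr S_\phi=\{1\}$ more visible.
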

As what the notation suggests, we set $\hat{\Gamma}$ to be the inverse of ${\hat{\Gamma}}^{-1}$.

\begin{proof}
  The equation $\eta\sigma \simeq w\sigma$ clearly characterizes ${\hat{\Gamma}}^{-1}$. Let $\eta \in X^G(\pi)$ and $\mathbf{a} \in H^1_{\text{cont}}(W_F, \hat{Z}^\sharp)$ which corresponds to $\eta$, together with a chosen $1$-cocycle $a$ in the cohomology class of $\mathbf{a}$. Since $L(\sigma)=X^G(\pi)$ by Proposition \ref{prop:L-X^G}, there exists $w \in W^G(M)$ such that $\eta\sigma \simeq w\sigma$. On the dual side, it implies that there exists $t \in N_{\hat{G}}(\hat{M})$ representing $w$, such that
  $$ t\phi t^{-1} = a\phi. $$

  This implies that $t \text{ mod } Z_{\hat{G}}$ belongs to $S_{\phi^\sharp, \text{ad}}$, and its class $[t]$ in $\mathscr{S}_{\phi^\sharp}$ corresponds to $\eta$ (recall the construction in Lemma \ref{prop:S-sequence}).

  On the other hand, we have $t \in N_{\phi^\sharp, \text{ad}}$ and its class $[\mathfrak{t}]$ in $\mathfrak{N}_{\phi^\sharp}$ is mapped to $[t]$ under the arrow $\mathfrak{N}_{\phi^\sharp} \twoheadrightarrow \mathscr{S}_{\phi^\sharp}$ in Proposition \ref{prop:diagram}. One can also apply the arrow $\mathfrak{N}_{\phi^\sharp} \twoheadrightarrow W_{\phi^\sharp}$ to $[\mathfrak{t}]$; since $W_{\phi^\sharp}$ is identified as a subgroup of $W^G(M)$, the image is simply $w$.

  Upon some contemplation of the diagram in Proposition \ref{prop:diagram}, one can see that the image of $[t]$ under $\mathscr{S}_{\phi^\sharp} \to R_{\phi^\sharp}$ is just $w$ modulo $W^0_{\phi^\sharp}$. This completes the proof.
\end{proof}

Now recall that we have defined the group $\bar{W}_{\sigma} \subset W^G(M)$. In view of Lemma \ref{prop:barGamma} and Proposition \ref{prop:L-X^G}, we have a canonical isomorphism
$$ \Gamma: \bar{W}_\sigma/W_\sigma \rightiso X^G(\pi)/X^M(\sigma). $$
This is to be compared with $\hat{\Gamma}: W_{\phi^\sharp}/W^0_{\phi^\sharp} \rightiso X^G(\pi)/X^M(\sigma)$.

\begin{proposition}\label{prop:R-1}
  We have
  \begin{enumerate}
    \item $\bar{W}_\sigma = W_{\phi^\sharp}$,
    \item $W_\sigma = W^0_{\phi^\sharp}$,
    \item $\Gamma = \hat{\Gamma}$.
  \end{enumerate}
  In particular, $R_{\phi^\sharp} \simeq X^G(\pi)/X^M(\sigma)$.
\end{proposition}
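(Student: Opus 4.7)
The whole statement reduces to the single set-theoretic equality $\bar W_\sigma = W_{\phi^\sharp}$ inside $W^G(M) = W^{\hat G}(\hat M)$. Indeed, once this is in hand, both the surjection $\bar\Gamma\colon \bar W_\sigma \twoheadrightarrow X^G(\pi)/X^M(\sigma)$ of Lemma \ref{prop:barGamma} (whose codomain $L(\sigma)/X^M(\sigma)$ coincides with $X^G(\pi)/X^M(\sigma)$ by Proposition \ref{prop:L-X^G}, since $\sigma\in\Pi_{2,\mathrm{temp}}(M)$) and the composition $\hat{\bar\Gamma}\colon W_{\phi^\sharp} \twoheadrightarrow R_{\phi^\sharp} \xrightarrow{\hat\Gamma} X^G(\pi)/X^M(\sigma)$ built from Lemma \ref{prop:X-W} are characterized by the same rule $w\mapsto [\eta]$ iff $w\sigma\simeq\eta\sigma$. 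Hence they agree as maps out of the common group; their kernels $W_\sigma$ and $W^0_{\phi^\sharp}$ coincide; and the induced isomorphisms $\Gamma$ and $\hat\Gamma$ on the quotient match up. This delivers (1), (2), (3) at a stroke, and the final isomorphism $R_{\phi^\sharp}\simeq X^G(\pi)/X^M(\sigma)$ is just $\hat\Gamma$.

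For the equality $\bar W_\sigma = W_{\phi^\sharp}$, the inclusion $W_{\phi^\sharp}\subseteq\bar W_\sigma$ is essentially what Lemma \ref{prop:X-W} already produces: it attaches to every $w \in W_{\phi^\sharp}$ an $\eta\in X^G(\pi)$ with $w\sigma\simeq\eta\sigma$, so $w \in \bar W_\sigma$ by the very definition of the latter. For the reverse inclusion I would pick $w\in\bar W_\sigma$ with $w\sigma\simeq\eta\sigma$ for some $\eta\in(M(F)/M^\sharp(F))^D$; Proposition \ref{prop:L-X^G} forces $\eta\in L(\sigma)=X^G(\pi)$, and local class field theory pairs $\eta$ with a class $\mathbf{a}\in H^1_{\mathrm{cont}}(W_F,\hat Z^\sharp)$. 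Compatibility of Theorem \ref{prop:LLC-G} with character twists converts $w\sigma\simeq\eta\sigma$ into the equivalence $n\phi_M n^{-1}\sim a\phi_M$ in $\Lgrp{M}$, where $n\in N_{\hat G}(\hat M)$ represents $w$ and $a$ is a suitable $1$-cocycle in the class $\mathbf a$. Composing with $\Lgrp{M}\hookrightarrow\Lgrp{G}$ gives $n\phi n^{-1}=a\phi$. Since $a$ takes values in $\hat Z^\sharp=\Ker(\mathbf{pr})$, projection to $\widehat{G^\sharp}$ shows $\mathbf{pr}(n)$ centralizes $\Im(\phi^\sharp)$; combined with the fact that $n$ normalizes $\hat M$, this gives $\mathbf{pr}(n)\bmod Z^{\Gamma_F}_{\widehat{G^\sharp}}\in N_{\phi^\sharp,\text{ad}}$ with image $w$ in $W_{\phi^\sharp}$, so $w\in W_{\phi^\sharp}$.

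The only real hurdle is bookkeeping: one must check that the canonical isomorphism $\mathscr{S}_{\phi^\sharp}\simeq X^G(\pi)$ of \S\ref{sec:iden-S-groups} (built via Lemma \ref{prop:S-sequence} and local Langlands) does transport the dual-side assignment $w \mapsto [\eta]$ into the representation-side one, so that the characterizations of $\bar\Gamma$ and $\hat{\bar\Gamma}$ are literally the same rule. No new idea is required --- only a diagram chase through constructions already in place.
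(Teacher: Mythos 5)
Your argument is correct and follows essentially the same path as the paper: reduce everything to the single equality $\bar W_\sigma = W_{\phi^\sharp}$, established via compatibility of the local Langlands correspondence for $G$ with character twists and the uniqueness-up-to-$H^1_{\mathrm{cont}}(W_F,\hat Z^\sharp)$ lifting of parameters (Theorem \ref{prop:lifting-parameter}), and then observe that $\Gamma$ and $\hat\Gamma$ agree because both are surjections out of this common group determined by the same rule $w\sigma\simeq\eta\sigma$ (Lemmas \ref{prop:barGamma} and \ref{prop:X-W}). One minor slip: after ``composing with $\Lgrp{M}\hookrightarrow\Lgrp{G}$'' you write $n\phi n^{-1}=a\phi$ but only have $\hat M$-conjugacy at that point; you should adjust $n$ by an element of $\hat M$ (which does not change its Weyl class) to get equality on the nose before projecting to $\widehat{G^\sharp}$.
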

\begin{proof}
  The first assertion follows from the definition of $\bar{W}_\sigma$ and Theorem \ref{prop:lifting-parameter}. Hence $\Gamma$ and $\hat{\Gamma}$ can be regarded as two surjective homomorphisms from $W_{\phi^\sharp}$ onto $X^G(\pi)/X^M(\sigma)$. However, they admit the same characterization (of the form $\eta\sigma \simeq w\sigma$) by Lemma \ref{prop:X-W} and \ref{prop:barGamma}, hence are equal. This proves the remaining two assertions.
\end{proof}

\begin{proposition}\label{prop:R-2}
  For all $\sigma^\sharp \in \Pi_{\phi^\sharp}$, we have
  \begin{enumerate}
    \item $W_{\sigma^\sharp} = W_{\phi^\sharp, \sigma^\sharp}$,
    \item $W^0_{\sigma^\sharp} = W^0_{\phi^\sharp, \sigma^\sharp}$,
    \item the restriction of $\hat{\Gamma}$ to $W_{\phi^\sharp, \sigma^\sharp}/W^0_{\phi^\sharp, \sigma^\sharp}$ induces an isomorphism
    $$ W_{\phi^\sharp, \sigma^\sharp}/W^0_{\phi^\sharp, \sigma^\sharp} \rightiso Z^M(\sigma)^\perp/X^M(\sigma). $$
  \end{enumerate}

  In particular, $R_{\phi^\sharp, \sigma^\sharp} = R_{\sigma^\sharp}$, and the isomorphisms $\Gamma$, $\hat{\Gamma}$ from these $R$-groups onto $Z^M(\sigma)^\perp/X^M(\sigma)$ coincide (recall Proposition \ref{prop:Goldberg} and Corollary \ref{prop:L-sharp}.)
\end{proposition}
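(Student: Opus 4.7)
The plan is to deduce the statement by assembling three existing ingredients: Proposition \ref{prop:R-1} (which identifies $\bar{W}_\sigma$, $W_\sigma$ and $\Gamma$ with their hatted analogues), Proposition \ref{prop:Goldberg}, and crucially Sécherre's irreducibility result (Theorem \ref{prop:irred}), which forces $R_\sigma = \{1\}$ since $\sigma \in \Pi_{2,\mathrm{temp}}(M)$ is unitary.

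For (i), I would first show $W_{\sigma^\sharp} \subset \bar{W}_\sigma$ via Proposition \ref{prop:res-disjoint}: if $w \in W_{\sigma^\sharp}$, then $\sigma|_{M^\sharp}$ and $w\sigma|_{M^\sharp}$ share the constituent $\sigma^\sharp$, whence $w\sigma \simeq \eta\sigma$ for some $\eta \in (M(F)/M^\sharp(F))^D$. Combined with Proposition \ref{prop:R-1}(i), this places $W_{\sigma^\sharp}$ inside $W_{\phi^\sharp}$, and since $W_{\sigma^\sharp}$ tautologically stabilizes $\sigma^\sharp$, one obtains $W_{\sigma^\sharp} \subset W_{\phi^\sharp, \sigma^\sharp}$. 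The reverse inclusion is immediate from the definition of the latter as a stabilizer.

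For (ii), Proposition \ref{prop:R-1}(ii) allows me to rewrite $W^0_{\phi^\sharp, \sigma^\sharp} = W^0_{\phi^\sharp} \cap W_{\sigma^\sharp}$ as $W_\sigma \cap W_{\sigma^\sharp}$. Proposition \ref{prop:Goldberg} then supplies an embedding $R_\sigma[\sigma^\sharp] = (W_\sigma \cap W_{\sigma^\sharp})/W^0_{\sigma^\sharp} \hookrightarrow R_\sigma$, and Theorem \ref{prop:irred} kills the target, forcing $W_\sigma \cap W_{\sigma^\sharp} = W^0_{\sigma^\sharp}$. The opposite inclusion $W^0_{\sigma^\sharp} \subset W_\sigma \cap W_{\sigma^\sharp}$ follows from Lemma \ref{prop:W^0-equality} (giving $W^0_{\sigma^\sharp} = W^0_\sigma \subset W_\sigma$) together with the defining inclusion $W^0_{\sigma^\sharp} \subset W_{\sigma^\sharp}$.

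Combining (i) and (ii) yields $R_{\phi^\sharp, \sigma^\sharp} = W_{\sigma^\sharp}/W^0_{\sigma^\sharp} = R_{\sigma^\sharp}$. For (iii), Corollary \ref{prop:L-sharp} already produces an isomorphism $\Gamma: R_{\sigma^\sharp} \rightiso Z^M(\sigma)^\perp/X^M(\sigma)$, and Proposition \ref{prop:R-1}(iii) ensures that $\Gamma$ and $\hat{\Gamma}$ agree on their common domain, so the restriction of $\hat{\Gamma}$ is precisely this isomorphism. The whole argument is formal bookkeeping once Sécherre's input is granted; I foresee no genuine technical obstacle.
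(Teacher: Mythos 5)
Your proof is correct and follows essentially the same route as the paper: assertion (i) via disjointness of $L$-packets (you make this explicit through Proposition \ref{prop:res-disjoint} and Proposition \ref{prop:R-1}(i)), assertion (ii) via $W^0_{\phi^\sharp}=W_\sigma$, Lemma \ref{prop:W^0-equality}, and the triviality of $R_\sigma$ from Theorem \ref{prop:irred}, and assertion (iii) via Corollary \ref{prop:L-sharp} and $\Gamma=\hat\Gamma$ from Proposition \ref{prop:R-1}(iii). Your detour through Goldberg's embedding $R_\sigma[\sigma^\sharp]\hookrightarrow R_\sigma$ in step (ii) is a slightly indirect way of obtaining what the paper gets at once from $W_\sigma=W^0_\sigma=W^0_{\sigma^\sharp}$, but the content is the same.
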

Remainder: the group $Z^M(\sigma)^\perp$ above is defined in \eqref{eqn:perp}.

\begin{proof}
  Our proof is based on the previous result. The first assertion follows immediately from the disjointness of tempered $L$-packets. By Lemma \ref{prop:W^0-equality} and the fact that $W_\sigma = W^0_\sigma$, we have
  \begin{align*}
    W^0_{\phi^\sharp, \sigma^\sharp} & = W^0_{\phi^\sharp} \cap W_{\sigma^\sharp} = W_\sigma \cap W_{\phi^\sharp} \\
    & = W^0_{\sigma^\sharp} \cap W_{\sigma^\sharp} = W^0_{\sigma^\sharp}. 
  \end{align*}
  The second assertion follows. The third assertion is then immediate from Proposition \ref{prop:Z-perp}.
\end{proof}

Note that the proof for the isomorphism $\hat{\Gamma}: R_{\phi^\sharp, \sigma^\sharp} \rightiso Z^M(\sigma)^\perp/X^M(\sigma)$ is independent of the Knapp-Stein theory.

The behaviour of the local Langlands correspondence (Theorem \ref{prop:LLC-SL}) for $G^\sharp$ and its Levi subgroups can now be summarized as follows.

\begin{theorem}\label{prop:main}
  Let $G, G^\sharp$ and $P=MU$, $P^\sharp = M^\sharp U$ be as before. Let $\phi^\sharp_M \in \Phi_{\mathrm{bdd}}(M^\sharp)$ and let $\phi^\sharp \in \Phi_{\mathrm{bdd}}(G^\sharp)$ be the composition of $\phi^\sharp_M$ with $\Lgrp{M^\sharp} \hookrightarrow \Lgrp{G^\sharp}$.
  \begin{enumerate}
    \item For every $\rho \in \Pi(\tilde{\mathscr{S}}_{\phi^\sharp_M}, \chi_M)$ parametrizing an irreducible representation $\sigma^\sharp \in \Pi_{\phi^\sharp_M}$, then $I^{G^\sharp}_{P^\sharp}(\sigma^\sharp)$, regarded as a virtual character of $G^\sharp(F)$, corresponds to that of  $\mathrm{Ind}^{\tilde{\mathscr{S}}_{\phi^\sharp}}_{\tilde{\mathscr{S}}_{\phi^\sharp_M}}(\rho)$.
    \item For any $\sigma^\sharp$ as above, $I^{G^\sharp}_{P^\sharp}(\sigma^\sharp)$ is irreducible if and only if $Z^M(\sigma)^\perp = X^M(\sigma)$ for some (equivalently, for any) $\sigma \in \Pi_{\mathrm{temp}}(M)$ such that $\sigma^\sharp \hookrightarrow \sigma|_{M^\sharp}$.
    \item If $\phi^\sharp_M \in \Phi_{2,\mathrm{bdd}}(M^\sharp)$, then we have natural isomorphisms
      \begin{gather*}
        R_{\phi^\sharp} \simeq X^G(\pi)/X^M(\sigma), \\
        R_{\phi^\sharp, \sigma^\sharp} \simeq R_{\sigma^\sharp} \simeq Z^M(\sigma)^\perp/X^M(\sigma)
      \end{gather*}
      where we set $\pi := I^G_P(\sigma) \in \Pi_{\mathrm{temp}}(G)$, for any choice of $\sigma \in \Pi_{2,\mathrm{temp}}(M)$ such that $\sigma^\sharp \hookrightarrow \sigma|_{M^\sharp}$.
    \item For $\phi^\sharp_M$, $\sigma^\sharp$ and $\rho$ as above, the class $\mathbf{c}_{\sigma^\sharp} \in H^2(R_{\sigma^\sharp}, \C^\times)$ of \eqref{eqn:R-cocycle} corresponds to $\mathbf{c}^{-1}_\rho$, where $\mathbf{c}_\rho \in H^2(R_{\phi^\sharp, \sigma^\sharp}, \C^\times)$ is the obstruction for extending $\rho$ to a representation of the preimage in $\tilde{\mathscr{S}}_{\phi^\sharp}$ of $R_{\phi^\sharp, \sigma^\sharp}$ (see Definition \ref{def:obstruction}).
  \end{enumerate}

  If $G^\sharp$ is quasisplit, then $Z^M(\sigma)^\perp = X^G(\pi)$ and $\tilde{R}_{\sigma^\sharp} \to R_{\sigma^\sharp}$ splits.
\end{theorem}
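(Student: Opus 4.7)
The plan is to assemble the four numbered assertions from the machinery developed earlier, with the quasisplit statement as a short corollary. To begin, I would fix $\sigma \in \Pi_{2,\text{temp}}(M)$ with $\sigma^\sharp \hookrightarrow \sigma|_{M^\sharp}$, available by Propositions \ref{prop:lifting} and \ref{prop:heredity}; let $\phi_M$ be the parameter of $\sigma$ from Theorem \ref{prop:LLC-G}, which lifts $\phi^\sharp_M$, and set $\pi := I^G_P(\sigma)$ with corresponding parameter $\phi$ (the composition of $\phi_M$ with $\Lgrp{M} \hookrightarrow \Lgrp{G}$). By Sécherre's Theorem \ref{prop:irred}, $\pi$ is irreducible, so Hypothesis \ref{hyp:irred} holds and all of \S\ref{sec:res-2} is applicable.

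For assertion (i), I would invoke the Levi-subgroup generalization of Theorem \ref{prop:Lambda}, provided by Theorem \ref{prop:generalization}, to obtain compatible homomorphisms $\Lambda_M: \tilde{\mathscr{S}}_{\phi^\sharp_M} \to S^M(\sigma)$ and $\Lambda_G: \tilde{\mathscr{S}}_{\phi^\sharp} \to S^G(\pi)$ intertwining the Levi-to-group inclusions on both sides, namely $\tilde{\mathscr{S}}_{\phi^\sharp_M} \hookrightarrow \tilde{\mathscr{S}}_{\phi^\sharp}$ of Proposition \ref{prop:diagram} and $S^M(\sigma) \hookrightarrow S^G(\pi)$ of Proposition \ref{prop:S-embedding}. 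Consequently $\mathrm{Ind}^{\tilde{\mathscr{S}}_{\phi^\sharp}}_{\tilde{\mathscr{S}}_{\phi^\sharp_M}}$ matches $\mathrm{Ind}^{S^G(\pi)}_{S^M(\sigma)}$ under the induced bijections on $\mathbf{K}_0$, and Proposition \ref{prop:K_0-diagram} identifies the latter with $I^{G^\sharp}_{P^\sharp}$. Assertions (ii) and (iii) then follow directly from Propositions \ref{prop:R-1}, \ref{prop:R-2} and Corollary \ref{prop:L-sharp}: reducibility of $I^{G^\sharp}_{P^\sharp}(\sigma^\sharp)$ is controlled by $R_{\sigma^\sharp} \simeq Z^M(\sigma)^\perp/X^M(\sigma)$, and independence of the criterion from the choice of $\sigma$ comes from the $(G(F)/G^\sharp(F))^D$-equivariance of both $Z^M(\cdot)^\perp$ and $X^M(\cdot)$.

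Assertion (iv) is essentially Proposition \ref{prop:cocycle}, but requires transferring the obstruction class from $S^G(\pi)$ to $\tilde{\mathscr{S}}_{\phi^\sharp}$ via $\Lambda_G$. This is the step I expect to be most delicate: one must verify that the obstruction for extending $\rho$ regarded as a representation of the preimage in $S^G(\pi)$ of $Z^M(\sigma)^\perp$ coincides with the obstruction for extending the pulled-back $\rho \circ \Lambda_M$ to the preimage in $\tilde{\mathscr{S}}_{\phi^\sharp}$ of $R_{\phi^\sharp, \sigma^\sharp}$. Since $\Lambda_G$ is a homomorphism of central extensions that covers the central character $\chi_M$ on $\tilde{Z}_\phi$ and is compatible with $\Lambda_M$, these two obstruction classes must agree in $H^2(R_{\phi^\sharp, \sigma^\sharp}, \C^\times) = H^2(R_{\sigma^\sharp}, \C^\times)$.

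Finally, for the quasisplit case $G = \GL_F(N)$, the character $\chi_G$ is trivial by \eqref{eqn:chi-G-special}, and likewise $\chi_M$ for each Levi subgroup. The push-forward description in Theorems \ref{prop:Lambda} and \ref{prop:generalization} then forces both $S^G(\pi)$ and $S^M(\sigma)$ to be split, hence abelian, extensions of their respective $X$-groups by $\C^\times$. Abelianness yields $Z^M(\sigma) = X^M(\sigma)$, so $Z^M(\sigma)^\perp = X^G(\pi)$ automatically. Moreover, $\rho$ is now a character of a split abelian central extension, which trivially extends to any larger split abelian extension by $\C^\times$; hence $\mathbf{c}_\rho = 1$, so by (iv) the cocycle $\mathbf{c}_{\sigma^\sharp}$ is trivial and the central extension $\tilde{R}_{\sigma^\sharp} \to R_{\sigma^\sharp}$ splits.
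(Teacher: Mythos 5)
Your overall plan for (i), (iii), (iv), and the quasisplit tail matches the paper's assembly of Proposition \ref{prop:K_0-diagram}, Propositions \ref{prop:R-1}--\ref{prop:R-2}, Proposition \ref{prop:cocycle}, and Theorem \ref{prop:generalization}. The transfer of the obstruction class in (iv) and the $\chi_G=1 \Rightarrow S^G(\pi)$ abelian argument for the quasisplit case are both correct in spirit.

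However, your argument for assertion (ii) has a genuine gap. You write that ``reducibility of $I^{G^\sharp}_{P^\sharp}(\sigma^\sharp)$ is controlled by $R_{\sigma^\sharp} \simeq Z^M(\sigma)^\perp/X^M(\sigma)$'' and cite Propositions \ref{prop:R-1}, \ref{prop:R-2} and Corollary \ref{prop:L-sharp}. But assertion (ii) only assumes $\phi^\sharp_M \in \Phi_{\mathrm{bdd}}(M^\sharp)$, so $\sigma^\sharp$ is tempered but not necessarily square-integrable modulo the center, and likewise $\sigma$ can be any element of $\Pi_{\mathrm{temp}}(M)$. The Knapp-Stein group $R_{\sigma^\sharp}$ is not defined in this generality, and Propositions \ref{prop:R-1}, \ref{prop:R-2} and Corollary \ref{prop:L-sharp} are proved only under the hypothesis $\sigma \in \Pi_{2,\mathrm{temp}}(M)$ (resp.\ $\sigma^\sharp \in \Pi_{2,\mathrm{temp}}(M^\sharp)$). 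Related to this, your opening step fixes $\sigma \in \Pi_{2,\mathrm{temp}}(M)$ globally, which is already too restrictive to cover parts (i) and (ii).

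The correct route for (ii), which is how the paper proceeds, passes through (i) and Mackey theory rather than Knapp-Stein theory: part (i) identifies $I^{G^\sharp}_{P^\sharp}(\sigma^\sharp)$ with the virtual character of $\mathrm{Ind}^{S^G(\pi)}_{S^M(\sigma)}(\rho)$. Since $X^G(\pi)$ is abelian, $S^M(\sigma)$ is normal in $S^G(\pi)$, and the proof of Proposition \ref{prop:Z-perp} shows, via the Stone--von Neumann description of $\Pi_-(S^M(\sigma))$, that the stabilizer of $\rho$ under conjugation by $S^G(\pi)$ is exactly the preimage of $Z^M(\sigma)^\perp$. Mackey's irreducibility criterion then gives irreducibility if and only if $Z^M(\sigma)^\perp = X^M(\sigma)$. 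This works for all $\sigma$ satisfying Hypothesis \ref{hyp:irred}, in particular all tempered $\sigma$, whereas your $R$-group detour does not.
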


As mentioned in the Introduction, this settles Arthur's conjectures on $R$-groups for $G^\sharp$.

\begin{proof}
  The first part is nothing but a special case of Proposition \ref{prop:K_0-diagram}. The second part resuls then from the proof of Proposition \ref{prop:Z-perp}; the independence of the choice of $\sigma$ is clear. The third part results from Proposition \ref{prop:R-1} and \ref{prop:R-2}. The fourth part is the combination of Proposition \ref{prop:cocycle} and Theorem \ref{prop:generalization}.

  Finally, $S^G(\pi)$ is commutative when $G^\sharp$ is quasisplit, as $\chi_G=1$. Hence we have $Z^M(\sigma)^\perp = X^G(\pi)$ and $\rho$ can always be extended in that case.
\end{proof}

\begin{remark}
  The decomposition of $I^{G^\sharp}_{P^\sharp}(\sigma^\sharp)$ depends on $\phi^\sharp_M$, but not on the element $\sigma^\sharp$. This is not expected to hold for other groups.
\end{remark}

\begin{remark}\label{rem:nontempered}
  We have limited ourselves to the tempered representations. However, if the local Langlands correspondence (Theorem \ref{prop:LLC-SL}) and Theorem \ref{prop:Lambda} can be extended to Arthur parameters $\psi^\sharp: \WD_F \times \SU(2) \to \Lgrp{G^\sharp}$ (see \cite[\S 6]{Ar89-unip}), then our results should be applicable to Arthur packets $\Pi_{\psi^\sharp}$ as well, except the part concerning the Knapp-Stein $R$-groups $R_{\sigma^\sharp}$. Note that the crucial lifting Theorem \ref{prop:lifting-parameter} also holds for Arthur parameters: see \cite[Remarque 8.2]{Lab85}.
\end{remark}

\subsection{Examples}\label{sec:examples}

The next example on $R$-groups will be constructed using Steinberg representations, whose definitions are reviewed below.

\begin{definition}\label{def:Steinberg}
  For this moment, we assume $G$ to be any connected reductive $F$-group. Fix a minimal parabolic subgroup $P_0$ of $G$. The Steinberg representation $\text{St}_G$ of $G$ is the virtual character of $G(F)$ given by
  $$ \text{St}_G := \sum_{\substack{P \supset P_0 \\ P = M U}} (-1)^{\dim \mathfrak{a}^G_M} \; I^G_P(\delta_P^{-\frac{1}{2}} \mathbbm{1}_M), $$
  where the sum ranges over the parabolic subgroups $P$ containing $P_0$ and $\mathbbm{1}_M$ denotes the trivial representation of $M(F)$.
\end{definition}

The basic fact \cite{Ca73} is that $\text{St}_G$ comes from a smooth irreducible representation in $\Pi_{2, \text{temp}}(G)$, which we denote by the same symbol $\text{St}_G$. It is clearly independent of the choice of $P_0$.

\begin{lemma}\label{prop:St-irred}
  For $G$ as in Definition \ref{def:Steinberg} and a subgroup $G^\sharp$ satisfying $G_{\mathrm{der}} \subset G^\sharp \subset G$, we have
  $$ \mathrm{St}_G|_{G^\sharp} \simeq \mathrm{St}_{G^\sharp}.$$
  In particular, the group $X^G(\mathrm{St}_G)$ defined in \S\ref{sec:res-rep} is trivial.
\end{lemma}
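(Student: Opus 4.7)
The plan is to restrict the defining alternating sum for $\mathrm{St}_G$ (Definition \ref{def:Steinberg}) term-by-term from $G(F)$ to $G^\sharp(F)$ and recognize the result as the analogous sum defining $\mathrm{St}_{G^\sharp}$.

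First, I would fix a minimal parabolic $P_0$ of $G$ and use the bijection $P \mapsto P^\sharp = P \cap G^\sharp$ between parabolic subgroups of $G$ containing $P_0$ and parabolic subgroups of $G^\sharp$ containing $P_0^\sharp$, which was recorded in \S\ref{sec:res-ind}. For each such $P = MU$, Lemma \ref{prop:res-induction} gives a functorial isomorphism
\[
I^G_P\bigl(\delta_P^{-1/2}\mathbbm{1}_M\bigr)\big|_{G^\sharp}\;\simeq\;I^{G^\sharp}_{P^\sharp}\bigl((\delta_P^{-1/2}\mathbbm{1}_M)|_{M^\sharp}\bigr),
\]
and since $\delta_P|_{M^\sharp}=\delta_{P^\sharp}$ and $\mathbbm{1}_M|_{M^\sharp}=\mathbbm{1}_{M^\sharp}$, the right-hand side equals $I^{G^\sharp}_{P^\sharp}(\delta_{P^\sharp}^{-1/2}\mathbbm{1}_{M^\sharp})$.

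Next, I need the signs to match, i.e.\ $\dim\mathfrak{a}^G_M=\dim\mathfrak{a}^{G^\sharp}_{M^\sharp}$. This is the only mildly delicate point, but it follows at once from the observation that $G$ and $G^\sharp$ have the same derived group (as $G_{\mathrm{der}}\subset G^\sharp\subset G$ forces $(G^\sharp)_{\mathrm{der}}=G_{\mathrm{der}}$), hence the same root datum on the derived side; in particular the set of simple roots $\Delta^G_P$ coincides with $\Delta^{G^\sharp}_{P^\sharp}$, and both dimensions equal the common cardinality of this set. Summing the restricted terms with the (matching) signs yields the identity of virtual characters
\[
\mathrm{St}_G|_{G^\sharp}\;=\;\sum_{P^\sharp\supset P_0^\sharp}(-1)^{\dim\mathfrak{a}^{G^\sharp}_{M^\sharp}}I^{G^\sharp}_{P^\sharp}(\delta_{P^\sharp}^{-1/2}\mathbbm{1}_{M^\sharp})\;=\;\mathrm{St}_{G^\sharp}.
\]
Since $\mathrm{St}_{G^\sharp}$ is irreducible, this forces $\mathrm{St}_G|_{G^\sharp}$ to be irreducible and isomorphic to it.

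Finally, for the statement about $X^G(\mathrm{St}_G)$: the irreducibility of $\mathrm{St}_G|_{G^\sharp}$ implies $\mathrm{End}_{G^\sharp}(\mathrm{St}_G)=\C\cdot\identity$ by Schur's lemma, so the central extension $1\to\C^\times\to S^G(\mathrm{St}_G)\to X^G(\mathrm{St}_G)\to 1$ of \eqref{eqn:res-S} has $S^G(\mathrm{St}_G)\subset\mathrm{Aut}_{G^\sharp}(\mathrm{St}_G)=\C^\times$, forcing $X^G(\mathrm{St}_G)=\{1\}$. I do not anticipate any real obstacle; the only care needed is in checking the equality of exponents via the common derived group, after which everything reduces to a linearity argument applied to Lemma \ref{prop:res-induction}.
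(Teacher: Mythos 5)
Your proof is correct and follows essentially the same route as the paper: restrict the alternating sum term-by-term using Lemma \ref{prop:res-induction}, match signs via the identification of parabolics, and conclude irreducibility and then triviality of $X^G(\mathrm{St}_G)$. Your argument that $\dim\mathfrak{a}^G_M=\dim\mathfrak{a}^{G^\sharp}_{M^\sharp}$ via the common derived group makes explicit a point the paper leaves implicit, and you invoke Schur's lemma directly for the final step where the paper cites Theorem \ref{prop:S-decomp}; these are the same observation in different clothing.
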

\begin{proof}
  Recall the bijection $P \mapsto P^\sharp := P \cap G^\sharp$ between the parabolic subgroups of $G$ and $G^\sharp$. Since $(\mathbbm{1}_L)|_{L^\sharp} = \mathbbm{1}_{L^\sharp}$ for any Levi subgroup $L$ of $G$, the first isomorphism follows by comparing the formulas defining $\text{St}_G$ and $\text{St}_{G^\sharp}$, together with Lemma \ref{prop:res-induction}. Hence the restriction of $\text{St}_G$ to $G^\sharp$ is irreducible. It follows from Theorem \ref{prop:S-decomp} that $X^G(\text{St}_G) = \{1\}$.
\end{proof}

Let us revert to the setting $G = \GL_D(n)$ and $G^\sharp = \SL_D(n)$.

\begin{example}\label{ex:nonsplit}
  We now set out to construct an example in which $\tilde{R}_{\sigma^\sharp} \to R_{\sigma^\sharp}$ does not split for every $\sigma^\sharp \hookrightarrow \sigma|_M$.

  First of all, there exists $\GL_D(m)$, for some choice of $D,m$, and a representation $\tau \in \Pi_{2,\text{temp}}(\GL_D(m))$ such that $S^{\GL_D(m)}(\sigma)$ is non-commutative. Indeed, for $m=1$ and $D$ equal to the quaternion algebra over $F$, Arthur exhibits in \cite[p.215]{Ar06} an L-parameter $\phi_\tau \in \Phi_{2,\text{temp}}(D^\times)$ such that
  \begin{itemize}
    \item $\tilde{\mathscr{S}}_{\phi^\sharp_\tau}$ is isomorphic to the quaternion group of order $8$;
    \item $\tilde{Z}_{\phi_\tau}$ corresponds to $\{\pm 1\}$.
  \end{itemize}
  In fact, $\phi_\tau$ factors through a homomorphism $\text{Gal}(K/F) \to \PGL(2,\C)$, where $K$ is a biquadratic extension of $F$, whose image is generated by the elements
  $$
    \begin{bmatrix} 1 & 0 \\ 0 & -1 \end{bmatrix}, 
    \begin{bmatrix} 0 & 1 \\ 1 & 0 \end{bmatrix} \in \PGL(2, \C).
  $$

  Since $\chi_{D^\times}$ is injective on $\tilde{Z}_{\phi_\tau}$ by \eqref{eqn:chi-G-special}. Theorem \ref{prop:Lambda} entails that $S^{D^\times}(\tau)$ is non-commutative.

  Take $\eta, \omega \in X^{\GL_D(m)}(\tau)$ so that their preimages in $S^{\GL_D(m)}(\tau)$ do not commute. Let $c$ (resp. $d$) be the order of $\eta$ (resp. $\omega$). Put $\text{St} := \text{St}_{\GL_D(m)}$ and take

  \begin{align*}
    M & := \GL_D(m) \times \prod_{\substack{1 \leq i \leq c \\ 1 \leq j \leq d}} \GL_D(m), \\
    G & := \GL_D(m(cd+1)), \\
    \sigma & := \tau \boxtimes \left( \bigboxtimes_{\substack{1 \leq i \leq c \\ 1 \leq j \leq d}} \eta^{i-1} \omega^{j-1} \text{St} \right), \\
    \pi & := I^G_P(\sigma) \quad \text{ for some } P \in \mathcal{P}(M).
  \end{align*}

  Here $X^M(\sigma)$ is defined relatively to $M^\sharp := G^\sharp \cap M$ where $G^\sharp = \SL_D(m(cd+1))$. The presence of $\text{St}$ forces $X^M(\sigma)$ to be trivial, by Lemma \ref{prop:St-irred}. Hence $\sigma^\sharp := \sigma|_{M^\sharp}$ is irreducible, and it is parametrized by the $1$-dimensional character $\chi_M: \tilde{Z}_{\phi^\sharp_M} \to \C^\times$. According to Theorem \ref{prop:main}, the central extension $\tilde{R}_{\sigma^\sharp} \twoheadrightarrow R_{\sigma^\sharp}$ splits if and only if $\rho$ can be extended to $\tilde{\mathscr{S}}_{\phi^\sharp}$. This is the case if and only if $S^G(\pi) \twoheadrightarrow X^G(\pi)$ splits, by Theorem \ref{prop:Lambda}. Hence it suffices to show the non-commutativity of $S^G(\pi)$.

  Put $L := \GL_D(m) \times \GL_D(mcd) \in \mathcal{L}^G(M)$ and set $\nu := I^L_{P \cap L}(\sigma)$. We claim $\eta,\omega \in S^L(\nu)$. Indeed, the $\GL_D(m)$-component of $L$ does not cause any problem. As for the $\GL_D(mcd)$-component, take representatives $\tilde{w}_\eta$, $\tilde{w}_\eta$ in $\SL_D(mcd)$ of the cyclic permutations
  \begin{gather*}
    w_\eta: 1 \to \cdots \to c \to 1, \\
    w_\omega: 1 \to \cdots \to d \to 1
  \end{gather*}
  of the indexes $i$ and $j$, respectively. Then the intertwining operators $J_\eta, J_\omega$ are given by the operators in \eqref{eqn:r_P} using $\tilde{w}_\eta$, $\tilde{w}_\omega$. Furthermore, $J_\eta$ and $J_\omega$ commute with each other; this follows from \eqref{eqn:r_P-property} and the obvious fact that $\tilde{w}_\eta$ and $\tilde{w}_\omega$ can be chosen to commute.

  From our choice of $\eta,\omega$, it follows that the preimages of $\eta,\omega$ in $S^L(\nu)$ do not commute. Since $S^L(\nu) \hookrightarrow S^G(\pi)$ by Proposition \ref{prop:S-embedding}, $S^G(\pi)$ is non-commutative, as required.
\end{example}

\begin{example}\label{ex:proper-incl}
  Now we set out the show that the inclusion $R_{\phi^\sharp, \sigma^\sharp} \subset R_{\phi^\sharp}$ is proper in general. By Theorem \ref{prop:main} and the notations therein, it amounts to show that $Z^M(\sigma)^\perp \subsetneq X^G(\pi)$ in general.

  As in the previous example, we take some $m \geq 1$, a central division $F$-algebra $D$ and $\tau \in \Pi_{2,\text{temp}}(\GL_D(m))$ such that $X^{\GL_D(m)}(\tau)$ contains $\eta, \omega$ with non-commuting preimages in $S^{\GL_D(m)}(\sigma)$. Take another $\tau' \in \Pi_{2,\text{temp}}(\GL_D(m))$ such that $X^{\GL_D(m)}(\tau') = \angles{\eta}$. Denote by $d$ the order of $\omega$. We take
  \begin{align*}
    M & := \GL_D(m) \times \prod_{1 \leq j \leq d} \GL_D(m), \\
    G & := \GL_D(m(d+1)), \\
    \sigma & := \tau \boxtimes \left( \bigboxtimes_{1 \leq j \leq d} \omega^{j-1} \tau' \right), \\
    \pi & := I^G_P(\sigma) \quad \text{ for some } P \in \mathcal{P}(M).
  \end{align*}
  Therefore $X^M(\sigma) = \angles{\eta}$ (defined relative to $M^\sharp = M \cap G^\sharp$ with $G^\sharp := \SL_D(m(d+1))$ as before), and $S^M(\sigma)$ is commutative. In particular $Z^M(\sigma)=X^M(\sigma)=\angles{\eta}$.

  On the other hand, a variant of the arguments in the previous example show that $\omega, \eta \in X^G(\pi)$ with non-commuting preimages in $S^G(\pi)$. Hence $\omega \in X^G(\pi)$ and $\omega \notin Z^M(\sigma)^\perp$, as required.

  Note that such $\tau$, $\tau'$ do exist when $D$ is the quaternion algebra over $F$ and $m=1$; in that case $\eta, \omega$ are identified with quadratic characters of $F^\times$. Indeed, a candidate of $\tau$ is given in the previous example. On the other hand, to construct $\tau'$ for a given $\eta$, we are reduced to construct $\tau'' \in \Pi_{2,\text{temp}}(\GL_F(2))$ with $X^{\GL_F(2)}(\tau'') = \{1, \eta\}$ and then take $\tau$ to be the Jacquet-Langlands transfer of $\tau''$.

  To finish the construction, let $E$ be the quadratic extension of $F$ determined by $\eta$ and $\theta: E^\times \to \C^\times$ be a continuous character. Set $\tau'' := \Ind_{E/F}(\theta)$ (the local automorphic induction, cf. \cite[Theorem 4.6]{JL70}), then $\eta \tau'' = \tau''$. From \cite[pp.738-739]{LL79}, one sees that $\tau''$ is cuspidal and $|X^{\GL_F(2)}(\tau'')|=2$ for general $\theta$, which suffices to conclude.
\end{example}

\bibliographystyle{abbrv}
\bibliography{Restriction}

\begin{flushleft} \small
  Kuok Fai Chao \\
  Institute of Mathematics, \\
  Academy of Mathematics and Systems Science, \\
  Chinese Academy of Sciences, \\
  55, Zhongguancun East Road, 100190 Beijing \\
  People’s Republic of China. \\
  E-mail address: \texttt{kchao@amss.ac.cn}
\end{flushleft}

\begin{flushleft} \small
  Wen-Wei Li \\
  Morningside Center of Mathematics, \\
  Academy of Mathematics and Systems Science, \\
  Chinese Academy of Sciences, \\
  55, Zhongguancun East Road, 100190 Beijing \\
  People's Republic of China. \\
  E-mail address: \texttt{wwli@math.ac.cn}
\end{flushleft}

\end{document}